\newcommand{\xdashrightarrow}[2][]{\ext@arrow 
0359\rightarrowfill@@{#1}{#2}}
\newcommand{\xdashleftarrow}[2][]{\ext@arrow 
3095\leftarrowfill@@{#1}{#2}}
\newcommand{\xdashleftrightarrow}[2][]
{\ext@arrow 3359\leftrightarrowfill@@{#1}{#2}}
\def\rightarrowfill@@{\arrowfill@@\relax\relbar
\rightarrow}
\def\leftarrowfill@@{\arrowfill@@\leftarrow
\relbar\relax}
\def\leftrightarrowfill@@{\arrowfill@@
\leftarrow\relbar\rightarrow}
\def\arrowfill@@#1#2#3#4{%
  $\m@th\thickmuskip0mu\medmuskip\thickmuskip
\thinmuskip\thickmuskip
   \relax#4#1
   \xleaders\hbox{$#4#2$}\hfill
   #3$%
}
\newtheorem{thm}{Theorem}[section]
\newtheorem{cor}[thm]{Corollary}
\newtheorem{lem}[thm]{Lemma}
\newtheorem{prop}[thm]{Proposition}
\theoremstyle{definition}
\newtheorem{defn}[thm]{Definition}
\newtheorem{rem}[thm]{Remark}  
\newtheorem{conjecture}[thm]{Conjecture}
\newtheorem{example}[thm]{Example}
\numberwithin{equation}{section}
\newcommand{\m}{m}
\newcommand{\RR}{{\mathcal R}}
\newcommand{\To}{\longrightarrow}
\newcommand{\out}{\mathrm{out}}
\newcommand{\Invol}{\operatorname{Invol}}
\newcommand{\sign}{\operatorname{SignAct}}
\newcommand{\QR}{\mathcal Q_R}
\newcommand{\QL}{\mathcal Q_L}
\newcommand{\Gbar}{H_B}
\newcommand{\GbarO}{H_{B,0}}
\newcommand{\HB}{\mathcal H_{B}}
\newcommand{\HBan}{\mathcal H_{B,an}}
\newcommand{\HA}{\mathcal H_{A}}
\newcommand{\HAan}{\mathcal H_{A,an}}
\newcommand{\qa}{q}
\newcommand{\qb}{q}
\newcommand{\PAB}{\mathbf P}
\newcommand{\Gr}{{\mathbf {gr}}}
\newcommand{\Ss}{\mathcal S}
\newcommand{\Sseq}{\mathcal S^{\operatorname{eq}}}
\newcommand{\dd}{z}
\newcommand{\Xcheck}{{\check{\mathbb X}^\circ}}
\newcommand{\Xcheckbar}{\check{\mathbb X}}
\newcommand{\Xcheckfact}{{\check{\mathbb X}^{\circ,fact}}}
\newcommand{\Osc}{\mathrm{Osc}}
\newcommand{\HH}{\mathcal H}
\newcommand{\ta}{\tau}
\newcommand{\hbarr}{z}
\newcommand{\hbarrB}{z}
\newcommand{\C}{\mathbb{C}}
\newcommand{\Z}{\mathbb{Z}}
\newcommand{\inv}{^{-1}}
\newcommand{\x}{\times}
\newcommand{\Hom}{{\operatorname{Hom}}}
\newcommand{\opp}{\operatorname{opp}}
\newcommand{\xx}{\widetilde{\mathbf{x}}}
\newcommand{\A}{\mathcal{A}}
\newcommand{\T}{\mathcal{T}}
\newcommand{\Pn}{\mathcal{P}}
\newcommand{\M}{\mathcal{M}}
\newcommand{\CC}{{\widetilde{\mathcal{C}}}}
\newcommand{\CI}{\mathcal{C}}
\newcommand{\Proj}{\operatorname{Proj}}
\newcommand{\Vertical}{\operatorname{Vert}}
\newcommand{\Sym}{\operatorname{Sym}}
\newcommand{\GammaO}{{\Gamma_{\mu_{n-k}}}}
\newcommand{\p}{{p}}
\newcommand{\qq}{{\mathbf q}}
\newcommand{\Qi}{Q_i}
\newcommand{\Qil}{Q_i^{\text{\rm in}}}
\newcommand{\Qir}{Q_i^{\text{\rm out}}}
\newcommand{\Qilc}{Q_i^{\text{\rm in}}(\Sigma_i)}
\newcommand{\Qirc}{Q_i^{\text{\rm out}}(\Sigma_i)}
\newcommand{\Qic}{Q_i(\Sigma_i)}
\newcommand{\Giin}{G_i^{\text{\rm in}}}
\newcommand{\GMstar}{(G_i^*)_{M_i}}
\newcommand{\Mi}{M_i}
\newcommand{\Miin}{M_i^{\text{\rm in}}}
\newcommand{\Fi}{F_i}
\newcommand{\Fip}{F'_i}
\newcommand{\PPi}{P_i}
\newcommand{\boundary}{v}
\newcommand{\addbox}[2]{\tau_{#2}(#1)}
\newcommand{\coeff}{p_{\CC}}
\newcommand{\telt}{t}
\newcommand{\Weq}{W^{\operatorname{eq}}}
\newcommand{\Feq}{\mathcal F^{\operatorname{eq}}}
\newcommand{\WeqAlt}{{\widetilde W}^{\operatorname{eq}}}
\newcommand{\Wtilde}{\widetilde{W}^{\operatorname{eq}}}
\newcommand{\Weqq}{\Weq_{\qb}}
\newcommand{\Wtildeq}{\Wtilde_{\qb}}
\newcommand{\dWeqdq}{\frac{\partial \Weq}{\partial q}}
\newcommand{\dWdq}{\frac{\partial W}{\partial q}}
\newcommand{\fact}{\operatorname{fact}}
\newcommand{\GG}{\widetilde{G}^{W_{\qb}}}
\newcommand{\HHH}{\widetilde{H}_B}
\newcommand{\reduce}[1]{#1}
\newcommand{\cluster}{\prod_{p\in \CI}p}
\DeclareRobustCommand*{\Raiseboxdepth}{\raisebox{\depth}}
\begin{document}

\title[The $B$-model connection and
 mirror symmetry for Grassmannians]
{The $B$-model connection and
mirror symmetry for Grassmannians}
\author{R. J. Marsh}%
\address{School of Mathematics,
            University of Leeds,
            Leeds
            LS2 9JT,
            UK
}%
\email{R.J.Marsh@leeds.ac.uk}%
\author{K. Rietsch}%
\address{Department of Mathematics,
            King's College London,
            Strand, London
            WC2R 2LS
            UK
}%
\email{konstanze.rietsch@kcl.ac.uk}%

\dedicatory{This paper is dedicated to the memory of Andrei Zelevinsky.}

\subjclass[2000]{14N35, 14M17, 57T15}

\keywords{Mirror Symmetry, flag varieties, Gromov-Witten theory, Grassmannian quantum cohomology, cluster algebras, Landau-Ginzburg model, Gauss-Manin system}

\thanks{ This work was supported by the Engineering and Physical Sciences Research Council
[grant numbers EP/G007497/1 and EP/S071395/1].}
\subjclass{}%
\date{6 June 2017 (Introduction revised 19 February 2018). Revised again after referee's comments 31 Jan 2020.}
\begin{abstract}
We consider the Grassmannian $X=Gr_{n-k}(\C^n)$ and describe a `mirror dual'
Landau-Ginzburg model $(\Xcheck , W_\qa:\Xcheck \to \C)$, where
$\Xcheck$ is the complement of a particular anti-canonical divisor in a
Langlands dual Grassmannian $\Xcheckbar$, and we express $W$  succinctly in
terms of Pl\"ucker coordinates. First of all, we show this Landau-Ginzburg model to be isomorphic to
one proposed for homogeneous spaces in a previous work by the second author. Secondly we show it to be a partial compactification of the Landau-Ginzburg model
defined in the 1990's by Eguchi, Hori, and Xiong.
Finally we construct inside the Gauss-Manin system associated to $W_\qa$ a free submodule which recovers
the trivial vector bundle with small Dubrovin connection defined out of Gromov-Witten invariants of $X$.
We also prove a
$T$-equivariant version of this isomorphism of connections.
Our results imply in the case of Grassmannians an integral formula for a solution to the quantum cohomology $D$-module of a homogeneous space, which was conjectured by the second author.
They also imply a series expansion of the top term in Givental's $J$-function, which was conjectured in a 1998 paper by Batyrev, Ciocan-Fontanine, Kim and van Straten.

\end{abstract}
\maketitle
\tableofcontents
\bibliographystyle{amsplain}
\section{Introduction}


The genus $0$ Gromov-Witten invariants of a Grassmannian $X$ answer enumerative questions about rational curves in $X$ 
and are put together in various ways to define rich mathematical structures such quantum cohomology rings, flat pencils of connections on Frobenius manifolds \cite{CoxKatz:QCohBook}. These structures are part of the so-called `$A$-model' of $X$. Mirror symmetry in the sense we consider here seeks to describe these structures in terms a mirror dual `$B$-model' or  Landau-Ginzburg (LG) model associated to $X$. Explicitly, the data from the $A$-model should be encoded by singularity theory or oscillating integrals of a regular function $W_q$ (the superpotential) which is defined on a `mirror dual' affine Calabi-Yau variety $\Xcheck$ with a holomorphic volume form $\omega$.  In the present paper we construct such a mirror datum in canonical and concrete terms for Grassmannians and prove associated mirror conjectures. Our results in particular imply and enhance a conjecture formulated in the 1990s by Batyrev, Ciocan-Fontanine, Kim and van Straten \cite[Conjecture~5.2.3]{BCKS:MSGrass} concerning a series expansion for a coefficient of Givental's $J$-function. This conjecture was restated in a paper of Bertram, Ciocan-Fontanine and Kim \cite[Section~3]{HoriVafaConj}, where the special case of Grassmannians of $2$-planes was proved by an entirely different method. The conjecture was again restated as a problem of interest in \cite[Problem 14]{prz09} some 10 years later.  

To give a flavour of our results, consider 
\begin{equation}\label{e:introS}
\mathcal S(q)=\sum_{\lambda}\left (\oint e^{\frac 1 {\hbar } W_q}p_\lambda \omega\right ) PD(\sigma^{\lambda}),
\end{equation}
which is an $H^*(X,\C)$-valued function depending on a single variable $q$ (and a parameter $\hbar$).
 The integrand involves the superpotential $W_q$. This is a particular rational function introduced in this paper  via an explicit formula in terms of Pl\"  ucker coordinates $p_\lambda$ on an isomorphic (but Langlands~dual) Grassmannian~$\check{\mathbb X}$; see for example \eqref{e:Wexample}. The integration is over a natural compact torus in an open subvariety~$\Xcheck$ of $\check{\mathbb X}$; see Theorem~\ref{t:main2}. Note  that the function $\mathcal S(q)$ in \eqref{e:introS} is expressed as a linear combination of Schubert classes $\sigma^\lambda$ permuted by Poincar\'e duality, denoted by PD.
 By Theorem~\ref{t:main2} (one of our main results) the function
 $\mathcal{S}(q)$ satisfies the flat section equation of the Dubrovin connection, i.e.\ we have 
$q\frac{d}{dq} \mathcal S=\frac 1 {\hbar} \sigma^{\ydiagram{1}}\star_q \mathcal S$.
Here   $\sigma^{\ydiagram{1}}$ denotes the hyperplane class  of $X$, and $\star_q$ denotes the quantum cup product on the small quantum cohomology ring of $X$.   
 
 A conjecture of Batyrev, Ciocan-Fontanine, Kim and van Straten proposes an integral formula $\oint e^{\frac 1 {\hbar } L_q} \omega$ for the coefficient of the top class $PD(\sigma^{\emptyset})$ of a flat section of the Dubrovin connection, and employs a Laurent polynomial $L_q$ introduced by Eguchi, Hori and Xiong \cite{EHX:GravQCoh} in place of our $W_q$. We prove this conjecture by recovering the Laurent polynomial superpotential $L_q$ as a restriction of $W_q$ to a particular open dense torus inside $\check{\mathbb X}$ and applying our more general Theorem~\ref{t:main2}. 

Note that the Schubert basis and the Pl\"ucker coordinates above are indexed by the same set.
This has its explanation in the geometric Satake correspondence. The key point is that there is a natural identification
$H^0(\check{\mathbb X},\mathcal O(1))=H^*(X,\C)$ identifying Schubert classes of $X$ with homogeneous coordinates of $\check{\mathbb X}$. This identification comes from the fact that both left-hand and right-hand sides agree with the $k$-th fundamental representation $\bigwedge^k\C^n$ of $SL_n(\C)$, the special linear group which acts on $\check{\mathbb X}$. For the left-hand side this is by the Borel-Weil theorem, and for the right-hand side it is by a very special case of the geometric Satake correspondence~\cite{Ginzburg, Kamnitzer, Lusztig, MV} which constructs representations of $G^\vee=SL_n(\C)$ via intersection cohomology of Schubert varieties of the affine Grassmannian $\mathcal Gr_G$ for the Langlands dual group $G=PSL_n(\C)$. (The Schubert variety which arises in the construction of  $\bigwedge^k\C^n$ turns out to be homogeneous for $G$ and coincides with the Grassmannian $X$.)


The methods developed here are useful and adaptable to other co-minuscule $G/P$, which are precisely the homogeneous spaces which in the geometric Satake correspondence appear  as `minimal' Schubert varieties of the affine Grassmannian $\mathcal{G}r_G$. In particular, since this paper appeared, the methods we use have been employed to obtain results analogous to Theorem~\ref{t:main2} in the case of even and odd-dimensional quadrics; see \cite{PechRie:Quadrics, PechRieWilliams:AllQuadrics}. Moreover, in the case of Lagrangian Grassmananians, a partial result in this direction, the `canonical' description of the superpotential in terms of Pl\"ucker coordinates, has been obtained in \cite{PechRie:LGLG}.

The main work in this paper is concerned with constructing the $A$-model Dubrovin connection in terms of the Gauss-Manin system associated to a mirror LG~model, in the most natural way possible, in the case where $X$ is a Grassmannian. 
Our main theorem, Theorem~\ref{t:main1}, explicitly describes the Dubrovin connection of $X$ in terms of the Gauss-Manin system of the mirror LG model we introduce. This underlies the  formula for the global flat section obtained in Theorem~\ref{t:main2}. We note that the superpotential is additionally shown (see Theorem~\ref{t:twosuperpotentials}) to be isomorphic to the Lie-theoretic superpotential of $X$ that was defined for general $G/P$ in \cite[Section~4.2]{Rie:MSgen}. Our result therefore also  implies a version of the mirror conjecture concerning solutions to the quantum differential equations in terms of the superpotential considered there, namely \cite[Conjecture~8.1]{Rie:MSgen}, in the Grassmannian case. 

Our results also shed new light on the (small) quantum cohomology ring of $X$, which is shown to agree with the Jacobi ring of $W_q$ by an isomorphism which identifies the Schubert class $\sigma^\lambda$ with the element of the Jacobi ring represented by the Pl\"ucker coordinate $p_\lambda$.   All of our results, including this one, are also stated and proved in the $T$-equivariant setting; see Proposition~\ref{p:eqJacobi} and Theorem~\ref{t:equivmain}.

In more recent work of the second author together with L.~Williams~\cite{RW:NObody}, the superpotential $W_q$ introduced in this paper, together with the cluster structure on $\C[\Xcheck]$, is shown by tropicalisation to define a set of polytopes which can be identified as Newton-Okounkov convex bodies of $X$. This can be understood as another form of mirror symmetry relating $X$ and $(\Xcheck,W_q)$.

A version of the present paper was placed on the arXiv in December 2015
(arXiv:1307.1085v2 [math.AG]). In May 2017 a preprint of Lam and Templier \cite{LamTemplier:preprint} appeared on the minuscule $G/P$ case of the mirror conjecture \cite[Conjecture~8.1]{Rie:MSgen} for the Lie-theoretic superpotential (see Section~\ref{s:RichardsonBmodel}). Their approach, which covers the Grassmannian case, employs completely different methods and uses the minuscule property. From this approach they can in their setting deduce that the Gauss-Manin system has the expected rank. 
On the other hand, our approach, using that the Grassmannian is {\it co-minuscule}, gives rise to explicit formulas for all of the components of a flat section of the Dubrovin connection in certain canonical coordinates. As mentioned above, the proof of the mirror conjecture along the same lines as in the present paper has already been carried out for even and odd-dimensional quadrics in \cite{PechRie:Quadrics, PechRieWilliams:AllQuadrics}, the latter of which are not minuscule and hence not covered by the work in~\cite{LamTemplier:preprint}. We expect our approach to be applicable, with analogous canonical coordinates, to all co-minuscule $G/P$. 

\vskip .2cm

The outline of the paper is as follows. We begin with a concrete introduction of the $A$-model structures. Then we give definitions of the $B$-model structures, as well as a careful statement of the main results.  In Section~\ref{s:comparisonMRR} we show how our Pl\"ucker formula for the superpotential relates to the formulations in \cite{BCKS:MSGrass,EHX:GravQCoh,Rie:MSgen}.
The proof of the main theorem begins in Section~\ref{s:StartOfMainProof} and takes up the remainder of the paper. It makes use of deep properties of the cluster algebra structure of the homogeneous coordinate ring of a Grassmannian. In the final sections we prove a version of the main theorem in the torus-equivariant setting.
 \vskip .2cm

\noindent  {\it Acknowledgements~:}
The first author thanks Gregg Musiker and Jeanne Scott for useful conversations.
The second author thanks Dale Peterson for his inspiring work and lectures,
Clelia Pech for useful conversations and Lauren Williams for helpful comments. We would like to thank the referee for many helpful comments.
Declarations of Interest: None.

\section{The $A$-model introduction}\label{s:IntroAmodel}

Let us suppose $X$ is the Grassmannian $Gr_{n-k}(\C^n)$. We focus on the example $k=3$ and $n=5$ to illustrate our results in the introduction. The cohomology of $X$ has a basis called the Schubert basis, which is indexed by partitions or Young diagrams (see e.g. \cite{Fulton:YoungBook}). In the example of $X=Gr_2(\C^5)$ we denote the Schubert basis of $H^*(X)=H^*(X,\C)$ by
$$
\sigma^{\emptyset},\sigma^{\ydiagram{1}}, \sigma^{\ydiagram{2}}, \sigma^{\ydiagram{1,1}}, \sigma^{\ydiagram{3}},\sigma^{\ydiagram{2,1}}, \sigma^{\ydiagram{3,1}}, \sigma^{\ydiagram{2,2}},
 \sigma^{\ydiagram{3,2}}, \sigma^{\ydiagram{3,3}}\ .
 $$
Here $\sigma^\lambda$ is in $H^{2|\lambda|}(X)$ and $|\lambda|$ denotes the number of boxes in the Young diagram $\lambda$. Furthermore $X^\lambda$ will be a Schubert variety associated to $\lambda$, representing the Poincar\'e dual homology class. For example $X^{\ydiagram{1}}$ is a hyperplane section for the Pl\"ucker embedding. In general the set of partitions fitting in an $(n-k)\x k$-rectangle indexes the Schubert basis for $Gr_{n-k}(n)$ in an entirely analogous way and is denoted by $\Pn_{k,n}$.

In classical Schubert calculus, Monk's rule says that the cup product with $\sigma^{\ydiagram{1}}$ takes any Schubert class $\sigma^\lambda$ to the sum of all the Schubert classes $\sigma^\mu$ corresponding to the $\mu$'s in $\Pn_{k,n}$ made up of $\lambda$ and precisely one extra box. In the cohomology of $Gr_2(5)$,
\begin{equation*}
\sigma^{\ydiagram{1}}\cup \sigma^{\ydiagram{2,1}}=\sigma^{\ydiagram{2,2}}+ \sigma^{\ydiagram{3,1}},\qquad
\sigma^{\ydiagram{1}}\cup \sigma^{\ydiagram{3,2}}=\sigma^{\ydiagram{3,3}},\qquad
\text{and }\quad \sigma^{\ydiagram{1}}\cup \sigma^{\ydiagram{3,3}}=0.
\end{equation*}
The combinatorics of adding a box, in this context, just encodes what happens to a Schubert variety $X^\lambda$ homologically, when it is intersected with a hyperplane section in general position.

In the quantum cohomology ring \cite{SiebertTian} (at fixed parameter $q$), quantum Monk's rule says that the quantum cup product with $\sigma^{\ydiagram{1}}$ takes any Schubert class $\sigma^\lambda$ to $\sigma^{\ydiagram{1}}\cup \sigma^{\lambda}$ plus, if it exists, a term $\qa \sigma^\nu$, where $\nu$ is obtained by removing $n-1(=4)$ boxes which must form the rim of $\lambda$ 
(by the $a=1$ case of quantum Pieri formula from \cite{Bertram:qSchubCalc}). 
For example,
\begin{equation*}
\sigma^{\ydiagram{1}}\star_\qa \sigma^{\ydiagram{2,1}}=\sigma^{\ydiagram{2,2}}+ \sigma^{\ydiagram{3,1}},\qquad
\sigma^{\ydiagram{1}}\star_\qa \sigma^{\ydiagram{3,2}}=\sigma^{\ydiagram{3,3}} + \qa \sigma^{\ydiagram{1}}.\qquad
\text{and }\quad \sigma^{\ydiagram{1}}\star_\qa \sigma^{\ydiagram{3,3}}=\qa \sigma^{\ydiagram{2}}.
\end{equation*}
Very roughly speaking, the extra term $\qa \sigma^{\nu}$ in $\sigma^{\ydiagram{1}}\star_\qa \sigma^{\lambda}$ says that the space of degree one maps $\phi: \C P^1\hookrightarrow X$ for which $\phi(0)$ lies in $X^{\lambda}$ and $\phi(1)$ lies in a fixed general position hyperplane, is essentially parametrised by a subvariety of $X$ in the class $[X^{\nu}]$ (via sending $\phi$ to $\phi(\infty)$).

The quantum products by degree two classes were used by  Dubrovin and Givental \cite{Giv:ICM, Dub:ICM} to define a connection on the trivial bundle
$$
 H^2(X,\C)\x H^*(X,\C)\to H^2(X,\C).
$$
In our setting we have $H^2(X,\C)=\C$ spanned by the class $c_1(\mathcal O(1))$, which equals the Schubert class $\sigma^{\ydiagram{1}}$. We denote by $\ta$ the coordinate on $H^2(X,\C)$ dual to $c_1(\mathcal O(1))$.
Let us recall the usual definition of the connection in the conventions of Dubrovin \cite{CoxKatz:QCohBook}, depending on an additional parameter $\hbarr$~:
\begin{equation}\label{e:Dubrovin1}
\nabla_{\partial_\ta}  := \frac{d}{d\ta} +
 \frac{1}{\hbarr}
 \sigma^{\ydiagram{1}}\star_{e^\ta}\, \underline{\, \ }\ .
\end{equation}
 We think of this connection as being dual to the one whose flat sections are constructed by Givental~\cite{Giv:EquivGW} in terms of descendent $2$-point Gromov-Witten invariants, compare also Section~\ref{s:Osc}.

Our main result is a $B$-model construction of the above connection. However we make some small adjustments first. Instead of $\ta$ we prefer to consider $\qa=e^{\ta}$, the coordinate on the torus $H^2(X,\C)/H^2(X,2\pi i \Z)\cong \C^*_\qa$.
We write $\C_\qa$ to mean $\C$ with coordinate $\qa$, and similarly $\C^*_\qa$ for $\C\setminus\{0\}$ with (invertible) coordinate~$\qa$.
Also, following Dubrovin \cite{Dub:ICM} we will extend the connection \eqref{e:Dubrovin1} in the $\hbarr$-direction, to give a flat meromorphic connection over a larger base.  Namely, let $\HA$ denote the (sheaf of regular sections of) the trivial vector bundle with fiber $H^*(X,\C)$ over the extended base $\PAB=\C_\hbarr\x \C_\qa$,  where the $\hbarr$ and $\qa$ are coordinates. We identify a Schubert class $\sigma^\lambda\in H^*(X,\C)$ with the corresponding constant section of $\HA$. 
Using the conventions of Iritani~\cite[Definition~3.1]{Iritani2} we set~:
\begin{eqnarray}\label{e:Anablaq}
{}^A\nabla_{\qa\partial_\qa}  &:=&\qa  \frac{\partial}{\partial\qa} +
 \frac{1}{\hbarr}
 \sigma^{\ydiagram{1}}\star_{\qa}\, \underline{\,  \ }\ , \\ \label{e:Anablaz}
{}^A \nabla_{\hbarr\partial_\hbarr}&:=& \hbarr\frac{\partial  }{\partial\hbarr} + {\Gr}\, - \frac{1}{\hbarr} c_1(TX)\star_{\qa}\, \underline{\,  \ }\  ,
\end{eqnarray}
where ${\Gr}$ is a diagonal operator on $H^*(X)$ given by
${\Gr}(\sigma)=k \sigma$ whenever $\sigma\in H^{2k}(X)$.
These formulas define a flat meromorphic connection on $\HA$.


The vector bundle $\HA$ also comes equipped with a flat pairing which is non-degenerate over $\C_z^*\x\C_\qa$. Namely let  $\left< \, \underline{\,  \ }\,  ,\, \underline{\,  \ }\, \right>_{H^*(X)}$ denote the Poincar\'e duality pairing on $H^*(X)$ and define $j:\PAB\to
\PAB$ by $(\hbarr,\qa)\mapsto (-\hbarr,\qa)$. Then the pairing $S_A: j^*\HA \otimes \HA \to \mathcal O_{\PAB_A}$ is given by
\begin{eqnarray*}
S_A(\, \underline{\,  \ }\,  ,\, \underline{\,  \ }\, )&=&(2\pi i\hbarr)^N\left< \, \underline{\,  \ }\, ,\, \underline{\,  \ }\, \right>_{H^*(X)},
\end{eqnarray*}
where $N=\dim_\C(X)$, compare~\cite{Iritani2}.
For the Grassmannian $X$, the Poincar\'e duality pairing is concretely described by the following involution. For $\lambda\in \Pn_{k,n}$ we denote by $PD(\lambda)$ the Young diagram obtained by taking the complement of $\lambda$ (within its bounding $(n-k)\x k$-rectangle) and rotating by $180^\circ$. The resulting Schubert class $\sigma^{PD(\lambda)}$ pairs to $1$ with  $\sigma^\lambda$ and to $0$ with all other Schubert classes under  $\left< \, \underline{\,  \ }\,  ,\, \underline{\,  \ }\, \right>_{H^*(X)}$.  
\begin{defn}\label{d:HA}  We define
\[
H_A=\Gamma(\HA, \C^*_\hbarr\x\C^*_\qa)=H^*(X,\C[\hbarr^{\pm 1}, \qa^{\pm 1}])
\]
to be the module for $D_\PAB=\C[\hbarr^{\pm 1},\qa^{\pm 1}]\left<\partial_{\hbarr},\partial_{\qa}\right>$ where
$\partial_{\hbarr},\partial_{\qa}$ act by ${}^A\nabla_{\partial_\hbarr} $ and ${}^A\nabla_{\partial_\qa} $, respectively. Compare equations~\eqref{e:Anablaq} and \eqref{e:Anablaz}.
We also define the $\C[\hbarr,\qa]$-submodule $H_{A,0}:=H^*(X,\C[\hbarr,\qa])$, which is acted on by the subalgebra
$D_{\PAB,0}$ of $D_\PAB$ generated by $\hbarr,\qa, \hbarr(\hbarr\partial_{\hbarr})$ and $\hbarr(\qa\partial_{\qa})$.
The pairing $H_A\otimes_{\C[\hbarr^{\pm 1},\qa^{\pm 1}]} H_A\to \C[\hbarr^{\pm 1},\qa^{\pm 1}]$ defined by $S_A$ is non-degenerate and denoted again by $S_A$.
\end{defn}

The main goal of this paper is to construct the $D_\PAB$-module $H_A$ above, and with it the data, $\HH_A, H_{A,0} $ and ${}^A\nabla$,
in terms of a Gauss-Manin system defined by a
mirror LG~model.
Descriptions of the LG~model
and the Gauss-Manin system
follow in Section~\ref{s:IntroBmodel}. The main results are stated in Section~\ref{s:main}.

\section{The $B$-model introduction}\label{s:IntroBmodel}
\subsection{The mirror LG model} To give our presentation of the mirror LG~model of the Grassmannian $X=Gr_{n-k}(n)$ we need to introduce  a new Grassmannian $\Xcheckbar:=Gr_k(n)$. Both $X$ and $\Xcheckbar$ have dimension $N=k(n-k)$. To be more precise, if $X=Gr_{n-k}(\C^n)$ then we think of $\Xcheckbar $ as $Gr_{k}(({\C^n})^*)$, which is embedded by a Pl\"ucker embedding in $\mathbb P (\bigwedge^k (\C^n)^*)$. Here $(\C^n)^*$ denotes the vector space dual to $\C^n$, with an action of the Langlands dual $GL_n(\C)$ from the right.
The  Pl\"ucker coordinates $\p_\lambda$ for $\Xcheckbar$ correspond in a natural way to
the Schubert classes $\sigma^{\lambda}$ in $H^*(X)$ and are indexed by $\lambda\in \Pn_{k,n}$.
\begin{footnote}{This coincidence of Pl\"ucker coordinates and Schubert classes is due to the identification of $H^*(X)$ with $\bigwedge^k \C^n$, viewed as the $k$-th fundamental representation of the Langlands dual $GL_n(\C)$ afforded by the geometric Satake correspondence~\cite{Ginzburg, Kamnitzer, Lusztig, MV}. 
This explains also why $\Xcheckbar$ should be viewed as a homogeneous space for the Langlands dual of the group used to define the $A$-model Grassmannian $X$. Therefore even though $X$ and $\Xcheckbar$ are isomorphic we do not think of them as being the same. Compare also \cite{PechRie:LGLG,PechRie:Quadrics,PechRieWilliams:AllQuadrics}.}
\end{footnote}

We continue with the explicit example of $X=Gr_2(5)$ and $\Xcheckbar=Gr_3(5)$, where $k=3$ and $n=5$.
Define the rational function $W$ on $\Xcheckbar\x \C_\qb$ by the formula
\begin{equation}\label{e:Wexample}
W=\frac{\p_{\ydiagram{1}}}{\p_{\emptyset}}+\frac{\p_{\ydiagram{3,1}}}{\p_{\ydiagram{3}}}+
\qb\frac{\p_{\ydiagram{2}}}{\p_{\ydiagram{3,3}}}+\frac{\p_{\ydiagram{3,2}}}{\p_{\ydiagram{2,2}}}
+\frac{\p_{\ydiagram{2,1}}}{\p_{{\ydiagram{1,1}}}},
\end{equation}
in terms of Pl\"ucker coordinates $p_\lambda$.
To obtain a regular function, remove from $\Xcheckbar$ the $5$ hyperplanes defined by the Pl\"ucker coordinates which appear in the denominators. The resulting affine variety is
$$
\Xcheck := \Xcheckbar\setminus\left\{ \p_{\emptyset}= 0\right\} \cup\left\{ \p_{\ydiagram{3}}= 0\right\}
\cup\{ \p_{\ydiagram{3,3}}= 0\}\cup
\{\p_{\ydiagram{2,2}}= 0\} \cup\{\p_{\ydiagram{1,1}}= 0\}.
$$
Note that the anti-canonical class of $\Xcheckbar=Gr_3(5)$ is $5\sigma^{\ydiagram{1}}$, therefore $\Xcheck$ is the complement of an anti-canonical divisor in $\Xcheckbar$. Let $\omega=
\omega_{\Xcheck}$ be a choice of non-vanishing holomorphic
volume form on $\Xcheck$ with simple poles along $D$.  We denote the regular function again by $W$, and refer to
$W:\Xcheck\x \C_\qb\to \C$
as the superpotential.

For a general Grassmannian $X=Gr_{n-k}(n)$ we have a completely  analogously defined affine subvariety $\Xcheck$ of $\Xcheckbar=Gr_k(n)$, along with  a non-vanishing holomorphic volume form $\omega$ on $\Xcheck$, and a superpotential $W:\Xcheck\x\C_\qb\to \C$, see Sections~\ref{s:mirrorGr} and \ref{s:omega}. Note that $\omega$ is a priori only uniquely determined up to a scalar. This scalar is chosen in Theorem~\ref{t:main2}, which is our first result that depends on this choice.

It is interesting to note that the variety $\Xcheck$ is an open positroid variety in the sense of Knutson, Lam and Speyer \cite{KnutsonLamSpeyer:PositroidPublished}. It also plays a special role for a particular Poisson structure on $\Xcheckbar$, see \cite{GSV:ClusterPoisson}. We will show in Section~\ref{s:comparisonMRR} that this LG model is isomorphic to the one introduced by the second author in \cite{Rie:MSgen}, and after restriction to an open subtorus becomes isomorphic to one introduced earlier by Eguchi, Hori and Xiong~\cite{EHX:GravQCoh} and studied further in~\cite{BCKS:MSGrass}.

 \subsection{The Gauss-Manin system} Denote by $\Omega^N(\Xcheck)$ the space of algebraic $N$-forms on $\Xcheck$.
 We write  $W_q:\Xcheck\to \C$ for the superpotential where the coordinate $\qb\ne 0$ is fixed.
There is a Gauss-Manin system associated to $W_\qb$, see \cite{Douai, Douai2, Sabbah:book}, which should be thought of as describing algebraic $N$-forms $\eta$
measured by `period integrals' of the form $\int_\Gamma e^{\frac 1 z W_q}\eta$, see the definition below.
Here we let both $\hbarrB$ and $\qb$ vary to get a $2$-parameter Gauss-Manin connection.
\begin{defn}\label{d:GMconnection}
Consider the $\C [\hbarrB^{\pm 1}, \qb^{\pm 1}]$-module $G^{W_\qb}$ defined by
\[
G^{W_{\qb}}=\Omega^N(\Xcheck)[\hbarrB^{\pm 1}, \qb^{\pm 1}]/\left((d + \hbarrB\inv d W_\qb\wedge\, \underline{\ \,  }\, \ )(\Omega^{N-1}(\Xcheck)[\hbarrB^{\pm 1}, \qb^{\pm 1}])\right).
\]
Note that for fixed $(\hbarrB_0,\qb_0)\in \PAB$ the `fiber'
\[
F_{B,(\hbarrB_0,\qb_0)}:=\Omega^N(\Xcheck)/\left((d + \hbarrB_0\inv  dW_{\qb_0}\wedge\, \underline{\ \,  }\,  )(\Omega^{N-1}(\Xcheck))\right),
\]
is a twisted de Rham cohomology group in the sense going back to Witten \cite[(11)]{Witten:82}. There is a Gauss-Manin connection on $G^{W_\qb}$  defined on $\eta\in \Omega^N(\Xcheck)$ by
\begin{eqnarray}\label{e:Bnabla1def}
{}^B\nabla_{\qb\partial_{\qb}}[\eta]&:=&
\frac{1}\hbarrB\left[\qb \frac{\partial{W}}{\partial \qb}\, \eta \right],\\
\label{e:Bnabla2def}
{}^B\nabla_{\hbarrB\partial_{\hbarrB}}[\eta]&:=&
 -\frac{1}{\hbarrB} \left[ W\, \eta \right],
\end{eqnarray}
and extended using the Leibniz rule. Thanks to the flatness of ${}^B\nabla$ we get a $D_\PAB$-module structure on $G^{W_\qb}$
by letting  $\partial_\hbarr$ and $\partial_\qb$ in $D_\PAB=\C[\hbarrB^{\pm 1},\qb^{\pm 1}]\left<\partial_{\hbarr},\partial_{\qa}\right>$
act by the operators ${}^B\nabla_{\partial_{\hbarr}}$ and ${}^B\nabla_{\partial_{\qb}}$, respectively. \end{defn}

In order to state our main theorem we will define a $\C[\hbarrB,\qb]$-submodule of $G^{W_{\qb}}$ which is to play the part of regular global sections of a trivial vector bundle $\HB$ on $\C_q\x\C_z$.
 
Since the divisor $\{\p_{\emptyset}=0\}$ is removed in the definition of $\Xcheck$, we can adopt the convention of setting $\p_{\emptyset}=1$ on $\Xcheck$. Therefore, from here on we consider the remaining $\p_\lambda$ as actual functions on $\Xcheck$ as opposed to homogeneous coordinates. 

\begin{defn}\label{d:Gbar}
Recall that $\Xcheck$ has on it a non-vanishing holomorphic volume form  $\omega$.
Let $\Gbar$ be the $\C[\hbarrB^{\pm 1}
,\qb^{\pm 1}]$-submodule of $G^{W_\qb}$ spanned by the classes $[\p_\lambda\omega]$ where $\lambda$ runs through $\Pn_{k,n}$. Furthermore let
$\GbarO$ be the $\C[\hbarrB,\qb]$-submodule inside $\Gbar$ generated by the 
$[\p_\lambda\omega]$, and let $\overline{F}_{B,(\hbarrB_0,\qb_0)}$ be the corresponding $\C$-linear subspace of $F_{B,(\hbarrB_0,\qb_0)}$.
\end{defn}

We will prove the following lemma in Section~\ref{s:freebasis}.
\begin{lem}\label{l:freebasis}
$\GbarO$ is a free $\C[\hbarrB,\qb]$-module with basis $\{[\p_\lambda\omega], \lambda\in\Pn_{k,n}\}$.
\end{lem}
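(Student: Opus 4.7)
The plan is to reduce to an associated Brieskorn-type lattice whose reduction modulo $\hbarrB$ is the Jacobi ring, and then to appeal to the Rietsch--Peterson identification of the Jacobi ring with quantum cohomology. I introduce the $\C[\hbarrB,\qb^{\pm 1}]$-module
\[
H_0 \;:=\; \Omega^N(\Xcheck)[\hbarrB,\qb^{\pm 1}]\,/\,(\hbarrB d + dW_\qb\wedge\cdot)\bigl(\Omega^{N-1}(\Xcheck)[\hbarrB,\qb^{\pm 1}]\bigr).
\]
Because $\hbarrB d + dW_\qb\wedge\cdot = \hbarrB(d+\hbarrB\inv dW_\qb\wedge\cdot)$, the assignment $[\alpha]\mapsto[\alpha]$ defines a $\C[\hbarrB,\qb^{\pm 1}]$-linear map $H_0\to G^{W_\qb}$, and a denominator-clearing argument shows its kernel is $\hbarrB$-torsion. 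Reduction modulo $\hbarrB$ yields
\[
H_0/\hbarrB H_0 \;\cong\; \mathrm{Jac}(W_\qb)\;:=\;\Omega^N(\Xcheck)[\qb^{\pm 1}]/dW_\qb\wedge\Omega^{N-1}(\Xcheck)[\qb^{\pm 1}],
\]
and the LG-model comparison of Section~\ref{s:comparisonMRR} together with the main result of \cite{Rie:MSgen} provides an isomorphism $\mathrm{Jac}(W_\qb)\cong qH^*(X,\C[\qb^{\pm 1}])$ sending $[\p_\lambda\omega]\mapsto\sigma^\lambda$. Hence $\{[\p_\lambda\omega]\}_{\lambda\in\Pn_{k,n}}$ is a $\C[\qb^{\pm 1}]$-basis of $\mathrm{Jac}(W_\qb)$.

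The key technical step is to show $H_0$ is $\hbarrB$-torsion-free. Suppose $(\hbarrB d + dW_\qb\wedge)\eta = \hbarrB\alpha$ with $\eta = \eta_0+\hbarrB\eta_1+\cdots$ polynomial in $\hbarrB$. Extracting the coefficient of $\hbarrB^0$ forces $dW_\qb\wedge\eta_0 = 0$. I would invoke Koszul exactness of $dW_\qb\wedge\cdot$ at top form degree on $\Xcheck$ -- a regular-sequence / isolated-critical-point property that follows from finite-dimensionality of $\mathrm{Jac}(W_\qb)$ over $\C[\qb^{\pm 1}]$ together with the fact that $\Xcheck$, as the complement of an ample (indeed anti-canonical) divisor in $\Xcheckbar$, is an affine variety of dimension $N$ -- to write $\eta_0 = dW_\qb\wedge\xi_0$ for some $\xi_0\in\Omega^{N-2}(\Xcheck)[\qb^{\pm 1}]$. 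Replacing $\eta$ by $\eta - (\hbarrB d + dW_\qb\wedge)\xi_0$ kills the $\hbarrB^0$-term without altering the identity, so the new $\eta$ lies in $\hbarrB\,\Omega^{N-1}(\Xcheck)[\hbarrB,\qb^{\pm 1}]$. Canceling the common factor of $\hbarrB$ in the $\C[\hbarrB]$-free module $\Omega^N(\Xcheck)[\hbarrB,\qb^{\pm 1}]$ then exhibits $\alpha$ in the image of $\hbarrB d + dW_\qb\wedge\cdot$, so $[\alpha]=0$ in $H_0$.

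Given torsion-freeness and the Jacobi-ring basis, a standard descent on $\hbarrB$-degree completes the argument. A relation $\sum_\lambda c_\lambda[\p_\lambda\omega]=0$ in $H_0$ with $c_\lambda\in\C[\hbarrB,\qb^{\pm 1}]$ reduces mod $\hbarrB$ to force each $c_\lambda|_{\hbarrB=0}=0$; writing $c_\lambda = \hbarrB c_\lambda'$ and canceling $\hbarrB$ by torsion-freeness yields $\sum c_\lambda'[\p_\lambda\omega]=0$, and the recursion terminates as the $c_\lambda$ are polynomials in $\hbarrB$. Injectivity of $H_0\to G^{W_\qb}$ (again a consequence of torsion-freeness) transports linear independence to $G^{W_\qb}$, and restriction to $\C[\hbarrB,\qb]\subset\C[\hbarrB,\qb^{\pm 1}]$ yields the lemma, since the $[\p_\lambda\omega]$ span $\Gbar_0^{W_\qb}$ by definition.

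The main obstacle is the Koszul exactness step. Its rigorous verification on the open positroid variety $\Xcheck$ most likely proceeds via explicit cluster coordinate charts, in which $W_\qb$ becomes a Laurent polynomial and one can check the regular-sequence condition on the $\partial_i W_\qb$ directly; this is consistent with the paper's heavy reliance on the cluster-algebra structure of $\Xcheck$.
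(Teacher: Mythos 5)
Your proposal is correct, and its backbone is the same as the paper's: both arguments reduce the statement to the identification of the Jacobi ring of $W_\qb$ with quantum cohomology, obtained from the comparison with the Richardson-variety superpotential (Theorem~\ref{t:twosuperpotentials}) together with Theorems~\ref{t:Peterson} and~\ref{t:MSgen}, under which $\sigma^\lambda\mapsto[\p_\lambda]$, so that the $[\p_\lambda\omega]$ form a basis after setting $\hbarrB=0$. Where you genuinely add something is in the lifting step: the paper simply notes that setting $\hbarrB=0$ in $\Gbar_0^{W_\qb}$ recovers the Jacobi ring and asserts that the Lemma follows, whereas you make explicit why linear independence modulo $\hbarrB$ propagates to linear independence over $\C[\hbarrB,\qb]$, by introducing the Brieskorn-type lattice $H_0$, proving it is $\hbarrB$-torsion-free via Koszul exactness of $dW_\qb\wedge\,\underline{\ \,}\,$ in degree $N-1$, and running the descent on $\hbarrB$-degree. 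This is exactly the content hidden in the paper's final sentence, and your treatment of it is sound. The step you flag as the main obstacle also goes through for the standard reason and does not require cluster charts: since the Jacobi ring is finite over $\C[\qb^{\pm1}]$, the relative critical scheme of $W$ is finite over $\C^*_\qb$, hence of codimension $N$ in the smooth $(N+1)$-dimensional affine variety $\Xcheck\x\C^*_\qb$, so the partial derivatives of $W_\qb$ form a regular sequence locally and the relative Koszul complex is exact in degrees below $N$. The only point to keep straight is that your independence statement is over $\C[\hbarrB,\qb^{\pm1}]$ while the lemma is over $\C[\hbarrB,\qb]$; restriction of scalars handles independence, and spanning holds by the definition of $\Gbar_0^{W_\qb}$, as you note.
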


By this lemma  $\GbarO$ is indeed the space of regular sections of a trivial vector bundle on $\PAB=\C_\hbarr\x\C_\qb$. We let $\HB$ denote the sheaf of regular sections of this trivial bundle on $\PAB$.
The fiber of the vector bundle $\HB$ at $(\hbarrB_0,\qb_0)$ is  $\overline{F}_{B,(\hbarrB_0,\qb_0)}$ and has a basis given independently of  $(\hbarrB_0,\qb_0)$ by the classes $[\p_\lambda \omega]$.

\begin{conjecture}\footnote{Update, May 2017. This conjecture now follows from our Theorem~\ref{t:twosuperpotentials} combined with the recent work  \cite{LamTemplier:preprint} of Lam and Templier.}\label{c:RankGM}
We conjecture that $\Gbar=G^{W_\qb}$. 
\end{conjecture}

The intuitive meaning of this conjecture is that $W_q$ has no additional critical point at $\infty$ in any compactification. The conjecture would follow from a related technical condition on $W_\qb$ (cohomological tameness \cite{Sabbah:pol,Sabbah:book}).  Note that in the special case of projective space the $[\p_\lambda\omega]$ were already known to form a free basis of $G^{W_\qb}$, see \cite{Gross:MSbook}. Another related example is the the smooth quadric $Q_{2n-1}$, which is the orthogonal Grassmannian of lines in $\C^{2n+1}$. In this case the mirror LG-model of \cite{Rie:MSgen} was expressed in Pl\"ucker coordinates in \cite{PechRie:Quadrics}, and proved for $Q_3$ to agree with one given by Gorbounov and Smirnov which they showed with Sabbah and Nemethi to be cohomologically tame \cite{GorbSmir:OddQuadrics}.

\section{Main results}\label{s:main}
\subsection{Isomorphism of $D_\PAB$-modules}\label{s:main1}
In the $B$-model, we consider the Gauss-Manin system on $G^{W_\qb}$. The first main theorem shows that the $A$-model datum, consisting of $H_{A,0}=H^*(X,\C[\hbarr,\qa])$ together with its small Dubrovin connection, can be recovered inside $G^{W_\qb}$.

\begin{thm}
\label{t:main1}
The $\C[\hbarr^{\pm 1},\qb^{\pm 1}]$-module $\Gbar$ is a $D_\PAB$--submodule of $G^{W_\qb}$, and
the map
\begin{eqnarray*}
\Phi:H_A
&\to &\Gbar\\
\sigma^{\lambda}&\mapsto&[\p_{\lambda}\omega]
\end{eqnarray*}
is an isomorphism of $D_\PAB$-modules.
Under this isomorphism  $H_{A,0}=H^*(X,\C[\hbarr,\qa])$ is  identified with $\GbarO$ and $\HA$ is identified with $\HB$.
\end{thm}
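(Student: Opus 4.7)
The theorem has four logically distinct assertions: (i) $\Phi$ is a $\C[\hbarr^{\pm 1}, \qb^{\pm 1}]$-module isomorphism, (ii) $\Gbar^{W_\qb}$ is stable under ${}^B\nabla$, (iii) $\Phi$ intertwines the two connections, and (iv) the pairings agree. Part (i) is immediate from Lemma~\ref{l:freebasis}, since both $H_A$ and $\Gbar^{W_\qb}$ have free bases indexed by $\Pn_{k,n}$ matched by $\Phi$; the same argument gives the $\C[\hbarr,\qb]$-version $H_{A,0}\cong \Gbar_0^{W_\qb}$ and hence $\HA\cong \HB$. Claims (ii) and (iii) are intertwined: the plan is to prove the connection compatibility on the generators $[\p_\lambda\omega]$, which simultaneously verifies that ${}^B\nabla$ preserves $\Gbar^{W_\qb}$.

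\textbf{The $\qb\partial_\qb$-intertwining.} Since $\sigma^\lambda$ is $\qa$-independent, writing $\sigma^{\ydiagram{1}}\star_\qa \sigma^\lambda = \sum_\mu c_{\lambda\mu}(\qa)\sigma^\mu$ by quantum Monk, the identity to prove is
\[
\bigl[\qb\,\partial_\qb W \cdot \p_\lambda\,\omega\bigr] \;=\; \sum_{\mu\in\Pn_{k,n}} c_{\lambda\mu}(\qb)\,[\p_\mu\,\omega] \qquad \text{in } G^{W_\qb}.
\]
Only one Laurent summand of $W$ carries $\qb$ (in the running example $\qb\,\p_{\ydiagram{2}}/\p_{\ydiagram{3,3}}$), so $\qb\,\partial_\qb W$ is itself a single Laurent monomial in Pl\"ucker coordinates. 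I would exhibit explicit $(N-1)$-forms $\eta_\lambda$ on $\Xcheck$ satisfying
\[
(d + \hbarr\inv\,dW\wedge)\,\eta_\lambda \;=\; \qb\,\partial_\qb W\cdot \p_\lambda\,\omega \;-\; \sum_\mu c_{\lambda\mu}(\qb)\,\p_\mu\,\omega,
\]
built as wedges of logarithmic one-forms in the cluster variables on $\Xcheck$. The classical Monk terms, corresponding to $\mu$ obtained from $\lambda$ by adding one box, will come from three-term Pl\"ucker relations among the numerators and denominators of the summands of $W$, while the quantum correction $\qa\sigma^\nu$ will come from the single $\qb$-summand reaching across the Young diagram. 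This is precisely where the cluster algebra structure of $\C[\Xcheck]$ (as an open positroid variety) enters the proof.

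\textbf{The $\hbarr\partial_\hbarr$-intertwining and the pairing.} Combining \eqref{e:Anablaq}--\eqref{e:Anablaz} with ${}^B\nabla_{\hbarr\partial_\hbarr}[\eta] = -\hbarr\inv[W\eta]$ on $\hbarr$-independent representatives, the $\hbarr\partial_\hbarr$-intertwining reduces to
\[
[W\cdot \p_\lambda\,\omega] \;=\; \Phi\bigl(c_1(TX)\star_\qa \sigma^\lambda\bigr) \;-\; \hbarr\,|\lambda|\,[\p_\lambda\,\omega].
\]
Since $c_1(TX)=n\,\sigma^{\ydiagram{1}}$ on a Grassmannian and $W$ is a sum of exactly $n$ Laurent monomials, this follows from the previous step summand-by-summand once the grading contribution $\hbarr|\lambda|[\p_\lambda\omega]$ is tracked from the homogeneous degrees of $\p_\lambda\omega$ on $\Xcheck$. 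For the pairing, one defines $S_B$ fiberwise as the Grothendieck residue on $\mathrm{Jac}(W_\qb)$ extended along $\hbarr$; it is flat with respect to ${}^B\nabla$ by standard oscillating-integral calculus, and its $\hbarr\to 0$ limit matches Poincar\'e duality on $qH^*(X)$ via the ring isomorphism $qH^*(X)\cong \mathrm{Jac}(W_\qb)$ of \cite{Rie:MSgen}. Uniqueness of the flat extension then forces $S_A=S_B$.

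\textbf{Main obstacle.} The crux is producing the primitives $\eta_\lambda$ witnessing the $\qb\partial_\qb$-identity inside the $\hbarr$-twisted de Rham complex, not merely modulo the Jacobian ideal. The ring-level isomorphism from \cite{Rie:MSgen} only controls the $\hbarr\to 0$ specialization, so it cannot be invoked directly. Instead, I expect that cluster mutations and exchange relations among Pl\"ucker coordinates, organized by the cluster structure on $\C[\Xcheck]$, will supply the $\eta_\lambda$ in a systematic way, with the positivity of cluster expansions matching the positivity of quantum Monk coefficients. This combinatorial heart of the proof is what the remainder of the paper (starting in Section~\ref{s:CAcoordring}) is devoted to.
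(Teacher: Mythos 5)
Your proposal follows essentially the same route as the paper: the primitives $\eta_\lambda$ you seek are exactly the contractions $i_{X_\lambda}\omega$ of the volume form against vector fields $X_{\lambda}=\p_\lambda\sum_{\mu}c_\lambda(\mu)\p_\mu\frac{\partial}{\partial \p_\mu}$ defined cluster-by-cluster (with regularity on all of $\Xcheck$ obtained from an additivity property of the coefficients $c_\lambda(\mu)$ under geometric exchange together with Hartogs), your insertion argument is the paper's identity $[di_\xi\omega]+\frac{1}{z}[(\xi\cdot W_q)\omega]=0$, and the hard computation $X_\lambda W_q=\sum_\mu \p_\mu+q\sum_\nu\p_\nu-q\frac{\partial W_q}{\partial q}\p_\lambda$ is likewise deferred there. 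The one step you underestimate is the $\hbarr\partial_\hbarr$ direction: it does not follow ``summand-by-summand from the previous step,'' but requires $n$ cyclically twisted vector fields $X^{(\m)}_\lambda$, one for each summand of $W$, whose exact contributions $d(i_{X^{(\m)}_\lambda}\omega)=-c_{\lambda^{(\m)}}(\phi^{(\m)})[\p_\lambda\omega]$ sum over $\m$ to the grading term $-|\lambda|[\p_\lambda\omega]$.
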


The proof of this theorem hinges on verifying the following formulas for the action of ${}^{B}\nabla$,
\begin{eqnarray}\label{e:Bnabla1}
{}^B\nabla_{\qb\partial_\qb}[\p_\lambda\omega] &=&\frac{1}\hbarrB \left (\sum_{\mu}[\p_\mu \omega] + \sum_{\nu}\qb\, [\p_\nu \omega]\right ),\\ \label{e:Bnabla2}
{}^B\nabla_{\hbarrB\partial_\hbarrB}[\p_\lambda\omega] &=&|\lambda|[\p_\lambda\omega] -{\frac 1\hbarrB}\, n\,\left (\sum_{\mu}[\p_\mu \omega] + \sum_{\nu}\qb\, [\p_\nu \omega]\right ),
\end{eqnarray}
where $\mu$ and $\nu$ are exactly as in the quantum Monk's rule for $\sigma^{\ydiagram{1}}\star_\qb \sigma^\lambda$; see equations~\eqref{e:Anablaq} and~\eqref{e:Anablaz} in Section~\ref{s:IntroAmodel}. The proof of this theorem will be given in Sections~\ref{s:StartOfMainProof} to~\ref{s:completionofproof}.
\medskip

In the concrete case of $X=Gr_2(5)$, these formulas say for example
\begin{eqnarray*}
{}^{B}
\nabla_{\qb\partial_\qb}\left([\p_{\ydiagram{2,1}}\omega]\right)
&=&\frac{1}\hbarrB\left([\p_{\ydiagram{3,1}}\omega]  +[\p_{\ydiagram{2,2}}\omega] \right),\\
{}^{B}
\nabla_{\qb\partial_\qb}\left([\p_{\ydiagram{3,3}}\omega]\right)
&=&\frac{1}\hbarrB \qb \, [\p_{\ydiagram{2}}\omega].
\end{eqnarray*}
Similarly in the $\hbarr\partial_\hbarr$ direction~:
\begin{eqnarray*}
{}^{B}
\nabla_{\hbarr\partial_\hbarr}\left([\p_{\ydiagram{2,1}}\omega]\right)
&=&3[\p_{\ydiagram{2,1}}\omega]- \frac{5}{\hbarrB}\left( [\p_{\ydiagram{3,1}}\omega]  +[\p_{\ydiagram{2,2}}\omega]\right) ,\\
{}^{B}
\nabla_{\hbarr\partial_\hbarr}\left([\p_{\ydiagram{3,3}}\omega]\right)
&=&6[\p_{\ydiagram{3,3}}\omega]- \frac{5}\hbarrB \qb \, [\p_{\ydiagram{2}}\omega].
\end{eqnarray*} 
This reflects precisely the formulas on the $A$-side arising from \eqref{e:Anablaq}
 and \eqref{e:Anablaz} and quantum Schubert calculus. For example \eqref{e:Anablaz} implies
 \begin{eqnarray*}
{}^{A}
\nabla_{\hbarr\partial_\hbarr}\left(\sigma^{\ydiagram{3,3}}\right)
&=&
6\, \sigma^{\ydiagram{3,3}}- \frac 5 \hbarr\qa\,\sigma^{\ydiagram{2}}.
\end{eqnarray*}
Now we come to some consequences of Theorem~\ref{t:main1}, along with comparison results connecting $W_q$ with previously defined superpotentials.  

\subsection{Oscillating integrals}\label{s:Osc}
Recall the flat pairing $S_A$ in the $A$-model. We may think of this  pairing as identifying the bundle $H^*(X)\x\C_\hbarr^*\x\C_\qa\to \C_\hbarr^*\x\C_\qa$ with its dual. The flatness condition can then be interpreted as saying that the dual connection to ${}^A\nabla$ is given by  formulas  analogous to \eqref{e:Anablaq} and \eqref{e:Anablaz} but with $z$ replaced by $-z$.
In other words the new connection ${}^A\nabla^\vee$ defined by
\begin{eqnarray}\label{e:AnablaqDual}
{}^A\nabla^\vee_{\qa\partial_\qa}  &:=&\qa  \frac{\partial}{\partial\qa} -
 \frac{1}{\hbarr}
 \sigma^{\ydiagram{1}}\star_{\qa}\, \underline{\,  \ }\ , \\ \label{e:AnablazDual}
{}^A \nabla^\vee_{\hbarr\partial_\hbarr}&:=& \hbarr\frac{\partial  }{\partial\hbarr} + {\Gr}\, + \frac{1}{\hbarr} c_1(TX)\star_{\qa}\, \underline{\,  \ }\  ,
\end{eqnarray}
satisfies $dS_A(\sigma,\sigma')=S_A({}^A\nabla^\vee \sigma,\sigma')+S_A(\sigma,{}^A\nabla\sigma')$, and is therefore dual to ${}^A\nabla$.

In~\cite[Corollary~6.3]{Giv:EquivGW}, Givental wrote down a basis of the space of all solutions $s\in H^*(X,\C[\hbarrB\inv,\ln(\qa)][[\qa]])$ to the flat sections equation $ {}^A\nabla^\vee_{\qa\partial_{\qa}} s =0$, that is to the equation:
 \begin{equation}\label{e:qdiffeq}
\qa\frac{\partial}{\partial {\qa}} s =\frac{1}{\hbarr}\sigma^{\ydiagram{1}}\star_{\qa} s.
 \end{equation}
Equation \eqref{e:qdiffeq} is referred to as the \emph {(small) quantum differential equation}.
Givental's solution is given in terms of two-point descendent Gromov-Witten invariants. 
Explicitly in our setting, for each $\mu\in \Pn_{k,n}$ there is a solution
\begin{equation}\label{e:smu}
s_\mu=\frac{1}{{\hbarr}^N}\sum_{\lambda\in{ \Pn_{k,n}}} \sum_{d\ge 0} \qa^d\left<\frac{1}{\hbarr-\psi} {e^{\frac{\ln (q)}{\hbarr}\sigma^{\ydiagram{1}}}}\sigma^\mu ,\sigma^\lambda \right>_{2,d} \sigma^{PD(\lambda)}
\end{equation}
to \eqref{e:qdiffeq}, where to make sense of the $d=0$ term one sets
\begin{equation}\label{e:smutwo}
\left<\frac{1}{\hbarr-\psi} {e^{\frac{\ln (q)}{\hbarr}\sigma^{\ydiagram{1}}}}\sigma^\mu ,\sigma^\lambda \right>_{2,0}:=
\left< {e^{\frac{\ln (q)}{\hbarr}\sigma^{\ydiagram{1}}}}\sigma^\mu ,\sigma^\lambda,1 \right>_{3,0}.
\end{equation}
Here we use the notation $\langle \quad\rangle_{n,d}$ for genus zero $n$-point degree $d$ Gromov-Witten invariants of $X$. Moreover $\psi$ is the `psi-class' on the moduli space of stable maps $\overline{\mathcal M_{0,2}}(X,d)$, which is the first Chern class of the line bundle defined by the cotangent line at the first marked point. We refer to~\cite{CoxKatz:QCohBook} or \cite[Section~1.3]{Pandharipande:afterGivental} for this result and relevant definitions. Note that we have added the factor $\frac{1}{{\hbarr}^N}$, with $N=k(n-k)$ for degree reasons (compare equation~\eqref{e:smunk} below). Equivalently, the factor is necessary to ensure flatness of $s_\mu$ in the $\hbarr$-direction. In the case where $\mu$ is the maximal element in $\Pn_{k,n}$, which we denote $\mu_{n-k}$, the exponential disappears and the formula simplifies to

\begin{equation}\label{e:smunk}
s_{\mu_{n-k}}= \sum_\lambda \left(\delta_{\lambda,\emptyset}+ \sum_{d\ge 1} \left<{\psi}^{dn-|\lambda|-1}\sigma^{\mu_{n-k}} ,\sigma^\lambda \right>_{2,d} \left(\frac{\qa}{z^n}\right)^d\right) {{\hbarr}}^{-|PD(\lambda)|}\sigma^{PD(\lambda)}.
\end{equation}

The following theorem implies an integral formula for the solution $s_{\mu_{n-k}}\in H^*(X,\C[\hbarrB\inv][[\qa]])$ to the quantum differential equation \eqref{e:qdiffeq}.

\begin{thm}\label{t:main2} Let  $\GammaO$ be a cycle in $H_N(\Xcheck,\Z)$ represented by an oriented, compact torus (homeomorphic to $ (S_1)^N$) which is the compact real form of a cluster torus (isomorphic to $(\C^*)^N$) inside $\Xcheck$. Note that the class $\GammaO$ does not depend on the choice of the cluster torus. 
We choose $\omega$ to be dual to  $\GammaO$ in the sense that $\frac{1}{(2\pi i)^N}\int_{\GammaO}\omega=1$.  Then the formula
\[
\Ss_{\GammaO}(\hbarrB,\qb):=\frac{1}{(2\pi i\hbarr)^N}\sum_{\lambda\in\Pn_{k,n}}\left(\int_{\GammaO} e^{\frac{1}{\hbarrB}W_{\qb}} \p_\lambda\omega\right)\sigma^{PD(\lambda)}
\]
defines  a flat section for ${}^A\nabla^\vee$ inside $ H^*(X,\C[\hbarrB\inv][[\qa]])$. In particular $\Ss_{\GammaO}$ satisfies the small quantum differential equation~\eqref{e:qdiffeq}.
\end{thm}

\begin{rem} Note that $\Gamma_{\mu_{n-k}}$ and $\omega$ in the above theorem are uniquely defined up to a common sign. The function $\Ss_{\GammaO}(\hbarrB,\qb)$ is canonical and does not depend on this sign choice. 
\end{rem}

\begin{proof}
This statement follows in a standard way from  Theorem~\ref{t:main1} and the constructions.  For any $(\hbarrB,\qb)\in \PAB$ with $\hbarrB\ne 0$, consider the linear form on the fiber $F_{B,(\hbarrB,\qb)}$ of $\HB$ at the point $(\hbarrB,\qb)$  defined by the formula
\[
\Osc_{\GammaO}(\hbarr,\qb): [\eta]\mapsto \int_{\GammaO} e^{\frac{1}{\hbarrB}W_{\qb}} \eta.
\]
This formula defines a section $\Osc_{\GammaO}$
of $\HBan^\vee$  over $\C_\hbarr^*\x\C_\qb$, where $\HBan^\vee$ denotes the sheaf of analytic sections of the bundle dual to $\HB$.
The definition of the Gauss-Manin connection \eqref{e:Bnabla1},\eqref{e:Bnabla2} on $\HB$ is engineered so that $\Osc_{\GammaO}$ is a flat section of $\HBan^\vee$. 

Using Theorem~\ref{t:main1} together with the pairing $S_A$, the bundle $\HBan^\vee$ with its Gauss-Manin connection can be identified with the pair $(\HAan,{}^A\nabla^\vee)$ over $\C_\hbarr^*\x\C_\qb$, where $\HAan$ denotes the sheaf of analytic sections of the vector bundle $\HA$.  We now denote by 
$\Osc^A_{\GammaO}$ the flat section of $(\HAan,{}^A\nabla^\vee)$ over $\C_\hbarr^*\x\C_\qb$ corresponding to $\Osc_{\GammaO}$ under this identification. Then $\Osc^A_\GammaO$ is the section of $\HAan$ determined by the property that 
\begin{equation}\label{e:OscA}
S_A(\Osc^A_{\GammaO},\sigma^\lambda)= 
\int_{\GammaO} e^{\frac{1}{\hbarrB}W_{\qb}} \p_\lambda\omega.
\end{equation}
Since the basis dual to $\{\sigma^\lambda\}$ with respect to the pairing $S_A$ is $\{\frac{1}{(2\pi i\hbarr)^N}\sigma^{PD(\lambda)}\}$, equation \eqref{e:OscA} implies that  
\[
\Osc^A_{\GammaO}=
\frac{1}{(2\pi i\hbarr)^N}\sum_{\lambda\in\Pn_{k,n}}\left(\int_{\GammaO} e^{\frac{1}{\hbarrB}W_{\qb}} \p_\lambda\omega\right)\sigma^{PD(\lambda)}.
\]
So $\Osc^A_{\GammaO}=\Ss_{\GammaO}$ and we see that $\Ss_{\GammaO}$ is flat for ${}^A\nabla^\vee$. 
It remains to check that $\Ss_{\GammaO}$ lies in $H^*(X,\C[\hbarr\inv][[q]])$, in other words that the coefficients $m_\lambda$ lie in $\C[\hbarr\inv][[q]]$ as opposed to $\C[[\hbarr\inv,q]]$. 
This follows by degree considerations. Namely the flatness of $\Ss_{\GammaO}$ implies in particular for the coefficients that 
\begin{equation*}
\left(\hbarr\frac{\partial}{\partial \hbarr}+n \qa\frac{\partial}{\partial \qa}\right)\left[\frac{1}{(2\pi i\hbarr)^N}\int_{\GammaO} e^{\frac{1}{\hbarrB}W_{\qb}} \p_\lambda\omega\right]=\frac{-|PD(\lambda)|}{(2\pi i\hbarr)^N}\int_{\GammaO} e^{\frac{1}{\hbarrB}W_{\qb}} \p_\lambda\omega.
\end{equation*}
Therefore $\left(\hbarr\frac{\partial}{\partial \hbarr}+n \qa\frac{\partial}{\partial \qa}\right)$ annihilates
\begin{equation}\label{e:homogeneous}
\hbarr^{|PD(\lambda)|}\frac{1}{(2\pi i\hbarr)^N}\int_{\GammaO} e^{\frac{1}{\hbarr}W_{\qb}} \p_\lambda\omega,
\end{equation}
which implies that in the $\qa$-expansion of \eqref{e:homogeneous} the coefficient of $\qa^d$ is a scalar multiple of $\frac 1{\hbarr^{dn}}$. As a consequence the coefficient of $\sigma^{PD(\lambda)}$ in $\Ss_{\GammaO}$ has the form
\begin{equation}\label{e:FormOfResidue}
\frac{1}{(2\pi i\hbarr)^N}\int_{\GammaO} e^{\frac{1}{\hbarrB}W_{\qb}} \p_\lambda\omega=\sum_{d\ge 0} a_d\left(\frac \qa{\hbarr^{n}}\right)^d\hbarr^{-|PD(\lambda)|}.
\end{equation}
In particular it lies in $\C[\hbarr\inv][[\qa]]$.
\end{proof}
\begin{rem}\label{r:GWresidue} Note that by setting $q=1$  in Equation~\eqref{e:FormOfResidue} we have the series expansion 
\begin{equation} \label{e:1expansion}
\frac{1}{(2\pi i\hbarr)^N}\int_{\GammaO} e^{\frac{1}{\hbarrB}W_{1}} \p_\lambda\omega= 
\sum_{d\ge 0} a_d\hbarr^{-dn-|PD(\lambda)|}.
\end{equation}
Expanding the exponential $e^{\frac{1}{\hbarrB}W_{1}}$ on the left-hand side of the above equation, the coefficient $a_d$ can now be computed by the residue formula
\[
a_d=\frac{1}{(dn-|\lambda|)!}\left(\frac {1}{(2\pi i)^N}\int_{\GammaO}(W_{1})^{dn-|\lambda|} \p_\lambda\omega\right), 
\]
where we assume $dn\ge |\lambda |$. If $dn<|\lambda|$ then necessarily $a_d=0$, since in this case $-dn-|PD(\lambda)|>-N$ while the left-hand side of \eqref{e:1expansion} is contained in $\hbarrB^{-N}\C[[\hbarrB\inv]]$. We will make use of this formula for $a_d$ in Proposition~\ref{p:GWlambda}.
\end{rem}
\begin{rem}\label{r:Siss}
In this remark we check that the flat section $\Ss_{\GammaO}$ of Theorem~\ref{t:main2} agrees with Givental's flat section $s_{\mu_{n-k}}$ exactly. First we note that, up to scalar, $s_{\mu_{n-k}}$ is the unique flat section with coefficients in $\C[\hbarrB\inv][[q]]$, as follows recursively from the differential equation~\eqref{e:qdiffeq} and grading considerations. From Theorem~\ref{t:main2} it follows therefore that $\Ss_{\GammaO}$ is a scalar multiple of $s_{\mu_{n-k}}$. The relevant scalar can be determined by computing the 
coefficient $a_0$ of 
\[
\frac{1}{(2\pi i)^N}\int_{\GammaO} e^{\frac{1}{\hbarrB}W_{\qb}} \omega=\sum_{d\ge 0} a_d\qa^d\hbarr^{-dn},
\]
where the above equation is a consequence of \eqref{e:FormOfResidue} in the case $\lambda=\emptyset$, and we have multiplied out by $\hbarrB^N$. This coefficient $a_0$ is computed on the left-hand side by \[
a_0=\frac{1}{(2\pi i)^N}\int_{\Gamma_{\mu_{n-k}}}\omega=1,
\]
using the assumptions of Theorem~\ref{t:main2}. Therefore, comparing with the start of the $\lambda=\emptyset$ term of \eqref{e:smunk}, we see that  $\Ss_{\GammaO}$ and $s_{\mu_{n-k}}$ agree.
\end{rem}
\begin{rem}[Other solutions and the $J$-function]\label{r:Jfunction} Further local solutions $\Ss=\Ss_\Gamma$ to the equation  $ {}^A\nabla^\vee_{\qa\partial_{\qa}} \Ss =0$ can be obtained by replacing $\GammaO$ by some other, possibly non-compact integration cycle $\Gamma$. In this case it would be necessary to have conditions on the decay of $\Re(\frac{1}{\hbarrB}W_{\qb})  $ in unbounded directions of $\Gamma$ and let $\Gamma$ vary with $\hbarrB$ and $\qb$, to ensure convergence.  According to Givental~\cite[Section~2]{Giv:QToda} such cycles $\Gamma$ may be obtained from Morse theory for $\Re(\frac{1}{\hbarrB}W_q)$. 
 Other than the compact cycle $\GammaO$ associated to $s_{\mu_{n-k}}$, we don't know how to determine specific cycles $\Gamma_\mu$ such that the flat section $S_{\Gamma_\mu}$ recovers Givental's flat section $s_\mu$ defined via the $A$-model.  Identifying such cycles would give integral formulas for all the entries of Givental's `fundamental solution matrix', and in particular for the coefficients of Givental's `$J$-function'; see Section~4 in \cite{Giv:QToda}. 
When $X=\C P^2$, however, explicit integration cycles can be described; see~\cite{Gross:MSbook}. 

Although from our results we only obtain a formula for the constant term of Givental's $J$-function  (the `$A$-series', see Section~\ref{s:EHX1}) and not the full $J$-function, we can still consider the  $\C[\qa,\hbarr^{- 1}]\langle \partial_q\rangle$-module generated by the coefficients of the $J$-function, or equivalently generated by the $\sigma^{\mu_{n-k}}$-coefficients of the flat sectionss of ${}^A\nabla^\vee$; see \cite{Giv:toricmirror}, or for example \cite[Section~5.1]{BCKS:MSGrass}.  This is sometimes called the `quantum cohomology $D$-module', and it quantises the part of the quantum cohomology ring generated by degree $2$ elements. (It is not to be confused with the $D_\PAB$-module $H_A$.)   
With this definition, we note that the property ${}^A\nabla^\vee_{\qa\partial_{\qa} }\Ss_\Gamma =0$ implies that the integrals
\begin{equation}\label{e:qcohDmod}
\int_{\Gamma} e^{\frac{1}{\hbarrB}W_{\qb}} \omega
\end{equation}
are solutions to the quantum cohomology $D$-module. We remark that other integral expressions for solutions of the quantum cohomology $D$-module of a Grassmannian which are very different from \eqref{e:qcohDmod} were obtained by Bertram, Ciocan-Fontanine and Kim~\cite{HoriVafaConj}. Moreover, in the same paper they prove a formula for the $J$-function generalising the one for projective space due to Givental.
\end{rem}

\subsection{$A$-series conjecture and the superpotential of Eguchi, Hori and Xiong} \label{s:EHX1} In Section~\ref{s:EHX}
 we will recall the definition of the conjectural Laurent polynomial superpotential of Eguchi, Hori and Xiong  \cite{EHX:GravQCoh}. In that section we will also state in more detail the following comparison result.
\begin{thm}\label{t:EHXcomparison}
The Laurent polynomial $L_q$ associated to the Grassmannian $X$ by Eguchi, Hori and Xiong in  \cite{EHX:GravQCoh} is isomorphic to the restriction of $W_\qb$ to a certain open torus $\mathcal T$  inside $\Xcheck$. The holomorphic volume form $\omega$ restricted to this torus agrees with the standard torus-invariant volume form $\omega_{\mathcal T}$. 
\end{thm}
The proof of Theorem~\ref{t:EHXcomparison} is contained in Sections~\ref{s:EHX} and~\ref{s:omega}.

\vskip .2cm

Consider now Givental's special solution $s_{\mu_{n-k}}$~\eqref{e:smunk} to the quantum differential equation~\eqref{e:qdiffeq}.
The coefficient of $\sigma^{\mu_{n-k}}$ in $s_{\mu_{n-k}}$  with $z$ specialised to $1$ is  an element of $\C[[\qa]]$ and referred to as the $A$-series in~\cite{BCKS:MSGrass}. Namely for $X=Gr_{n-k}(\C^n)$,
\begin{equation}\label{e:Aseriesgen}
A_{X}(q)=\sum_d \left<\frac{1}{1-\psi}\sigma^{\mu_{n-k}} ,\sigma^\emptyset \right>_{2,d}\qa^d\ = \ 
1+\sum_{d\ge 1} \left<\psi^{d n-2}\sigma^{\mu_{n-k}} \right>_{1,d}\qa^d,
\end{equation}
where we have used that $\sigma^{\emptyset}$ is the fundamental class of $X$ to simplify the formula. 

In \cite{BCKS:MSGrass}  Batyrev, Ciocan-Fontanine, Kim and van Straten studied the  Laurent polynomial superpotential of \cite{EHX:GravQCoh} and conjectured an explicit combinatorial formula
for the $A$-series \eqref{e:Aseriesgen} in the case of a Grassmannian. This conjecture can now be deduced. 

We note that in the special case of $Gr_2(n)$ this conjecture was proved earlier in \cite{HoriVafaConj} using a formula (also proved in \cite{HoriVafaConj}) for the $J$-function; compare Remark~\ref{r:Jfunction}.

 \begin{cor}\label{c:BCKSconj} \cite[Conjecture~5.2.3]{BCKS:MSGrass}
The $A$-series of the Grassmannian $X=Gr_{n-k}(\C^n)$ is given by
\begin{equation}\label{e:EHXA}
A_X(q)=
\sum_{d\ge 0}\frac{1}{(d!)^{n}}\sum_{(s_{i,j})\in\mathcal S_d}
\left(\prod_{(i,j)\in \mathcal I_{k,n}}\binom{s_{i+1,j}}{s_{i,j}}\binom{s_{i,j+1}}{s_{i,j}}\right )q^d .
\end{equation}
Here the indexing sets are
\begin{eqnarray*}
\mathcal I_{k,n}&=&\{(i,j)\in \Z\x \Z\ |\ 0< i<n-k, 0<j<k\},\\
\mathcal S_d&=&\{(s_{i,j})\in (\Z_{\ge 0})^{\mathcal I_{k,n}} \ |\ s_{i+1,j}\ge s_{i,j},\ s_{i,j+1}\ge s_{i,j},\ s_{n-k,j}=s_{i,k}=d  \}.
\end{eqnarray*}
 \end{cor}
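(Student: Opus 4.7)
The strategy is to combine Theorem~\ref{t:main2} with Proposition~\ref{t:EHXcomparison} in order to rewrite the $A$-series as a torus integral/constant term of an exponential of the EHX Laurent polynomial, and then invoke the combinatorial identity proved by Batyrev--Ciocan-Fontanine--Kim--van~Straten in \cite{BCKS:MSGrass}. The essential point is that \cite{BCKS:MSGrass} already established that the right-hand side of \eqref{e:EHXA} equals the constant term of $\exp(W_{EHX})$ as a power series in $\qb$; what was missing was the identification of that constant term with the Gromov--Witten series $A_X(\qa)$, and this is exactly what Theorem~\ref{t:main2} provides.

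First I would choose $\Gamma_0$ to be the standard compact real torus cycle $\{|x_i|=1\}$ sitting inside the open torus $(\C^*)^N \hookrightarrow \check X$ furnished by Proposition~\ref{t:EHXcomparison}, and normalise $\omega$ so that $\tfrac{1}{(2\pi i)^N}\int_{\Gamma_0}\omega = 1$. Applying Theorem~\ref{t:main2} yields a flat section $\Ss_{\Gamma_0}$ of ${}^A\nabla^\vee$ whose coefficient of $\sigma^{\mu_{n-k}} = \sigma^{PD(\emptyset)}$ (recalling $\p_{\emptyset}=1$) is
\[
\frac{1}{(2\pi i\hbarrB)^N}\int_{\Gamma_0} e^{\frac{1}{\hbarrB}W_{\qb}}\,\omega.
\]
Using Proposition~\ref{t:EHXcomparison} to restrict to the open torus, $W_{\qb}|_{(\C^*)^N}$ becomes the EHX Laurent polynomial and $\omega$ becomes (a multiple of) $\frac{dx_1}{x_1}\wedge\dots\wedge\frac{dx_N}{x_N}$; the integral over $\Gamma_0$ thus equals $(2\pi i\hbarr)^N$ times the constant term $\mathrm{CT}_{\mathbf x}\bigl(e^{W_{EHX}/\hbarrB}\bigr)$ in the $x_i$-variables.

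Next I would identify $\Ss_{\Gamma_0}$ with Givental's distinguished flat section $s_{\mu_{n-k}}$ of \eqref{e:smunk}. Both are flat sections of ${}^A\nabla^\vee$ with values in $H^\ast(X,\C[\hbarrB^{-1}][[\qa]])$, so by the uniqueness of solutions it is enough to compare their $\qb=0$ specialisations. On the $A$-side, $s_{\mu_{n-k}}|_{\qa=0}$ has leading coefficient $\sigma^{\mu_{n-k}}$ with coefficient $1/\hbarrB^{N}\cdot(\text{const})$ coming from the $d=0$, $\lambda=\emptyset$ term of \eqref{e:smunk}; on the $B$-side the same coefficient of $\Ss_{\Gamma_0}|_{\qb=0}$ is $\frac{1}{(2\pi i\hbarrB)^N}\int_{\Gamma_0} e^{\frac{1}{\hbarrB}W_0}\omega$, and an expansion of the exponential against the normalisation of $\omega$ produces the matching leading term. (Via the $D_{\PAB}$-module isomorphism of Theorem~\ref{t:main1}, this matching can alternatively be carried out at the level of the $\C[\hbarrB]$-basis $\{[\p_\lambda\omega]\}$, which is the viewpoint that I expect to be cleanest.) Setting $\qa=\qb$ and $\hbarrB=1$ then gives
\[
A_X(\qa) = \mathrm{CT}_{\mathbf x}\bigl(e^{W_{EHX}(\mathbf x,\qa)}\bigr).
\]

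Finally I would expand $\exp(W_{EHX})$ as a power series in $\qa$, whose coefficient of $\qa^m$ is a polynomial combination of monomials in the $x_i$; extracting the constant term produces exactly a sum over admissible integer arrays, and by the combinatorial bookkeeping carried out in Section~5 of \cite{BCKS:MSGrass} this sum matches the right-hand side of \eqref{e:EHXA} over $\mathcal S_m$. The main obstacle I anticipate is the asymptotic identification of $\Ss_{\Gamma_0}$ with $s_{\mu_{n-k}}$: the oscillating integral formally degenerates as $\qb\to 0$ because several critical points of $W_{\qb}$ escape to the boundary of $\check X$, so some care with a stationary phase/steepest descent argument (or with an explicit power-series match via Theorem~\ref{t:main1}) is required to pin down the correct flat section and its normalisation. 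Once that identification is secured, the rest of the argument is essentially bookkeeping reducible to \cite{BCKS:MSGrass}.
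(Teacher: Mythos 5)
Your proposal follows essentially the same route as the paper, whose entire proof is the observation that the corollary follows from Theorem~\ref{t:main2}, Proposition~\ref{t:EHXcomparison}, and the residue/constant-term computation already carried out in \cite{BCKS:MSGrass}. The extra care you take in identifying $\Ss_{\Gamma_0}$ with Givental's section $s_{\mu_{n-k}}$ (via the absence of $\ln q$ terms and the normalisation of $\omega$) is a point the paper leaves implicit, but it is the standard argument and does not constitute a different approach.
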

\begin{proof} The combinatorial formula \eqref{e:EHXA} is obtained in \cite{BCKS:MSPartFl} as the residue of the form
$e^{L_q}\omega_{\mathcal T}$ determined by the EHX superpotential $L_q$.  This corollary therefore follows from Theorem~\ref{t:main2} together with Theorem~\ref{t:EHXcomparison} and the residue calculation in \cite[Section~5.1]{BCKS:MSPartFl}.
\end{proof}

Note that the corollary uses a residue calculation to give combinatorial formulas for the descendent Gromov-Witten invariants $\langle\psi^{dn-2}\sigma^{\mu_{n-k}}\rangle_{1,d}$ which are certain coefficients of Givental's flat section $s_{\mu_{n-k}}$.
As a corollary to Theorem~\ref{t:main2} we also have the following residue formulas for all of the remaining descendent Gromov-Witten invariants appearing in $s_{\mu_{n-k}}$. 
\begin{prop}\label{p:GWlambda}
With notations as in Section~\ref{s:Osc} we have 
\begin{equation}\label{e:GWresidue}
\langle \psi^{dn-1}\sigma^{\mu_{n-k}}, \sigma^\lambda\rangle_{2,d}=
\frac{1}{(dn-|\lambda|)!}\left(\frac {1}{(2\pi i)^N}\int_{\GammaO}(W_{1})^{dn-|\lambda|} \p_\lambda\omega\right),
\end{equation}
assuming $dn\ge|\lambda|$.
\end{prop}
\begin{proof}
Recall that the flat section $\Ss_{\GammaO}$ of Theorem~\ref{t:main2} agrees with $s_{\mu_{n-k}}$, by Theorem~\ref{t:main2} and Remark~\ref{r:Siss}. The residue formula~\eqref{e:GWresidue} now follows from the expansion \eqref{e:smunk} and Remark~\ref{r:GWresidue}.
\end{proof}
 
\subsection{The LG model on a Richardson variety}\label{s:GPsuperpotentialIntro}
The first LG~model for the Grassmannian $X$ which accurately recovers the quantum cohomology ring is one defined in \cite{Rie:MSgen}. In this LG~model the superpotential can be interpreted as a regular function $\mathcal F_\qb$ defined on an intersection of opposite Bruhat cells $\RR=\mathcal R_{w_P,w_0}$. Here $\mathcal R$ is an affine subvariety of the full flag variety on the $B$-model side, which is associated to the Grassmannian~$X$ interpreted as a homogeneous space $GL_n/P$. 
 In Section~\ref{s:RichardsonBmodel} we recall the precise definition of the superpotential $\mathcal F_q$. Moreover we explain how our superpotential $W_q$ from Section~\ref{s:IntroBmodel} relates to it, by defining a carefully chosen embedding of  $\RR$ 
 into the Grassmannian $\Xcheckbar$. We obtain the following comparison result. 
\begin{thm}\label{t:twosuperpotentials} The LG~models $(\Xcheck, W_{\qb})$ and $(\RR,\mathcal F_{\qb})$ are isomorphic via the maps given in Proposition~\ref{p:comparisonMRR}. This isomorphism also identifies the holomorphic volume form $\omega$ on $\Xcheck$ with the holomorphic volume form on $\RR$ introduced in~\cite[Section~7]{Rie:MSgen}.
\end{thm}
The proof of this theorem is contained in Section~\ref{s:comparisonMRR} and Section~\ref{s:omega}. This proof also makes use of some special coordinates on $\Xcheck$ and
the EHX Laurent polynomial expression recalled in Definition~\ref{d:EHXBmodel}. This theorem implies that the Jacobi ring of $W_q$ recovers the quantum cohomology ring $qH^*(X)[q\inv]$, since this is what was proved for $\mathcal F_q$ in \cite[Corollary~4.2]{Rie:MSgen}. We will show in   Proposition~\ref{p:SchubertPlucker} that the classes of the Pl\"ucker coordinates in the Jacobi ring of $W_q$ recover the Schubert basis, see also Proposition~\ref{p:eqJacobi} and the paragraph preceding it.

Finally, as a consequence of Theorem~\ref{t:twosuperpotentials} and the results on oscillating integrals described in Section~\ref{s:main1} and Section~\ref{s:Osc}, we also obtain the following corollary,  compare Remark~\ref{r:Jfunction}. 

\begin{cor}[Special case of Conjecture~8.1 from \cite{Rie:MSgen}]
The quantum cohomology $D$-module of the Grassmannian
has a global holomorphic solution on $\C^*_{\hbarr}\x\C_{\qb}$ given by the residue integral $\frac{1}{(2 \pi i)^N}\oint e^{\frac{1}\hbarr\mathcal F_{\qb}}\omega$.  \end{cor}
This concludes the summary of main results which are non-equivariant. 

\section{Equivariant results}\label{s:EquivariantIntro}
The Grassmannian $X=Gr_{n-k}(\C^n)$ is a homogeneous space for $G^\vee=GL_n(\C)$. In particular, the maximal torus $T^\vee$ of $n\x n$-diagonal matrices and the Borel subgroups of upper-triangular and lower-triangular matrices,  denoted by $B^\vee_+$ and $B^\vee_-$ respectively, all act on $X$. In the $A$-model this means that we can consider the $T^\vee$-equivariant cohomology of $X$ and the
$T^\vee$-equivariant quantum cohomology of $X$, and we also have a $T^\vee$-equivariant version of the Dubrovin connection; see in particular \cite{Giv:toricmirror, Lu:EquivQCoh, Mihalcea}. Our final results are about extending Theorem~\ref{t:main1} to describe the equivariant Dubrovin connection via the $B$-model.

In Section~\ref{s:Tequivariantsuperpotential}  we define a deformation $\Weq$ of the superpotential $W$ (Definition~\ref{d:Weq}) involving the equivariant parameters $x_1,\dotsc, x_n$. These parameters are the standard generators of the equivariant cohomology ring of a point,  $H^*_{T^\vee}(\{pt\})=\C[x_1,\dotsc, x_n]$.  In our running example of $X=Gr_2(\C^5)$ the deformation $\Weq$ is given by the formula
\[
\Weq=W + (x_1+x_2)\ln(\qb) + (x_2-x_1)\ln (\p_{\ydiagram{3}}) + (x_3-x_2)\ln (\p_{\ydiagram{3,3}}) +(x_4-x_3)\ln (\p_{\ydiagram{2,2}})+(x_5-x_4)\ln (\p_{\ydiagram{1,1}}).
\]
Note that we may think of $\Weq$ either as a multivalued map
\[\Weq: \Xcheck\x \C^*_q\to \C\oplus H^2_{T^\vee}(\{pt\})\,  =\, H^{\le 2}_{T^\vee}(\{pt\})\]
or as a multivalued function 
$\Weq: \Xcheck\x \C^*_q\x \mathfrak h^\vee\to \C $, interpreting the $x_i$ as functions on $\mathfrak h^\vee$ via the identification $H^2_{T^\vee}(\{pt\})\cong (\mathfrak h^\vee)^*$.  
Our first equivariant result is the comparison result, which is proved in Section~\ref{s:Tequivariantsuperpotential}. 

\begin{prop}\label{p:eqLiecomp} The LG~model $(\Xcheck, \Weq_\qb)$ is isomorphic to the equivariant LG~model $(\RR,\mathcal F^{\operatorname{eq}}_{\qb})$ defined in \cite[Section~4.1 and 4.2]{Rie:MSgen} via the maps given in Proposition~\ref{p:comparisonMRR}. 
\end{prop}

Note that although $\Weq$ is multivalued, the derivatives of $\Weq$ along $\Xcheck$ are regular and define an ideal $(\partial_{\Xcheck}\Weq)$ in  $\C[\Xcheck][q^{\pm 1}, x_1,\dotsc x_n]$. We call the quotient by the ideal the \emph{Jacobi ring} of $(\Xcheck, \Weq_\qb)$ and let $[p_\lambda]$ denote the image in the Jacobi ring of the Pl\"ucker coordinate $p_\lambda$.  As in the non-equivariant case, combining our comparison result (Proposition~\ref{p:eqLiecomp}) with  \cite[Corollary~4.2]{Rie:MSgen} implies an isomorphism between the Jacobi ring of $(\Xcheck, \Weq_\qb)$ and the (small) equivariant quantum cohomology ring of $X$ with $\qa\inv$ adjoined. This isomorphism has the following very natural description which will be restated and proved in Proposition~\ref{p:eqLiecompEnhanced}.

\begin{prop}\label{p:eqJacobi} The Jacobi ring of $(\Xcheck, \Weq_\qb)$ is isomorphic to the equivariant quantum cohomology ring $qH^*_{T^\vee}(X,\C)[\qa\inv]$ via an isomorphism which takes the form
\[
[\p_\lambda]\mapsto [X^\lambda]_{T^\vee}.
\]
Here $X^\lambda$ is the $B^\vee_+$-invariant Schubert variety of codimension $|\lambda|$ associated to $\lambda$, and $[X^\lambda]_{T^\vee}$ is its $T^\vee$-equivariant fundamental class, viewed as an element of the equivariant quantum cohomology of $X$.
\end{prop}

Our main equivariant result is an equivariant version of Theorem~\ref{t:main1} which we now prepare to state. Equivariant quantum cohomology can in this setting be thought of as providing a $q$-deformed version  $\star_{\qa,\mathbf x}$ of the equivariant cup product on $H^*_{T^\vee}(X)$, where $\mathbf x=(x_1,\dotsc, x_n)$. Let us therefore denote by $\sigma_{T^\vee}^\lambda$ the equivariant {\it or} quantum equivariant Schubert class 
\[
\sigma^\lambda_{T^\vee}:=[X^\lambda]_{T^\vee}.
\]
We recall the equivariant quantum Monk's rule \cite[Section~1.1]{Mihalcea} which reads
\[
c_1^{T^\vee}\hskip -1mm(\mathcal O(1))\star_{\qa,\mathbf x} \sigma_{T^\vee}^\lambda=\sum_\mu\sigma_{T^\vee}^\mu +\qb\sum_\nu \sigma_{T^\vee}^\nu + x_\lambda \sigma_{T^\vee}^\lambda,
\]
where the $\mu$ and $\nu$ are as in the non-equivariant quantum Monk's rule described in  Section~\ref{s:IntroAmodel}, and  $x_\lambda$ is a particular linear combination of the $x_i$; see  \eqref{e:xlambda}.

This formula determines the equivariant Dubrovin connection on the $A$-model side. Namely, we have the following definition, which is the $T^\vee$-equivariant analogue of Definition~\ref{d:HA}.

\begin{defn}[The equivariant version of $H_A$]\label{d:HAeq}
Consider the ring of differential operators, 
\[
D_{\operatorname{eq}}=\C[x_1,\dotsc, x_n, \hbarr^{\pm1}, q^{\pm 1}]\langle q \partial_q, z\partial_z+\sum_{i=1}^n x_i\partial_{x_i}\rangle.
\]
Let $H^{\operatorname{eq}}_A$ be the $H_{T^\vee}^*(\{pt\})[\hbarr^{\pm 1},\qb^{\pm 1}]$-module defined by 
\[ 
H_A^{\operatorname{eq}}:=H_{T^\vee}^*(X)[\hbarr^{\pm 1},\qb^{\pm 1}].
\]
Then $H_A$ is a $D_{\operatorname{eq}}$-module by setting
\begin{equation}\label{e:EqGM1}
\qa\partial_{\qa} \ \sigma=\frac 1z c^{T^\vee}_1\hskip -1mm(\mathcal O(1))\star_{\qa,\mathbf x}\sigma
\end{equation}
and
\begin{equation}\label{e:EqGM2}
\left(\hbarr\partial_{\hbarr}+\sum_i x_i\partial_{x_i}\right) \ \sigma=
-\frac{1}{\hbarr}
[X_{ac}]_{T^\vee}\star_{\qa,\mathbf x}\sigma + \Gr(\sigma),
\end{equation}
for $\sigma \in H^*(X,\C)$. Here, $X_{ac}$ denotes the anticanonical divisor given by the union of $n$ different $T^\vee$-invariant hyperplanes which are permuted by the cyclic $\Z/n\Z$-action on $X$; see Section~\ref{s:eqqH}. 
\end{defn}

\begin{defn}[The equivariant version of $H_B$]\label{d:BmodeleqGMsystem} 
Let $\Omega_{\operatorname{eq}}^\bullet(\Xcheck)$ %
denote the graded algebra of algebraic differential forms on $\Xcheck$ with coefficients in 
\[H_{T^\vee}^*(pt)[\hbarr^{\pm 1},q^{\pm 1}]=\C[x_1,\dotsc, x_n, \hbarr^{\pm 1},q^{\pm 1}].
\] 
The $k$-th graded component 
\[
\Omega^k_{\operatorname{eq}}(\Xcheck)=\Omega^k(\Xcheck)\otimes H_{T^\vee}^*(\{pt\})[\hbarr^{\pm 1},\qb^{\pm 1}]
\]
consists of algebraic $k$-forms on $\Xcheck$ with coefficients in $H_{T^\vee}^*(\{pt\})[\hbarr^{\pm 1},\qb^{\pm 1}]$.   
In particular,  the $\Omega^k_{\operatorname{eq}}(\Xcheck)$ are $H_{T^\vee}^*(\{pt\})[\hbarr^{\pm 1},\qb^{\pm 1}]$-modules. Then $\frac 1 z dW^{\operatorname{eq}}$, where $d=d_{\Xcheck}$ is the exterior derivative along $\Xcheck$, can be thought of as an element of $\Omega_{\operatorname{eq}}^1(\Xcheck)$. Note that $ dW^{\operatorname{eq}}$ is algebraic despite the fact that 
$\Weq$ is not. 
An equivariant analogue of the Gauss-Manin system from Definition \ref{d:GMconnection} is defined by
\[
G^{\Weqq}:=\Omega_{\operatorname{eq}}^N(\Xcheck)/ (d+\frac{1}{\hbarr}d \Weq\wedge\, \underline{\ \ }\, )\Omega_{\operatorname{eq}}^{N-1}(\Xcheck).
\]
The elements of $G^{\Weqq}$ may be thought of as algebraic $N$-forms $\eta$ on $\Xcheck$ depending (algebraically) on parameters $q^{\pm 1},z^{\pm 1},x_1,\dotsc, x_n$, which can be measured by integrals $\int_{\Gamma}e^{\frac 1 z\Weq}\eta$.

The ring of differential operators, 
\[
D_{\operatorname{eq}}=\C[x_1,\dotsc, x_n, \hbarr^{\pm1}, q^{\pm 1}]\langle q \partial_q, z\partial_z+\sum_{i=1}^n x_i\partial_{x_i}\rangle,
\]
acts in a natural way on  $G^{ \Weqq}$. This action is given explicitly by
\[
\qb\partial_{\qb}[ \eta]  = \frac 1 {\hbarr} \left[\qb\frac{\partial \Weq}{\partial \qb}\eta \right]
\]
and
\[
(\hbarr\partial_{\hbarr}+\sum_i x_i\partial_{x_i})[ \eta]  = -\frac 1 \hbarr \left[W\eta \right],
\]
where the second formula arises from the identity $\left(\hbarr\frac{\partial}{\partial {\hbarr}}+\sum_i x_i\frac{\partial}{\partial{x_i}} \right)\left(\frac 1{\hbarr}\Weq\right)=-\frac 1{\hbarr}W$. 

Note that the differential operator $\hbarr\partial_{\hbarr}$ no longer acts on $G^{ \Weqq}$. 
Indeed, $\hbarr\frac{\partial}{\partial \hbarr} \left(\frac 1\hbarr \Weqq\right)=- \frac 1{\hbarr} \Weqq$ now involves logarithms of Pl\"ucker coordinates, so is no longer algebraic.
This is why it was necessary to replace $\hbarr\partial_{\hbarr}$ by the differental operator $\hbarr\partial_{\hbarr}+\sum_i x_i\partial_{x_i}$.


Finally, we define $H_B^{\operatorname{eq}}$ to be the  $H_{T^\vee}^*(pt)[\hbarr^{\pm 1}][\qb, \qb\inv]$-submodule of $G^{\Weqq}$ spanned by the classes $[\p_{\lambda}\omega]$ for $\lambda\in \Pn_{k,n}$.
\end{defn}

The following Theorem is the equivariant version of Theorem~\ref{t:main1}. It will be proved in Section~\ref{s:actionvectorfieldequivariant}.
\begin{thm}\label{t:equivmain}
 $H^{\operatorname{eq}}_B$ is a $D_{\operatorname{eq}}$-submodule of $G^{W_q^{\operatorname{eq}}}$. Moreover the $H_{T^\vee}^*(pt)[\hbarr^{\pm 1},\qb^{\pm 1}]$-module homomorphism defined by 
\[
\begin{array}{ccc}
H_A^{\operatorname{eq}}&\To &H_B^{\operatorname{eq}}\\
\sigma_{T^\vee}^\lambda&\mapsto &[p_{\lambda}\omega]
\end{array}
\]
is an isomorphism of $D_{\operatorname{eq}}$-submodules. 
In particular
\[
\qb\partial_{\qb} [\p_\lambda\omega]=\frac{1}z\left(\sum_\mu[\p_\mu\omega]+\qb \sum_\nu[\p_\nu\omega] + x_\lambda [\p_\lambda\omega]\right),
\]
where $\mu,\nu$ and $x_\lambda$ are as in the equivariant quantum Monk's rule \eqref{e:EquivariantQMonks}  for multiplication by $c_1^{T^\vee}\hskip -1mm(\mathcal O(1))$.
\end{thm}

\begin{rem}[Alternative superpotential] We also introduce an alternative version of the equivariant superpotential. It is denoted by $\WeqAlt$ and is the same as $\Weq$ but with the $\ln(q)$-term removed. For example in the case of $X=Gr_2(\C^5)$,   
\[
{\WeqAlt}=W +  (x_2-x_1)\ln (\p_{\ydiagram{3}}) + (x_3-x_2)\ln (\p_{\ydiagram{3,3}}) +(x_4-x_3)\ln (\p_{\ydiagram{2,2}})+(x_5-x_4)\ln (\p_{\ydiagram{1,1}}).
\]
Unlike $\Weq$, this alternative superpotential is regular in $\qb$. Note that the Jacobi ring of $(\Xcheck, \WeqAlt)$ again agrees with the equivariant quantum cohomology ring $qH^*(X)[\qa\inv]$, since $\WeqAlt$ has the same Jacobi ring as  $\Weq$. 
 However the change from $\Weq$ to $\WeqAlt$ affects the Gauss-Manin system.  With this change, an analogue to Theorem~\ref{t:equivmain} holds in which the equivariant first Chern class $c^{T^\vee}_1\hskip -1mm(\mathcal O(1))$ appearing in \eqref{e:EqGM1} is replaced by the equivariant fundamental class of the $B^\vee_-$-invariant Schubert divisor $\widetilde X^{\ydiagram 1}$. Note that the Chern class $c^{T^\vee}_1\hskip -1mm(\mathcal O(1))$ in the original version is in a sense not geometric, because $\mathcal O(1)$ has no $T^\vee$-invariant global sections; therefore $c^{T^\vee}_1(\mathcal O(1))$ is not the fundamental class of a $T^\vee$-invariant divisor. We also remark that the Schubert divisor $\tilde X^{\ydiagram 1}$ which appears here is `opposite' to the one defining $\sigma^{\ydiagram{1}}_{T^\vee}$.  
 \end{rem}

\begin{rem} [Oscillating integrals]
In analogy with Section~\ref{s:Osc}, this theorem provides solutions to the equivariant small quantum differential equations
\begin{equation}\label{e:equivariantDubrovin1}
\qa \frac{\partial}{\partial\qa}\Ss=
 \frac{1}{\hbarr}\, c_1^{T}(\mathcal O(1))\star_{\qa,\mathbf x}\, \Ss,
\end{equation}
which are of the form
\begin{equation}\label{e:equivsol}
\Sseq_{\Gamma}(\hbarrB,\qb,\mathbf x):=\frac{1}{(2\pi i\hbarr)^N}\sum_{\lambda\in\Pn_{k,n}}\left(\int_{\Gamma} e^{\frac{1}{\hbarrB}\Weq} \p_\lambda\omega\right)\sigma^{PD(\lambda)},
\end{equation}
given a suitable simply-connected choice of $\Gamma$ in $\Xcheck$ (allowed to vary continuously with $\hbarrB,\qb$ and $\mathbf x$ in some domain of $\C^{2+n}$) for which the integral converges, as in Remark~\ref{r:Jfunction}.
Moreover, $\Ss=\Sseq_{\Gamma}$ also satisfies the differential equation
\begin{equation}\label{e:equivariantDubrovin2}
\left( \hbarr \frac{\partial}{\partial\hbarr}+\sum_{i=1}^nx_i  \frac{\partial}{\partial x_i}\right)\Ss=
-\frac{1}{\hbarr}[X_{ac}]_{T^\vee}\star_{\qa,\mathbf x}\, \Ss - \Gr( \Ss).
\end{equation}

Note that for the definition of the solution \eqref{e:equivsol} we need to make a choice of a branch of $\Weq|_{\Gamma}$. This affects $\Sseq_{\Gamma} $ by a factor of the form
\begin{equation}\label{e:eqfactor}
\exp\left (2\pi i m\frac{x_1+\dotsc + x_{n-k}}{z}+2\pi i \sum_{j=2}^n m_j\frac{x_j- x_{j-1}}{z}\right),
\end{equation}
where $m, m_j\in \Z$.
We remark that the factor~\eqref{e:eqfactor} is annihilated by $\qb\partial_\qb$ and $z\partial_z+\sum x_j\partial_{x_j}$, hence the choice of branch doesn't affect the validity of \eqref{e:equivariantDubrovin1} or \eqref{e:equivariantDubrovin2}.

Finally, a solution $\Sseq_\Gamma$ of the quantum differential equations gives rise to a solution,
\[
\int_{\Gamma} e^{\frac{1}{\hbarrB}\Weq}\omega,
\]
of the equivariant quantum cohomology $D$-module; compare Remark~\ref{r:Jfunction}. 
Together with the comparison result (Proposition~\ref{p:EquivariantComparison}), Theorem~\ref{t:equivmain} implies a version of \cite[Conjecture~8.2]{Rie:MSgen} about integral solutions to the quantum cohomology $D$-module of a homogeneous space, in the special case of a Grassmannian. 
\end{rem}

This concludes the summary of results. We now begin by defining in more detail the versions of the superpotential and showing how they are related to one another.

\section{The three versions of the superpotential }\label{s:mirrorGr}
We have already mentioned the three different versions of a Landau-Ginzburg model dual to the $A$-model Grassmannian $X=Gr_{n-k}(\C^n)$. In this section their superpotentials are defined in detail and  we show how they are related to one another.
We proceed in reverse chronological order, beginning with the superpotential introduced in Section~\ref{s:IntroBmodel}. 

\subsection{The  Pl\"ucker coordinate superpotential}\label{s:GrassmannianBmodel}
\label{s:Plucker}
The Pl\"ucker coordinate formulation gives a very simple-looking expression for the superpotential, therefore it is a natural starting point. We begin by describing its domain.  Recall that the $A$-model Grassmannian, $X=Gr_{n-k}(\C^n)$, is a homogeneous space for $GL_n(\C)$ acting from the left. We think of it in the usual way as a Grassmannian of codimension $k$ subspaces in the $n$-dimensional vector space of column vectors $\C^n$. 
In this section we define a Landau-Ginzburg model taking place on a $B$-model Grassmannian.  The $B$-model Grassmannian is a Grassmannian of row vectors, $\Xcheckbar:=Gr_k((\C^{n})^*)$,  and we view it as homogeneous space for the Langlands dual $GL_n$ under the (right) action of multiplication from the right. Note that $X$ and $\Xcheckbar$ are isomorphic, but this is a type~$A$ coincidence; compare for example~\cite{PechRieWilliams:AllQuadrics,PechRie:LGLG}. In order to distinguish between the two general linear groups acting on $X$ and $\Xcheckbar$ we will refer to the group on the $A$-model side as $GL_n^\vee(\C)$ and add a check ${}^\vee$ to notations pertaining to this group.


Elements of $\Xcheckbar$ may be represented by maximal rank $(k\x n)$-matrices $M$ in the usual way, with $M$ representing its row-span.
We think of $\Xcheckbar$ as embedded in $\Proj(\bigwedge^k (\C^n)^*)$ by its Pl\"ucker embedding and denote its homogeneous coordinate ring by $\C[\Xcheckbar]$.
The Pl\"ucker coordinates are all the maximal minors of $M$, and are determined by a choice of $k$ columns. We index the
Pl\"ucker coordinates by partitions $\lambda\in \Pn_{k,n}$ as follows.
Associate to any partition $\lambda\in \Pn_{k,n}$ a $k$-subset in $[1,n]:=\{1,\dotsc, n\}$ by interpreting $\lambda$ as a path from the top right hand corner of its bounding $(n-k)\x k$ rectangle down to the bottom left hand corner. Such a path necessarily consists of $k$ horizontal and $n-k$ vertical steps.  The positions of the horizontal steps (numbered from the start of the path to the end) define a subset of $k$ elements in $[1,n]$. We denote this subset, associated to $\lambda$, by
$J_\lambda$. See Figure~\ref{fig:partitionpath} for an example.

\begin{figure}
\includegraphics[width=3cm]{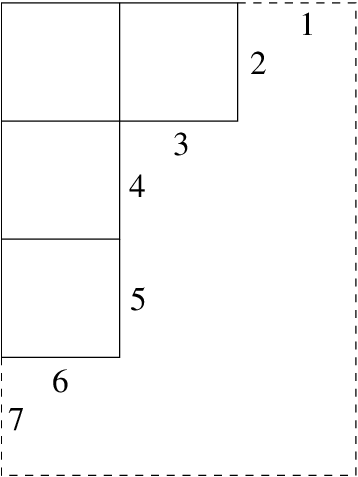}
\caption{The $k$-subset corresponding to a partition. For
$\Raiseboxdepth{\ydiagram{2,1,1}}\in\Pn_{3,7}$,
we have 
$J_{\ydiagram{2,1,1}}=\{1,3,6\}$.
}
\label{fig:partitionpath}
\end{figure}

Suppose that $J_\lambda=\{j_1,\dotsc, j_k\}$ with $1\le j_1<\dotsc< j _k\le n$. Then the Pl\"ucker coordinate associated to $\lambda$ is defined to be the determinant of a $k\x k$ submatrix of $M$,
\[
\p_\lambda(M)=\det\left((M_{i,j_l})_{i,l}\right).
\]
We will also sometimes denote the Pl\"{u}cker coordinate corresponding to the
$k$-subset $J$ by $p_J$.

A special role will be played by the $n$ Pl\"{u}cker coordinates corresponding to
the $k$-subsets which are (cyclic) intervals. These are $J_{i}:=[i+1,i+k]$, where $i\in [1,n]$. Sometimes it will be useful to index such an interval by its last element, in which case we use the notation $L_{i+k}$ for $J_i$. So 
\begin{equation}\label{JiLi}
J_{i}=L_{i+k}=[i+1,i+k].
\end{equation}
Our convention regarding indices 
is that elements in such $k$-subsets are interpreted modulo $n$, and also the subscripts
of $J_i$ and $L_i$.
The partition corresponding  to $J_i$ is denoted by $\mu_i$. 
For example since $J_1=[2,k+1]$ we have $\mu_{1}=(k)$, the maximal partition with one part. If $n-k\ge 2$, then $\mu_2$ is the maximal two row partition, $(k,k)$. For $k=3$ and $n=7$ we have for example,
\begin{equation}\label{e:muiexample}
\mu_{1}=\ydiagram{3}\, , \  \mu_2=\ydiagram{3,3}\,,\  \mu_3=\ydiagram{3,3,3}\, ,\ \   \mu_4=\ydiagram{3,3,3,3},\, \ \
  \mu_5=\ydiagram{2,2,2,2}\, ,\ \mu_6=\ydiagram{1,1,1,1}\, ,\ \mu_7=\emptyset\,.
\end{equation}
 Always $\mu_{n-k}$ is the maximal rectangle, and $\mu_n$ is the empty partition. 
 
The set $\{p_{\mu_1},\dotsc, \p_{\mu_n}\}$ of special Pl\"ucker coordinates is invariant under the $\Z/n\Z$ action on $\Xcheckbar$ defined by cyclic shift,
\[
M=\begin{pmatrix}m_{11} & m_{12}& m_{13} &\dots & m_{1,n}\\
m_{21} & m_{22}& m_{23}&\dots & m_{2,n}\\
\vdots  & \vdots &\vdots& &\vdots \\
m_{k1} & m_{k2}&  m_{k3}&\dots & m_{k,n}\\
\end{pmatrix}
\mapsto
M[1]=\begin{pmatrix}m_{12} & m_{13}&\dots & m_{1,n} & (-1)^{k-1}m_{11}\\
m_{22} & m_{23}&\dots & m_{2,n} &(-1)^{k-1}m_{21}\\
\vdots  & \vdots &    &\vdots &\vdots \\
m_{k2} & m_{k3}&\dots & m_{k,n}&(-1)^{k-1}m_{k1}\\
\end{pmatrix}.
\]
Indeed, we have $p_J(M[1])=p_{J+1}(M)$ 
for any $k$-subset $J$ of $[1,n]$ (where $J+1$ is obtained by adding $1$ to each element of $J$, modulo $n$). 
Therefore also  $\p_{\mu_i}(M[1])=\p_{\mu_{i+1}}([M])$ as $J_{\mu_i}=J_i$ and $J_{i+1}=J_i+1$. 
 
Each $\p_{\mu_i}$ is a section of $\mathcal O(1)$ for the Pl\"ucker embedding, and the union of the hyperplane sections defined by the $\p_{\mu_i}$ is an anticanonical divisor
\begin{equation}\label{e:acdivisorDef}
D=\{\p_{\mu_{1}}=0\} \cup \{\p_{\mu_{2}}=0\}\cup \dotsc \cup \{\p_{\mu_{n}}=0\}.
\end{equation}
Indeed, the Pl\"ucker embedding of the Grassmannian $\Xcheckbar$ is minimal, and $\Xcheckbar$ is Fano of index $n$, i.e. $n$ is the maximal integer for which the anti-canonical class  is divisible by $n$.

Let $\Xcheck$ be the Zariski-open
subset of $\Xcheckbar$
obtained by removing the anti-canonical divisor \eqref{e:acdivisorDef}. So
$$
\Xcheck:=\Xcheckbar\setminus D=\left\{ [M]\in \Xcheckbar\ |\ \p_{\mu_{i}}(M)\ne 0, \text{ all $i=1,\dotsc, n$ }\right\}.
$$
Note that the anticanonical divisor $D$ and its complement are invariant under the $\Z/ n\Z$-action on $\Xcheckbar$ defined by $M\mapsto M[1]$, by the discussion above.

The coordinate ring of $\Xcheck$ is denoted by $\C[\Xcheck]$. 
Recall that, to pass from the homogeneous coordinates 
$\p_\lambda$ on $\Xcheckbar$ to regular functions on $\C[\Xcheck]$ we 
make the convention of setting $\p_\emptyset=1$. Both $\C[\Xcheck]$ and 
$\C[\Xcheckbar]$ are
cluster algebras; see Section~\ref{s:CAcoordring}.

Our version of the Landau-Ginzburg model mirror dual to $X$ is a regular function $W:\Xcheck\x\C_{\qb}\to \C$, which we may call the {\it canonical superpotential}, in analogy with \cite[Section~1.1]{PechRieWilliams:AllQuadrics}. It is defined as follows.

\begin{defn}[The superpotential on $\Xcheck$]\label{d:W}
Denote by 
$\widehat{\mu}_{i}$ the partition corresponding to 
\[\widehat{J_i}=\widehat{L}_{i+k}:=[i+1,i+k-1]\cup \{i+k+1\},
\] 
where $i+k$ has been removed from $J_i$ and replaced by $i+k+1$. 
Unless $i=n-k$, the Young diagram of  $\widehat{\mu}_{i}$, is obtained from the
Young diagram of $\mu_{i}$ by adding a box. The particular shape of $\mu_{i}$
guarantees that there is only one way to do this. The partition $\widehat{\mu}_{n-k}$ is obtained by
removing the entire rim from $\mu_{n-k}$ to give an $(n-k-1)\x (k-1)$ rectangle.
 We define
\begin{equation}\label{e:W}
W:=\sum_{i=1}^n \frac {\p_{\widehat{\mu}_{i}}} {\p_{\mu_{i}}} \qb^{ \delta_{i,n-k} }=
\left( \sum_{ i\ne n-k} \frac {\p_{\widehat{\mu}_{i}}} {\p_{\mu_{i}}}\right)  + \qb \frac {\p_{\widehat{\mu}_{n-k}}} {\p_{\mu_{n-k}}}.
\end{equation}
\end{defn}
\begin{rem} Notice that in the quantum Schubert calculus of $X$ and for $i\ne n-k$,
\[
\sigma^{\ydiagram{1}}\star_{\qa}\sigma^{\mu_i}=\sigma^{\widehat{\mu_{i}}},\qquad \sigma^{\ydiagram{1}}\star_{\qa}\sigma^{\mu_{n-k}}=\qb\, \sigma^{\widehat{\mu_{n-k}}},
\]
and thus each of the individual summands of $W$ in the above formula  formally resembles the hyperplane class $\sigma^{\ydiagram{1}}$.  
\end{rem}

We will next recall the definitions of the two previously conjectured Landau-Ginzburg models for Grassmannians,
starting with the LG~model of \cite{Rie:MSgen} followed by that of Eguchi, Hori and Xiong \cite{EHX:GravQCoh, BCKS:MSGrass}, and explain how these previous definitions relate to this new one.

\subsection{The Lie-theoretic superpotential}\label{s:RichardsonBmodel}

Let the ($B$-model) group  $G=GL_n(\C)$ act (now from the left) on a full flag variety.
We fix some notation regarding this group.
 We let $B_+,B_-$ denote its upper-triangular and lower-triangular Borel subgroups, respectively, and
$T$ denote the maximal torus of diagonal matrices. The unipotent radicals of $B_+$ and $B_-$ are
denoted by $U_+$ and $U_-$. Let $\mathfrak h$ denote the Lie algebra of $T$.
We let $E_{i,j}$ denote the matrix
with entry $1$ in row $i$ and column $j$ and zeros elsewhere. Let $e_i=E_{i,i+1}$
and $f_i=E_{i+1,i}$ be the usual Chevalley elements of $\mathfrak{gl}_n$, and
let $\alpha_i\in\mathfrak h^*$ be the simple root corresponding to $e_i$.
We define the associated $1$-parameter subgroups $x_i:\C\to U_+$ and $y_i:\C\to U_-$,
\[
x_i(t)=\exp(t e_i),\quad
y_i(t)=\exp(t f_i)
\]
for $i=1,\dotsc, n-1$.
Let $W=N_{GL_n}(T)/T\cong S_n$ be the Weyl
group. Namely for every $w\in W$ we have a natural choice of representative $\bar w$ which is the permutation matrix in $GL_n$ corresponding to the permutation $w$. Alternatively we can choose the following representatives. Let
$$
\dot s_i=x_i(1)y_i(-1)x_i(1) \in N_{GL_n}(T)
$$
represent the generator $s_i=\dot s_i T$ of $W$ which is the simple transposition $(i,i+1)$. Let $\ell:W\to\Z_{\ge 0}$ be the length function.
If $\ell(w)=m$ and $s_{i_1}\dotsc s_{i_m}$ is a reduced expression for $w$, then the product $\dot w=
\dot s_{i_1}\dotsc \dot s_{i_m}$ is a well defined element of $GL_n$  and independent of
the reduced expression chosen. One advantage of the latter choice of representatives is that its definition is not specific to $GL_n$.
We will also require the root subgroups
\[
x_{\alpha_j+\alpha_{j+1}+\dotsc+\alpha_k}(a):=\bar s_{j+1}\bar s_{j+2}\cdots \bar s_{k-1} x_k(a) \bar s_{k-1}\inv\dotsc \bar s_{j+2}\inv\bar s_{j+1}\inv= \mathbf 1_n + a E_{j,k+1},
\]
where $j<k$ and $\mathbf 1_n$ is the $n\x n$ identity matrix.

Let $P\supset B_+$ be the maximal parabolic subgroup in $G$ generated by $B_+$ and the
elements $\dot s_i$ for  $i\ne n-k$. We also have $W_P=\left<s_i\ |\ i\not = n-k\right>$, the corresponding
parabolic subgroup of the Weyl group $W$. The longest element in $W_P$ is denoted by $w_P$, while the longest element of $W$ is denoted by $w_0$. .  Let $W^P$ denote the set
of minimal length coset representatives in $W/W_P$.  The longest element in $W^P$ is denoted by $w^P$. Clearly $w^Pw_P=w_0$, the longest element of $W$.

Consider the Schubert variety  inside $GL_n/B_-$ defined by,
\[
\Xcheckbar_{w_P}=\overline{B_+\dot w_P B_-/B_-}.
\]
We think of  $\Xcheckbar_{w_P}\subset GL_n/B_-$ as a Richardson variety, namely as closure of the intersection,
\[
\RR=\mathcal R_{w_P,w_0}= B_+\dot w_P B_-\cap B_-\dot w_0 B_-/B_-,
\]
 of opposite Bruhat cells. This intersection, $\RR$, is a smooth irreducible variety of dimension $\ell (w^P)$, which equals to  $k(n-k)$
 for our choice of $P$.

The construction of a mirror LG~model in \cite{Rie:MSgen}, applied in  the Grassmannian case, yields a superpotential $\mathcal F$ on $\RR\x\C^*_{\qb}$. Here the definition of $\mathcal F$ involves first identifying $\RR\x\C^*_{\qb}$ with a subset of the group $PSL_n(\C)$, Langlands dual to the group $SL_n(\C)$ acting on the $A$-model $X$, see \cite[Section~4.1]{Rie:MSgen}. Since we are considering our Grassmannians $X$ and $\Xcheckbar$ as a homogeneous spaces for (Langlands dual) general linear groups, we will replace this subset of $PSL_n(\C)$ with a natural choice of lift to the $B$-model $GL_n(\C)$. This will not change the function on $\RR\x\C^*_{\qb}$, but will make a difference (as it should) when we extend to the $T^\vee$-equivariant case in Section~\ref{s:Tequivariantsuperpotential}. 

One further practical difference between $PSL_n$ and $GL_n$ is that in \cite{Rie:MSgen} the torus in $PSL_n$ analogous to $T^{W_P}$ is identified  with $\C^*_{\qb}$, while for $GL_n$ the torus $T^{W_P}$ isn't one-dimensional, but rather is two-dimensional. In order to cut down the dimension let $\widetilde T^{W_P}$ be the one-dimensional subtorus of $T^{W_P}$ in $GL_n(\C)$ defined by 
\[
\widetilde T^{W_P}:=\left\{\left .\begin{pmatrix} d &&&&\\
&\ddots& &&&\\
&& d &&&\\
&&& 1&&\\
&&&&\ddots &\\
&&&&& 1
\end{pmatrix}\ \right | \  d\in\C^*\right\}.
\] 
Then $\widetilde T^{W_P}\cong \C^*_{\qb}$ via the identification of $\alpha_{n-k}$ and $q$. We have an isomorphism 
\begin{equation}\label{e:psiR}
\begin{array}{ccc}
\psi_R:B_-\cap U_+\widetilde T^{W_P} \dot w_P\dot w_0\inv U_+& \overset{\sim}\To &\RR\x\C^*_{\qb},\\
b = u_1 t \dot w_P\dot w_0\inv u_2   & \mapsto & (b\dot w_0B_-,\alpha_{n-k}(t)),
 \end{array}
 \end{equation}
 as in \cite[Section~4.1]{Rie:MSgen}.
To define the superpotential we need the map  $e_i^* : U_+\to\C$ which sends $u\in U_+$  to its $(i,i+1)$-entry, so $e_i^*(u):=u_{i,i+1}$. This  notation $e_i^*(u)$ stems from the fact that the matrix entry $u_{i,i+1}$ of $u$ can also be thought of as the coefficient of $e_i$ in $u$ after embedding $U_+$ into the completed universal enveloping algebra of its Lie algebra.

The following definition is an equivalent formulation of the definition from \cite[Section~4.2]{Rie:MSgen} as follows from \cite[Lemma~5.2]{Rie:MSgen}. Note that the setting in \cite{Rie:MSgen} is that of an arbitrary complex reductive algebraic group~$G$ and parabolic subgroup $P$, and we apply it here to $G=GL_n$ and $P$ a maximal parabolic. 

 \begin{defn}[The Lie-theoretic superpotential {\cite[Lemma~5.2]{Rie:MSgen}}]  \label{d:RichardsonBmodel} 
 Let $\widetilde {\mathcal F}:B_-\cap U_+\widetilde T^{W_P}\dot w_P\dot w_0\inv U_+\to \C$ be the map defined by
\[
\widetilde {\mathcal F}: b=u_1t \dot w_P\dot w_0\inv u_2\quad \mapsto\quad \sum_{i=1}^n e_i^*(u_1)+\sum_{i=1}^n e_i^*(u_2).
\] 
Note that this map $\widetilde{\mathcal F}$ is well-defined even though $u_1$ and $u_2$ are not uniquely determined by $b$, see
\cite[Equation~(4.4) and Lemma~5.2]{Rie:MSgen}.
For the $A$-model Grassmannian $X=Gr_{n-k}(\C^n)$ viewed as a homogeneous space $GL^\vee_n/P^\vee$, the Lie-theoretic version of the superpotential is the composition
\[
\mathcal F=\widetilde{\mathcal F}\circ \psi_R\inv:\RR\x\C_\qb^*\to\C.
\]
Explicitly,
\[
\mathcal F(b\dot w_0B_-,q)=\sum e_i^*(u_1)+\sum e_i^*(u_2),
\]
if $b$ is in $B_-$ and factorizes as $b=u_1 t \dot w_P\dot w_0\inv u_2$ with  $u_1,u_2\in U_+$ where $t\in \widetilde T^{W_P}$ is determined by $\alpha_{n-k}(t)=q$.
Additionally, we use the notation $\mathcal F_q$ for the map 
$
\mathcal F_q:\RR\to \C
$
defined by $\mathcal F_q(b\dot w_0 B_-)=\mathcal F(b\dot w_0B_-,q)$.
\end{defn}


We now recall the Dale Peterson presentation of the quantum cohomology ring of a homogeneous space which applies as a special case 
to $X=Gr_{n-k}(n)$,
compare \cite{Rie:QCohPFl}.
\begin{thm}[Dale Peterson \cite{Pet:QCoh}] \label{t:Peterson} Associated to the homogeneous space $X$ define a subvariety $Y^*_P$ in $\RR$, called the Peterson variety of $X$, as follows. Let $F\in \mathfrak g^*$ be the sum of the dualised positive Chevalley generators, 
\[F= e_1^*+e_2^*+\dotsc + e_{n-1}^*,
\] 
and set
\[Y^*_P:=\{ gB_-\in\RR \ |\ g\inv\cdot F\in [\mathfrak n_-,\mathfrak n_-]^\perp\},\]
using the coadjoint action of $G$. Let $\C[Y^*_P]$ denote the coordinate ring of $Y_P^*$, in the possibly non-reduced sense.
Then $\C[Y^*_P]$ is isomorphic to the quantum cohomology ring $qH^*(X,\C)[\qb\inv]$ of $X$ by an explicit isomorphism.
\end{thm}
We call the isomorphism from Theorem~\ref{t:Peterson} the {\it Peterson isomorphism}. In Proposition~\ref{p:flambda} we will recall where Peterson's isomorphism takes  a Schubert class in the Grassmannian case. For a description of the Peterson isomorphism for type $A$ partial flag varieties we refer to \cite{Rie:QCohPFl, Rie:JAMSerr}, in general type see also~\cite{Rie:MSgen}. We remark that in type $A$ the quantum cohomology rings, and with them the coordinate rings $\C[Y_P^*]$, are always reduced. 

 The superpotential $\mathcal F$ defined in \cite{Rie:MSgen} is related to the Peterson variety as follows. Denote again by $\qb$ the element of the coordinate ring $\C[Y^*_P]$ which corresponds under the Peterson isomorphism to the quantum parameter. Then $\qb$ is a finite morphism from $Y_P^*$ to $ \C^*$.
 
\begin{thm}[Mirror construction of the Peterson variety {\cite[Theorem~4.1]{Rie:MSgen}}]\label{t:MSgen}
The critical points of $\mathcal F_{\qb}$ inside $\RR$ lie in the Peterson variety and precisely recover the fibers of $\qb:Y^*_P\to \C^*$.  Moreover the subvariety of $\RR\x\C^*_q$ corresponding to the ideal  $ (\partial_{\RR}\mathcal F)$ of partial derivatives of $\mathcal F$ along $\RR$ is isomorphic to $Y_P^*$ by the restriction of the first projection $\RR\x\C^*_q\to \RR$, and we obtain
\[
\C[\RR\x\C^*_\qb]/(\partial_{\RR}\mathcal F)\cong\C[Y^*_P].
\]
\end{thm}

The Lie-theoretic superpotential $(\RR,\mathcal F_q)$ is therefore related to the quantum cohomology of $X$ by the combination of Theorems~\ref{t:Peterson} and \ref{t:MSgen}.

In order to set up the comparison of the Lie-theoretic superpotential with our new formulation in terms of Pl\"ucker coordinates we define the following maps
\[
\Xcheck\overset{\pi_L}\longleftarrow B_-\cap U_+  \dot w_P \dot w_0\inv U_+\overset {\pi_R}\To \RR.
\]
Here the the map on the left hand side is defined by setting $\pi_L(b):=Pb$, and the map on the right hand side is $\pi_R(b)=b\dot w_0 B_-$, where $b\in B_-\cap U_+  \dot w_P \dot w_0\inv U_+$. It is straightforward that $\pi_R$ is well-defined and an isomorphism. The map $\pi_L$ is a priori a map to $\Xcheckbar$,
but it is known to land in $\Xcheck$ and moreover $\pi_L$ is an isomorphism so that we have $\Xcheck\cong  \RR$. Namely the following proposition follows from {\cite[Section 5.4]{KnutsonLamSpeyer:PositroidPublished}, via a construction from  \cite[Section~2.1]{Lus:TPparabolic}}. 
\begin{prop}
\label{p:KLS}
The projection map $\pi_L$ is a well-defined isomorphism from $B_-\cap U_+  \dot w_P \dot w_0\inv U_+$ to $\Xcheck$.
 \end{prop}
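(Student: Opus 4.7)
The approach is to factor the claim through the natural parametrization $\pi_R : B_- \cap U_+ \dot w_P \dot w_0^{-1} U_+ \to \RR$, which is easily seen to be an isomorphism, and then invoke the positroid-Richardson theorem of Knutson, Lam and Speyer. The composition $\pi_L \circ \pi_R^{-1} : \RR \to \Xcheck$ will turn out to be precisely (the Langlands-dual version of) the natural projection from the full flag variety to the Grassmannian restricted to $\RR$, which is the object they study.

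Step 1 is to check $\pi_R$ is an isomorphism. Writing $b = u_1 \dot w_P \dot w_0^{-1} u_2$ with $u_i \in U_+$, one has $\dot w_0^{-1} u_2 \dot w_0 \in U_-$ and hence $b \dot w_0 B_- = u_1 \dot w_P B_- \in B_+ \dot w_P B_-/B_-$; combined with $b \in B_-$ this lands in $\RR$. Conversely, any point of $\RR$ has a unique representative $u \dot w_0$ with $u \in U_+$ (big Bruhat cell), and the additional condition $u \dot w_0 \in B_+ \dot w_P B_-$ pins down, by uniqueness of Bruhat and Gauss-type decompositions, a unique $b \in B_- \cap U_+ \dot w_P \dot w_0^{-1} U_+$ with $b \dot w_0 B_- = u \dot w_0 B_-$. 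The inverse is regular since all the factorizations involved are regular on their domain of definition.

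Step 2 is to identify $\pi_L \circ \pi_R^{-1}$. Using the factorization $b = u_1 \dot w_P \dot w_0^{-1} u_2$, one has $\pi_L(b) = P b = P \dot w_0^{-1} u_2$, since $u_1 \dot w_P \in P$; meanwhile $\pi_R(b) = u_1 \dot w_P B_-$. Thus $\pi_L \circ \pi_R^{-1}$ is realized as a projection $GL_n^\vee / B_- \to P \backslash GL_n^\vee$, matching the standard forgetful map (passing through the left-right duality via inversion) from a full flag to the $k$-dimensional subspace it contains, restricted to $\RR$.

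Step 3 is to apply \cite{KnutsonLamSpeyer:PositroidJuggling}: they show that the projection from the open Richardson variety associated to an interval $[w_P, w_0]$ in $W$ onto the corresponding open positroid variety in $Gr_k(n)$ is an isomorphism. It remains to identify the open positroid variety attached to $[w_P, w_0]$ with $\Xcheck = \Xcheckbar \setminus \bigcup_i \{p_{\mu_i}=0\}$. This is a combinatorial match: the Grassmann necklace of this top Bruhat interval consists of exactly the cyclic intervals $J_1, \ldots, J_n$, so the positroid divisors are exactly the $n$ cyclic Pl\"ucker divisors. The main obstacle I anticipate is precisely this bookkeeping in Step 2 and 3: carefully aligning the Langlands duality (right action on $\Xcheckbar$ versus left action on the flag variety) and matching the KLS labelling of positroid strata by Grassmann necklaces or bounded affine permutations with the cyclic Pl\"ucker description of $\Xcheck$ used here.
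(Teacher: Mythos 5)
Your proposal follows the same route as the paper: the paper likewise notes that $\pi_R$ is straightforwardly an isomorphism and then simply invokes the Knutson--Lam--Speyer theorem on projections from open Richardson varieties to open positroid varieties, recording even less of the bookkeeping (the Grassmann-necklace identification of $\Xcheck$, the left/right duality) than you outline. One small slip worth fixing: a point of the opposite big cell $B_-\dot w_0 B_-/B_-$ has unique unipotent representative $u_-\dot w_0 B_-$ with $u_-\in U_-$ (equivalently $\dot w_0 u_+B_-$ with $u_+\in U_+$), not $u\dot w_0 B_-$ with $u\in U_+$; this does not affect the argument.
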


In Section~\ref{s:comparisonMRR} we will prove the following proposition.
\begin{prop}\label{p:comparisonMRR} With the definitions from Section~\ref{s:GrassmannianBmodel} and \ref{s:RichardsonBmodel}, the following diagram commutes:
\[
\xymatrix{
\Xcheck\x\C^*_q \ar[d]_W &
B_-\cap U_+  \dot w_P \dot w_0\inv U_+\x \C_\qb^*  
\ar[l]_(0.65){\pi_L \x id} \ar[r]^(0.65){\pi_R \x id}
& 
\RR\x\C_{\qb}^* \ar[d]_{\mathcal{F}} \\
\C& = &  \C.
}
\]
\end{prop}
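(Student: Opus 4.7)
The plan is to verify the commutativity of the diagram by direct computation in the factorization coordinates on $B_- \cap U_+\, T^{W_P}\, (\dot w^P)^{-1}\, U_+$. Given $b = u_1\, t\, (\dot w^P)^{-1}\, u_2$ with $q = \alpha_{n-k}(t)$, the target identity is
\[W(Pb,\, q) \;=\; \sum_i e_i^*(u_1) + \sum_i e_i^*(u_2) \;=\; \mathcal F(b\dot w_0 B_-,\, q).\]
Using the alternative expression for $\mathcal F$ at the end of Definition~\ref{d:RichardsonBmodel}, the right-hand side may be rewritten as $\sum_i e_i^*(u_1) + \sum_{i\neq k} e_i^*(u_{2,0}) + q\, e_k^*(u_{2,0})$, where $u_{2,0} = \mu(u_1)$, which isolates exactly how the parameter $q$ enters. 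This is then matched against $W = \sum_{i=1}^n \frac{\p_{\widehat{\mu_i}}}{\p_{\mu_i}}\, q^{\delta_{i,n-k}}$, with the unique $q$-weighted summand on each side corresponding to $i = n-k$. The crucial simplification is that $u_1 t \in P$ (since $U_+ \subset B_+ \subset P$ and $T^{W_P} \subset T \subset P$), so $Pb = P(\dot w^P)^{-1} u_2$ and the Pl\"ucker coordinates $\p_\lambda(Pb)$ can be extracted from a concrete matrix representative coming from the factorization.

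\textbf{Key computation.} The core of the proof is an explicit formula for each ratio $\p_{\widehat{\mu_i}}(Pb)/\p_{\mu_i}(Pb)$ in terms of the coordinates on the two $U_+$-factors. The interval Pl\"ucker coordinates $\p_{\mu_i}$ admit product expressions in the entries of $u_1$, $t$, $u_2$, reflecting the fact that they are the frozen cluster variables of $\C[\Xcheck]$ (see Section~\ref{s:CAcoordring}). The adjacent Pl\"ucker coordinates $\p_{\widehat{\mu_i}}$, which differ from $\p_{\mu_i}$ by a single box, are among the cluster variables immediately linked to the frozen ones, and the ratio $\p_{\widehat{\mu_i}}/\p_{\mu_i}$ therefore collapses to a simple combination of Chevalley characters $e_j^*(u_1)$ and $e_j^*(u_{2,0})$ for specific $j = j(i)$, with a $q$-factor appearing precisely in the summand $i = n-k$ via $\alpha_{n-k}(t) = q$. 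The combinatorial counting arranges so that the $n$ Pl\"ucker ratios jointly reproduce all $2(n-1)$ Chevalley characters of $\mathcal F$ with the correct coefficients.

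\textbf{Main obstacle.} The principal difficulty is the explicit combinatorial matching: the cyclic indexing $i = 1,\ldots,n$ of the intervals $\mu_i$ must be reconciled with the linear indexing of simple roots in $e_j^*$, and one must verify that each Chevalley character of $u_1$ and of $u_{2,0} = \mu(u_1)$ appears in exactly one Pl\"ucker ratio, with the single $q$-weighted term correctly localised at $i = n-k$. This bookkeeping is organised by the cluster structure on $\C[\Xcheck]$ developed in Section~\ref{s:CAcoordring}, together with a chosen reduced expression for $w_0 w_P^{-1}$ that pins down the relationship between Bott--Samelson-type coordinates on $U_+$ and the Pl\"ucker coordinates on the positroid variety $\Xcheck$ (Proposition~\ref{p:KLS}). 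Once the index matching is set up, the exchange relations of the cluster structure translate directly into the needed identities $\p_{\widehat{\mu_i}}/\p_{\mu_i} = e_{j(i)}^*(u_1) + e_{j(i)}^*(u_{2,0})\, q^{\delta_{i,n-k}}$ (or the appropriate variant), and summing over $i$ recovers $\mathcal F(b\dot w_0 B_-, q)$ as required.
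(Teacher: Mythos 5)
Your overall strategy coincides with the paper's: reduce to the factorization $b=u_1t(\dot w^P)\inv u_2$, use the alternative expression for $\mathcal F$ in terms of $u_{2,0}$ to localise the $q$-dependence, and then match the $n$ summands $\p_{\widehat{\mu_i}}/\p_{\mu_i}$ of $W$ against Chevalley characters. However, the key computation as you state it is wrong in a way that matters. You claim each ratio satisfies $\p_{\widehat{\mu_i}}/\p_{\mu_i}=e_{j(i)}^*(u_1)+e_{j(i)}^*(u_{2,0})\,q^{\delta_{i,n-k}}$ and that the $n$ ratios "jointly reproduce all $2(n-1)$ Chevalley characters". In fact each ratio equals a \emph{single} Chevalley character: with $b=u_{1,0}\dot w_P\dot w_0\inv u_2$, the paper's Lemma~\ref{p:eistar} gives $e_i^*(u_{1,0})=\Delta^{n-k+1,\ldots,n}_{\Jip}(b)/\Delta^{n-k+1,\ldots,n}_{\Ji}(b)$ for $n-k\le i\le n-1$ and $e_i^*(u_2)=\Delta^{n-k+1,\ldots,n}_{J_{i-k\ \widehat{i-k+1}}}(b)/\Delta^{n-k+1,\ldots,n}_{J_{i-k}}(b)$ for $k\le i\le n-1$, while $e_i^*(u_{1,0})=0$ for $i<n-k$ and $e_i^*(u_2)=0$ for $i<k$. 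So only $n$ of the formal $2(n-1)$ characters survive, and these vanishings (forced by the special factorized shape of $u_2$ and the condition $b\in B_-$) are an essential part of the count; your bookkeeping, which expects two characters per ratio, cannot close up ($2n\ne 2(n-1)$, and neither equals $n$).

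Relatedly, the mechanism you propose for the key identities is not the right one. The identities above are not cluster exchange relations (three-term Pl\"ucker relations); the paper obtains them from the vanishing of specific $(k+1)\times(k+1)$ minors of $\dot w_0\inv u_2$, whose Laplace expansion yields $e_i^*(u_2)\,\Delta^{n-k+1,\ldots,n}_{J_{i-k}}-\Delta^{n-k+1,\ldots,n}_{J_{i-k\ \widehat{i-k+1}}}=0$ and its analogue for $u_{1,0}$. The cluster structure of $\C[\Xcheck]$ plays no role in this proposition (it is used later, in Sections~\ref{s:CAcoordring}--\ref{s:actionvectorfield}). To repair your argument you would need to (i) replace the claimed two-term identity by the one-character identity with the correct index shift between the two families, (ii) prove the vanishing statements, and (iii) justify the remaining identities by a determinantal computation rather than by exchange relations.
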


Propositions~\ref{p:KLS} and \ref{p:comparisonMRR}  imply the part of
Theorem~\ref{t:twosuperpotentials} which states
that the Landau-Ginzburg model from 
Section~\ref{s:GrassmannianBmodel} is isomorphic to the one 
from~\cite{Rie:MSgen} recalled in 
Definition~\ref{d:RichardsonBmodel}. In the case of Lagrangian 
Grassmannians and for odd-dimensional quadrics, formulas 
analogous  to Definition~\ref{d:RichardsonBmodel} and comparison 
results analogous to the above Propositions  were found by C.~Pech and 
K. Rietsch in \cite{PechRie:LGLG,PechRie:Quadrics} and by 
C.~Pech, K. Rietsch and L.~Williams in the case of even 
quadrics~\cite{PechRieWilliams:AllQuadrics}.

\subsection{The Laurent polynomial superpotential}\label{s:LaurentBmodel}\label{s:EHX}

The earliest construction of Landau-Ginzburg models for Grassmannians is due to Eguchi, Hori and Xiong \cite{EHX:GravQCoh} and associates to $X=Gr_{n-k}(\C^n)$
a Laurent polynomial $L_{\qb}$ in $k(n-k)$ variables (with parameter $\qb$).  This Laurent polynomial also appeared in \cite{Oblezin:WhittakerII} in relation to a parabolic analog of the quantum Toda lattice. Let $\mathcal T=(\C^*)^{k(n-k)}$ and define  $L_q:\mathcal T \to \C$ as follows (compare \cite{BCKS:MSGrass,BCKS:MSPartFl}).

Let $\QR=(\mathcal V,\mathcal A)$  be a quiver with vertices given by
\[
\mathcal V=\{(i,j)\in [1, n-k]\x [1, k]\}\sqcup \{(0,1),(n-k,k+1)\}
\]
and with two types of arrows $a\in \mathcal A$, namely
\[  (i,j)\To (i,j+1)\quad \text{and}\quad (i,j)\To (i+1,j),
\]
defined whenever $(i,j),(i,j+1)$, and  $(i,j),(i+1,j)$, respectively, are in $\mathcal V$. We write $h(a)$ for the head of an arrow $a$, and $t(a)$ for the tail.
To every vertex in the quiver associate a coordinate $\dd_{ij}$. We set $\dd_{0,1}=1$ and $\dd_{n-k,k+1}=\qb$, and let the remaining $(\dd_{ij})_{i=1,\dotsc, n-k}^{j=1,\dotsc, k}$ be the coordinates on
the big torus $\mathcal T=(\C^*)^{k(n-k)}$.

\begin{defn}[The EHX Laurent polynomial supotential \cite{BCKS:MSGrass,EHX:GravQCoh}]\label{d:EHXBmodel}
To every arrow $a$ in the quiver  $(\mathcal V,\mathcal A)$ one can associate a Laurent monomial  by dividing the coordinate at the head by the coordinate at the tail. The regular function $L_q:\mathcal T\to \C$ is defined to be the sum of all of the Laurent monomials obtained in this way,
\[
L_q=\sum_{a\in\mathcal A} \frac{\dd_{h(a)}}{\dd_{t(a)}},
\]
keeping in mind that  $\dd_{0,1}=1$ and $\dd_{n-k,k+1}=\qb$ also occur.
\end{defn}


\begin{example}\label{ex:EHX} Consider $k=3$ and $n=5$. So $X=Gr_2(\C^5)$ and $\Xcheckbar=Gr_3((\C^5)^*)$,
the Grassmannian of $3$-planes in the vector space of row vectors.
The big torus is $\mathcal T\cong (\C^*)^6$ with coordinates
$(\dd_{11}, \dd_{12}, \dd_{13}, \dd_{21}, \dd_{22}, \dd_{23})$. The superpotential is
\begin{equation}\label{e:EHXWGr25}
L_q= \dd_{11}+\frac{\dd_{21}}{ \dd_{11}}+\frac{\dd_{22}}{ \dd_{12}}+\frac{\dd_{23}}{ \dd_{13}}
+\frac{\dd_{12}}{ \dd_{11}}+
\frac{\dd_{13}}{ \dd_{12}}+
 \frac{\dd_{22}}{ \dd_{21}}+\frac{\dd_{23}}{ \dd_{22}}+\frac{\qb}{ \dd_{23}},
\end{equation}
and is encoded in the quiver $\QR$ shown below.
\begin{center}

\begin{tikzpicture}
\draw (0,2) [fill] circle (.4mm);
\draw[->](0,0)--(1,0);
\draw[->](1,0)--(2,0);
\draw[->](2,0)--(3,0);
\draw[->](0,2)--(0,1);
\draw[->](0,1)--(0,0);
\draw[->](1,1)--(1,0);
\draw[->](0,1)--(1,1);
\draw[->](1,1)--(2,1);
\draw[->](2,1)--(2,0);
\draw (3.05,0) [fill] circle (.4mm);
\node[right] at (3,0) {$\qb$};
\node [below] at (2,0) {$\dd_{23}$};
\node [below] at (1,0) {$\dd_{22}$};
\node [below] at (0,0) {$\dd_{21}$};
\node [above] at (1,1) {$\dd_{12}$};
\node [above] at (2,1) {$\dd_{13}$};
\node [left] at (0,1) {$\dd_{11}$};
\node [left] at (0,2) {$1$};
\end{tikzpicture}.
\end{center}
\end{example}

The relationship between this superpotential $L_q$ and the superpotential $W_\qb$ from Definition~\ref{d:W} is given in the proposition below. The Pl\"ucker coordinates indexed by rectangular Young diagrams play a special role here,
and we denote a Young diagram which is an $i\times j$ rectangle by
$i\times j$.
For example if $(k,n)=(3,7)$ the Pl\"ucker coordinates corresponding to the rectangles $\mu_i$ are $\p_{\mu_1}=\p_{1\x 3}$, $\p_{\mu_2}=\p_{2\x 3}$ and so forth; compare \eqref{e:muiexample}. We have $k(n-k)$ rectangular Pl\"ucker coordinates, not counting $\p_\emptyset$. 

\begin{prop}\label{p:comparisonMREHX}
There is a (unique) embedding $\iota:\mathcal T\to\Xcheck$ for which the Pl\"ucker coordinates corresponding to rectangular Young diagrams are related to the $\dd_{ij}$ coordinates as follows,
\begin{equation}\label{e:iota}
\dd_{ij}= \iota^*\left(\frac{\p_{i\x j}}{\p_{(i-1)\x (j-1)}}\right).
\end{equation}
Moreover, the Laurent polynomial superpotential $L_q$ agrees with the pullback of $W_\qb$ to $\mathcal T$ under $\iota$.
\end{prop}
\begin{proof}[Sketch of proof] We describe the embedding $\iota:\mathcal T\to \Xcheck $ defined in Proposition~\ref{p:comparisonMREHX} concretely. To explain the construction in our setting we continue with Example~\ref{ex:EHX}. Let us decorate the above quiver by elements $\dot s_i$ as follows and remove the arrow with head labeled $\qb$ .
\begin{center}
 \begin{tikzpicture}
\node at (-2, .4) {Row $E_4$};
\node at (-2, 1.4) {Row $E_3$};
\node at (-2, 2.4) {Row $E_2$};
\node at (-2, 3.4) {Row $E_1$};
\draw (3,0.5) circle (3mm);
\draw (0,2.5) circle (3mm);
\draw (1,2.5) circle (3mm);
\draw (0,3.5) circle (3mm);
\draw (0,2) [fill] circle (.4mm);
\node at (3,0.5) {${\dot s_4}$};
\node at (0,2.5) {${\dot s_2}$};
\node at (1,2.5) {${\dot s_2}$};
\node at (0,3.5) {${\dot s_1}$};
\draw[->](0,0)--(1,0);
\draw[->](1,0)--(2,0);
\draw[->](0,2)--(0,1);
\draw[->](0,1)--(0,0);
\draw[->](1,1)--(1,0);
\draw[->](0,1)--(1,1);
\draw[->](1,1)--(2,1);
\draw[->](2,1)--(2,0);
\node [below] at (2,0) {$\dd_{23}$};
\node [below] at (1,0) {$\dd_{22}$};
\node [below] at (0,0) {$\dd_{21}$};
\node [above] at (1,1) {$\dd_{12}$};
\node [above] at (2,1) {$\dd_{13}$};
\node [left] at (0,1) {$\dd_{11}$};
\node [left] at (0,2) {$1$};
\end{tikzpicture}
\end{center}

The new figure has $n-1$ rows, $E_1,E_2,\ldots ,E_{n-1}$ (where, in the
example, $n=5$). For $1\leq i\leq k-1$, row $E_i$ contains $i$ copies
of $\dot s_i$ (written a circle). For $k\leq i\leq n-1$, row $E_i$
contains all of the downward-pointing arrows with target $z_{i-k+1,j}$
for some $j$ (one arrow if $i=k$; $k$ arrows if $i>k$), followed by
$i-k$ copies of $\dot s_i$.

We call a path in the quiver which has precisely one vertical step a {\it $1$-path}. Notice that each $1$-path contains a downward-pointing arrow from exactly one row. For any vertex $v$ decorated with a $z_{ij}$ there is clearly a unique minimal length $1$-path which has this vertex at its lower end. We call this $1$-path the \emph{minimal} $1$-path with the given vertex $v$ at its base.

Then we read off a sequence of $\dot s_i$'s and $1$-paths, going column by column from right to left. Namely in each column we list, starting from the top and going down, any $\dot s_i$'s, followed by any minimal $1$-paths associated to
vertices from that column. In the example above the sequence is shown below.
 \begin{center} \label{e:1pathlist}
 \begin{tikzpicture}
 \node at (-1,.5) {$\dot s_4 $};
 \draw (-1,.5) circle (3mm);
  \node at (-.5,.4) {,};
\node [left] at (0,1) {$1$};
\draw (0,1) [fill] circle (.4mm);
\draw[->](0,1)--(0,0);
\draw[->](0,0)--(1,0);
\draw[->](1,0)--(2,0);
\node [above] at (2,0) {$\dd_{13} \ $};
  \node at (2.5,.4) {,};
\node [above] at (3,1) {$\dd_{13} \ $};
\draw[->](3,1)--(3,0);
\node [below] at (3,0) {$\dd_{23}\ $};
  \node at (3.5,.4) {,};
\node at (4,.5) {$\dot s_2$};
\draw (4,.5) circle (3mm);
  \node at (4.5,.4) {,};
\node [left] at (5,1) {$1$};
\draw (5,1) [fill] circle (.4mm);
\draw[->](5,1)--(5,0);
\draw[->](5,0)--(6,0);
\node [above] at (6,0) {$ \dd_{12}\ $};
  \node at (6.5,.4) {,};
\draw[->](7,1)--(7,0);
\node [above] at (7,1) {$\dd_{12}$};
\node [below] at (7,0) {$\dd_{22} \ $};
  \node at (7.5,.4) {,};
\node at (8,.5) {$\dot s_1$};
\draw (8,.5) circle (3mm);
  \node at (8.5,.4) {,};
\node at (9,.5) {$\dot s_2$};
\draw (9,.5) circle (3mm);
  \node at (9.5,.4) {,};
\draw[->](10,1)--(10,0);
\draw (10,1) [fill] circle (.4mm);
\node [above] at (10,1) {$1$};
\node [below] at (10,0) {$\dd_{11}\ $};
  \node at (10.5,.4) {,};
\draw[->](11,1)--(11,0);
\node [above] at (11,1) {$\dd_{11}$};
\node [below] at (11,0) {$\dd_{21}\ $};
\end{tikzpicture}
\end{center}
To a  $1$-path $\gamma$ in row $E_i$ we associate a factor 
$x_i\left(\frac{\dd_{h(\gamma)}}{\dd_{t(\gamma)}}\right)$, where 
$t(\gamma)$ is the initial vertex and $h(\gamma)$ is the final vertex of 
$\gamma$. The other elements of the sequence correspond in
the obvious way to factors $\dot s_i$. These factors are all 
multiplied together to give an element $g_{(\dd_{ij})}$ in the big Bruhat 
double coset $B_-\dot w_0 B_-$ 
of $GL_n$. In the above example we have the element of $GL_5$ given by
 \begin{equation*}\label{e:g2Gr25}
g_{(\dd_{ij})}:={\dot s_4}x_3\left({\dd_{13}}\right)x_4\left(\frac{\dd_{23}}{\dd_{13}}\right) {\dot s_2} x_3\left({\dd_{12}}\right)
 x_4\left(\frac{\dd_{22}}{\dd_{12}}\right){\dot s_1} {\dot s_2} x_3\left({\dd_{11}}\right) x_4\left(\frac{\dd_{21}}{\dd_{11}}\right).
\end{equation*}
Note that one can check that $\dot w_0\inv g_{(\dd_{ij})}\in \dot w_P\dot w_0\inv U_+$ by permuting all of the $\dot s_i$ factors in $g_{(\dd_{ij})}$ to the left. 
Since $g_{(\dd_{ij})}$ is also in the big Bruhat cell we have $\dot w_0\inv g_{(\dd_{ij})}\in \dot w_P\dot w_0\inv U_+\cap B_+ B_-$.
 
We now define the map $\iota:\mathcal T\to \Xcheck$ by
\[
\iota:(\dd_{ij})\mapsto P \, \dot w_0\inv\, g_{(\dd_{ij})}.
\]

To see that this map is the embedding alluded to in the proposition  it suffices to consider the $\p_{i\x j}$ Pl\"ucker coordinates of $P\, \dot w_0\inv\, g_{(\dd_{ij})}$ and check that these are related to the $\dd_{ij}$  
as follows,
\begin{equation}\label{e:dijviaminors}
\ \dd_{11}=\frac{\p_{\ydiagram{1}}}{\p_\emptyset}\ ,\   \dd_{12}=\frac{\p_{\ydiagram{2}}}{\p_\emptyset}\ ,\ \dd_{13}=\frac{\p_{\ydiagram{3}}}{\p_\emptyset}\ ,
 \dd_{21}=\frac{\p_{\ydiagram{1,1}}}{\p_\emptyset}\ ,\ \dd_{22}=\frac{\p_{\ydiagram{2,2}}}{\p_{\ydiagram{1}}}\ , \
\dd_{23}=\frac{\p_{\ydiagram{3,3}}}{\p_{\ydiagram{2}}}.
\end{equation}
%
It is easy to check that this holds in general, so that we have \eqref{e:iota}.
Finally, it is straightforward to compute the $\p_{\widehat\mu_i}$ Pl\"ucker coordinates of $P\, \dot w_0\inv\, g_{(\dd_{ij})}$ and see that substituting $P\, \dot w_0\inv \, g_{(\dd_{ij})}$ into the formula~\eqref{e:W} for $W$
recovers the Laurent polynomial  $L_q$.
\end{proof}

\begin{rem}
The construction of the map $\iota$ is inspired by the construction of factorisations  of elements in the Peterson variety introduced in~\cite{Rie:TotPosGBCKS}. In fact the two constructions are essentially related by a reflection of the quiver, see the proof of Proposition~\ref{p:involution} where both factorisations are needed.
\end{rem}

\subsection{Proof of Proposition~\ref{p:comparisonMRR}}\label{s:comparisonMRR}

In this section we prove Proposition~\ref{p:comparisonMRR}, which says that the superpotential $W$ defined in \eqref{e:W} is isomorphic to the Lie-theoretic superpotential $\mathcal F$ from \cite{Rie:MSgen}, see Definition~\ref{d:RichardsonBmodel}.

Let $\widetilde{W}$ denote the pullback of $\mathcal F$ to $\Xcheck\x \C^*_\qb$ via $(\pi_L\x id)\inv\circ(\pi_R\x id)$;
\begin{equation}
\label{e:Wtildediagram}
\xymatrix{
\Xcheck\x\C^*_q \ar[drr]_{\widetilde{W}} & \ar[l]_(0.67){\pi_L\x id} (B_-\cap U_+  \dot w_P \dot w_0\inv U_+)\x \C_\qb^* \ar[r]^(0.67){\pi_R\x id} & \RR\x\C_{\qb}^* \ar[d]^{\mathcal{F}} \\
&& \C
}
\end{equation}
compare Section~\ref{s:RichardsonBmodel}. Here we are keeping in mind Proposition~\ref{p:KLS} which says that $\pi_L$ is an isomorphism. Then to show Proposition~\ref{p:comparisonMRR} we need to prove that $\widetilde{W}$ agrees with $W$. 

Let us assume that $Pg\in \Xcheck$ is of the form
\begin{equation}\label{e:Pg}
Pg=P\dot w_0\inv g_{(\dd_{ij})}
\end{equation}
for an element $(\dd_{ij})\in \mathcal T$; compare Section~\ref{s:EHX}. 
The subset $\Xcheckfact$ of $\Xcheck$ consisting of such factorisable elements $Pg$ is an open dense subset of $\Xcheck$. Therefore  it suffices to show that $\widetilde{W}$ and $W$ agree on $\Xcheckfact$. 

\begin{defn} \label{d:ufact}
Note that any element $\dot w_0\inv g_{(\dd_{ij})}$ can also be written in the form $\dot w_P \dot w_0\inv u_2$ for 
an element $u_2\in U_+$ which can be factorized as
$u(1)\cdots u(k)$, where each $u(j)$ is a product of root subgroups,
\[
u(j)=x_{\alpha_j+\cdots +\alpha_k}(m_{jk})x_{k+1}(m_{j,k+1})\cdots x_{n-1}(m_{j,n-1}).
\]
Let $U_+^{\fact}\subset U_+$ denote the subset of $U_+$ consisting of such factorized elements $u_2=u(1)\cdots u(k)$, with nonzero entries $m_{j,l}$ in all of the root subgroup factors.
\end{defn}
We have $\Xcheckfact=P\dot w_P\dot w_0\inv U_+^{\fact}$. Therefore we may rewrite $Pg$ from \eqref{e:Pg} as
\[
Pg=P\dot w_P\dot w_0\inv u_2,
\]
where $u_2\in  U_+^{\fact}$. We can now define a map $\tilde \mu:U_+^{\fact}\to U_+$ by requiring 
\[
\tilde \mu(u_2)\inv B_-=\dot w_P\dot w_0\inv u_2B_-
 \]
for all $u_2\in U_+^{fact}$.
Notice that $\dot w_P\dot w_0\inv u_2B_-$ lies in $U_+B_-/B_-$ since it equals $\dot w_0\inv g_{(\dd_{ij})}B_-\in \dot w_0\inv B_-\dot w_0 B_-=U_+ B_-$, and therefore $\tilde \mu$ is well-defined. If the context is clear we will write
$\tilde\mu(u_2):=u_{1,0}$, noting that then $u_{1,0}$ is the  unique element in $U^+$ for which 
\[
u_{1,0}\dot w_P\dot w_0\inv u_2\in B_-.
\]

\begin{lem} \label{l:wtildeformula}
Let $u_2\in U_+^{\fact}$. We have the following formula for $\widetilde W$ on $\Xcheckfact\x \C^*_q$,
\[
\widetilde {W}(P\dot w_P\dot w_0\inv u_2,q)=\sum_i e_i^*(u_2)+\sum_{i\ne n-k}e_i^*(u_{1,0})+ q e_{n-k}^*(u_{1,0}),
\]
where $u_{1,0}=\tilde \mu(u_2)$.
\end{lem}

\begin{proof}
This lemma is straightforward. Let $t_q\in \widetilde T^{W_P}$ be the unique element with $\alpha_{n-k}(t_q)=q$. Recall the isomorphism $\psi_R$ from  \eqref{e:psiR}. We define an analogous isomorphism $\psi_L$ by the following commutative diagram
\[
\xymatrix{
\Xcheck\x\C^*_q &
\ar[l]_(0.65){\pi_L\x id} B_-\cap U_+  \dot w_P \dot w_0\inv U_+\x \C_\qb^* \ar[d]^{\psi} \ar[r]^(0.65){\pi_R\x id} &
\RR\x\C_{\qb}^*\\
& \ar[ul]^{\psi_L} \ar[ur]_{\psi_R}
B_-\cap U_+ \widetilde T^{W_P}\dot w_P\dot w_0\inv U_+ &
}
\]
in which every arrow is an isomorphism;
compare  Proposition~\ref{p:KLS} along with the paragraph preceding it. The connecting isomorphism $\psi$ in the middle is just the map
\[
\begin{array}{ccc}
\psi:  B_-\cap U_+  \dot w_P \dot w_0\inv U_+\x \C_\qb^*  &\to  &  B_-\cap U_+ \widetilde T^{W_P}\dot w_P\dot w_0\inv U_+\\
(b_0,q)&\mapsto &b:=t_q b_0.
\end{array}
\]

We can express $\widetilde W$ as a composition
\[
\begin{array}{ccccccl}
\widetilde W: 
& \Xcheck\x \C^*_{\qb} 
&\overset {\psi_L\inv}\longrightarrow  
& B_-\cap U_+ \widetilde T^{W_P}\dot w_0 U_+ 
&\overset{\widetilde{\mathcal F}} \To & \C \\
& (P\dot w_P\dot w_0\inv u_2,q) &\mapsto &
b=t_q u_{1,0}\dot w_P\dot w_0\inv u_2 &\mapsto &  \widetilde {\mathcal F} (b)=\mathcal F(b\dot w_0 B_-,q),
\end{array}
\]
where $\widetilde{\mathcal F}$ is as in Definition~\ref{d:RichardsonBmodel}. Clearly,
since $t_q u_{1,0}\dot w_P\dot w_0\inv u_2 = u_1 t_q \dot w_P\dot w_0\inv u_2$ for $u_1=t_q u_{1,0}t_q\inv$, we have  
\[\widetilde {\mathcal F} (t_q u_{1,0}\dot w_P\dot w_0\inv u_2)=
\sum_i e_i^*(u_2)+\sum_{i}e_i^*(u_1)=\sum_i e_i^*(u_2)+\sum_{i\ne n-k}e_i^*(u_{1,0})+ q e_{n-k}^*(u_{1,0}),
\]
as required.
\end{proof}

Let  $\Delta^I_J(g)$ denote the minor with row set $I$ and column set $J$, and 
recall that $J_i=[i+1,i+k]$, in interval notation, and $\widehat J_i=[i+1,i+k-1]\cup \{i+k+1\}$.
We then have the following lemma about minors. 

\begin{lem} \label{l:eistar}
Let $u_2\in U_+^{\fact}$ and $u_{1,0}=\tilde \mu(u_2)$ and $b=u_{1,0}\dot w_P\dot w_0\inv u_2\in B_-$. Then
we have
\begin{eqnarray}
e_i^*(u_{1,0})&=&
\begin{cases}
0 & 1\leq i\leq n-k-1; \\
\frac{\Delta_{\widehat J_i}^{[n-k+1,n]}(b)}{\Delta_{J_i}^{[n-k+1 ,n]}(b)} & n-k\leq i\leq n-1.
\end{cases}\label{e:eiu10}
\\
e_i^*(u_2)&=&
\begin{cases}
0 & 1\leq i\leq k-1; \\
\frac{\Delta_{\widehat J_{i-k}}^{[n-k+1,n]}(b)}{\Delta_{J_{i-k}}^{[n-k+1,n]}(b)} & k\leq i\leq n-1. \label{e:eiu2}
\end{cases}
\end{eqnarray}
\end{lem}
\begin{proof}
Since $u_2\in U_+^{\fact}$, we have a factorization $u_2=u(1)\cdots u(k)$ as in Definition~\ref{d:ufact}.
We denote by $v_1,\ldots ,v_n$ the standard basis of
the defining representation $V=\mathbb{C}^n$ for $GL_n(\mathbb{C})$.

We first consider the proof of~\eqref{e:eiu2}.
For $1\leq i\leq k-1$, it follows from the above factorization
of $u_2$ that $u_2\cdot v_{i+1}=v_{i+1}$, so $e_i^*(u_2)=0$ as required.
For $k\leq i\leq n-1$, an inductive argument using
the factorization of $u_2$ shows that
$$u_2\cdot v_{i-k+1}\wedge \cdots \wedge v_{i+1}=0.$$
It follows that
$$\Delta_{[i-k+1 ,i+1]}^{[1,k]\cup \{i\}}(u_2)=0.$$
Expanding this minor along the last row and noting that
$u_2$ is upper unitriangular, this implies that
$$\Delta_{[i-k+1 ,i-1]\cup \{i+1\}}^{[1,k]}(u_2)-\Delta_{[i-k+1,i]}^{[1,k]}(u_2)e_i^*(u_2)=0.$$
Hence, using the fact that $b=u_{1,0}\dot w_P\dot w_0^{-1}u_2$,
$$e_i^*(u_2)=
\frac{ \Delta_{[i-k+1,i-1]\cup \{i+1\}}^{[1, k]}(u_2)}
{\Delta_{[i-k+1,i]}^{[1,k]}
(u_2)}=\frac{\Delta_{\widehat J_{i}}^{[n-k+1,n]}(b)}{\Delta_{J_i}^{[n-k+1 ,n]}(b)}.$$
We now consider the proof of~\eqref{e:eiu10}.
The explicit factorization of $u_2$ implies that
the matrix $u_2^{-1}\dot w_0 \dot w_P^{-1}$ has the form
$\begin{pmatrix} A & B \\ C & D \end{pmatrix},$
where $A$ is a $k\times (n-k)$ matrix with zeros above the 
leading diagonal (i.e.\ the entries $A_{ij}$ with $j>i$ are all zero)
$B$ is a $k\times k$ identity matrix, $C$ is an upper
triangular
$(n-k)\times (n-k)$ matrix with $(-1)^k$ on the diagonal,  and $D$ is a zero $(n-k)\times k$ matrix.

Since $b^{-1}u_{1,0}=u_2^{-1}\dot w_0\dot w_P^{-1}$, we have,
for $1\leq i\leq n-1$, that
$$e_i^*(u_{1,0})=\frac{\Delta_{[1,i-1]\cup\{i+1\}}^{[1,i]}(u_2^{-1}\dot w_0\dot w_P^{-1})}
{\Delta_{[1,i]}^{[1,i]}(u_2^{-1}\dot w_0\dot w_P^{-1})}.$$
If $1\leq i\leq n-k-1$,
this is zero since the entries in the first $i$ rows
of column $i+1$ of $u_2^{-1}\dot w_0\dot w_P^{-1}$ are all zero.
If $n-k+1\leq i\leq n-1$, then,
using the above description of
$u_2^{-1}\dot w_0\dot w_P^{-1}$, we have
\begin{equation*}
e_i^*(u_{1,0})=
-\frac{\Delta_{[1,n-k]}^{\{i-n+k\}\cup [i-n+k+2, i]}(u_2^{-1}\dot w_0\dot w_P^{-1})}{\Delta_{[1,n-k]}^{[i-n+k+1,i]}(u_2^{-1}\dot w_0\dot w_P^{-1})}.
\end{equation*}
Since $b^{-1}u_{1,0}=u_2^{-1}\dot w_0\dot w_P^{-1}$ and $u_{1,0}\in U^+$, we obtain
\begin{equation*}
e_i^*(u_{1,0}) =
-\frac{\Delta_{[1,n-k]}^{\{i-n+k\}\cup [i-n+k+2,i]}(b^{-1})}{\Delta_{[1 ,n-k]}^{[i-n+k+1,i]}(b^{-1})} 
=
\frac{\Delta_{\widehat{J_i}}^{[n-k+1,n]}(b)}{\Delta_{J_i}^{[n-k+1 ,n]}(b)},
\end{equation*}
as required (using Jacobi's Theorem for the minors of an inverse matrix).
A similar argument can be made in the case $i=n-k$.
\end{proof}

Proposition~\ref{p:comparisonMRR} follows from diagram~\eqref{e:Wtildediagram}, Lemma~\ref{l:wtildeformula}
and Lemma~\ref{l:eistar}, since the $\frac{\Delta_{\widehat{J_{i}}}^{[n-k+1 ,n]}(b)}{\Delta_{J_{i}}^{[n-k+1,n]}(b)}$ are nothing other than the summands of $W(Pb)$ as defined  in \eqref{e:W}. Therefore this concludes the proof of Proposition~\ref{p:comparisonMRR}. 

In these last three sections we have proved that we have an isomorphism $\pi_R\circ\pi_L\inv:\Xcheck\to \mathcal R$ (see Proposition~\ref{p:KLS}), and that under this isomorphism the superpotentials $\mathcal F$ and $W$ are identified (see Proposition~\ref{p:comparisonMRR}). Also we have demonstrated an embedding of a $k(n-k)$-dimensional torus $\mathcal T$ into $\Xcheck$ for which $W$ restricts to the Laurent polynomial superpotential $L_q$ (see Proposition~\ref{p:comparisonMREHX}). To finish up the proof of Theorems~\ref{t:EHXcomparison} and \ref{t:twosuperpotentials} from the introduction it remains to compare the holomorphic volume forms on the domains of these three superpotentials. This will be done in Section~\ref{s:omegalemmas}, after we have introduced the cluster structure of the Grassmannian.




\section{The coordinate ring $\C[\Xcheck]$ as a cluster algebra}
\label{s:CAcoordring}

By~\cite[Thm.\ 3]{Scott:Grassmannian}, the homogeneous coordinate ring of the Grassmannian $\Xcheckbar$ has a cluster algebra
structure (see also \cite[\S3]{GSV:ClusterPoisson},\cite[Thm. 4.17]{GSV:Book}). In the latter this cluster algebra structure is shown to induce a cluster algebra structure on $\C[\Xcheck]$. We now recall these constructions.

A skew-symmetric \emph{cluster algebra} with frozen variables is defined as
follows~\cite[\S5]{FoZe:ClusterAlgebrasI}.
Let $r,m\in\mathbb{N}$ and consider the field $\mathbb{F}=\mathbb{C}(u_1,\ldots ,u_{r+m})$
of rational functions in $r+m$ indeterminates.
A \emph{seed} in $\mathbb{F}$ is a pair $(\xx,\widetilde{Q})$
where $\xx=\{x_1,x_2,\ldots ,x_{r+m}\}$ is a set
freely generating $\mathbb{F}$ as a field over $\mathbb{C}$
and $\widetilde{Q}$ is a quiver with vertices
$1,2,\ldots ,r+m$ which has no loops ($1$-cycles) or $2$-cycles.
The vertices $r+1,\ldots r+m$ are said to be \emph{frozen}, and there are no
arrows between them. The corresponding
variables are called \emph{frozen variables}. The subset $\mathbf{x}=\{x_1,x_2,\ldots ,x_r\}$ of $\xx$
is known as a \emph{cluster} while $\xx$ is known as an \emph{extended cluster}.

Given $1\leq k\leq r$, the seed $(\xx,\widetilde{Q})$ can be mutated at $k$ to produce a new
seed $\mu_k(\xx,\widetilde{Q})=(\xx',\mu_k\widetilde{Q})$ where
$\xx'=(\xx\setminus \{x_k\})\cup \{x'_k\}$, and
$$x_kx'_k=\prod_{i\rightarrow k}x_i+\prod_{k\rightarrow i}x_i.$$
The new quiver, $\mu_k\widetilde{Q}$, is obtained from $Q$ as follows:
\begin{enumerate}
\item For every path $i\to k\to j$ in $Q$, add an arrow $i\to j$
(with multiplicity).
\item Reverse all arrows incident with $k$.
\item Remove a maximal collection of $2$-cycles in the resulting quiver.
\end{enumerate}
The \emph{cluster algebra} associated to $(\xx,\widetilde{B})$ is the
$\mathbb{C}$-subalgebra of $\mathbb{F}$ generated by the elements of the
extended clusters which can be obtained from $(\xx,\widetilde{Q})$ by arbitrary
finite sequences of mutations; these
elements are called \emph{cluster variables}. Note that the cluster algebra can be defined over
$\mathbb Z$ or $\mathbb Q$.

Recall that in the $B$-model we are working with $\Xcheckbar=P\backslash GL_n^{\vee}$, a Grassmannian of $k$-planes in $(\C^n)^*$, in its Pl\"ucker embedding.  We have the following:
\begin{thm} \cite[Thm. 3]{Scott:Grassmannian}
(see also~\cite[\S3]{GSV:ClusterPoisson},\cite[Thm. 4.17]{GSV:Book}).
The homogeneous coordinate ring $\C[\Xcheckbar]$ is a
cluster algebra.
\end{thm}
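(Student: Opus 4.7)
The approach I would take follows Scott's strategy: exhibit an explicit initial seed $(\mathbf{x},\widetilde{Q})$ whose extended cluster consists entirely of Pl\"ucker coordinates, check that the exchange relation at each mutable vertex is a short Pl\"ucker relation (so that mutations produce new Pl\"ucker coordinates), and then argue that iterated mutation reaches every $\p_\lambda$ for $\lambda\in\Pn_{k,n}$. Combined with standard facts, this identifies the cluster algebra with $\C[\Xcheckbar]$.

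A convenient initial seed is the \emph{rectangles seed}. Take the extended cluster to be $\{\p_{i\times j}\colon 1\le i\le n-k,\ 1\le j\le k\}$, with the boundary rectangles (those touching the right or bottom edge of the $(n-k)\times k$ grid, which are precisely the cyclic-interval coordinates $\p_{\mu_i}$) declared frozen. The quiver $\widetilde{Q}$ has one vertex per rectangle, horizontal and vertical arrows along the grid, and diagonal arrows of the form $\p_{(i-1)\times(j-1)}\to\p_{i\times j}$; the orientation is chosen so that the mutation binomial at an interior $(i,j)$ is
\[
\p_{(i+1)\times j}\,\p_{i\times(j-1)} \;+\; \p_{(i-1)\times j}\,\p_{i\times(j+1)}.
\]
One then verifies, using the three-term Pl\"ucker relation
\[
\p_{I\cup\{a,c\}}\,\p_{I\cup\{b,d\}} \;=\; \p_{I\cup\{a,b\}}\,\p_{I\cup\{c,d\}} \;+\; \p_{I\cup\{a,d\}}\,\p_{I\cup\{b,c\}}
\]
applied to the $k$-subset corresponding to $i\times j$ with $a,b,c,d$ chosen from the neighbouring rows/columns, that the mutated cluster variable is again a Pl\"ucker coordinate (specifically, one attached to a near-rectangle partition). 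This gives the first round of non-rectangular Pl\"ucker coordinates as cluster variables.

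The main combinatorial step is to show that \emph{every} Pl\"ucker coordinate $\p_\lambda$, $\lambda\in\Pn_{k,n}$, appears as a cluster variable in some seed obtained by finitely many mutations from the rectangles seed. I would organise this inductively, e.g.\ on an appropriate notion of distance from a rectangle, building the partition $\lambda$ up column by column (or box by box along its border) and producing at each stage a short Pl\"ucker relation that realises the next $\p_\lambda$ as a mutation exchange. Alternatively, one can invoke Postnikov's plabic-graph / alternating-strand combinatorics: each reduced plabic graph for the top positroid cell gives a seed of Pl\"ucker coordinates, square moves correspond to cluster mutations, and connectivity of the plabic-graph moves together with the fact that every $\binom{[n]}{k}$-subset labels some face of some plabic graph gives the required exhaustion.

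Once step three is done, equality of the cluster algebra with $\C[\Xcheckbar]$ follows quickly. The homogeneous coordinate ring is generated by the $\p_\lambda$, so containment of $\C[\Xcheckbar]$ in the cluster algebra is immediate from step three. The reverse containment follows because every cluster variable is, by construction, a Pl\"ucker coordinate (hence an element of $\C[\Xcheckbar]$) and the Laurent phenomenon guarantees that all cluster monomials likewise lie in $\C[\Xcheckbar]$, viewed inside the appropriate localisation. The main obstacle is the reachability argument of step three: without it one only knows the cluster algebra contains the rectangular $\p_{i\times j}$ and their first mutations. Handling this cleanly is what forces one either to do a careful mutation-sequence induction or, more conceptually, to import the plabic-graph technology of Postnikov and Scott.
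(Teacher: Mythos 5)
First, a point of comparison: the paper does not prove this theorem at all --- it quotes it from Scott and then, in Definition~\ref{defn:Postnikov} and Theorems~\ref{theorem:Postnikov} and~\ref{thm:Grassmanniancluster}, recalls the Postnikov-diagram description of the seeds that Scott's proof produces. Measured against Scott's actual argument, your sketch has the right architecture for one half of the statement: seeds whose extended clusters consist of Pl\"ucker coordinates (indexed by Postnikov diagrams/plabic graphs), geometric exchange realising mutation as a three-term Pl\"ucker relation, and Postnikov's results that every $k$-subset labels a face of some diagram and that all diagrams are connected by geometric exchanges. That gives $\C[\Xcheckbar]\subseteq\A(D)$, and you correctly identify this reachability statement as requiring Postnikov's machinery.

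The genuine gap is in the reverse containment, where you assert that ``every cluster variable is, by construction, a Pl\"ucker coordinate.'' This is false as soon as $\min(k,n-k)\ge 3$: not every seed arises from a Postnikov diagram, because a mutable vertex of a diagram seed corresponds to a geometric exchange only when the face is quadrilateral, and mutating at a vertex of higher degree leaves the family of Pl\"ucker seeds. Concretely, in your rectangles seed for $Gr_3(6)$ the diagonal arrows you introduce make the vertex $2\times2$ have degree six, so its exchange binomial is a sum of two \emph{triple} products involving $\p_{1\times1}$ and $\p_{3\times3}$ (not the two-by-two expression you display, and not a short Pl\"ucker relation), and the mutation there produces one of the two degree-two cluster variables of $Gr_3(6)$ that are not Pl\"ucker coordinates (the cluster type is $D_4$, with $16$ mutable cluster variables against only $14$ mutable Pl\"ucker coordinates). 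Hence $\A(D)\subseteq\C[\Xcheckbar]$ cannot be read off from the generators, and the Laurent phenomenon does not close the gap: Laurentness in a fixed cluster only yields regularity on the corresponding cluster torus, not on the affine cone over $\Xcheckbar$. Closing this direction is the genuinely hard part of Scott's proof; it requires showing each cluster variable is regular off a codimension-two subset and invoking normality, equivalently the Berenstein--Fomin--Zelevinsky upper-bound theorem --- the same codimension-two-plus-Hartogs device this paper itself uses in the proof of Lemma~\ref{l:vectorfield}.
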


We follow~\cite{Scott:Grassmannian}, which describes a cluster structure on
$\C[\Xcheckbar]$ in terms of \emph{Postnikov diagrams}, i.e.
\emph{alternating strand diagrams} from~\cite[Defn. 14.1]{Postnikov:Totalpositivity}.
We restrict here to the Postnikov diagrams arising in the cluster structure
of the Grassmannian.

\begin{defn} \label{defn:Postnikov}
A \emph{Postnikov diagram} of type $(k,n)$
consists of a disk $\mathbb{D}$ with $2n$ marked points
\sloppy $b_1,b'_1,b_2,b'_2,\ldots ,b_n,b'_n$ marked clockwise on its boundary,
together with $n$ smooth oriented curves in the disk, known as \emph{strands}.
Strand $i$ starts at $b_i$ or $b'_i$ and ends at $b_{i+k}$ or $b'_{i+k}$.
Here we regard strands (and thus the subscripts
of the $b_i$) as elements of $[1,n]$ interpreted
modulo $n$.
The arrangement must satisfy the following additional conditions:

\begin{enumerate}[(a)]
\item Only two strands can intersect at any given point and all such
crossings must be transversal.
\item There are finitely many crossing points.
\item If strand $i$ starts at $b_i$
(respectively, $b'_i$), the first strand crossing it (if such a strand exists) comes from the right (respectively, left). Similarly, if strand $i$ ends at $b_{\reduce{i+k}}$ 
(respectively, $b'_{\reduce{i+k}}$), the last string crossing 
it (if such a strand
exists) comes from the right (respectively, left). Following a strand from
its starting point to its ending point, the crossings alternate between left and
right.
\item A strand has no self-crossings.
\item Suppose two strands meet at more than one point. For any two distinct intersection points $p$ and $q$ one strand must be oriented from $p$ to $q$ and the other from $q$ to $p$.  
\end{enumerate}

Postnikov diagrams are considered up to \emph{isotopy} (noting that such an isotopy can neither create nor delete crossings).
One may also consider  \emph{twisting/untwisting moves} and \emph{boundary twists}; see Figures~\ref{fig:untwistingmove} and~\ref{fig:boundarytwist} (note that these are not isotopies).
Two Postnikov diagrams are said to be equivalent if one can be obtained from the other using a sequence of such moves.
These moves are local in the sense that no other strands must
cross the strands involved in the area where the rule is applied.

\begin{figure}
\includegraphics[width=6cm]{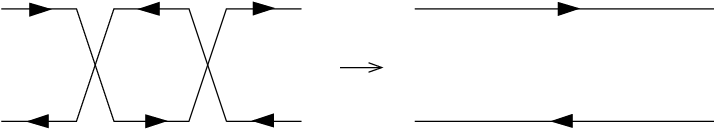}
\caption{The twisting/untwisting move.}
\label{fig:untwistingmove}
\end{figure}

\begin{figure}
\psfragscanon
\psfrag{bi}{$b_i$}
\psfrag{bip}{$b'_i$}
\includegraphics[width=6cm]{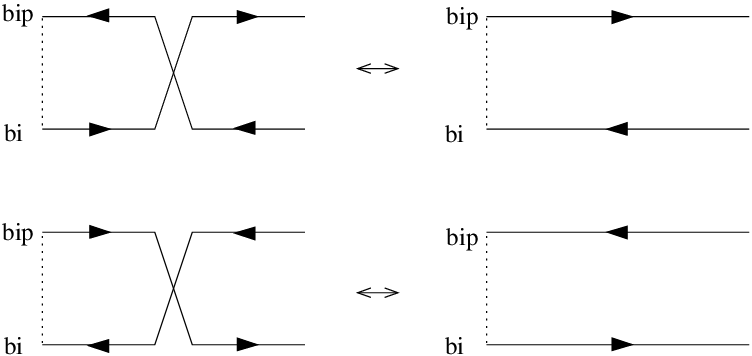}
\caption{The boundary twist.}
\label{fig:boundarytwist}
\end{figure}
\end{defn}

When actually drawing Postnikov diagrams, we usually drop the labels of the
vertices $b_i$ and $b'_i$ and instead indicate the start of strand $i$ by writing
an $i$ in a circle (i.e.\ at $b_i$ or $b'_i$) and the end of strand $i$ (i.e.\ at
$b_{\reduce{i+k}}$ or $b'_{\reduce{i+k}}$) by writing $i$ in a rectangle.
We draw a dotted line between $b_i$ and $b'_i$ to make it clearer where they are.
Thus each $i$ in a rectangle should be linked by a dotted line to $\reduce{i+k}$ in a circle.
For an example of a Postnikov diagram, of type $(3,6)$,
see Figure~\ref{fig:post36}.

\begin{figure}
\psfragscanon
\psfrag{1s}{\pscirclebox{$1$}}
\psfrag{1e}{\psframebox{$1$}}
\psfrag{2s}{\pscirclebox{$2$}}
\psfrag{2e}{\psframebox{$2$}}
\psfrag{3s}{\pscirclebox{$3$}}
\psfrag{3e}{\psframebox{$3$}}
\psfrag{4s}{\pscirclebox{$4$}}
\psfrag{4e}{\psframebox{$4$}}
\psfrag{5s}{\pscirclebox{$5$}}
\psfrag{5e}{\psframebox{$5$}}
\psfrag{6s}{\pscirclebox{$6$}}
\psfrag{6e}{\psframebox{$6$}}
\psfrag{p}{$\emptyset$}
\includegraphics[width=8cm]{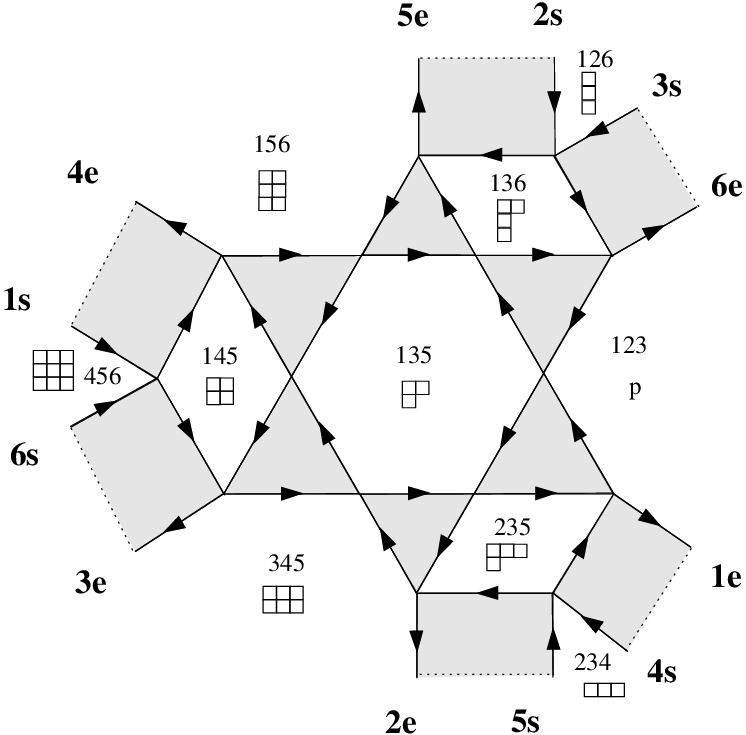}
\caption{A Postnikov diagram for $Gr_3(6)$.}
\label{fig:post36}
\end{figure}

The complement of a Postnikov diagram in the disk is a disjoint union of disks, called \emph{faces} (note that faces often appear as polygons in the Figures, e.g.\ Figure~\ref{fig:post36}).
A face whose boundary includes part of the boundary of the disk $\mathbb D$ is
called a \emph{boundary face}. A face whose boundary (excluding the boundary of $\mathbb D$) is oriented (respectively, alternating) is said to be an \emph{oriented}
(respectively, \emph{alternating}) \emph{face}; it is easy to check that all faces
are of one of these types.

We label each alternating face $F$ with the subset $L(F)$ of
$[1,n]$ which contains $i$ if and only if $F$ lies to the left of
strand $i$. The corresponding Pl\"ucker coordinate is denoted by $p_F=p_{L(F)}$.

The \emph{geometric exchange} on a Postnikov diagram is the local move shown in Figure~\ref{fig:flipmove}.

\begin{figure}
\centering
\includegraphics[width=6cm]{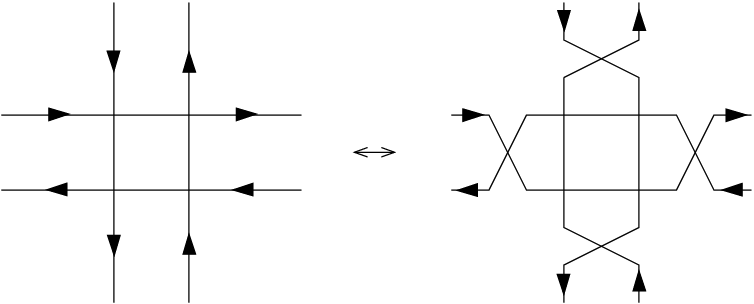}
\caption{Geometric Exchange.}
\label{fig:flipmove}
\end{figure}

We recall the following (see~\cite[Props. 5 and 6]{Scott:Grassmannian}; see also~\cite[\S1]{ops} for more recent developments).

\begin{thm}[Postnikov] \label{theorem:Postnikov}
\begin{enumerate}
\item[(a)]
Each Postnikov diagram of type $(k,n)$ has exactly $k(n-k)+1$ alternating faces.
\item[(b)]
Each alternating face is labelled by a $k$-subset of
$[1,n]$.
\item[(c)]
Every $k$-subset of $[1,n]$ appears as the label of an alternating face in some Postnikov diagram of type $(k,n)$.
\item[(d)]
Any two Postnikov diagrams of type $(k,n)$ (up to equivalence) are connected by a sequence of geometric exchanges.
\item[(e)] The labels of the faces on the boundary of any Postnikov diagram
are the $L_i=[\reduce{i-k+1},i]$ for $i=1,2,\ldots ,n$.
Indeed, $L_{i}$ labels the boundary face between $b'_i$ and $b_{\reduce{i+1}}$.
\end{enumerate}
\end{thm}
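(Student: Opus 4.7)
The statement bundles five claims of rather different character, so my plan is to treat (a), (b), (e) by direct diagrammatic analysis and to address (c), (d) via Postnikov's correspondence between reduced diagrams and decorated permutations.

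For part (a), I would apply Euler's formula to the planar graph underlying the diagram. The vertex set consists of the $2n$ boundary marked points together with the crossings, so $V = 2n + c$, where $c$ denotes the number of crossings. There are $2n$ boundary arcs between consecutive marked points, and each strand with $d_i$ crossings contributes $d_i + 1$ interior segments; since $\sum_i d_i = 2c$, this yields $E = 2c + 3n$, and (including the outer face) $F = n + c + 2$, so the number of interior faces is $n + c + 1$. The first substantive task is to establish $c = k(n-k)$ under the axioms of Definition~\ref{defn:Postnikov}; I would prove this either by a direct strand-pair analysis showing that strands $i$ and $j$ are forced to cross exactly once when their indices interleave appropriately, or, once (d) is in hand, by reduction to an explicit ``standard'' Postnikov diagram where the count is transparent. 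It then remains to identify exactly $n$ oriented faces, one sitting at each boundary arc $(b_i, b'_i)$ forced by the boundary twist axiom, so that the remaining $k(n-k)+1$ interior faces must be alternating.

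For (b) and (e), I would argue locally. Part (e) follows by tracking which strands place the arc between $b'_i$ and $b_{i+1}$ on their left: precisely those strands $j$ whose endpoint (at $b_{j+k}$ or $b'_{j+k}$) lies on the short side of the arc and whose starting point is on the far side, pinning down $j \in \{i-k+1, \dots, i\}$. Hence $L(F) = L_i$ and in particular $|L(F)| = k$. Part (b) then follows from (e) by induction using (d): at each step a geometric exchange preserves the cardinality of alternating face labels (one checks this directly from the local picture of Figure~\ref{fig:flipmove}), so the boundary case $|L(F)| = k$ propagates to every alternating face of every diagram.

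For (c) and (d), my plan is to follow Postnikov's strategy via decorated permutations. Each diagram determines a decorated permutation $\pi$ of $\{1, \dots, n\}$ by sending each strand's starting label to its endpoint label (with a colour recording whether the endpoint is primed), and two diagrams are equivalent under isotopy, boundary twist and untwisting precisely when they yield the same $\pi$. Under this bijection, geometric exchange corresponds to a square move on the associated plabic graph, and (d) reduces to a Matsumoto-type rigidity statement: any two reduced realisations of a fixed $\pi$ are connected by such square moves. This rigidity step is the main obstacle; its proof requires a careful local-to-global normal form argument comparing two reduced diagrams strand by strand and using a double induction on the number of differing crossings together with a ``resolution'' procedure that eliminates local disagreements. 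For (c), one identifies the collection of face labels of a diagram with the Grassmann necklace of $\pi$, and observes that for each prescribed $k$-subset $I$ there exists a decorated permutation whose Grassmann necklace contains $I$, produced for instance by a cyclic-rotation construction starting from $I$; given (d), this suffices.
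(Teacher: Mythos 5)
The paper does not prove this theorem at all: it is quoted from \cite[Props.~5 and 6]{Scott:Grassmannian} (ultimately from Postnikov \cite{Postnikov:Totalpositivity}), so your proposal is being measured against those sources rather than against anything in the text. Judged on its own terms, the proposal has genuine errors, not just deferred steps.

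In part (a), the Euler count $F_{\mathrm{int}}=n+c+1$ is correct, but both constants you then plan to establish are wrong. The number of crossings of a type-$(k,n)$ diagram is \emph{not} $k(n-k)$, and the number of oriented faces is \emph{not} $n$: for $k=1$ the unique diagram (Remark~\ref{r:post1n}, Figure~\ref{fig:postnikovk1}) has $c=0$ crossings and a single oriented face, while $k(n-k)=n-1$; your two claims would give $n-1+n=2n-1$ extra faces where there are only $1$. Moreover $c$ is not even an invariant of the equivalence class, since the twisting move changes it by $2$ (what is invariant is the number of \emph{alternating} faces). So the Euler-characteristic route cannot be closed as described; the correct counts (for the reduced representative) are $c=n(\min(k,n-k)-1)+\binom{?}{?}$-type quantities coming from counting interleaving chords of the permutation $i\mapsto i+k$, and the oriented faces number $\min(k,n-k)^2$, not $n$. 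In part (c) there is a second substantive error: the collection of \emph{all} face labels of a diagram is not the Grassmann necklace --- the necklace consists only of the $n$ boundary labels $L_i$ of part (e) --- and every Postnikov diagram of type $(k,n)$ realises the \emph{same} decorated permutation $i\mapsto i+k$, so you cannot produce a diagram containing a prescribed label $I$ by choosing a different permutation. What is actually needed (and what Scott does) is an explicit construction, for each $k$-subset $I$, of a type-$(k,n)$ diagram having $I$ as a face label. Smaller issues: your derivation of (b) from (d) is circular in flavour and unnecessary --- the direct argument is that two alternating faces meeting at a crossing have labels differing by exchanging one element, so all labels have the cardinality $k$ forced by the boundary faces of (e) --- and (d), the genuinely hard move-connectedness statement, is only named, not proved.
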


A Postnikov diagram of type $(k,n)$ encodes a seed for the cluster algebra structure of the homogeneous coordinate ring of the Grassmannian $\Xcheckbar$ as follows.
Scott~\cite[Sect.\ 5]{Scott:Grassmannian} defines a quiver $Q=Q(D)$ for any
Postnikov diagram $D$. The vertices of $Q$ are the alternating faces of $D$.
The arrows between vertices correspond to
points of incidence of the corresponding faces, such
that whenever two faces $X,Y$ of $D$ are related as in Figure~\ref{fig:neighbourfaces}
there is an arrow in $Q$ from $X$ to $Y$.

We regard $Q$ as being embedded in $\mathbb{D}$, with each vertex mapping
to a point in the middle of the corresponding alternating face and each
arrow drawn as a line between its endpoints passing through the corresponding point of incidence of the corresponding faces.
We will consider the label of an alternating
region to also label the corresponding vertex in $Q$. We refer to the vertices
$L_i$ of $Q$ as its \emph{boundary vertices}.

\begin{figure}
\psfragscanon
\psfrag{I}{$I$}
\psfrag{J}{$J$}
\psfrag{D}{$D$}
\psfrag{Q(D)}{$Q(D)$}
\includegraphics[width=3cm]{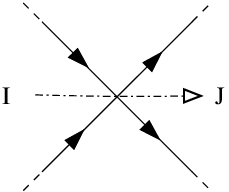}
\caption{Neighbouring faces in a Postnikov diagram.}
\label{fig:neighbourfaces}
\end{figure}

We consider the field $\mathbb{F}$ obtained by adjoining to $\mathbb{C}$ indeterminates
$u_I$ for $I$ the label of an alternating face in $D$.
Let $\xx(D)$ be the free generating set for $\mathbb{F}$ containing these
indeterminates $u_I$ for $I$ coming from $D$. We regard the indeterminates corresponding to boundary
faces (the $L_i$) as frozen variables. Note that there are $k(n-k)-n+1$
non-frozen variables and $n$ frozen variables, making a total of $k(n-k)+1$
variables. Each variable is naturally associated to an alternating face of
$D$ and thus to a vertex of $Q(D)$.

\begin{defn}
Fix a Postnikov diagram $D_0$ of type $(k,n)$. We set $\A$ to be the cluster algebra corresponding to the seed
$(\xx(D_0),Q(D_0))$.

Recall for a $k$-subset $I$ of $[1, n]$ there is an associated Pl\"ucker coordinate denoted by $p_I$; see Section~\ref{s:GrassmannianBmodel}. For a general Postnikov diagram $D$ we denote by
$\CI(D)$ the set of Pl\"ucker coordinates labelling non-boundary alternating faces of $D$, and by $\CC(D)$ the set of Pl\"{u}cker coordinates labelling arbitrary alternating faces of $D$.
\end{defn}

\begin{thm} \cite[Thm.\ 2]{Scott:Grassmannian} \label{thm:Grassmanniancluster}
\begin{enumerate}[(a)]
\item
There is an isomorphism $\varphi$ from $\C[\Xcheckbar]$ to $\A$ taking
$p_I$ to $u_I$ for each $p_I$ in $\CC(D_0)$.
\item
Let $D$ be an arbitrary Postnikov diagram of type $(k,n)$.
Then $S(D):=(\varphi(\CC(D)),Q(D))$ is a seed of $\A$.
\item
If $D,D'$ are two Postnikov diagrams of type $(k,n)$ related by a geometric
exchange corresponding to a quadrilateral face $X$ of $D'$ then $S(D')$ is
the mutation at $p_X$ of $S(D)$.
\end{enumerate}
\end{thm}

We see that the elements $u_I=\varphi(p_I)$ of $\A$, for $I$ a $k$-subset of $[1,n]$
are all cluster variables.

Let $\A'$ be the cluster algebra defined the same way as for $\A$ except that the elements $u_{J_i}^{-1}$, for $i=1,2,\ldots ,n$, of $\mathbb{F}$
are added to the generating set. Thus $\A'$ is the localisation of $\A$
obtained by adjoining inverses to the elements $u_{J_i}$
(see~\cite[Sect.\ 3.4]{GSV:Book}). Recall that $\Xcheck$ is defined to be the
subset of $\Xcheckbar$ where the $p_{J_i}$ do not vanish.
We have the following:

\begin{prop} \label{prop:Invertedcoefficients}
\begin{enumerate}
\item[(a)]
There is an isomorphism $\varphi'$ from
$\mathbb{C}[\Xcheck]$ to $\A'$, taking
$p_I$ to $u_I$ for each $p_I$ in $\CC(D_0)$.
\item[(b)]
Let $D$ be an arbitrary Postnikov diagram of type $(k,n)$.
Then $S(D):=(\varphi(\CC(D)),Q(D))$ is a seed of $\A'$.
\item[(c)]
If $D,D'$ are two Postnikov diagrams of type $(k,n)$
related by a geometric exchange corresponding to a quadrilateral face $X$
of $D$ then $S(D')$ is the mutation at $p_X$ of $S(D)$.
\end{enumerate}
\end{prop}

The proof of this result involves applying~\cite[Prop.\ 
3.37]{GSV:Book} to get (a) (see also~\cite[Prop.\ 11.1]
{FoZe:ClusterAlgebrasII}). Parts (b) and (c) can be
shown by following the proof in~\cite{Scott:Grassmannian}
of Theorem~\ref{thm:Grassmanniancluster}.

\begin{defn} We identify $\C[\Xcheck]$ with $\mathcal A'$ via the isomorphism $\varphi'$. We refer to any seed $(\CC(D), Q(D))$ associated to a Postnikov diagram $D$ as a Postnikov seed and call $\CC(D)$ a Postnikov extended cluster. The set of cluster variables contains the Pl\"{u}cker coordinates $p_I$.
The frozen variables are the $p_I$, where $I$ is a cyclic interval.
\end{defn}

\begin{rem} \label{r:post1n}
If $k=1$ or $n-1$, there is a unique Postnikov diagram of  
type $(k,n)$ up to equivalence. It can be chosen to have no 
crossings at all. The case $k=1$, $n=6$ is shown in Figure~
\ref{fig:postnikovk1}.
\end{rem}

\begin{figure}
\psfragscanon
\psfrag{1s}{\pscirclebox{$1$}}
\psfrag{1e}{\psframebox{$1$}}
\psfrag{2s}{\pscirclebox{$2$}}
\psfrag{2e}{\psframebox{$2$}}
\psfrag{3s}{\pscirclebox{$3$}}
\psfrag{3e}{\psframebox{$3$}}
\psfrag{4s}{\pscirclebox{$4$}}
\psfrag{4e}{\psframebox{$4$}}
\psfrag{5s}{\pscirclebox{$5$}}
\psfrag{5e}{\psframebox{$5$}}
\psfrag{6s}{\pscirclebox{$6$}}
\psfrag{6e}{\psframebox{$6$}}
\psfrag{1}{$1$}
\psfrag{2}{$2$}
\psfrag{3}{$3$}
\psfrag{4}{$4$}
\psfrag{5}{$5$}
\psfrag{6}{$6$}
\includegraphics[width=5cm]{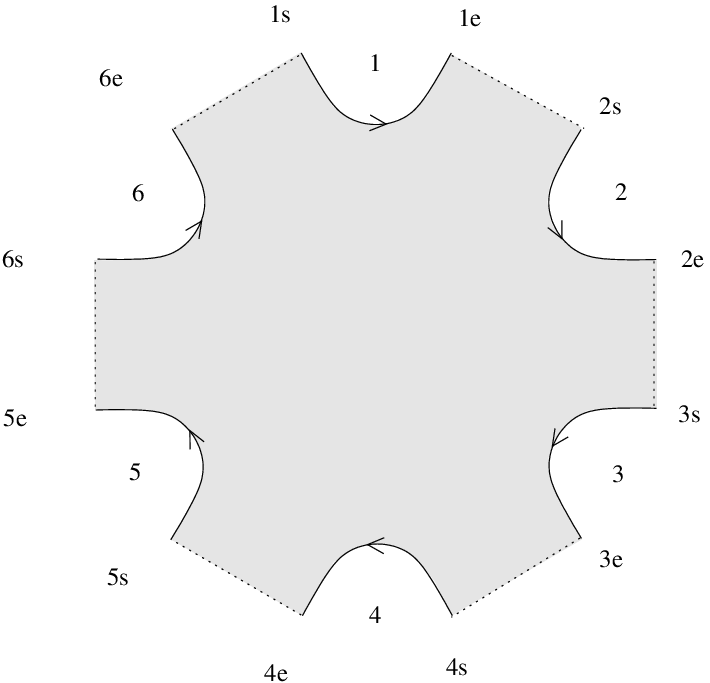}
\caption{A Postnikov diagram for $Gr(1,6)$.}
\label{fig:postnikovk1}
\end{figure}

 \section{The three versions of the holomorphic volume form}\label{s:omega}

In this section we conclude the proof of the comparison theorems~\ref{t:EHXcomparison} and \ref{t:twosuperpotentials}\label{s:omegalemmas}
which was begun in Section~\ref{s:mirrorGr}.
To do this it remains to compare the holomorphic volume forms on the domains, $\mathcal T,\Xcheck$ and  $\RR$, of the three superpotentials $L_{\qb}, W_{\qb}$ and $\mathcal F_{\qb}$. Recall that these domains are related by maps
\begin{equation}\label{e:compatiblemaps}
\mathcal T \overset\iota{\hookrightarrow} 
\Xcheck \overset{\pi_L\inv} {\longrightarrow}  B_-\cap U_+\dot w_P\dot w_0\inv U_+  \overset{\pi_R}\longrightarrow \RR,
\end{equation}
see Section~\ref{s:RichardsonBmodel}. We begin by recalling how each of the holomorphic volume forms is constructed.

\begin{defn}[the holomorphic volume form on $\mathcal T$ \cite{Giv:QToda}]\label{d:omegaT}
The domain $\mathcal T$ of the Laurent polynomial superpotential $L_q$ is naturally a torus. The holomorphic volume form $\omega_{\mathcal T}$ is the standard invariant volume form on the torus,
\begin{equation}\label{e:omegaT}
\omega_{\mathcal T}= \bigwedge_{i,j} \frac{d \dd_{ij}}{\dd_{ij}},
\end{equation}
where the wedge product is over pairs $i,j$ with $1,\dotsc, n-k$ and $j=1,\dotsc, k$. The sign is determined by the ordering of the indexing set, which we may choose to be lexicographic.
\end{defn}

\begin{defn}[the holomorphic volume form on $\Xcheck$]\label{d:omegaX}
We define $\omega_\Xcheck$ to be a choice of holomorphic volume form on $\Xcheck$ which extends to a meromorphic form on $\Xcheckbar$ with degree one poles along the $\Z/n\Z$-invariant anticanonical divisor $D$ from \eqref{e:acdivisorDef}. The normalisation will be chosen after Lemma~\ref{l:omegaXlemma}. 
\end{defn}

\begin{defn}[the holomorphic volume form on $\RR$ \cite{Rie:MSgen}]\label{d:omegaR}
The holomorphic volume form on $\RR$ is defined in outline as follows; for details we refer to {\cite[Section~7]{Rie:MSgen}}.  Choose a reduced expression $s_{i_1}\dotsc s_{i_N}$ of $w_0$, recorded by the sequence $\mathbf i=(i_1,\dotsc, i_N)$. Associated to $\mathbf i$ there is an open torus $\RR_{\mathbf i}\cong (\C^*)^{k(n-k)}$ consisting of `factorised' elements $u_1 \dot w_P B_- $ in $\RR$; see~\cite{MarRie:Parametrizations}. 
We let $z^{(\mathbf i)}_j$ denote the coordinates on $\RR_{\mathbf i}$, and consider the standard invariant holomorphic volume form \[
\omega_{\mathbf i}=\bigwedge_j \frac{dz_j^{(\mathbf i)}}{z_j}.
\] 
Note that the reduced expression $\mathbf i$ implies a natural ordering on these coordinates. 
It is shown in \cite[Proposition~7.2]{Rie:MSgen} that this form $\omega_{\mathbf i}$ extends to $\mathcal R$ and (up to sign) is independent of the  initial choice of reduced expression $\mathbf i$. 

For example in the case where $\Xcheck=Gr_3(\C^5)$ and $\mathbf i=(1,2,3,4,1,2,3,1,2,1)$ we have 
\begin{equation}\label{e:Riexample}
\RR_{\mathbf i}=
\{ x_1(m_1)x_2(m_2)\dot s_3\dot s_4 x_1(m_5)x_2(m_6)\dot s_3 x_1(m_8) x_2(m_9)\dot s_1 B_- \mid m_j\in \C^*
\},
\end{equation}
and the form $\omega_\RR$ is the extension of the form $\bigwedge_j \frac{dm_j}{m_j}$ from $\RR_{\mathbf{i}}$ to $\RR$.
\end{defn}
Using the maps in \eqref{e:compatiblemaps} we can pull back each one of the three forms to $\mathcal T$. For any two of the forms we say they are {\it {compatible}} if their pullbacks to $\mathcal T$ agree up to a nonzero scalar. 

\begin{prop}\label{p:compatibleforms} The forms $\omega_{\mathcal T},\omega_\Xcheck$ and $\omega_{\RR}$ are compatible.
\end{prop}

Once we have proved this proposition the normalisations of $\omega_{\Xcheck}$ and $\omega_\RR$ can be chosen so that the pullbacks actually agree with $\omega_{\mathcal T}$, and the proof of Theorems~\ref{t:EHXcomparison} and \ref{t:twosuperpotentials} will be complete. Our strategy for proving that the holomorphic volume forms $\omega_{\mathcal T},\omega_\Xcheck$ and $\omega_{\RR}$ are compatible is to make use as much as possible of the symmetries of $\Xcheck$, and of the rectangles cluster.  

We begin with the form $\omega_\Xcheck$. Recall the open subset $\Xcheckfact$ of $\Xcheck$, and the maps introduced in Sections~\ref{s:comparisonMRR} and \ref{s:LaurentBmodel},
\begin{equation*}\begin{array}{ccccc}
\mathcal T &\overset \sim \To &\dot w_P \dot w_0\inv U^{fact}_+ &  \overset \sim\longrightarrow &\Xcheckfact\\
(z_{ij}) &\mapsto & g=\dot w_0\inv g_{(z_{ij})} &\mapsto &Pg.  
\end{array}
\end{equation*}
Let $\omega_{\Xcheckfact}$ denote the holomorphic volume form on $\Xcheckfact$ which agrees with $\omega_{\mathcal T}$ under the above isomorphism $\mathcal T\overset\sim\To\Xcheckfact$. Proposition~\ref{p:comparisonMREHX} implies that $\omega_{\Xcheckfact}$ is given by 
\begin{equation}\label{e:omegaXfact}
\omega_{\Xcheckfact}=\bigwedge_{i,j}\frac{d\,\p_{i\x j}}{\p_{i\x j}},
\end{equation}
where the indexing set is as for $\omega_\mathcal T$ in \eqref{e:omegaT}. 

\begin{lem} \label{l:omegaXlemma}
The holomorphic volume form $\omega_{\Xcheckfact}$ extends to a meromorphic form $\omega_{\Xcheckbar}$ on $\Xcheckbar$ which is regular on $\Xcheck$.
Moreover $\omega_{\Xcheckbar}$  has poles of order one along the divisor $D$. 
\end{lem}

\begin{proof}
The form $\omega_{\Xcheckfact}$ extends to a meromorphic form on $\Xcheckbar$ as a consequence of \eqref{e:omegaXfact}, and we need to analyse its poles. We denote this meromorphic form by $\omega_{\Xcheckbar}$.
Note that the rectangular Pl\"{u}cker coordinates form an extended cluster in $\mathbb{C}[\Xcheck]$
by~\cite[Lemma 8.1]{MarSco}, since, for $1\leq i\leq n-k$,
$1\leq j\leq k$,
the $k$-subset $J_{i\times j}$ associated to $p_{i\times j}$ is
given by
\begin{equation}\label{e:Jixj}
J_{i\times j}=[1,k-j]\cup [i+k-j+1,i+k],
\end{equation}
and hence coincides with the $k$-subset $M_{k,n}(k-j,i)$ defined in~\cite[Lemma 8.3]{MarSco}.

It follows that $\omega_{\Xcheckbar}$ is regular and nonvanishing on $\Xcheck$ by an argument entirely analogous to the one used in the construction of $\omega_{\RR}$ in \cite[Section~7]{Rie:MSgen}, which is now standard in the context of cluster algebras; see for example Section~13 of \cite{Lam:TNGlectures} and references therein. Namely, by applying mutations to $\omega_{\Xcheckbar}$ one can show that restriction of $\omega_{\Xcheckbar}$ to  any cluster torus is given by  the same formula, up to sign, in terms of the corresponding cluster variables. So $\omega_{\Xcheckbar}$ is regular and nonvanishing on the union of the cluster tori. To extend  one uses that the union of cluster tori has complement of codimension $\ge 2$ inside $\Xcheck$. This shows that $\omega_{\Xcheckbar}$ is regular and nonvanishing on $\Xcheck$.

We now use this compatibility of $\omega_{\Xcheckbar}$ with the cluster structure to show that $\pm\omega_{\Xcheckbar}$ is invariant under the action of $\Z/n\Z$ on $\Xcheckbar$
defined by the cyclic
shift (see Section~\ref{s:GrassmannianBmodel}).
Note that if $1$ is added to the label of each strand in a Postnikov diagram $D$, we obtain another Postnikov diagram $D'$
with the property that the corresponding extended cluster
$\CC(D')$ is the pull-back of $\CC(D)$ under the cyclic shift.
Since $\omega_{\Xcheckbar}$ has the same form (up to sign) in terms of the shifted cluster $\CC(D')$ as it does in terms of $\CC(D)$, it follows that
the $\Z/n\Z$ action on $\Xcheck$ preserves $\omega_{\Xcheckbar}$ up to sign.



We can now determine the poles of $\omega_{\Xcheckbar}$. Since $\omega_{\Xcheckbar}$ is regular and nonvanishing on $\Xcheck$, it must have poles contained in $D$. We pick an irreducible component $D_0$ of $D$ and let $m_0$ be the order of the pole of $\omega_{\Xcheckbar}$ along $D_0$. The other irreducible components of $D$ are obtained from $D_0$ by the $\Z/n\Z$-action. Since $\omega_{\Xcheckbar}$ is $\Z/n \Z$-invariant up to sign, it must have the same order $m_0$ of pole along each component of $D$. Finally, since the index of $\Xcheckbar$ is $n$ it follows that $m_0=1$. Therefore $\omega_{\Xcheckbar}$ has poles of order one along each component of $D$. 
\end{proof}
 
We may set $\omega_{\Xcheck}$ to be the restriction of $\omega_{\Xcheckbar}$ to $\Xcheck$, thus fixing the normalisation which was missing from Definition~\ref{d:omegaX}. The above lemma implies that $\omega_\Xcheck$ is compatible with $\omega_{\mathcal T}$, thus proving the first part of Proposition~\ref{p:compatibleforms}.   
 
Next we turn out attention to $\omega_\RR$.   In order to be able to make the comparison of forms we introduce another symmetry of $\Xcheck$, which is in a sense an obstruction to our isomorphism from $\RR$ to $\Xcheck$  extending to a map from the closure of $\RR$ to $\Xcheckbar$.
Namely, the main ingredient to proving that $\omega_\Xcheck$ and $\omega_\RR$ are compatible will be the involution described in the following proposition. 

For $b\in B_-\dot w_0\cap U_+ \widetilde T^{W_P}\dot w_P U_-$
and $\lambda\in \mathcal P_{k,n}$, let us denote by $P_{\lambda}(b)$
the minor of $B$ with row set $[n-k+1,n]$ and column set $J_{\lambda}$.
We also use the shorthand $P_m(b)=P_{\mu_m}(b)$.

\begin{prop}\label{p:involution}
Transposition of matrices restricts to define an involution $(\quad)^t$ on the subvariety $B_-\dot w_0\cap U_+ \widetilde T^{W_P}\dot w_P U_-$ of $GL_n(\C)$. We conjugate this involution by right multiplication by ${\dot w_0}$ to obtain an involution 
\begin{equation}\label{e:geninvolution}
\begin{array} {lccc}
\Invol:& B_-\cap U_+ \widetilde T^{W_P}\dot w_P \dot w_0\inv U_+ &\To & B_-\cap U_+ \widetilde T^{W_P}\dot w_P\dot w_0\inv U_+,\\
&b&\mapsto & (b \dot w_0)^t \dot w_0\inv.
\end{array}
\end{equation}
The map $\Invol$ satisfies (and is uniquely determined by) the following equality of minors,
\begin{equation}\label{e:deltaidentity}
{P_{i\x j}(\Invol(b))} = (-1)^{ij}\frac {P_{(n-k-i)\x (k-j)}(b)} {P_{n-k+i-j}(b)} (\qq(b))^{\min(i,j)},
\end{equation}
where $1\le i\le n-k$ and $1\le j\le k$. Here 
\[
\qq(b)
:=\alpha_{n-k}(t),
\]
where 
\begin{equation}\label{e:factorsofb}
b=u_1t\dot w_P \dot w_0\inv u_2\quad \text{ for }\quad u_1, u_2\in U_+ \quad\text{and}\quad t\in \widetilde T^{W_P}.
\end{equation} 
\end{prop}
\begin{rem}\label{r:involutionq}
Note that in the setting of Proposition~\ref{p:involution}, transposition of $b=z\dot w_0\inv$ affects the torus factor $t$ in \eqref{e:factorsofb} by $t\mapsto \dot w_P^{-2} t$. This changes the sign of $\alpha_{n-k}$ precisely if $k$ and $n-k$ have different parity.  Hence  $\qq(\Invol(b))=(-1)^n\qq(b)$ 
\end{rem}
\begin{rem}\label{r:involutiotildetau} 
The proposition can be interpreted as saying that there exists an involution 
\begin{equation}\label{e:tautildeinvolution}
\widetilde\tau:\Xcheck\x\C^*_q\to
\Xcheck\x\C^*_q
\end{equation}
which (using rectangles cluster coordinates) satisfies
\begin{equation}\label{e:involutionwithq}
(p_{i\x j},q)\mapsto \left(\frac{p_{(n-k-i)\x (k-j)}}{p_{J_{n-k+j-i}}}q^{\min(i,j)},q\right).
\end{equation}
Indeed via the identification of $\Xcheck\x\C^*_q$ with $B_-\cap U_+\widetilde T^{W_P}\dot  w_P\dot w_0\inv U_+$, the involution $\Invol$ becomes defined on $\Xcheck\x\C^*_q$.
We now note that the involution $\Invol$ commutes with the $\C_u^*$-action on $\Xcheck\x \C^*_q$ given in coordinates by: 
\begin{equation}\label{e:gradingastorusaction}
u\cdot ((p_{\lambda})_{\lambda\in\Pn_{k,n}},q)=((u^{|\lambda|}p_\lambda)_{\lambda\in\Pn_{k,n}},u^n q).
\end{equation}
This is straightforward to check using the formula \eqref{e:involutionwithq}. Let $\sign:\Xcheck\x \C^*_q\to \Xcheck\x \C^*_q$ denote the map given by the action of $(-1)\in \C^*_u$. Then 
\[
\widetilde\tau:=\sign\circ\Invol=\Invol\circ\sign.
\]
The involution $\widetilde\tau$ preserves the superpotential $W:\Xcheck\x\C^*_q\to \C$. For example  this follows from a direct calculation using the formula for the superpotential in the rectangles cluster. Alternatively the invariance of $W$ relates to the symmetry in the formula for~$\mathcal F$ with regard to $u_1$ and $u_2$, see Definition~\ref{d:RichardsonBmodel},  via the comparison of superpotentials result (Proposition~\ref{p:comparisonMRR}).

Finally, we observe that $\widetilde\tau$ fixes the critical points of $W_q$. This is because the critical points are represented by Toeplitz matrices $b$ in $B_-\cap U_+\widetilde T^{W_P}\dot w_P\dot w_0\inv U_+$; see \cite[Equation~(5.14)]{Rie:MSgen}, with $h=0$ for the non-equivariant case. (Recall that a Toeplitz matrix is a matrix for which the entries along each diagonal are the same, and for $b\in B_-$ this is equivalent to the condition of stabilizing the standard principal nilpotent $F$ from Theorem~\ref{t:Peterson}). It follows that the matrices
$z=b\dot w_0$ are almost Hankel matrices (i.e.\ the entries along each anti-diagonal are the same up to sign, with the signs alternating along the anti-diagonal).
Transposition of $z$ swaps all of the signs on even length anti-diagonals. From this we can observe that 
\[
\sign(Pz^t\dot w_0\inv, q)=(Pz\dot w_0\inv, q),\quad \text{ if $Pz\dot w_0\inv$ is a critical point of $W_q$.}
\]
In other words, $\widetilde \tau$ fixes the critical points $(Pz\dot w_0\inv, q)$ of $W_q$. 
\end{rem}

\begin{rem}\label{r:involutiontau}
If we set $q=1$ the restricted domain of \eqref{e:tautildeinvolution} can be identified with $\Xcheck$. In this case the proposition can be interpreted as saying that there is an involution  $\tau$ on $\Xcheck$ which extends the involution 
\[
p_{J_j}\mapsto \frac{1}{p_{J_{n-j}}}
\]
on frozen variables. Moreover this involution preserves the rectangles cluster torus and is given there by
\begin{equation}\label{e:translatedinvolution}
p_{i\x j}\mapsto \frac{p_{(n-k-i)\x (k-j)}}{p_{J_{n-k+j-i}}}.
\end{equation}
\end{rem}

\begin{rem}\label{r:involutionqcoh}
The involution $\tilde \tau$ from Remark~\ref{r:involutiotildetau} has an interpretation which involves the quantum cohomology ring $qH^*(X,\C)[\qa\inv]$. Namely by the combination of  Theorem~\ref{t:MSgen} with Theorem~\ref{t:Peterson}, the quantum cohomology ring is the ring of functions on the critical point locus $\{\partial_{\Xcheck}W=0\}$ inside $\Xcheck\x\C^*_q$. 
Since the involution acts trivially on the critical point locus, it also acts trivially on the quantum cohomology ring $qH^*(X,\C)[\qa\inv]$.
In Section~\ref{s:Schubert} we will show that the Pl\"ucker coordinates $p_\lambda$ restricted to the critical locus of $W_q$ are identified with the quantum Schubert classes $\sigma^\lambda$ via Peterson's isomorphism (see Proposition~\ref{p:eqJacobi}). 
Therefore in the Grassmannian case, since we have the formula \eqref{e:involutionwithq}, this means that we obtain the relations
\begin{equation}\label{e:freeqcohrelations}
\sigma^{i\x j}\star\sigma^{\mu_{n-k+j-i}} =q^{\min(i,j)}\sigma^{(n-k-i)\x (k-j)}
\end{equation}
in the quantum cohomology ring $qH^*(X,\C)$, as a result of the symmetry $\widetilde\tau$ of the mirror. These relations also follow from repeated application of the quantum Pieri rule of \cite[p. 293]{Bertram:qSchubCalc}.

We note that the involution~\eqref{e:geninvolution} makes sense for arbitrary reductive algebraic groups. By analogous arguments to above this construction gives rise to an involution on the Lie-theoretic mirror  $\RR_P\x T^{W_P}$ of a general $G^\vee/P^\vee$ such that the superpotential $\mathcal F$ is invariant and the critical points are fixed. In particular this involution should again induce relations in the quantum cohomology ring. 
\end{rem}

We will outline the proof of Proposition~\ref{p:involution} after proving the following corollary, which is our first application of the proposition. 

\begin{cor}\label{c:compatibility2}
The forms $\omega_{\Xcheck}$ and $\omega_\RR$ are compatible. 
\end{cor}

\begin{proof}
To prove the statement of the corollary we need to  compare $(\Xcheck,\omega_{\Xcheck})$ and $(\RR, \omega_{\RR})$ under the isomorphisms
\begin{equation}\label{e:piLpiR}
\Xcheck \overset{\pi_L}\longleftarrow 
 B_-\cap U_+\dot w_P\dot w_0\inv U_+ 
\overset{\pi_R}\To 
\RR.
\end{equation}
Let $\Invol_0$ be the restriction of the involution $\Invol$ to $B_-\cap U_+\dot w_P\dot w_0\inv U_+$,
\[\Invol_{\, 0} :B_-\cap U_+\dot w_P\dot w_0\inv U_+ \to B_-\cap U_+\dot w_P\inv\dot w_0\inv U_+
\] 
given by  $b\mapsto \dot w_0\inv b^t \dot w_0\inv$. Also let
\begin{eqnarray*} \tau_0 :\Xcheck &\to &\Xcheck\\
Pb &\mapsto & P\Invol_{\, 0}(b),
\end{eqnarray*} 
for $b\in B_-\cap U_+\dot w_P\dot w_0\inv U_+ $.
We consider the commutative diagram
\begin{equation} \label{e:piLpiRenhanced}
\xymatrix{
\Xcheck \ar[d]_{\tau_0} & \ar_(0.7){\pi_L}[l] B_-\cap U_+\dot w_P\dot w_0\inv U_+ \ar[dl]^{\widetilde{\pi}_{L}} 
\ar[r]^(0.75){\pi_R} &  \RR. \\
\Xcheck & &
}
\end{equation}

Recall that we have a torus $\Xcheckfact$ inside $\Xcheck$, see Section~\ref{s:comparisonMRR}, and on the right hand a torus $\RR^{fact}:=\RR_{\mathbf i}$ inside $\RR$, where $\mathbf i=(1,2,\dotsc, n;1,2,\dotsc, n-1;\dotsc ; 1,2; 1)$ (compare Definition~\ref{d:omegaR}). 

The holomorphic volume form $\omega_{\Xcheck}$ is characterised up to a scalar by the fact that its restriction to $\Xcheckfact$ is a torus invariant volume form, and similarly for $\omega_{\RR}$ and $\RR^{fact}$.  Therefore it suffices to prove that the pull-backs of the tori agree, namely that
\[\pi_L\inv\left(\Xcheckfact\right)=\pi_R\inv\left(\RR^{fact}\right).
\]
We have that the preimage $\pi_{R}\inv\left(\RR^{fact}\right)$ is described, carrying on with the example from Definition~\ref{d:omegaR},  by
\[
\pi_{R}\inv\left(\RR^{fact}\right)=\{
b\in B_-\mid b =x_1(m_1)x_2(m_2)\dot s_3\dot s_4 x_1(m_5)x_2(m_6)\dot s_3 x_1(m_8) x_2(m_9)\dot s_1 \dot w_0\inv u_2,\  u_2\in U_+, m_i\in \C^* 
\}.
\]
We compare this with the preimage $\widetilde\pi_L\inv\left(\Xcheckfact\right)$. Namely, in the same example, $\widetilde\pi_L(b)=P\dot w_0\inv b^t\dot w_0\inv$ lies in \[
\Xcheckfact=\{P\dot w_0\inv \dot s_4 x_3(n_1) x_4(n_2)\dot s_2x_3(n_4)x_4(n_5)\dot s_1\dot s_2 x_3(n_8)x_4(n_9) \mid n_i\in \C^*\}
\]
if and only if $\dot w_0\inv b^t\in  B_- \dot w_0\cap U_+\dot w_P\inv U_-$ is of the form
\begin{equation*}
\dot w_0\inv b^t =b_+\dot w_0\inv \dot s_4\inv x_3(n_1) x_4(n_2)\dot s_2\inv x_3(n_4)x_4(n_5)\dot s_1\inv\dot s_2\inv x_3(n_8)x_4(n_9)\dot w_0
\end{equation*}
for some $b_+\in B_+$, or equivalently if $b$ is of the form 
\begin{equation*}
b =
x_1(n_9)
x_2(n_8)
\dot s_3
\dot s_4
x_1(n_5)
x_2(n_4)
\dot s_3
 x_1(n_2)
x_2(n_1)
\dot s_1
\dot w_0\inv
b_+  .
\end{equation*}
Therefore $\widetilde\pi_L\inv\left(\Xcheckfact\right)=\pi_R\inv\left(\RR^{fact}\right)$. This argument clearly works in general, although we only wrote it out in an example.

Now note that the torus $\Xcheckfact$ is also characterised by the condition that $\p_{i\x j}\ne 0$ for all of the Pl\"ucker coordinates in the rectangles cluster, thanks to Proposition~\ref{p:comparisonMREHX}.  From Proposition~\ref{p:involution} it follows that $\tau_0$ is an isomorphism which takes $\Xcheckfact$ to $\Xcheckfact$. Therefore 
\[
\pi_L\inv\left(\Xcheckfact\right)=\widetilde\pi_L\inv\left(\tau_0(\Xcheckfact)\right)=
\widetilde\pi_L\inv\left(\Xcheckfact\right).
\]
and $\widetilde\pi_L\inv\left(\Xcheckfact\right)=\pi_R\inv\left(\RR^{fact}\right)$ as we saw above. 
This concludes the proof that $\pi_L\inv\left(\Xcheckfact\right)=\pi_R\inv\left(\RR^{fact}\right)$, and hence the proof of the Corollary.
\end{proof}

\begin{proof}[Outline of the proof of Proposition~\ref{p:involution}]
Our initial proof of Proposition~\ref{p:involution} involved a sequence of equalities of minors. However we describe our second proof which involves an extension of the construction from Proposition~\ref{p:comparisonMREHX}. 

The idea for this proof is to factorise a generic element of $B_- \cap U_+\widetilde T^{W_P}\dot w_P\dot w_0\inv U_-$ in a way that makes it easy to take the transpose. Recall that we have an embedding $\iota:\mathcal T\to\Xcheck$ defined in Section~\ref{s:EHX} which was constructed via multiplying together factors from simple root subgroups. One can `extend' $\iota$ to an embedding 
\[
\zeta:\mathcal T\x \C^*_q\ \hookrightarrow \ B_-\cap U_+\widetilde T^{W_P}\dot w_P \dot w_0\inv U_+,
\]
for which $b=\zeta(z_{ij},q)$ satisfies $\mathbf q(b)=q$ and $Pb\in \Xcheck$ is $\iota(\dd_{ij})$. 
The map $\zeta$ has the following explicit description, which relates to the construction of $\iota$ from Section~\ref{s:EHX}. As in the  proof of Proposition~\ref{p:comparisonMREHX}, we demonstrate our construction of the map $\zeta$ in the special case of $X=Gr_2(5)$ and $\Xcheckbar=Gr_3(5)$, that is, $k=3$ and $n=5$. 

We now consider both the quiver $\QR$ associated to $X=Gr_2(5)$ in Section~\ref{s:EHX} and its reflection through the $xy$-axis which we denote by $\QL$. We decorate the two quivers as shown below. 
Note that we have purposefully left out some of the $s_i$'s. In both cases we have also dropped the bottom right hand corner arrow. 
\begin{center}
 \begin{tikzpicture}
\node at (-2, .4) {Row $E_4$ };  
\node at (-2, 1.4) {Row $E_3$ };  
\node at (-2, 2.4) {Row $E_2$ };  
\node at (-2, 3.4) {Row $E_1$ };  
\draw (0,3.5) circle (3mm);
\draw (0,3) [fill] circle (.4mm);
\node at (0,3.5) {${\dot s_1}$};
\draw[->](1,0)--(0,0);
\draw[->](0,1)--(0,2);
\draw[->](0,2)--(0,3);
\draw[->](1,0)--(1,1);
\draw[->](0,0)--(0,1);
\draw[->](1,1)--(0,1);
\draw[->](1,2)--(0,2);
\draw[->](1,1)--(1,2);
\node [left] at (0,2) {$z_{23}$};
\node [below] at (1,0) {$z_{11}$};
\node [below] at (0,0) {$z_{21}$};
\node [right] at (1,1) {$z_{12}$};
\node [above] at (1,2) {$z_{13}$};
\node [left] at (0,1) {$z_{22}$};
\node [left] at (0,3) {$q$};
\end{tikzpicture}
\qquad \qquad
 \begin{tikzpicture}
\node at (-2, .4) {Row $E_4$ };
\node at (-2, 1.4) {Row $E_3$ };
\node at (-2, 2.4) {Row $E_2$ } ;
\node at (-2, 3.4) {Row $E_1$ };
\draw (0,2.5) circle (3mm);
\draw (1,2.5) circle (3mm);
\draw (0,3.5) circle (3mm);
\draw (0,2) [fill] circle (.4mm);
\node at (0,2.5) {${\dot s_2}$};
\node at (1,2.5) {${\dot s_2}$};
\node at (0,3.5) {${\dot s_1}$};
\draw[->](0,0)--(1,0);
\draw[->](1,0)--(2,0);
\draw[->](0,2)--(0,1);
\draw[->](0,1)--(0,0);
\draw[->](1,1)--(1,0);
\draw[->](0,1)--(1,1);
\draw[->](1,1)--(2,1);
\draw[->](2,1)--(2,0);
\node [below] at (2,0) {$\dd_{23}$};
\node [below] at (1,0) {$\dd_{22}$};
\node [below] at (0,0) {$\dd_{21}$};
\node [above] at (1,1) {$\dd_{12}$};
\node [above] at (2,1) {$\dd_{13}$};
\node [left] at (0,1) {$\dd_{11}$};
\node [left] at (0,2) {$1$};
\end{tikzpicture}
\end{center}
We remark that the analogue of the quiver from the proof of Proposition~\ref{p:comparisonMREHX} is the one on the right hand side, above. The left hand side quiver is related to the construction of an open part of the Peterson variety from  \cite[Theorem~7.2]{Rie:TotPosGBCKS}.
 
To the quiver on the right hand side we simply associate an element $g_R(z_{ij})\in \dot s_2\dot s_1\dot s_2 U_+$ using the construction from the proof of Proposition~\ref{p:comparisonMREHX}. Namely in this example the element is
\begin{equation}\label{e:gR}
g_R(z_{ij})= 
   x_3\left({z_{13}}\right)
   x_4\left(\frac {z_{23}}{z_{13}}\right)
 \dot s_2  
x_3\left(z_{12}\right) 
x_4\left(\frac {z_{22}}{z_{12}}\right)
\dot s_1\dot s_2
 x_3\left(z_{11}\right)
x_4\left(\frac {z_{21}}{z_{11}}\right).
\end{equation}
To the reflected quiver on the left hand side we associate an element $g_L(z_{ij},q)\in  U_+\dot s_1$ using the construction from~\cite{Rie:TotPosGBCKS} which goes as follows. This time we start from the left-most column at the bottom working upwards and then carry on column by column to the right. Namely to every vertex labelled by a $z_{ij}$ we associate the minimal $1$-path starting at that vertex. Then we list these $1$-paths one for each $z_{ij}$ vertex in our column, starting from the bottom and working upwards. These are followed by any $\dot s_i$'s at the top of the column. Then we repeat for the next column to the right, until the list is complete. 
As before to any $1$-path which crosses row $E_i$  we associate the element $x_i(z_{t}/z_s)\in U_+$, where $z_s$ is the coordinate at the beginning vertex (`source') of the $1$-path, and $z_t$ the vertex at the end (`target'). 
We again obtain a matrix given as the product of the listed factors. 
 In the running example we have,  
\begin{equation}\label{e:gL}
g_L(z_{ij},q)=  
x_4\left(\frac {z_{22}}{z_{21}}\right)
x_3\left(\frac {z_{23}}{z_{22}}\right)
x_2\left(\frac {q}{z_{23}}\right)
 \dot s_1  
x_4\left(\frac {z_{12}}{z_{11}}\right)
x_3\left(\frac {z_{13}}{z_{12}}\right)
x_2\left(\frac {q}{z_{13}}\right).
\end{equation}

 Moreover we set $t_q=\operatorname{diag}(q,q,1,1,1)$ to be the diagonal matrix in $\widetilde T^{W_P}$ with $\alpha_{2}(t_q)=q$. The map $\zeta$ is defined by  multiplying the matrices as follows, 
 \[
 \zeta(\dd_{ij},q)=g_L(z_{ij},q)\, t_q \dot w_0\inv \ g_R(z_{ij}).
 \]
 From the factorisation of $g_R$ and $g_L$ we see that $\zeta(\dd_{ij},q) $ is of the form $u_1 t_q \dot s_1 \dot s_3\dot s_4\dot s_3\dot w_0\inv u_2\in U_+\widetilde T^{W_P}\dot w_P\dot w_0\inv U_+$. One then has to check that the $ab$-entries of the $n\x n$-matrix $\zeta(\dd_{ji},q)$ vanish whenever $a<b$. This can be shown by careful study of the
representation of the product $\zeta(\dd_{ij},q)$ in terms of a concatenation
of `chips' (corresponding to terms $x_i$; see~\cite[Figure 4]{FoZe:TestsParametrizations}) and wiring diagrams (corresponding to products of terms $\dot s_i$). We leave out the details. It follows that $\zeta(\dd_{ij},q)\in B_-\cap U_+\widetilde T^{W_P}\dot w_P\dot w_0\inv U_+$, as was intended. 
 
We can now work out the minors we are interested in, in the case where $b=\zeta(\dd_{ij},q)$.
As in Proposition~\ref{p:comparisonMREHX} we have
\begin{equation}\label{e:minorgR}
\frac{P_{i\x j}(\zeta(z_{ij},q))}{P_{(i-1)\x(j-1)}(\zeta(z_{ij},q))}=z_{ij}
\end{equation}
and similarly we obtain
\begin{equation}\label{e:minorgL}
\frac{P_{i\x j}(\Invol(\zeta(z_{ij},q)))}{P_{(i-1)\x(j-1)}(\Invol(\zeta(z_{ij},q)))}=\frac{(-1)^{i+j-1}q}{z_{n-k-i+1,k-j+1}}.
\end{equation}
Note that the $P_{i\x j}$ are basically the rectangle Pl\"ucker coordinates $p_{i\x j}$ appearing in Proposition~\ref{p:comparisonMREHX}.
In particular, note that $P_{i\times 0}=P_{0\times j}=P_{\emptyset}$, and
it is easy to check that $P_{\emptyset}(\zeta(\dd_{ij},q))=
P_{\emptyset}(\Invol(\zeta(\dd_{ij},q))=1$.

We now finish the proof of the proposition. Clearly, $\zeta$ has open dense image in $B_-\dot w_0\cap U_+\widetilde T^{W_P}\dot w_P U_-$ (as follows for example from the above formulas).  It therefore suffices to prove the identity \eqref{e:deltaidentity} in the case $b=\zeta(\dd_{ij},q)$.

From \eqref{e:minorgR} and \eqref{e:minorgL} it follows that
\[
\frac{P_{i\x j}(\Invol(b))}{P_{(i-1)\x(j-1)}(\Invol(b))}=(-1)^{i+j-1}\frac{q P_{(n-k-i)\x(k-j)}(b)}{P_{(n-k-i+1)\x (k-j+1)}(b)}.
\]
A telescopic product identity then implies the desired formula,
\[
P_{i\x j}(\Invol(b))=(-1)^{ij}q^{\min(i,j)}\frac{P_{(n-k-i)\x(k-j)}(b)}{P_{n-k+j-i}(b)}.
\]
We note that $P_{n-k+j-i}(b)$ is the minor of $b$ associated to the rectangle $(n-k-i+\min{(i,j)})\x (k-j+\min{(i,j)})$, which is indeed a maximal rectangle.
\end{proof}

\section{Schubert classes and proof of the free basis lemma}\label{s:consequences}
The first aim of this section is to show that the isomorphism between $\C[\Xcheck\x \C^*_q]/(\partial_{\Xcheck} W_q)$ and $qH^*(X)[q\inv]$ identifies the classes of the Pl\"ucker coordinate $\p_\lambda$ with the Schubert class $\sigma^\lambda$. After that we will prove Lemma~\ref{l:freebasis}.

\subsection{The Schubert basis}\label{s:Schubert} In this Section we prove the non-equivariant version of Proposition~\ref{p:eqJacobi}. 
As in Section~\ref{s:comparisonMRR}, let $\Delta^I_J(g)$ denote the minor with row set $I$ and column set $J$, and recall that to each $\lambda\in \Pn_{k,n}$ we have associated a $k$-subset $J_\lambda$ of $[1,n]$, see Section~\ref{s:GrassmannianBmodel}. Recall that by Peterson's theorem, Theorem~\ref{t:Peterson}, there is an explicit isomorphism between $qH^*(X,\C)[q\inv]$ and the Peterson variety $Y_P^*$ inside $G/B_-$.  We now state in the context of Grassmannians a result from Dale Peterson's theory which makes this isomorphism precise. It can be found with proof as Proposition~11.3 in~\cite{Rie:QCohPFl}.

\begin{prop}[Dale Peterson~\cite{Pet:QCoh}]\label{p:flambda}
Let $\lambda\in\Pn_{k,n}$. If $J_\lambda=\{\lambda_1,\dotsc,\lambda_k\}$ set $J_\lambda^{\opp}=\{n+1-\lambda_1,n+1-\lambda_2,\dotsc, n+1-\lambda_k\}$. Note that $J_\emptyset=[1,k]$ and $J_\emptyset^{\opp}=[n+1-k,n]$. We write $\Delta^{J}$  for $\Delta_{J_\emptyset^{\opp}}^{J}$. Define a rational function on $G/B_-$ by
\[
f_\lambda(gB_-)=\frac{\Delta^{J_\lambda^{\opp}}(g)}{\Delta^{J_\emptyset^{\opp}}(g)}.
\]
The restriction of $f_\lambda$ to the Peterson variety $Y_P^*$ is identified with the Schubert class $\sigma^\lambda$ under the isomorphism of Theorem~\ref{t:Peterson}. 
\end{prop}

Next we recall that by Theorem~\ref{t:MSgen}, the Peterson variety $Y_P^*$ is isomorphic  to the subvariety of $\RR\x \C^*_q$ defined by $\partial_\RR \mathcal F=0$, for the Lie theoretic superpotential $\mathcal F:\RR\x \C^*_q\to \C$ from Definition~\ref{d:RichardsonBmodel}. Moreover by the comparison of superpotentials result, Theorem~\ref{t:twosuperpotentials}, this variety must be isomorphic to the subvariety of $\Xcheck\x \C^*_{q}$ defined by $\partial_{\Xcheck}W=0$. 

We now apply Proposition~\ref{p:flambda} and isomorphisms above to prove the following proposition. 

\begin{prop}\label{p:SchubertPlucker}
The Jacobi ring of $(\Xcheck, W_q)$ is isomorphic to the quantum cohomology ring $qH^*(X,\C)[\qa\inv]$ via an isomorphism which sends the coordinate $q$ to the quantum parameter $q$ and takes the form
\[
[\p_\lambda]\mapsto \sigma^\lambda
\]
on Pl\"ucker coordinates. This isomorphism agrees with the one obtained by combining the isomorphism between $(\Xcheck, W_q)$ and $(\RR, \mathcal F_q)$, the isomorphism between the critical locus $\{\partial_\RR \mathcal F=0\}$ and the Peterson variety $Y_P^*$, and Peterson's isomorphism between the coordinate ring of the Peterson variety and the quantum cohomology ring $qH^*(X,\C)[q\inv]$.
\end{prop}

\begin{proof}
For $\lambda\in\mathcal P_{k,n}$ and $b=z\dot w_0\inv\in B_-\cap U_+ \widetilde T^{W_P}\dot w_P\dot w_0\inv U_+$ recall that $P_\lambda(b)$ denotes the minor of $b$ involving the last $k$ rows and with column set given by $J_\lambda$. These
minors are related to the Pl\"ucker coordinates on $\Xcheck$ as follows:
\[
\frac{P_\lambda(b)}{P_\emptyset (b)}=p_\lambda(Pb),
\]
keeping in mind our normalisation $p_\emptyset(Pb)=1$.

The isomorphism between $\Xcheck$ and $\RR$ sends $Pb$ to $b\dot w_0 B_-$. We want to relate 
$p_\lambda(Pb)$ to $f_\lambda(zB_-)$,
whenever $b\dot w_0 B_-=zB_-$ is in the Peterson variety $Y_P^*$. 

Recall the involution $b\mapsto \Invol(b)$ from Proposition~\ref{p:involution} and the related involution 
\[\widetilde\tau: (Pz\dot w_0\inv,q)\to  \sign(Pz^t\dot w_0\inv,(-1)^n q)
\] 
from Remark~\ref{r:involutiontau}, where $z=b\dot w_0$ and $q=\qq(b)$. We have that 
\begin{equation*}
p_\lambda\circ \widetilde\tau\ (Pb,q)=(-1)^{|\lambda|}\frac{P_\lambda(\Invol(b))}{P_\emptyset(\Invol(b))}=
(-1)^{|\lambda|}\frac{P_\lambda(z^t\dot w_0\inv)}{P_\emptyset(z^t\dot w_0\inv)}=
\frac{\Delta^{J_\emptyset^{\opp}}_{J_\lambda^{\opp}}(z^t)}{\Delta^{J_\emptyset^{\opp}}_{J_\emptyset^{\opp}}(z^t)} 
=\frac{\Delta^{J_\lambda^{\opp}}_{J_{\emptyset}^{\opp}}(z)}{\Delta^{J_{\emptyset}^{\opp}}_{J_\emptyset^{\opp}}(z)}
=f_\lambda(zB_-).
\end{equation*}
Assume now that $Pb$ is a critical point of $W_q$. In this case $\widetilde\tau(Pb,q)=(Pb,q)$, as follows from Remark~\ref{r:involutiotildetau}. Therefore 
\[
p_\lambda(Pb)={f_\lambda(zB_-)} \qquad\text{if $Pb$ is a critical point of $W_q$ for $q=\qq(b)$.} 
 \]  
 Recall that if $Pb$ is a critical point of $W_q$ then $b\dot w_0B_-$ is in the Peterson variety $Y_P^*$. Therefore by Proposition~\ref{p:flambda}, Peterson's isomorphism maps the function $f_\lambda$ on the right hand side to the quantum Schubert class $\sigma^\lambda$. All in all we have shown that $\p_{\lambda}$ modulo $(\partial_{\Xcheck}W)$ maps to $f_\lambda$ which maps to $\sigma^\lambda$ under the relevant isomorphisms. Note that all the isomorphisms also preserve $q$, therefore the proof is complete. 
\end{proof}

\subsection{The free basis lemma}\label{s:freebasis} Recall Definitions~\ref{d:GMconnection} and \ref{d:Gbar} of the Gauss-Manin system $G^{W_\qb}$ and its submodule $\Gbar$. Now that we are finished comparing the holomorphic volume forms we simply write $\omega$ for $\omega_\Xcheck$. In this section we prove the following result and, as a consequence, we show Lemma~\ref{l:freebasis}.
\begin{lem}\label{l:freebasis2}

$\GbarO$ is a free $\C[\hbarrB,\qb]$-module with basis $\{[\p_\lambda\omega], \lambda\in\Pn_{k,n}\}$ and
\[
\Gbar=\GbarO\otimes_{\C[\hbarrB,\qb]}{\C[\hbarrB^{\pm 1},\qb^{\pm 1}]}.
\]
In particular $\Gbar$ is a free $\C[\hbarrB^{\pm 1},\qb^{\pm 1}]$-module with basis $\{[\p_\lambda\omega], \lambda\in\Pn_{k,n}\}$.
\end{lem}
\begin{proof}
It suffices to show that the elements $[\p_\lambda\omega]$ in $G^{W_{\qb}}$ are linearly independent over $\C[\hbarr, q^{\pm 1}]$. To prove this we show first the following claim. 

\vskip .1cm
\noindent{{\it Claim:} } Suppose we have a relation $ \sum_\lambda c_\lambda(\hbarr,\qb,\qb\inv)\, [\p_\lambda\omega]=0 $
 in $G^{W_\qb}$. Then for any point $x\in\Xcheck$ there is a form $\nu\in \Omega^{N-1}({\mathcal U})[q,q\inv]$ defined locally around $x$ such that 
 \[
 \sum_\lambda c_\lambda(0,\qb,\qb\inv)\, \p_\lambda\omega= dW_{\qb}\wedge\nu.
 \]
 \vskip .1cm

\noindent{{\it Proof of the Claim:} }
The assumption says that in $\Omega^N(\Xcheck)[\hbarr,q^{\pm 1}]$ we have
\begin{equation}\label{e:plamdarelation}
 \sum_\lambda c_\lambda(\hbarr,\qb,\qb\inv)\, \p_\lambda\omega=d\eta + \frac 1 \hbarr dW_q\wedge \eta
\end{equation}
where $\eta\in \Omega^{N-1}(\Xcheck)[\hbarr^{\pm 1},q^{\pm 1}]$. Let us write 
\[
\eta=\eta^{(<-1)} + \frac 1\hbarr\eta^{[-1]}+\eta^{[0]} + \hbarr \eta^{[1]} + \eta^{(>1)}
\]
where $\eta^{(<-1)}\in z^{-2} \Omega^{N-1}(\Xcheck)[\hbarr\inv,q^{\pm 1}]$ and $\eta^{(>1)}\in \hbarr^2\Omega^{N-1}(\Xcheck)[\hbarr,q^{\pm 1}]$, while $\eta^{[-1]},\eta^{[0]}, \eta^{[1]}\in \Omega^{N-1}(\Xcheck)[q^{\pm 1}]$. By Equation~\eqref{e:plamdarelation}
we have that $d\eta + \frac 1 \hbarr dW_q\wedge \eta\in \Omega^{N}(\Xcheck)[\hbarr,q^{\pm 1}]$. This implies firstly that 
\[
d\eta^{(<-1)} + \frac 1 \hbarr dW_q\wedge \left(\frac 1\hbarr \eta^{[-1]}+\eta^{(<-1)}\right)=0,
\]
and secondly that
\[
 d\eta^{[-1]}+dW_q\wedge \eta^{[0]}
=0,
\]
from the $\hbarr\inv$-component of \eqref{e:plamdarelation}. 
We record that taking the exterior derivative of the second equality we obtain
\begin{equation}\label{e:dWqwedgeeta}
dW_q\wedge d\eta^{[0]}=0.
\end{equation}
Now setting $\hbarr$ to zero in the non-zero terms of $d\eta + \frac 1 \hbarr dW_q\wedge \eta$ and rewriting equation~\eqref{e:plamdarelation} we obtain,
\[
 \sum_\lambda c_\lambda(0,\qb,\qb\inv)\, \p_\lambda\omega=d\eta^{[0]} + dW_q\wedge \eta^{[1]}.
\]
Following \cite[Example 2.32]{Gross:MSbook}, a local argument using the fact that $W_q$ has isolated critical points shows that the sequence of sheaves
\[
\Omega_\Xcheck^{N-2}\overset{dW_q\wedge }{\longrightarrow}\ker\left(\Omega_\Xcheck^{N-1}\overset{dW_q\wedge }{\longrightarrow}\Omega_\Xcheck^N\right)\overset{dW_q\wedge }{\longrightarrow} 0
\]
is exact. Since $d\eta^{[0]}$ is in the kernel above, by \eqref{e:dWqwedgeeta}, we see that there exists locally an $(N-2)$-form $\varepsilon$ such that $d\eta^{[0]}=dW_q\wedge\varepsilon$. As a result we obtain the local equation
\[
 \sum_\lambda c_\lambda(0,\qb,\qb\inv)\, \p_\lambda\omega=dW_q\wedge \varepsilon + dW_q\wedge \eta^{[1]}= dW_q\wedge(\eta^{[1]}+\varepsilon).
\]     
Setting $\nu =\eta^{[1]}+\varepsilon$ this proves the Claim.

\vskip .1cm

We can now finish the proof of the lemma. Suppose that $\sum_\lambda c_\lambda(\hbarr, \qb,\qb\inv)[\p_\lambda\omega]=0$. Then by the Claim we have $\sum_\lambda c_\lambda(0, \qb,\qb\inv)\p_\lambda\omega=dW_q\wedge\nu$ locally. Since $\omega$ is non-vanishing on $\Xcheck$ this implies that $\sum_\lambda c_\lambda(0, \qb,\qb\inv)\p_\lambda$  vanishes on the critical points of $W_q$. Therefore it lies in the ideal $(\partial_\Xcheck W_q)$ in $\C[\Xcheck][q^{\pm 1}]$ and we have proved that $\sum_\lambda c_\lambda(0,\qb,\qb\inv)\p_\lambda=0$ in the Jacobi ring of $W_q$. By Proposition~\ref{p:SchubertPlucker} this implies the relation $\sum_\lambda c_\lambda(\qb)\,\sigma^\lambda=0$ in the quantum cohomology ring $qH^*(X,\C)[q\inv]$ of $X$. Since the Schubert classes are linearly independent over $\C[q^{\pm 1}]$ it follows that $c_\lambda(0,\qb,\qb\inv)=0$ for all $\lambda$. 

We have therefore proved that the first term of $c_\lambda(z,\qb,\qb\inv)=c_\lambda(0,\qb,\qb\inv)+\sum_{i>0} \hbarr^i c_\lambda^{(i)}(\qb,\qb\inv)$ vanishes. Now  assume we know that $c_\lambda^{(i)}(\qb,\qb\inv)=0$ for all $i<i_0$ and all $\lambda$. Then since $z$ is a non-zero-divisor in $H_B$ we can apply the same arguments to
\[
\hbarr^{-i_0}\sum_{\lambda} c_\lambda(z,\qb,\qb\inv)[p_\lambda\omega]=0,
\]
which is another a relation in $\Omega^N(\Xcheck)[\hbarr,q^{\pm 1}]$. Again it follows that the left hand side vanishes after setting $\hbarr=0$, giving $c_\lambda^{(i_0)}(\qb,\qb\inv)=0$. Therefore by induction all terms vanish and $c_\lambda(z,\qb,\qb\inv)=0$ for every $\lambda$. It follows that the $[\p_{\lambda}\omega]$ are linearly independent over $\C[\hbarr,\qb\inv,\qb]$.

\end{proof}

\section{Outline of the proof of Theorem~\ref{t:main1}}
\label{s:StartOfMainProof}
The remainder of the paper will be devoted to proving
Theorem~\ref{t:main1} and its equivariant counterpart. Here we summarize the proof. We keep all of the notation of the previous sections.
We need to show that the following hold for all $\lambda\in \Pn_{k,n}$:
\begin{align} \label{e:dwdtaction1}
\left[ q\dWdq \p_{\lambda}\omega \right] &=
\sum_{\mu}[\p_{\mu}\omega]+q\sum_{\nu}[\p_{\nu}\omega]; \\
\label{e:dwdtaction2}
\frac{1}{z}\left[ W\p_{\lambda}\omega\right] &=
\frac{n}{z}\left(\sum_{\mu}[\p_{\mu}\omega]+q\sum_{\nu}[\p_{\nu}\omega] \right) -|\lambda|[\p_{\lambda}\omega],
\end{align}
where $\mu$, $\nu$ are exactly as in the quantum Monk's rule for
$\sigma^{\Box}*_q\sigma^{\lambda}$. This will be shown
in Theorem~\ref{t:dwdtaction}.

In Section~\ref{s:CAcoordring}, we have recalled the cluster structure on the Grassmannian, following Scott~\cite{Scott:Grassmannian},
in terms of Postnikov diagrams.
Next, in Section~\ref{s:specialdiagram}, for
any partition $\lambda$, we
construct an extended cluster containing
$p_{\lambda}$ and also $p_{\mu}$ and $p_{\nu}$ for all $\mu$ and $\nu$ appearing in the
quantum Monk's rule for $\sigma^{\Box}*_q\sigma^{\lambda}$. This extended
cluster corresponds to a Postnikov diagram $D_{\lambda}$ with good properties;
in particular, strands $i$ and $i+1$ cross at a single point
in the diagram.

In order to work with $W$ in this extended cluster we use Theorem~\ref{thm:MarSco}, a special case of~\cite[Thm. 1.1]{MarSco}. In Section~\ref{s:anycluster}, we recall this result in the special case we will need. This gives us an expansion of the numerators $p_{{\widehat{\mu}_i}}$ occurring in
the definition of $W_q$, in terms of an extended cluster
arising from an arbitrary Postnikov diagram, $D$. Namely, the expansion of
$p_{\widehat{\mu}_i}$ is given as a sum of Laurent monomials,
with the set of terms in bijection with the perfect matchings on
a bipartite graph $G_i$ related to the Postnikov diagram $D$.

In Sections~\ref{s:matchingconstruction} and \ref{s:matchingproperties}
we analyse the perfect matchings on the graphs $G_i$, by
constructing a natural initial perfect matching, $\Mi$,
and showing that all other perfect matchings can be obtained from
$\Mi$ by face flips, starting with a face adjacent to the
crossing point of $i$ and $i+1$. This involves calculating the
elementary components of $G_i$.
In Section~\ref{s:matchingevaluation}
we compute the matching monomial
corresponding to $\Mi$.

If $\xi$ is a regular vector field on $\Xcheck$ then it can be inserted
into $\omega$ to give an $(n-1)$-form $i_{\xi}\omega$. By the definition
of $G^W$, we have the relation:
\begin{equation}
[di_{\xi}\omega]+\frac{1}{z}[(\xi W_q)\omega]=0
\label{e:insertionrelation}
\end{equation}
(see equation~\ref{e:insertion}).
The proof then proceeds by constructing
explicit vector fields for which the
relation~\eqref{e:insertionrelation}
implies equations~\eqref{e:dwdtaction1} and~\eqref{e:dwdtaction2}.

So, in Section~\ref{s:vectorfield} we use the cluster structure to define a regular vector field $X_{\lambda}$ on $\Xcheck$, together with twisted versions $X^{(\m)}_{\lambda}$, $m\in [1,n]$ (satisfying $X^{(n)}_{\lambda}=X_{\lambda}$).
To obtain the desired relations,
we need to compute $X_{\lambda}^{(m)}W_q$.
We compute the action of $X_{\lambda}^{(\m)}$ on each of the monomials in the expansion of $W_q$ 
in terms of the extended cluster associated to
the Postnikov diagram $D_{\lambda}$.
This is done by first computing the action on the monomial associated to
$\Mi$. Then in Section~\ref{s:actionvectorfield2} we use face
flips starting from $\Mi$ to compute the action of $X_{\lambda}^{(m)}$
on an arbitrary monomial in the expansion of $W_q$. We obtain the expression:

\begin{equation} \label{e:vfequation}
X^{(\m)}_{\lambda}W=\left(
\sum_{\mu}\p_{\mu}+q\sum_{\nu}\p_{\nu}\right) - q^{\delta_{mn}}\frac{\p_{\widehat{L}_m}}{\p_{L_m}}\p_{\lambda},
\end{equation}
where $\mu$, $\nu$ are exactly as in the quantum Monk's rule for $\sigma^{\Box}*_q \sigma^{\lambda}$ (Theorem~\ref{t:nonequivariantaction}).
Recall that $L_i=[i-k+1,i]$ (see~\eqref{JiLi} in Section~\ref{s:Plucker}).

The case $\m=n$ gives:
\begin{equation} \label{e:vfequation1}
X_{\lambda}W =
\left(
 \sum_{\mu} \p_{\mu}+q\sum_{\nu}\p_{\nu}
\right)-q\dWdq p_{\lambda},
\end{equation}
and summing~\eqref{e:vfequation} over $\m=1,2,\ldots ,n$, we obtain:
\begin{equation} \label{e:vfequation2}
\sum_{m=1}^n X^{(m)}_{\lambda}W =
n\left( \sum_{\mu} \p_{\mu}+q\sum_{\nu}\p_{\nu}
\right)- Wp_{\lambda}, 
\end{equation}
where, in each case, $\mu$, $\nu$ are exactly as in the quantum Monk's rule
for $\sigma^{\Box}*_q\sigma^{\lambda}$; see Corollary~\ref{c:nonequivariantaction}.
In Section~\ref{s:completionofproof}, we use these identities and equation~\eqref{e:insertionrelation} to complete
the proof of Theorem~\ref{t:main1}.
See Theorem~\ref{t:dwdtaction}.


\section{A special Postnikov diagram associated to a partition $\lambda$}
\label{s:specialdiagram}

Given $\lambda\in \Pn_{k,n}$, we denote by $\lambda^{\Box}$ any partition obtained from $\lambda$ by adding a single box. 
Our aim in this section is, given $\lambda\in \Pn_{k,n}$, to define a Postnikov
diagram containing $J_{\lambda}$ and all $J_{\lambda^{\Box}}$ as labels. Moreover, the faces labelled $J_{\lambda^{\Box}}$ should be adjacent to the face labelled $J_{\lambda}$. This diagram will be used later in
explicitly computing the action of $X_{\lambda}$ on $W_q$. We start by constructing special, symmetric Postnikov diagrams in the case $n=2k$ and $J_{\lambda}=\{1,3,\ldots ,2k-1\}$; the
diagrams for arbitrary $(k,n)$ can then be obtained by adding strands
to these in an appropriate way. We assume that $k\not=1,n-1$.

If $J$ is any $k$-subset of $[1,n]$ and $1\leq i\leq n$ with
$i\in J$, $\reduce{i+1}\not\in J$, then we can form a $k$-subset
$J^i$ in which $i$ is replaced by $\reduce{i+1}$.
In this case we say that $i$ is
(clockwise) \emph{moveable} in $J$.
Note that $J=L_i$ for some $i$ if and only if exactly one element of $J$ is
moveable. Therefore any $J$ for which $\p_J$ is a cluster variable
(and not a frozen variable) has at least two moveable elements.

Our aim in this section is to prove the following theorem.

\begin{thm} \label{thm:diagramconstruction}
Let $J$ be a $k$-subset of $[1,n]$.
Then there is a Postnikov
diagram $D(J)$ containing an alternating
face labelled $J$ such that if $i$ is
moveable in $J$, there is an adjacent
alternating face labelled $J^i$.
\end{thm}

For examples in the cases $J=\{1,3\}$ in $Gr_2(4)$, $J=\{1,3,5\}$ in $Gr_3(6)$
and $J=\{1,3,5,7\}$ in $Gr_4(8)$, see Figures~\ref{fig:post24},~\ref{fig:post36}
and~\ref{fig:post48} respectively.
We assume for now that $J\not=L_j$ for
any $j$; we will deal with the case $J=L_j$ at the end of the Section.
\begin{figure}
\psfragscanon
\psfrag{1s}{\pscirclebox{$1$}}
\psfrag{1e}{\psframebox{$1$}}
\psfrag{2s}{\pscirclebox{$2$}}
\psfrag{2e}{\psframebox{$2$}}
\psfrag{3s}{\pscirclebox{$3$}}
\psfrag{3e}{\psframebox{$3$}}
\psfrag{4s}{\pscirclebox{$4$}}
\psfrag{4e}{\psframebox{$4$}}
\psfrag{p}{$\emptyset$}
\includegraphics[width=6cm]{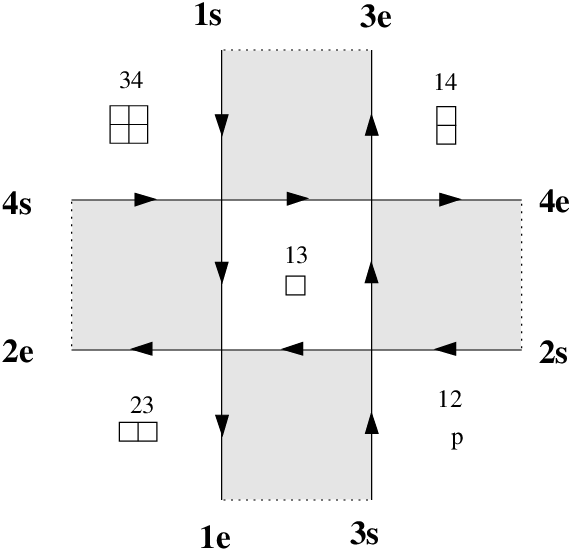}
\caption{The Postnikov diagram $D(\{1,3\})$ for $Gr_2(4)$.}
\label{fig:post24}
\end{figure}


\begin{figure}
\psfragscanon
\psfrag{1s}{\pscirclebox{$1$}}
\psfrag{1e}{\psframebox{$1$}}
\psfrag{2s}{\pscirclebox{$2$}}
\psfrag{2e}{\psframebox{$2$}}
\psfrag{3s}{\pscirclebox{$3$}}
\psfrag{3e}{\psframebox{$3$}}
\psfrag{4s}{\pscirclebox{$4$}}
\psfrag{4e}{\psframebox{$4$}}
\psfrag{5s}{\pscirclebox{$5$}}
\psfrag{5e}{\psframebox{$5$}}
\psfrag{6s}{\pscirclebox{$6$}}
\psfrag{6e}{\psframebox{$6$}}
\psfrag{7s}{\pscirclebox{$7$}}
\psfrag{7e}{\psframebox{$7$}}
\psfrag{8s}{\pscirclebox{$8$}}
\psfrag{8e}{\psframebox{$8$}}
\psfrag{p}{$\emptyset$}
\includegraphics[width=10cm]{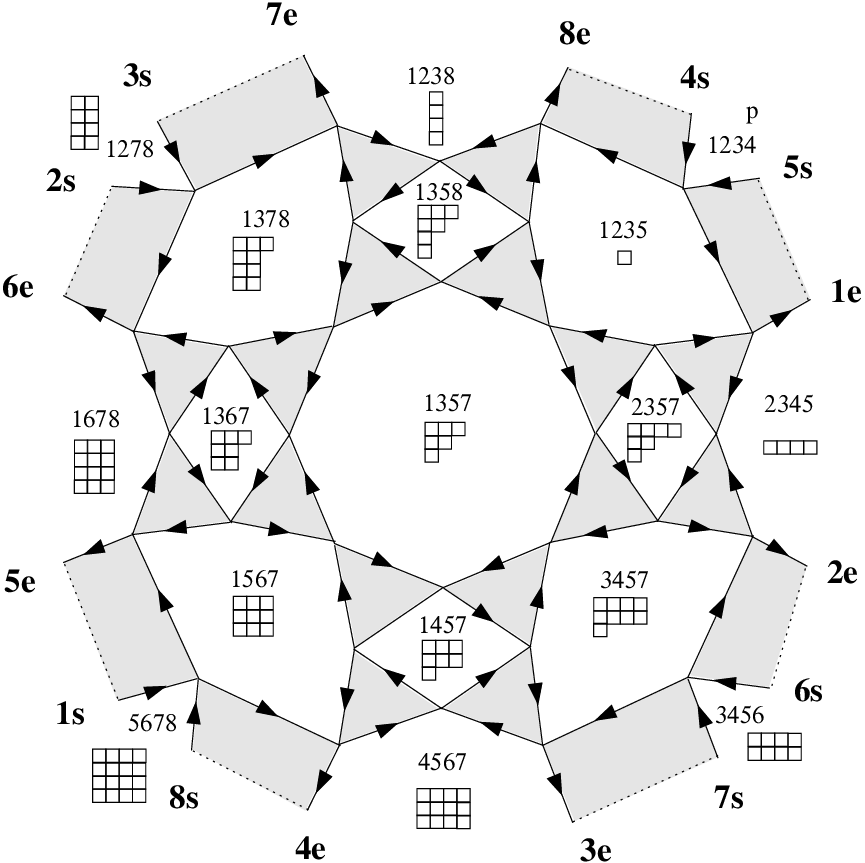}
\caption{The Postnikov diagram $D(\{1,3,5,7\})$ for $Gr_4(8)$.}
\label{fig:post48}
\end{figure}

Our strategy for proving Theorem~\ref{thm:diagramconstruction} is as follows.
Let $K=\{i\in J\,:\,\reduce{i+1}\not\in J\}$ be the set of moveable elements in $J$;
then $|K|\geq 2$ by the assumption above on $J$.
For arbitrary $k\geq 2$ and $n=2k$, we construct an explicit diagram (Proposition~\ref{prop:symmetricdiagram}) demonstrating
Theorem~\ref{thm:diagramconstruction} for the case $J=\{1,3,\ldots ,2k-1\}$.
We then show that, given a diagram satisfying Theorem~\ref{thm:diagramconstruction},
it is possible to add a new strand to it around the boundary (clockwise or anticlockwise)
to obtain another such diagram (Propositions~\ref{prop:addclockwise} and~\ref{prop:addanticlockwise}). Adding a clockwise strand corresponds to
increasing $n$ by $1$, while adding an anticlockwise strand corresponds to increasing
$k$ by $1$.

\begin{prop} \label{prop:symmetricdiagram}
Suppose $k\geq 2$ and $n=2k$. Set
$J=\{1,3,\ldots ,2k-1\}$.
Then Theorem~\ref{thm:diagramconstruction} holds for $J$.
\end{prop}

\begin{proof}
For $k=2,3,4$ it is easy to verify that the diagrams in
Figures~\ref{fig:post24},~\ref{fig:post36} and~\ref{fig:post48} are Postnikov
diagrams and that they show that the theorem holds in this case.
We now show how to construct such diagrams for arbitrary $k\geq 4$.

Consider a tiling $\T$ of the plane by regular hexagons and equilateral triangles in which there are two triangles and two hexagons incident with each vertex
(in the order hexagon, triangle, hexagon, triangle) and for which each of
the hexagons has a vertical pair of parallel sides. We convert this into a
tiling of the upper half-plane by cutting along a horizontal line through the
lowest points of a row of hexagons.
The faces in the new tiling incident with this line are hexagons, triangles and
quadrilaterals, repeating in this order
left to right.

We number the edges on the boundary
in the order
$\ldots -2,-3,0,-1,2,1,4,3,6,5,\ldots$
from left to right (i.e.\ switching each pair of integers in which the first is
odd in the natural ordering on $\mathbb{Z}$).
We orient an edge left to right if it has an even label,
and right to left if it has an odd label.
This determines orientations for the
adjacent triangles.
We label all the vertical edges in the tiling
upwards and all the remaining edges downwards.
This gives an orientation of every edge in 
the tiling. See
Figure~\ref{fig:halfplanetiling} for an 
example.
The tiling can be seen as a collection of 
infinite strands, each of which has 
precisely one edge incident with the 
boundary and thus inherits a label from
this edge.

\begin{figure}
\includegraphics[width=8cm]{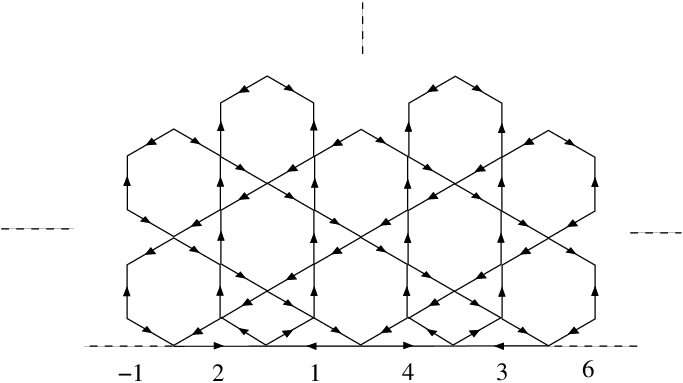}
\caption{A tiling of the upper half-plane.}
\label{fig:halfplanetiling}
\end{figure}

Consider the horizontal line in the plane
which passes through the vertices at the
tops of the triangles above the $(k-3)$rd row of hexagons.
Let $S$ be the subset of the plane
bounded by the boundary of the
half-plane and this horizontal line.
Consider also the vertical lines in the strip
$S$ bisecting edges $-1$ and $2k-1$.
We identify these to obtain a cylinder. The 
tiling of the half-plane induces a tiling of this cylinder, with the strands 
in the half-plane tiling corresponding to 
$2k$ strands in the cylinder labelled by
$1,2,\ldots ,2k$.

Since the cylinder is homeomorphic to an 
annulus, we obtain a corresponding tiling of 
an annulus, in which the lower boundary of 
the cylinder corresponds to the inner 
boundary of the annulus. We glue a disk onto
the inner boundary, obtaining a diagram $\Gamma_k$ on a disk with $2k$ vertices on
its boundary. We label the vertex where
strand $j$ starts by $j$.
It is clear that parts (a) and (b) in the definition of a Postnikov diagram
hold for $\Gamma_k$. 

Furthermore, it is easy to see inductively
that the vertices $1,2,\ldots ,2k$ are 
arranged clockwise around the boundary and 
that if $i$ is even (respectively, odd),
strand $i$ goes clockwise (respectively, 
anticlockwise) around the disk from vertex 
$i$ to vertex $i+k$. Hence property (d)
in the definition of a Postnikov diagram
holds.

For each $i$, we replace vertex $i$
with a pair of vertices $b_i$ and $b'_i$,
with $b'_i$ clockwise of $b_i$. We
move the start of strand $i$ to one of $b_i$ and $b'_i$ and the end of strand $i-k$ to the other in such a way that we don't introduce any additional crossings.
Then the odd strands start at $b_i$ and
the even strands start at $b'_i$.
We call the resulting
diagram $D_{k,2k}$. It is easy to see that it
satisfies part (c) in the definition of a Postnikov diagram (and also parts (a), (b)
and (d) by the above).

It is clear from the construction that
strands $j,j+k$ do not meet, while all
other pairs of strands which are both even or both odd meet exactly once. A pair
of strands of mixed parity may meet several
times, but since one goes clockwise
around the disk and the other goes anticlockwise, they satisfy the requirements
of part (e) in the definition of a Postnikov diagram. Hence we see that this condition
holds for any pair of strands.
We have shown that $D_k$ is a Postnikov diagram as required.
%
%


The label of the inserted disk is easily seen to
be $J=\{1,3,\ldots ,2k-1\}$ and the labels of the $k$ quadrilaterals sharing a
vertex with it are exactly the $J^i$ for $i$ moveable in $J$.
The proof of the proposition is complete.
\end{proof}

Note that, for $k=4$, the diagram in Figure~\ref{fig:post48} is equivalent to
the diagram we have constructed here using the tiling. Furthermore, the
diagrams in~\ref{fig:post24} and~\ref{fig:post36} can also be constructed from
the tiling by taking appropriate subsets of the tiling near the half-plane boundary.
We also remark that none of the faces constructed in Proposition~\ref{prop:symmetricdiagram}
(i.e.\ giving the required labels) is a boundary face.

We next consider the addition of a clockwise strand.

\begin{prop} \label{prop:addclockwise}
Let $\Gamma$ be a Postnikov diagram for $Gr_k(n)$.
Then a strand $s'$ can be added to $\Gamma$ to produce a Postnikov diagram
on strands $[1,n]\cup \{s'\}$ (with ordering
$1,2,\ldots ,s,s',\ldots ,n$) for $Gr_k(n+1)$ in such a way that all labels of
non-boundary faces remain after adding the strand.
\end{prop}

\begin{proof}
By applying the boundary twist if necessary, we
ensure that the oriented regions adjacent to
the vertices $b_{\reduce{s+1}},b'_{\reduce{s+1}},b_{\reduce{s+2}},b'_{\reduce{s+2}},\ldots ,b'_{\reduce{s+k}}$
are all oriented clockwise, as shown in Figure~\ref{fig:clockwiserulea}.
We add an extra strand labelled $s'$ with starting point between $b_{\reduce{s+1}}$ and
$b'_{\reduce{s+1}}$. The ending point of $s'$ is placed between $b_{\reduce{s+k}}$ and $b'_{\reduce{s+k}}$.
The strand $s'$ crosses all the strands between these points
close to the boundary
(i.e.\ with no additional crossings between it and the boundary).
See Figure~\ref{fig:clockwiseruleb}.

\begin{figure}
\psfragscanon
\psfrag{s+1-kE}{\psframebox{$\scriptstyle \reduce{s+1-k}$}}
\psfrag{s+2-kE}{\psframebox{$\scriptstyle \reduce{s+2-k}$}}
\psfrag{sE}{\psframebox{$\scriptstyle s$}}
\psfrag{s+1S}{\pscirclebox{$\scriptstyle \reduce{s+1}$}}
\psfrag{s+2S}{\pscirclebox{$\scriptstyle \reduce{s+2}$}}
\psfrag{s+kS}{\pscirclebox{$\scriptstyle \reduce{s+k}$}}
\psfrag{bs+1}{$\scriptstyle b_{s+1}$}
\psfrag{b's+1}{$\scriptstyle b'_{s+1}$}
\psfrag{bs+2}{$\scriptstyle b_{s+2}$}
\psfrag{b's+2}{$\scriptstyle b'_{s+2}$}
\psfrag{bs+k}{$\scriptstyle b_{s+k}$}
\psfrag{b's+k}{$\scriptstyle b'_{s+k}$}

\includegraphics[width=12cm]{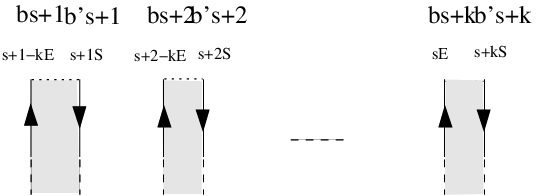}
\caption{Adding a clockwise strand: before.}
\label{fig:clockwiserulea}
\end{figure}

\begin{figure}
\psfragscanon
\psfrag{s+1-kE}{\psframebox{$\scriptstyle \reduce{s+1-k}$}}
\psfrag{s+2-kE}{\psframebox{$\scriptstyle \reduce{s+2-k}$}}
\psfrag{sE}{\psframebox{$\scriptstyle s$}}
\psfrag{ssE}{\psframebox{$\scriptstyle s'$}}
\psfrag{ssS}{\pscirclebox{$\scriptstyle s'$}}
\psfrag{s+1S}{\pscirclebox{$\scriptstyle \reduce{s+1}$}}
\psfrag{s+2S}{\pscirclebox{$\scriptstyle \reduce{s+2}$}}
\psfrag{s+kS}{\pscirclebox{$\scriptstyle \reduce{s+k}$}}
\includegraphics[width=12cm]{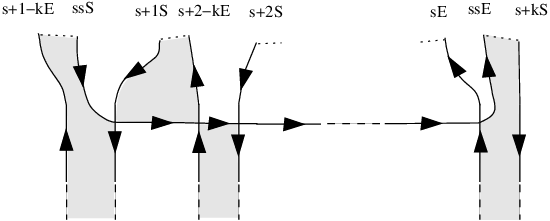}
\caption{Adding a clockwise strand: after.}
\label{fig:clockwiseruleb}
\end{figure}

The new diagram is again a Postnikov diagram.
In particular, for (c), note that the crossing nearest to the
starting point of strand $\reduce{s+1}$ must cross it from left (before adding the
new strand), since $\reduce{s+1}$ starts at $b'_{\reduce{s+1}}$. After adding the new
strand, the starting point of $s+1$ is $b_{\reduce{s+1}}$ and the
crossing nearest the starting point comes from the right, as required.
The next crossing point on strand $s+1$ is the original first crossing, which is from the
left, so the alternation rule, property (c) in Definition~\ref{defn:Postnikov}, does not fail. Similar arguments apply to the other strands.

We also note that, relabelling $1,2,\ldots ,s,s',\ldots ,n$ with
$1,2,\ldots ,n+1$, strand $i$ ends at $b_{\reduce{i+k}}$ or $b'_{\reduce{i+k}}$ for all $i$,
in the new diagram.

Since only boundary alternating faces are on the left of the added strand,
$s'$, all of the labels of the non-boundary alternating faces of the
original diagram will be labels of non-boundary alternating faces of the
new diagram, as required.
\end{proof}

For an example, see Figure~\ref{fig:clockwiseexample}.
Here we add a strand $3'$ to the diagram in Figure~\ref{fig:post36}.
Note that we have to first apply the boundary twist to the two strings
incident with $b_5$ and $b'_5$. We obtain a Postnikov diagram for $Gr_{3,7}$
(with the numbering $1,2,3,3',4,5,6$).

\begin{figure}
\includegraphics[width=8cm]{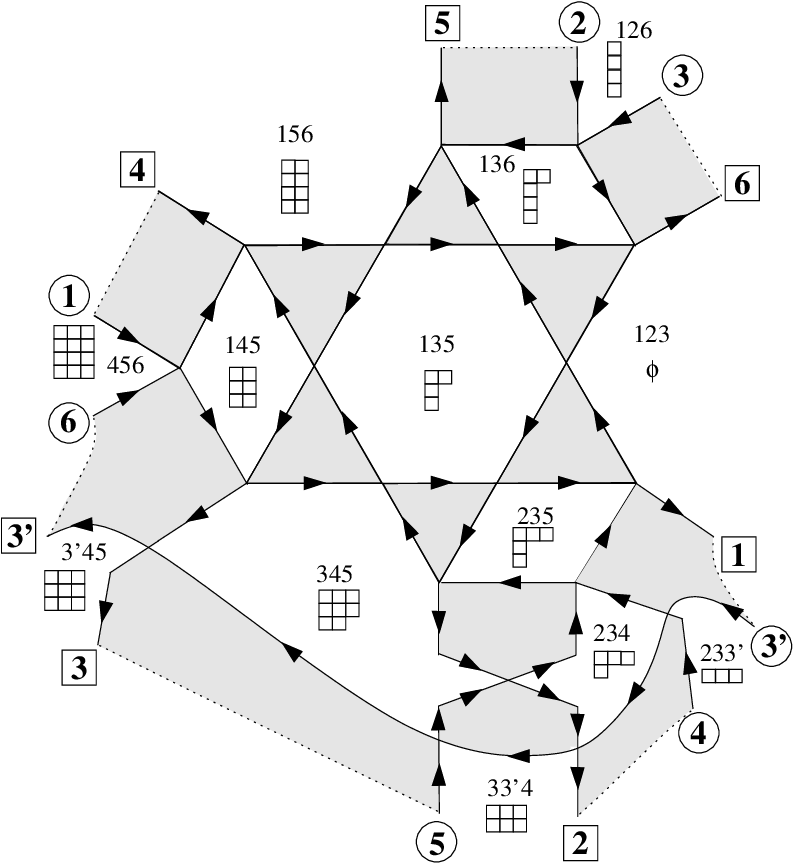}
\caption{Example: adding a clockwise strand $3'$.}
\label{fig:clockwiseexample}
\end{figure}

Secondly, we consider adding an anticlockwise strand.

\begin{prop} \label{prop:addanticlockwise}
Let $\Gamma$ be a Postnikov diagram on strands $[1,n]$
for $Gr_k(n)$. Then a strand $a'$ can be added to $\Gamma$ to produce a
Postnikov diagram on strands $[1,n]\cup \{s'\}$
(with ordering $1,\ldots ,s,s',\ldots ,n$) for $Gr_{k+1}(n+1)$ in such a way
that, for all labels $I$ of non-boundary alternating faces in the original
diagram, $I\cup\{s'\}$ labels a non-boundary alternating
face in the augmented diagram.
\end{prop}

\begin{proof}
By applying the boundary twist if necessary, we ensure that the oriented regions adjacent to the
$b_s,b'_s,b_{\reduce{s-1}},b'_{\reduce{s-1}} \ldots ,b'_{\reduce{s-(n-k-1)}}=b'_{s+k+1}$ are all oriented anticlockwise,
as shown in Figure~\ref{fig:anticlockwiserulea}. We add an 
extra strand labelled $s'$ with starting point between $b_s
$ and $b'_s$.
The ending point of $s'$ is placed between $b_{\reduce{s-(n-k-1)}}$ and $b'_{\reduce{s-(n-k-1)}}$.
It crosses all the strands between these points
close to the boundary (i.e.\ with no additional crossings between it and the boundary).
See Figure~\ref{fig:anticlockwiseruleb}.

\begin{figure}
\vskip 0.4cm
\psfragscanon
\psfrag{s-(n-k-1)S}{\pscirclebox{$\scriptstyle \reduce{s-(n-k-1)}$}}
\psfrag{s+1E}{\psframebox{$\scriptstyle \reduce{s+1}$}}
\psfrag{s-1S}{\pscirclebox{$\scriptstyle \reduce{s-1}$}}
\psfrag{sS}{\pscirclebox{$\scriptstyle s$}}
\psfrag{s-kE}{\psframebox{$\scriptstyle \reduce{s-k}$}}
\psfrag{s-k-1E}{\psframebox{$\scriptstyle \reduce{s-k-1}$}}
\psfrag{s-1S}{\pscirclebox{$\scriptstyle \reduce{s-1}$}}
\includegraphics[width=12cm]{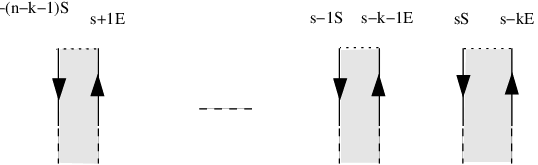}
\caption{Adding an anticlockwise strand: before.}
\label{fig:anticlockwiserulea}
\end{figure}

\begin{figure}
\vskip 0.4cm
\psfragscanon
\psfrag{s-(n-k-1)S}{\pscirclebox{$\scriptstyle \reduce{s-(n-k-1)}$}}
\psfrag{ssS}{\pscirclebox{$\scriptstyle s'$}}
\psfrag{ssE}{\psframebox{$\scriptstyle s'$}}
\psfrag{s+1E}{\psframebox{$\scriptstyle \reduce{s+1}$}}
\psfrag{s-k-1E}{\psframebox{$\scriptstyle \reduce{s-k-1}$}}
\psfrag{sS}{\pscirclebox{$\scriptstyle s$}}
\psfrag{s-kE}{\psframebox{$\scriptstyle \reduce{s-k}$}}
\psfrag{s-1S}{\pscirclebox{$\scriptstyle \reduce{s-1}$}}
\includegraphics[width=12cm]{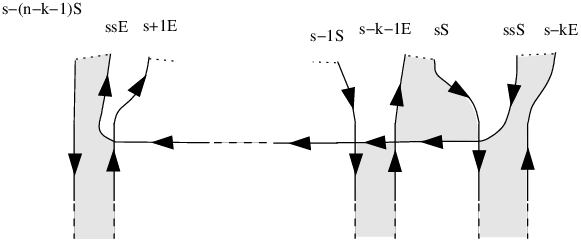}
\caption{Adding an anticlockwise strand: after.}
\label{fig:anticlockwiseruleb}
\end{figure}

It is easy to check that the new diagram is again a 
Postnikov diagram.
The proof is as for Proposition~\ref{prop:addclockwise}.
Since all non-boundary alternating faces in the original 
diagram are on the left of the added strand, $s'$,
the non-boundary labels of the new diagram are indeed
precisely the sets $I\cup \{s'\}$ where $I$ is a
non-boundary label of the old diagram.

Finally, it is also easy to check that, relabelling
$1,2,\ldots ,s,s',\ldots ,n$ with $1,2,\ldots,n+1$, strand $i$ ends at
$b_{\reduce{i+k}}$ or $b'_{\reduce{i+k}}$ for all $i$, in the new diagram.
\end{proof}

\begin{proof}[Proof of Theorem~\ref{thm:diagramconstruction}]
Suppose we are given a $k$-subset $J\subseteq [1,n]$ with
$J\not=L_j$ for any $j$.
Let $K=\{i\in J\,:\,\reduce{i+1}\not\in J\}$ be the
set of moveable elements in $J$; then $|K|\geq 2$.
We start with the diagram $D_{|K|}$ as in Proposition~\ref{prop:symmetricdiagram}.
Let $K+1=\{\reduce{i+1}\,:\,i\in K\}$; note that
$K$ and $\reduce{K+1}$ are disjoint.
Renumber the strands $\{1,2,3,\ldots ,2k\}$ with the elements of $K\cup (\reduce{K+1})$ taken
in cyclic order, starting with the smallest element of $K$.
Then, for each $i\in [1,n]\setminus (J\cup (K+1))$ (in numerical order), use
Proposition~\ref{prop:addclockwise} to add a clockwise strand $i$ in
such a way that all labels of non-boundary faces remain the 
same after adding the strand.
Finally, for each $i\in J\setminus K$ (again in numerical
order), use Proposition~\ref{prop:addanticlockwise} to
add an anticlockwise strand $i$ in such a way that for all 
labels $I$ of non-boundary faces, the resulting diagram has 
a label of form $I\cup \{i\}$. The diagram
constructed in this way satisfies the requirements of Theorem~
\ref{thm:diagramconstruction}.

If $J=L_j$ for some $j$, then $K=\{j\}$. Suppose that $j$ is even. Recall from the proof of Proposition~\ref{prop:symmetricdiagram} that in $D_k$, strand $j$ goes clockwise around the disk while strand $j+1$ goes anticlockwise around the disk. Hence these two strands must cross on the boundary of $L_j$ and, in fact, this must be the only crossing on the boundary. If $j$ is odd, we may rotate $D_k$ to obtain a diagram with the same property; this shows the result for $Gr(k,2k)$. A
diagram for arbitrary $n$ can then be
obtained by adding strands using Propositions~\ref{prop:addclockwise} and~\ref{prop:addanticlockwise}.
\end{proof}

\begin{defn} \label{d:Dlambda}
If $\lambda\in\Pn_{k,n}$ corresponds to a $k$-subset $J=J_{\lambda}$ of 
$[1,n]$, we write $D_{\lambda}$ for $D(J)$. We denote the face 
of $D_{\lambda}$ labelled $J_{\lambda}$ by $F(\lambda)$.
\end{defn}

\begin{defn} \label{d:cyclicgroup}
For $\m\in [1,n]$, let $\mu^{(\m)}$ denote the partition
defined by 
\[J_{\mu^{(\m)}}:=J_{\mu}-\m\mod n,
\]
where $J_\mu-m$ denotes the set obtained from $J_\mu$ by subtracting $m$ from every element. For example for the empty partition $\emptyset$, we have $J_{\emptyset}=\{1,2,\dotsc, k\}$ and $\emptyset^{(m)}=\mu_{n-m}$ with associated subset $J_{n-m}$, as defined in Section~\ref{s:GrassmannianBmodel}.
Note that $\mu\mapsto \mu^{(\m)}$ gives
an action of the cyclic group of degree $n$ on the set $\Pn_{k,n}$.
\end{defn}

\begin{rem} \label{r:rotateddiagrams}
We can arrange that, for each partition $\lambda\in \Pn_{k,n}$,
the Postnikov diagram $D_{\lambda^{(m)}}$ can be obtained from $D_{\lambda}$ by relabelling strand $i$ as strand $i-\m$ for all $i$.
We follow the proof of Theorem~\ref{thm:diagramconstruction} as given for a single representative $\lambda$ in
each orbit. Then we define $D_{\lambda^{(\m)}}$ to be $D_{\lambda}$ with strand $i$ relabelled as $\reduce{i-\m}$ for all $i$. It is easy to check that $D_{\lambda^{(\m)}}$ can then still be constructed as in the proof of Theorem~\ref{thm:diagramconstruction}:
this holds by construction for $Gr(k,2k)$. In general, suppose that in the
construction, $D_{\lambda}$ is obtained from a diagram $D_0$ for $Gr(k,2k)$ by adding
strands $i_1,i_2,\ldots ,i_r$ in order. Then we construct $D_{\lambda^{(\m)}}$ from
$D_0$ (with strand $i$ relabelled as $\reduce{i-\m}$) by adding
strands $\reduce{i_1-\m},\reduce{i_2-\m},\ldots ,\reduce{i_r-\m}$ in order. We shall assume that the construction has been done in this way throughout the rest of the paper.
\end{rem}

\section{The superpotential written in terms of an arbitrary
Pl\"{u}cker extended cluster}
\label{s:anycluster}
Restricting the regular function $W_q$ on $\Xcheck$ to a cluster torus $\Xcheck_{\CC}$ gives a Laurent polynomial in the associated cluster variables. We will need an explicit formula for this in the case where $\CC$ is a Postnikov extended cluster associated to a Postnikov diagram $D$.
To obtain such a formula, we shall use \cite[Thm.\ 1.1]{MarSco} which 
expresses a twisted version of an arbitrary Pl\"{u}cker coordinate $\p_\mu$
in terms of an arbitrary Postnikov extended cluster.
The Laurent polynomial expansion of $\p_{\mu}|_{\Xcheck_{\CC}}$ is given in terms of perfect matchings
on a bipartite graph associated to the Pl\"ucker coordinate  $\p_{\mu}$
and the Postnikov diagram $D$. 
The Pl\"{u}cker coordinates appearing in the numerators in the
definition of $W_q$ (see~\eqref{e:W}) are themselves twists of
Pl\"{u}cker coordinates, by~\cite[Prop.\ 3.5]{MarSco} (up to frozen variables), so it follows that~\cite[Thm.\ 1.1]{MarSco} gives an expression for these Pl\"{u}cker coordinates themselves. We now go into more 
detail.

Let us label the frozen variables by $L_i=\{i-k+1, \dots, i\}=J_{i-k}$ and recall from \eqref{e:W} the formula for $W_q$, which with this notation reads
\begin{equation}
\label{e:W2}
W_q=\sum_{i=1}^n \frac {\p_{\widehat{L}_{i}}} {\p_{L_{i}}}\, \qb^{ \delta_{i,n} }.
\end{equation}
We will now explain how to express the $\p_{\widehat{L}_i}$ in terms of an arbitrary Pl\"ucker extended cluster, by an application of~\cite[Thm.\ 1.1]{MarSco}.

\begin{defn} \label{d:Di}
Let $\CC$ be an arbitrary Postnikov extended cluster and $D$ its Postnikov diagram.
Fix $1\leq i\leq n$ and set
$$N_i=\{i\}\cup [\reduce{i+2},\reduce{i+k}].$$
Let $D_i$ be the Postnikov diagram obtained from $D$ by applying
boundary twists (see Figure~\ref{fig:boundarytwist}) where necessary to ensure that the oriented boundary region adjacent to $b_j,b'_j$ is anticlockwise for all $j\in N_i$ and
clockwise otherwise. If $k=1$ or $k=n-1$, we take $D$ to be the
Postnikov diagram described in Remark~\ref{r:post1n} (see Figure~\ref{fig:postnikovk1}). We have $N_i=\{i\}$, so, as in the other cases, to get $D_i$ we add a boundary twist between the end of strand $i-1$ and the start of strand
$i$ on the boundary. We also add a double boundary
twist between strands $\reduce{j-1}$ and $j$ for all
$j\not=i$ to avoid degeneracies.
\end{defn}

Figure~\ref{fig:quiverexample} shows the Postnikov
diagram $D_3$ constructed from the diagram $D$ in Figure~\ref{fig:post36}. Note that $N_3=\{3,5,6\}$, so in $D_3$,
the oriented regions adjacent to the starting points of
strands $1,2$ and $4$ are black and $3,5$ and $6$ are white.

\begin{defn} \label{def:QiGi}
We denote the quiver of $D_i$ by $\Qi$. We also associate a bipartite graph $G_i$ to $D_i$ as follows~\cite[\S 14]{Postnikov:Totalpositivity}.
Note that any intersection point of two
strands is surrounded by four distinct faces, two of which are oriented faces.
Let $G_i$ be the graph with vertices corresponding to the oriented faces of $D_i$ and edges determined by the intersection points of strands. The vertex in $G_i$ corresponding
to the oriented boundary face adjacent to $b_j,b'_j$ is
labelled $\boundary_j$, for $j\in [1,n]$.   Marking a vertex black if corresponds to a clockwise face and white if corresponds to an anticlockwise face makes
$G_i$ a bipartite graph. Thus the vertices $\boundary_j$ for $j\in N_i$ are white and the other vertices are black.
\end{defn}

The graph $G_i$ can be regarded as being naturally embedded in the disk, with the vertex corresponding to an oriented face of $D_i$ plotted at an interior point in the middle of the face. If there is an edge between two vertices of $G_i$, it is drawn to pass through the corresponding strand intersection point mentioned above. The faces of $G_i$ are in bijection with the alternating faces of $D_i$, and hence with the  Pl\"ucker coordinates in the Postnikov extended cluster $\CC$.
We refer to $G_i$ as the \emph{dual bipartite graph} of $D_i$.
See Figure~\ref{fig:quiverexample}, where the dual bipartite graph $G_3$ of $D_3$ and the quiver $Q_3$ have been superimposed on the
same diagram. 

\begin{figure}
\psfragscanon
\psfrag{v1}{$\boundary_1$}
\psfrag{v2}{$\boundary_2$}
\psfrag{v3}{$\boundary_3$}
\psfrag{v4}{$\boundary_4$}
\psfrag{v5}{$\boundary_5$}
\psfrag{v6}{$\boundary_6$}
\psfrag{1s}{\pscirclebox{$1$}}
\psfrag{1e}{\psframebox{$1$}}
\psfrag{2s}{\pscirclebox{$2$}}
\psfrag{2e}{\psframebox{$2$}}
\psfrag{3s}{\pscirclebox{$3$}}
\psfrag{3e}{\psframebox{$3$}}
\psfrag{4s}{\pscirclebox{$4$}}
\psfrag{4e}{\psframebox{$4$}}
\psfrag{5s}{\pscirclebox{$5$}}
\psfrag{5e}{\psframebox{$5$}}
\psfrag{6s}{\pscirclebox{$6$}}
\psfrag{6e}{\psframebox{$6$}}
\psfrag{p}{$\emptyset$}
\psfrag{P3}{$P_3$}
\includegraphics[width=10cm]{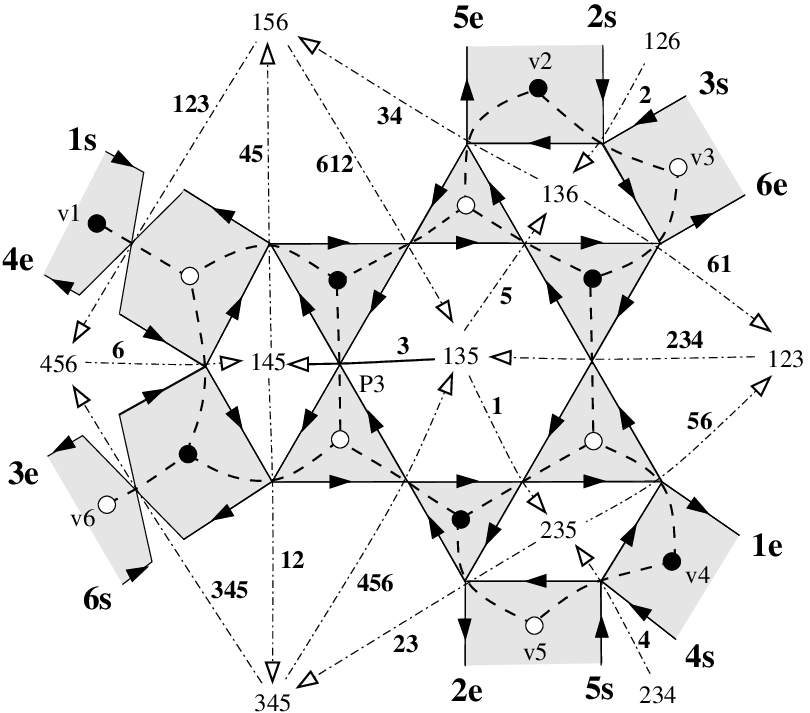}
\caption{The Postnikov diagram $D_3$ constructed from the
diagram in Figure~\ref{fig:post36}, together with its
bipartite dual, $G_3$ (equal dashed lines between coloured vertices) and the quiver $Q_3$ (uneven dashed arrows). Note that $N_3=\{3,5,6\}$. For an explanation of the notation $P_3$, see the comment after Lemma~\ref{l:uniquecrossing}.}
\label{fig:quiverexample}
\end{figure}

We assign monomial weights $w_e\in \C[\Xcheck_{\CC}]$ to the edges of $G_i$ as follows. Let $v$ be the unique black vertex incident with an edge $e$. The weight $w_e$ of $e$  is defined to be the product of the Pl\"ucker coordinates labelling the faces of $G_i$ which are incident with $v$ but not with the rest of $e$ (i.e.\ excluding the two faces on each side of $e$). See Figure~\ref{fig:weighting} for an illustration
of the rule. Figure~\ref{fig:bipartiteweighted} shows the weighting on the dual bipartite graph of Figure~\ref{fig:quiverexample}.

\begin{figure}
\psfragscanon
\psfrag{M1}{$\p(1)$}
\psfrag{M2}{$\p(2)$}
\psfrag{Md}{$\p(d)$}
\psfrag{e}{$e$}
\includegraphics[width=4cm]{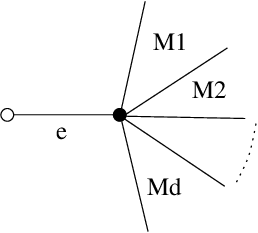}
\caption{Weighting of an edge: $w_e=\p(1)\p(2)\cdots \p(d)$.}
\label{fig:weighting}
\end{figure}

\begin{figure}
\psfragscanon
\psfrag{v1}{$\boundary_1$}
\psfrag{v2}{$\boundary_2$}
\psfrag{v3}{$\boundary_3$}
\psfrag{v4}{$\boundary_4$}
\psfrag{v5}{$\boundary_5$}
\psfrag{v6}{$\boundary_6$}
\psfrag{p123}{$p_{123}$}
\psfrag{p234}{$p_{234}$}
\psfrag{p345}{$p_{345}$}
\psfrag{p456}{$p_{456}$}
\psfrag{p156}{$p_{156}$}
\psfrag{p126}{$p_{126}$}
\psfrag{p136}{$p_{136}$}
\psfrag{p145}{$p_{145}$}
\psfrag{p235}{$p_{235}$}
\psfrag{p135}{$p_{135}$}
\psfrag{1}{$1$}
\includegraphics[width=8cm]{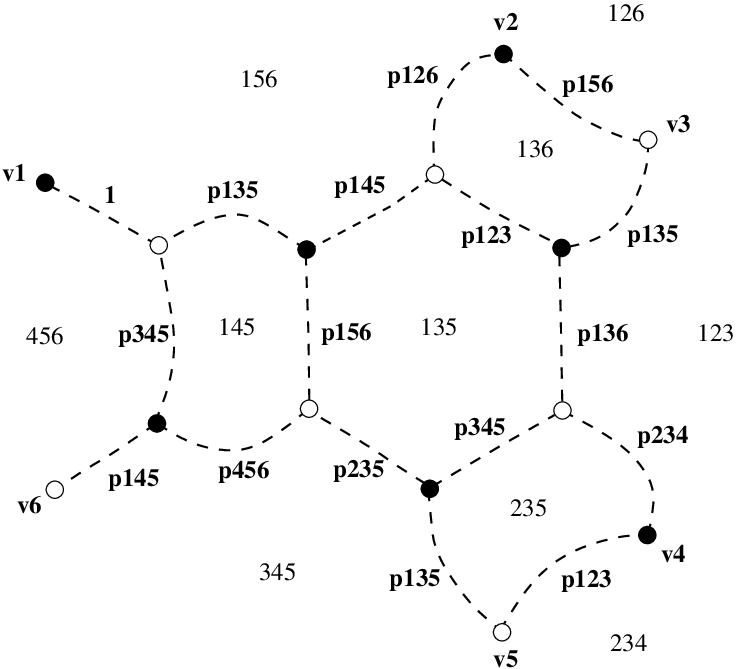}
\caption{The weighting on the dual bipartite graph $G_3$ in Figure~\ref{fig:quiverexample}.}
\label{fig:bipartiteweighted}
\end{figure}

Recall that a \emph{perfect matching} of a graph $\Gamma$ with edge-set $E$ is a subset $M$ of $E$ such that each vertex of $\Gamma$ is incident with precisely one edge in $M$. If $\Gamma$ is weighted with weighting $w_e$ for each edge $e$, its \emph{matching polynomial} is given by: $$w_{\Gamma}=\sum_{M} \prod_{e\in M} w_e,$$
where the sum is over all perfect matchings of $\Gamma$. We will sometimes write $w_M=\prod_{e\in M} w_e$.

We then have the following theorem, which is the special case of a more general result,~\cite[Thm.\ 1.1]{MarSco}.

\begin{thm} \cite{MarSco} \label{thm:MarSco} \\
Let $D$ be a Postnikov diagram, with corresponding Postnikov cluster
$\CI=\CI(D)$ and extended cluster $\CC=\CC(D)$.
Fix $1\leq i\leq n$. Let $G_i$ be the bipartite graph defined above.
Then the following holds in $\C[\Xcheck_\CC]$~:
$$\p_{\widehat{L}_i}=
\frac{w_{G_i}}{\cluster} \p_{L_{\reduce{i-1}}} \p_{L_{i}}
\cdots \p_{L_{\reduce{i+k}}}
=\sum_{M}
\frac{w_M}{\cluster} \p_{L_{\reduce{i-1}}} \p_{L_{i}}
\cdots \p_{L_{\reduce{i+k}}},
$$
where the sum is over all perfect matchings
$M$ of $G_i$.
\end{thm}


Recall that the superpotential $W_q$ is given by the formula:
\begin{equation*}
W_q=\sum_{i=1}^n \frac {\p_{\widehat{L}_{i}}} {\p_{L_{i}}}\, \qb^{ \delta_{i,n} }.
\end{equation*}
Theorem~\ref{thm:MarSco} states that 
\begin{equation}\label{e:Lihatterm}
\frac {\p_{\widehat{L}_{i}}} {\p_{L_{i}}}=\sum_{M}
\frac{w_M}{\cluster} \p_{L_{\reduce{i-1}}} \p_{L_{i+1}} \p_{L_{i+2}}
\cdots \p_{L_{\reduce{i+k}}}.
\end{equation}
  We thus have the following:

\begin{cor} \label{cor:Wanycluster}
Let $D$ be a Postnikov diagram, with corresponding Postnikov cluster
$\CI=\CI(D)$ and extended cluster $\CC=\CC(D)$.
Let $G_1,G_2,\ldots ,G_n$ be the bipartite graphs associated above to $D$.
Then we have the following expression for
the superpotential $W_q$ (see equation~\eqref{e:W2}) in $\C[\Xcheck_\CC]$~:
\begin{equation}
W_q=\frac{1}{\cluster} \sum_{i=1}^n
w_{G_i} \p_{L_{\reduce{i-1}}} \p_{L_{\reduce{i+1}}}
\cdots \p_{L_{\reduce{i+k}}} q^{\delta_{in}}.
\end{equation}
\end{cor}

\section{Construction of a perfect matching}
\label{s:matchingconstruction}
Suppose $\lambda\in \Pn_{k,n}$ and let $D=D_{\lambda}$ be the Postnikov diagram constructed by Theorem~\ref{thm:diagramconstruction}.
For example, Figure~\ref{fig:post36} shows the Postnikov diagram $D_{\ydiagram{2,1}}$ for $G(3,6)$, noting that $J_{\ydiagram{2,1}}=\{1,3,5\}$.
Let $D_1,D_2,\ldots ,D_n$ be the associated boundary-adjusted Postnikov diagrams and $G_1,G_2,\ldots ,G_n$ be the corresponding dual bipartite graphs, associated to $D$ in Section~\ref{s:anycluster}. We also have corresponding quivers $Q_1,Q_2,\ldots ,Q_n$ (see Definitions~\ref{d:Di} and~\ref{def:QiGi}).
For example, Figure~\ref{fig:quiverexample} shows $D_3$, $G_3$ and $Q_3$ for the case $D_{\ydiagram{2,1}}$.

Our aim in this section is to construct an explicit perfect matching $\Mi$ on each of the $G_i$. This perfect matching will correspond to
a distinguished monomial summand in $p_{\widehat{L}_i}/p_{L_i}$. In Section~\ref{s:matchingproperties} we will show
that every other perfect matching can be
obtained from $\Mi$ by face flips
and in Section~\ref{s:matchingevaluation} we
will compute the distinguished monomial
summand explicitly.
We assume that $k\not=1,n-1$.
We first make the following observation.

\begin{lem} \label{l:uniquecrossing}
Let $\lambda\in \Pn_{k,n}$. Then strands $i,\reduce{i+1}$ cross at exactly one point in
$D_{\lambda}$.
\end{lem}

\begin{proof}
Suppose first that $J_{\lambda}\not=L_j$
for any $j$.
For the case $J_{\lambda}=\{1,3,5,\ldots ,2k-1\}$ in $Gr(k,2k)$, the result can be observed
from the construction of $D_{\lambda}$ in Proposition~\ref{prop:symmetricdiagram}.
Note that, for $i$ even, the intersection point of strands $i$ and $\reduce{i+1}$ is the first
intersection point for each of these strands, while for $i$ odd, the intersection
point is on the boundary of the central $n$-sided alternating face in $D_{\lambda}$.

Adding strands does not change this property, since every
new strand crosses the existing strands at most once. Because strand $i$ starts at
$b_i$ or $b'_i$ and ends at $b_{\reduce{i+k}}$ or $b'_{\reduce{i+k}}$, every pair of strands $i,\reduce{i+1}$ must cross at least once. The argument for $J_{\lambda}=L_j$ is similar.
\end{proof}

We define $\PPi$ to be the unique crossing point of strands
$i$ and $i+1$ in $D_{\lambda}$ (and also the corresponding point in $D_i$). The point $P_3$ is shown in Figure~\ref{fig:quiverexample}.
We note that in the case $k=1$ or $n-1$, with our convention (see Remark~\ref{r:post1n}), Lemma~\ref{l:uniquecrossing} does not hold.


\begin{rem} \label{rem:cyclematchings}Perfect matchings for $G_i$ also have an
interpretation in terms of the quiver $Q_i$.
This interpretation will be useful for our construction.
The quiver $\Qi$ is embedded into a disk in such a way that its complement forms a disjoint union of disks. The boundary of such a disk is either a cycle in $\Qi$, which we call a \emph{minimal cycle},
or an oriented path in $\Qi$,
together with part of the boundary of the
whole disk $\mathbb{D}$.
In the latter case, the oriented path goes
from a boundary vertex to an adjacent one
and is called a \emph{boundary path}.

If $E$ is a set of edges in $G_i$, we denote
by $\Sigma(E)$ the set of arrows $\Sigma(E)$ in $Q_i$ crossing the edges in $E$. Then $E$ is a perfect matching in $G_i$ if and only if $\Sigma(E)$ contains exactly one arrow in
each minimal cycle and one arrow in each boundary path
in $Q_i$. We call such a collection of
arrows a \emph{perfect cut} (note that
a set of arrows satisfying the first property is referred to as an \emph{admissible cut} in~\cite{Her:thesis} (see also~\cite[\S1]{BFPPT}).
\end{rem}

In order to construct a perfect cut on
$Q_i$, we use a weighting
on the arrows of $Q_i$ from~\cite[Defn.\ 4.1]{bkm}.
Recall that for $p,q\in [1,n]$, we denote by 
$[p,q]$ the cyclic interval
$\{p,\reduce{p+1},\ldots ,q\}$.

\begin{defn} \label{d:weighting}
Fix $i\in [1,n]$. Then we give each arrow
$\alpha$ in the quiver $\Qi$ a
\emph{weight} given by the set
$$d(\alpha)=[p,q-1]=
\{p,\reduce{p+1},\ldots ,\reduce{q-1}\},$$
in the case where
$\alpha:I\rightarrow J$ and
$J=I-\{p\}+\{q\}$.
Note that the strands $p$ and $q$ cross on 
the arrow $\alpha$:
a typical arrow in $\Qi$ is shown in Figure
\ref{f:grading}.
This weighting is extended to paths
by defining the weight of a path to be the
multiset union of the labels of its arrows,
regarded as a multisubset of $[1,n]$.
\end{defn}

\begin{figure}
\psfragscanon
\psfrag{I}{$I$}
\psfrag{J}{$J$}
\psfrag{p}{$p$}
\psfrag{q}{$q$}
\psfrag{[p,q-1]}{$[p,q-1]$}
\includegraphics[width=4.5cm]{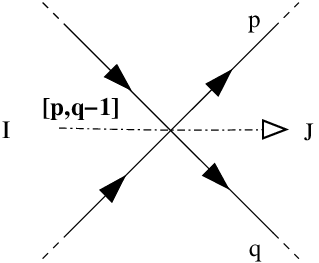}
\caption{The labelling of arrows in $Q(D)$. Strands $p$ and $q$ cross the
arrow from $I$ to $J$, which is given the label $[p,q-1]$.}
\label{f:grading}
\end{figure}

In order to study this weighting,
we need to consider a slightly modified
version of a Postnikov diagram, $D$.
We define the \emph{closure}
$\overline{D}$ of $D$ as follows. Let
$\overline{\mathbb{D}}$ be a disk slightly
larger than $\mathbb{D}$, and extend the 
strands in $D$ incident
with $b_i$ and $b'_i$ on the boundary of
$\mathbb{D}$ to a
common point $\overline{b}_i$ on the boundary of
$\overline{\mathbb{D}}$. This produces a 
Postnikov diagram
$\overline{D}$ according to the original 
definition~\cite[Defn.\ 14.1]{Postnikov:Totalpositivity}.

Let $Q(\overline{D})$ be the quiver 
associated to $\overline{D}$ in~\cite[Defn.\ 2.4]{bkm}: 
the vertices are the same as
the vertices of $Q(D)$, and we take all of 
the arrows in $Q(D)$,
together with additional \emph{boundary arrows} between the boundary 
vertices $L_j$ of the quiver. There
is a boundary arrow for each point
$\overline{b_i}$, oriented and weighted according to a one-sided version of the rule for $Q(D)$; see Figure~\ref{f:boundaryneighbourfaces}.
Thus this boundary arrow through $\overline{b_i}$ is oriented clockwise
if $\boundary_i$ is black and anticlockwise if $\boundary_i$ is white.

\begin{figure}
\psfragscanon
\psfrag{j}{$j$}
\psfrag{j-k}{$j-k$}
\psfrag{Lj}{$L_j$}
\psfrag{Lj-1}{$L_{j-1}$}
\psfrag{bj}{$\overline{b_j}$}
\psfrag{D}{$D$}
\psfrag{Q(D)}{$Q(D)$}
\includegraphics[width=7cm]{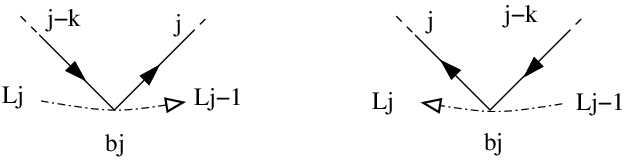}
\caption{Neighbouring faces on the boundary of a closed Postnikov diagram. The weight of the arrow in the left hand figure is $[j,j-k-1]$ and the weight of the arrow in the right hand figure is $[j-k,j-1]$.}
\label{f:boundaryneighbourfaces}
\end{figure}

Note that in $Q(\overline{D})$, there is always a boundary arrow between $L_i$
and $L_{\reduce{i+1}}$, in one direction or 
the other, corresponding
to the intersection point $\overline{b}_i$.
We define $\overline{\Qi}$ to be the quiver
of $\overline{D_i}$.
The closure $\overline{D_3}$ of the Postnikov
diagram $D_3$ in Figure~\ref{fig:quiverexample} 
is shown in Figure~\ref{fig:bipartiteexampleclosed}, and the
corresponding weighted quiver $\overline{Q_3}$ is shown in
Figure~\ref{fig:quiverclosedexample}.

\begin{figure}
\psfragscanon
\psfragscanon
\psfrag{v1}{$\boundary_1$}
\psfrag{v2}{$\boundary_2$}
\psfrag{v3}{$\boundary_3$}
\psfrag{v4}{$\boundary_4$}
\psfrag{v5}{$\boundary_5$}
\psfrag{v6}{$\boundary_6$}
\psfrag{1s}{$\overline{b_1}$}
\psfrag{2s}{$\overline{b_2}$}
\psfrag{3s}{$\overline{b_3}$}
\psfrag{4s}{$\overline{b_4}$}
\psfrag{5s}{$\overline{b_5}$}
\psfrag{6s}{$\overline{b_6}$}
\psfrag{p}{$\emptyset$}
\includegraphics[width=10cm]{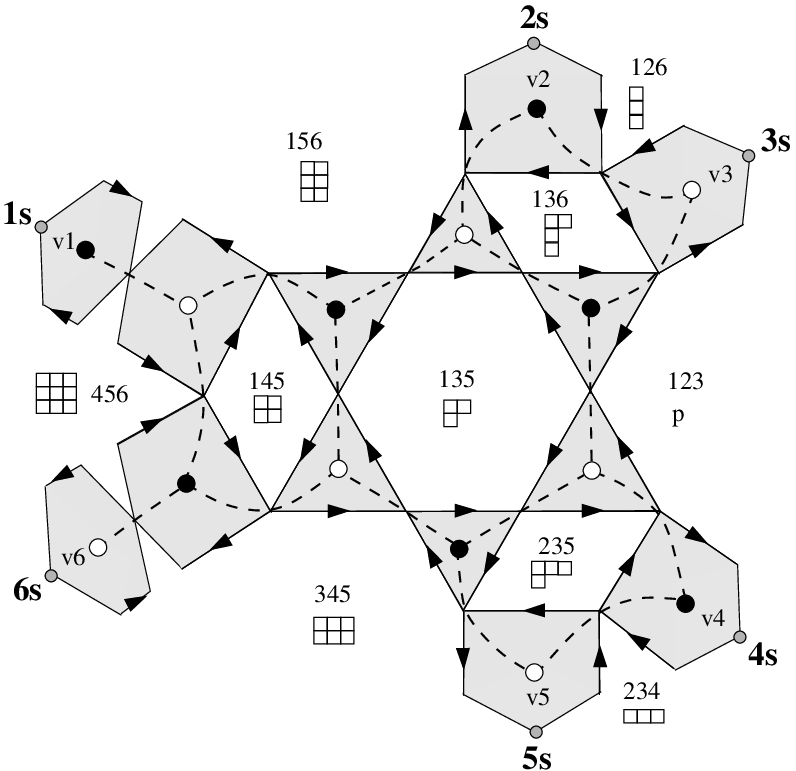}
\caption{The closure $\overline{D_3}$ of the Postnikov diagram $D_3$ in Figure~\ref{fig:quiverexample} and the corresponding dual bipartite
graph.}
\label{fig:bipartiteexampleclosed}
\end{figure}

\begin{figure}
\psfragscanon
\psfrag{v1}{$\boundary_1$}
\psfrag{v2}{$\boundary_2$}
\psfrag{v3}{$\boundary_3$}
\psfrag{v4}{$\boundary_4$}
\psfrag{v5}{$\boundary_5$}
\psfrag{v6}{$\boundary_6$}
\psfrag{1s}{$\overline{b_1}$}
\psfrag{2s}{$\overline{b_2}$}
\psfrag{3s}{$\overline{b_3}$}
\psfrag{4s}{$\overline{b_4}$}
\psfrag{5s}{$\overline{b_5}$}
\psfrag{6s}{$\overline{b_6}$}
\includegraphics[width=10cm]{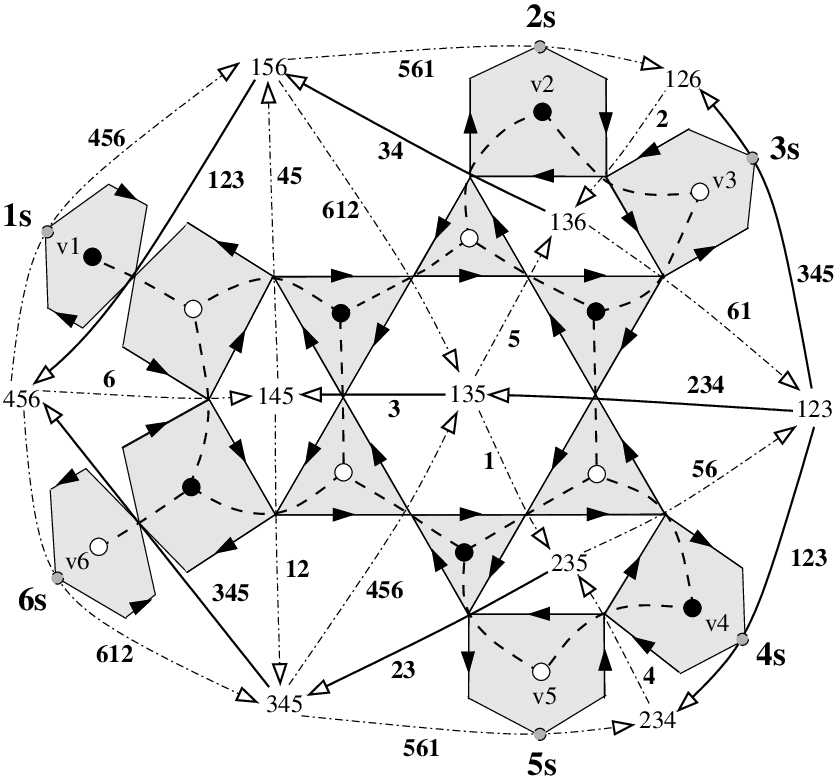}
\caption{The weighted quiver $\overline{Q_3}$ of the Postnikov diagram $\overline{D_3}$ in Figure~\ref{fig:bipartiteexampleclosed}. The arrows in $\overline{S_3}$ (see Definition~\ref{def:Siarrows}) are shown as unbroken arrows.}
\label{fig:quiverclosedexample}
\end{figure}

By~\cite[\S9]{ops} or~\cite[Cor.\ 4.4]{bkm}, we have:
\begin{lem} \label{l:orientedcycle}
Let $D$ be a Postnikov diagram. Then the weights of the arrows in a 
minimal oriented cycle in 
$Q(\overline{D})$ are of the form
$[p_1,p_2-1]$,
$[p_2,p_3-1],\ldots\ ,[p_r ,p_1-1]$,
where $p_1,p_2,\ldots ,p_r$ are in cyclic 
order around $[1,n]$. In particular, the 
weight of such a cycle is $[1,n]$.
\end{lem}

We note, for future reference, the
following corollary:

\begin{cor} \label{c:largercycle}
Let $D$ be a Postnikov diagram and let $c$ be
a cycle in $\overline{D}$. Let $r_0$ be the number of minimal
cycles made up of arrows from $c$ and its
interior, whose orientation is the same as that of $c$. Let $r_1$
be the number of such minimal cycles whose orientation is opposite to that of $c$.
Then $r_0>r_1$ and the weight of $c$ is equal to the multiset union of
$r_0-r_1$ copies of $[1,n]$.
\end{cor}
\begin{proof}
The interiors of the minimal cycles
in the statement of the corollary tile the interior of $c$ completely.
Let $\mathcal{M}_0$ (respectively $\mathcal{M}_1$) denote the set of
such minimal cycles which are oriented in the same way as $c$
(respectively, opposite to $c$).
Each arrow in $c$ lies in exactly one
minimal cycle in $\mathcal{M}_0$, and
each arrow in the interior of $c$ lies on exactly one minimal cycle
in $\mathcal{M}_0$ and exactly one minimal cycle in $\mathcal{M}_1$. Hence, considering the weights, we have:
$$\bigcup_{c_0\in \mathcal{M}_0} d(c_0) = d(c)\cup \bigcup_{c_1\in \mathcal{M}_1}d(c_1),$$
where the unions are multiset unions.
By Lemma~\ref{l:orientedcycle},
$d(c_0)=d(c_1)=[1,n]$ for all minimal
cycles $c_0\in \mathcal{M}_0$ and $c_1\in \mathcal{M}_1$.
Since $r_i=|\mathcal{M}_i|$ for $i=0,1$, the result follows.
\end{proof}

\begin{defn} \label{def:Siarrows}
We set $S_i$ to be the set of arrows in $\Qi$ whose weight contains $i$ (see Figure~\ref{fig:Siexample} for an example), and $\overline{S_i}$ be the set of arrows in $\overline{Q_i}$ whose weight contains $i$ (see Figure~\ref{fig:quiverclosedexample} for an example).
\end{defn}

Applying Lemma~\ref{l:orientedcycle} 
to $\overline{D_i}$, it follows that each minimal cycle in $\overline{\Qi}$ contains exactly one arrow in $\overline{S_i}$. Since $\overline{\Qi}$ contains no boundary paths, it follows that $\overline{S_i}$
can be considered as a perfect cut on $\overline{\Qi}$ in the sense of Remark~\ref{rem:cyclematchings}.

Each minimal cycle in $\Qi$ is a minimal cycle in $\overline{\Qi}$. Therefore,
it contains exactly one arrow in $S_i$.
However, $S_i$ is not a perfect cut
in $\Qi$, because there are boundary
paths which do not contain any arrows in $S_i$, as we shall now see.
For an example of this, see Figure~\ref{fig:Siexample}, which shows the quiver $Q_3$ in our running example and the set of arrows $S_3$.

\begin{figure}
\psfragscanon
\psfrag{v1}{$\boundary_1$}
\psfrag{v2}{$\boundary_2$}
\psfrag{v3}{$\boundary_3$}
\psfrag{v4}{$\boundary_4$}
\psfrag{v5}{$\boundary_5$}
\psfrag{v6}{$\boundary_6$}
\psfrag{1s}{\pscirclebox{$1$}}
\psfrag{1e}{\psframebox{$1$}}
\psfrag{2s}{\pscirclebox{$2$}}
\psfrag{2e}{\psframebox{$2$}}
\psfrag{3s}{\pscirclebox{$3$}}
\psfrag{3e}{\psframebox{$3$}}
\psfrag{4s}{\pscirclebox{$4$}}
\psfrag{4e}{\psframebox{$4$}}
\psfrag{5s}{\pscirclebox{$5$}}
\psfrag{5e}{\psframebox{$5$}}
\psfrag{6s}{\pscirclebox{$6$}}
\psfrag{6e}{\psframebox{$6$}}
\psfrag{p}{$\emptyset$}
\psfrag{P3}{$P_3$}
\includegraphics[width=10cm]{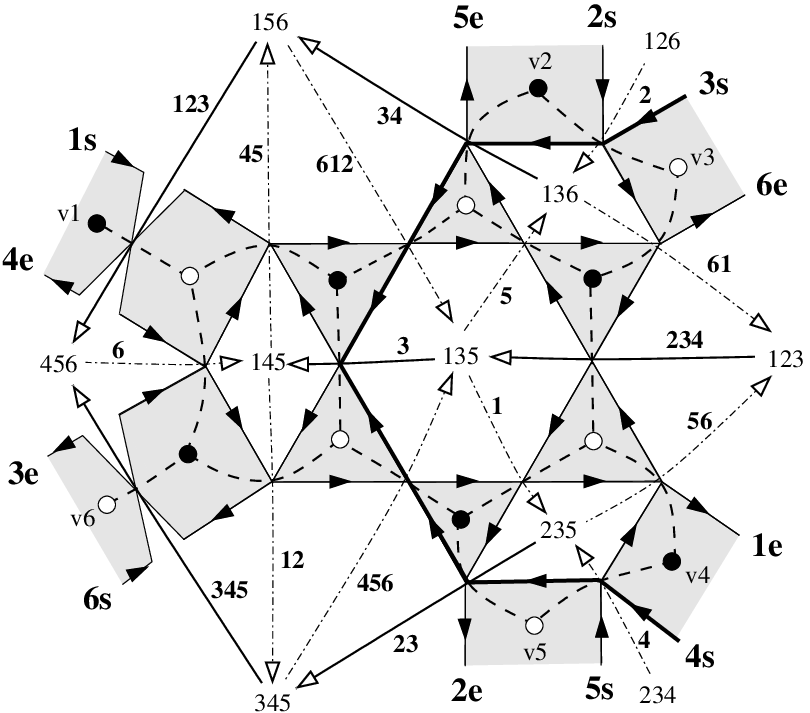}
\caption{The quiver $Q_3$ from Figure~\ref{fig:quiverexample}. The arrows in $S_3$ (see Definition~\ref{def:Siarrows}) are shown as unbroken arrows. The path $\gamma_3$ is shown as a thickened line, with end points given by the encircled $3$ and encircled $4$; see the paragraph after Lemma~\ref{l:strandalternating}.}
\label{fig:Siexample}
\end{figure}

\begin{lem} \label{l:boundaryweight}
Let $D$ be a Postnikov diagram. Then the 
boundary path between
$L_{\reduce{j-1}}$ and $L_j$ has weight
$[j,\reduce{j-k-1}]$ (respectively,
$[\reduce{j-k},\reduce{j-1}]$) if it is 
oriented towards $L_{\reduce{j-1}}$ 
(respectively, $L_j$).
\end{lem}
\begin{proof}
In $Q(\overline{D})$, there is an arrow between $L_j$ and
$L_{\reduce{j-1}}$ crossed by strands $\reduce{j-k}$ and $j$. This arrow
has weight $[j,\reduce{j-k-1}]$ (respectively, $[\reduce{j-k},\reduce{j-1}]$)
if it points towards $L_{\reduce{j-1}}$ (respectively, $L_j$).
Hence the boundary path in $Q(D)$, which
completes this arrow to a minimal cycle
in $Q(\overline{D})$, has weight as claimed by
Lemma~\ref{l:orientedcycle}.
\end{proof}

Recall that the vertex $\overline{b_j}$ in
$\overline{D_i}$ lies on the boundary of the
oriented region of $\overline{D_i}$
corresponding to the vertex $\boundary_j$ in $G_i$.

\begin{lem} \label{l:boundarypaths}
The boundary path around $\overline{b_j}$ in $\Qi$ contains exactly one arrow from $S_i$ if $j\not\in\{i,i+1\}$ and contains no arrow from $S_i$ otherwise.
\end{lem}

\begin{proof}
We use Lemma~\ref{l:boundaryweight}.
If $j\in [\reduce{i+2},\reduce{i+k}]$ then, since $\boundary_j$ is white, the weight of the boundary path around $\boundary_j$ is $[\reduce{j-k},\reduce{j-1}]$ and contains $i$.
Hence exactly one of the arrows on this path lies in $S_i$.
Since $\boundary_{\reduce{i+1}}$ is black, the boundary path around this vertex has weight $[\reduce{i+1},\reduce{i-k}]$ and hence does not contain $i$.

If $j\in [\reduce{i+k+1},\reduce{i-1}]$, then $\boundary_j$ is black, and the weight of the boundary path around this vertex is $[j,\reduce{j-k-1}]$,
so includes $i$, and exactly one of the arrows on this path lies in $S_i$. The vertex $\boundary_i$ is white, so the boundary path around around this vertex has weight $[\reduce{i-k},\reduce{i-1}]$ and has no arrow in $S_i$.
\end{proof}

\begin{rem} \label{r:boundaryarrows}
Since $\overline{S_i}$ is a perfect cut
(and $S_i$ is the restriction of $\overline{S_i}$ from $\overline{\Qi}$ to
$\Qi$), it follows from Lemma~\ref{l:boundarypaths} that the boundary
arrow in $\overline{\Qi}$ through $\overline{b_j}$ lies in $\overline{S_i}$
if and only if $j\in \{i,i+1\}$.
\end{rem}

%
The proof of Lemma~\ref{l:crossingstrandi} arose from discussions of R.~Marsh with K.~Baur and A.~King in an alternative approach towards the results in~\cite{bkm}.

We need to alter $S_i$ in order to obtain a perfect cut on $\Qi$. This
will involve reversing membership of
$S_i$ for certain arrows in $\Qi$.
In order to make this construction,
we need more information about $S_i$.

\begin{lem} \label{l:crossingstrandi}
Let $D$ be a Postnikov diagram and let $\alpha$ be an
arrow in $Q(D)$ or $Q(\overline{D})$. Then $\alpha$ crosses strand $i$ if and only
if the weight of $\alpha$ contains $i$ and not $\reduce{i-1}$, or $\reduce{i-1}$ and not $i$.
\end{lem}

\begin{proof}
If an arrow $\alpha$ crosses strand $i$ from left to right, its weight is $[p,\reduce{i-1}]$ where $p\not=i$ since strands cannot self-intersect. Hence it contains $\reduce{i-1}$ but not $i$.
If $\alpha$ crosses strand $i$ from right to left, its weight is $[i,\reduce{q-1}]$ for some $q\not=i$, and hence contains $i$ but not $\reduce{i-1}$. If $\alpha$ does not cross strand $i$ at all, its weight is $[p,\reduce{q-1}]$
where neither $p$ nor $q$ is equal to $i$. If $i\in [p,\reduce{q-1}]$, then,
since $p\not=i$, we have $i-1\in [p,\reduce{q-1}]$ also. If $i\not\in [p,\reduce{q-1}]$, then,
since $q\not=i$, we have $\reduce{i-1}\not\in [p,\reduce{q-1}]$ also. Thus in this case, either
$\reduce{i-1}$ and $i$ both lie in the weight of $\alpha$ or neither $i$ nor $\reduce{i-1}$
lies in the weight of $\alpha$. The result follows.
\end{proof}

\begin{lem} \label{l:strandalternating}
The arrows in $\overline{\Qi}$ crossing strand $i$ (respectively, strand $\reduce{i+1}$), in order from the start of the strand to its end, alternate between lying in $\overline{S_i}$ and not lying in $\overline{S_i}$.

The first arrow in $\overline{\Qi}$ crossing strand $i$ lies in $\overline{S_i}$. Similarly the first arrow in $\overline{\Qi}$ crossing strand $i+1$ lies in $\overline{S_i}$.  The first arrows in $\Qi$ crossing strand $i$ and strand $i+1$, respectively,  do not lie in $S_i$.

The arrow crossing $\PPi$ lies in $S_i$.
\end{lem}

\begin{proof}
Since any two consecutive arrows
crossing strand $i$ lie in the same
cycle, it follows from Lemma~\ref{l:crossingstrandi} and
Lemma~\ref{l:orientedcycle} that the arrows of $\Qi$ (or $\overline{\Qi}$) crossing strand $i$ alternate between lying in
$S_i$ and not in $S_i$. By Lemma~\ref{l:boundarypaths} the boundary path around the white vertex $\boundary_i$ does not contain an arrow in $S_i$. It follows that the first arrow crossing strand $i$ in $\Qi$ does not lie in $S_i$. The alternating property implies that the first arrow crossing
strand $i$ in $\overline{\Qi}$ does lie in $\overline{S_i}$. A similar argument applies to the case of strand $\reduce{i+1}$.

Since $\PPi$ is the unique crossing point
of strands $i$ and $i+1$ (Lemma~\ref{l:uniquecrossing}), it must be the 
case that strand $i$ crosses the arrow 
crossing $\PPi$ from right to left, 
while strand $\reduce{i+1}$ crosses
it from left to right (looking along the 
arrow). Hence the weight of this arrow is
$\{i\}$ and the second part follows.
\end{proof}

Let $\gamma_i$ be the path in $D_i$ which proceeds along strand $i$ from the beginning up until the crossing point $\PPi$ with strand $\reduce{i+1}$, and then carries on along the reverse of strand $\reduce{i+1}$, to the start of that strand. 
See Figure~\ref{fig:Siexample} for an example.
Let $\overline{\gamma_i}$ be the analogously defined path in $\overline{D_i}$.

\begin{cor} \label{c:gammaalternating}
The arrows in $\overline{\Qi}$ crossing $\overline{\gamma_i}$ alternate between lying in $\overline{S_i}$ and not in $\overline{S_i}$, starting and ending with the former case. The arrows in $Q_i$ crossing $\gamma_i$ alternate between lying in $S_i$ and not in $S_i$, starting and ending with the latter case.
\end{cor}

We define a new set of arrows $\Sigma_i$ in $\Qi$ as follows. We will see that $\Sigma_i$ is a perfect cut on $\overline{\Qi}$ and on $\Qi$.

\begin{defn} \label{d:sigmai}
Let $\Sigma_i$ be the set of arrows $\alpha$ in $\Qi$ which satisfy one of the following:
\begin{enumerate}[(a)]
\item $\alpha$ does not cross $\gamma_i$ and $\alpha$ lies in $S_i$, or
\item $\alpha$ crosses $\gamma_i$ and $\alpha$ does not lie in $S_i$.
\end{enumerate}

Thus $\Sigma_i$ is obtained from $S_i$ by toggling membership
for arrows crossing $\gamma_i$. We refer to this operation as the
swap $\rho$.
\end{defn}

See Figure~\ref{fig:sigmai} for an example of the set $\Sigma_i$.

\begin{figure}
\psfragscanon
\psfrag{v1}{$\boundary_1$}
\psfrag{v2}{$\boundary_2$}
\psfrag{v3}{$\boundary_3$}
\psfrag{v4}{$\boundary_4$}
\psfrag{v5}{$\boundary_5$}
\psfrag{v6}{$\boundary_6$}
\psfrag{1s}{\pscirclebox{$1$}}
\psfrag{1e}{\psframebox{$1$}}
\psfrag{2s}{\pscirclebox{$2$}}
\psfrag{2e}{\psframebox{$2$}}
\psfrag{3s}{\pscirclebox{$3$}}
\psfrag{3e}{\psframebox{$3$}}
\psfrag{4s}{\pscirclebox{$4$}}
\psfrag{4e}{\psframebox{$4$}}
\psfrag{5s}{\pscirclebox{$5$}}
\psfrag{5e}{\psframebox{$5$}}
\psfrag{6s}{\pscirclebox{$6$}}
\psfrag{6e}{\psframebox{$6$}}
\psfrag{p}{$\emptyset$}
\psfrag{P3}{$P_3$}
\includegraphics[width=10cm]{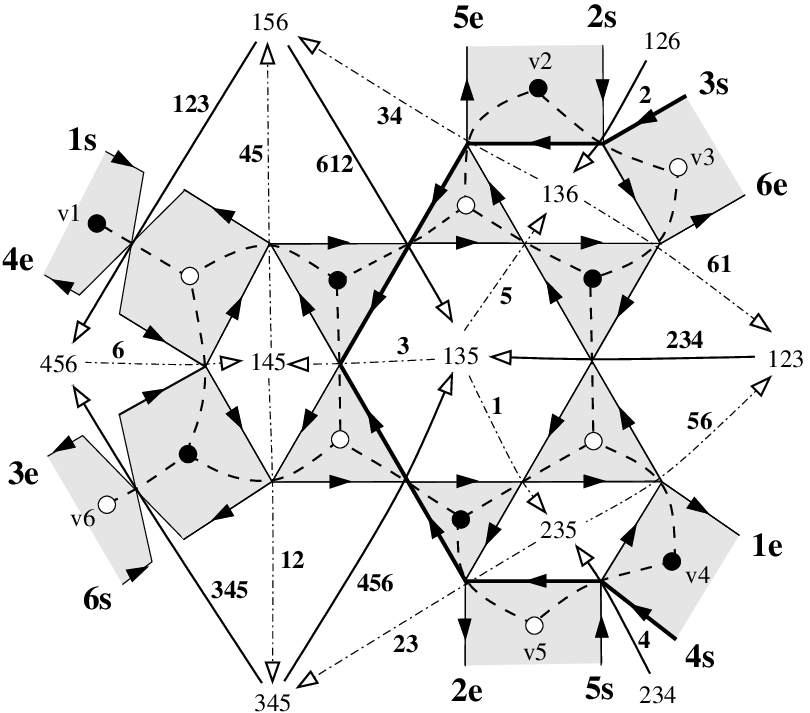}
\caption{The arrows in $\Sigma_3$ (see Definition~\ref{d:sigmai}), in the quiver $Q_3$ from Figure~\ref{fig:quiverexample}, as unbroken arrows.}
\label{fig:sigmai}
\end{figure}

\begin{prop} \label{p:Ximatching}
The set $\Sigma_i$ of arrows is a perfect cut of $Q_i$.
\end{prop}

\begin{proof}
Note first that $\gamma_i$ does not self-intersect, since the strands themselves do not
self-intersect, and strands $i$ and $\reduce{i+1}$ have a unique crossing point by
Lemma~\ref{l:uniquecrossing}.
Consider a minimal oriented cycle in $\Qi$ and the part of the Postnikov diagram lying in the interior of the cycle.
If $\gamma_i$ crosses this cycle, then the part of $\gamma_i$ in the interior of the
cycle is a union of arcs joining mid-points of arrows on the cycle. 
One end-point of each such arc lies on an arrow in $S_i$ by
Corollary~\ref{c:gammaalternating}. However, since each minimal oriented cycle contains exactly one arrow in
$S_i$, there must be only one such arc. Then the swap $\rho$ has
the effect of swapping which of the two arrows at the ends of this arc lies in the matching. This does not change the property
that the oriented cycle contains exactly one  arrow chosen by the matching. Therefore $\Sigma_i$ also contains precisely one arrow from each minimal oriented cycle in $\Qi$.

A similar argument applies to a path around a boundary vertex $\boundary_j$ for
$j\not=i,\reduce{i+1}$, since in this case $\gamma_i$ does not start or end on the
boundary side of such a path. We see that such boundary paths contain
exactly one arrow in $\Sigma_i$, possibly swapped from the one that was contained in $S_i$.

However, the two boundary paths around the vertices where strands $i$ (respectively, $\reduce{i+1}$) start
have the property that $\gamma_i$ starts (respectively, finishes) inside the boundary path around
the corresponding white (respectively, black) vertex. Furthermore, the first arrow that
$\gamma_i$ crosses does not lie in $S_i$, by Corollary~\ref{c:gammaalternating}, so it
does lie in $\Sigma_i$.
Similarly, the last arrow that $\gamma_i$ crosses does not lie in
$S_i$, by Corollary~\ref{c:gammaalternating}, so it does lie in $\Sigma_i$.

We need to check that
$\gamma_i$ crosses the
boundary path around $\boundary_{\reduce{i+1}}$ only once, at its last
arrow and that it crosses the boundary path around $\boundary_i$ only once, at its first arrow.

If $\gamma_i$ were to cross the boundary path around
the vertex $\boundary_{\reduce{i+1}}$ a second time, it would be entering the boundary region at the crossing point, describing an arc, and crossing back out at the next arrow (since it does not
end at $b_{\reduce{i+1}}$). By Lemma~\ref{l:strandalternating}, one of these arrows would be contained in $S_i$, giving a contradiction to Lemma~\ref{l:boundaryweight}.
An analogous argument shows that $\gamma_i$  crosses the boundary path around the vertex $\boundary_{\reduce{i}}$ only once. 



It follows that each of these boundary paths contains exactly 
one element of $\Sigma_i$. We have shown that $\Sigma_i$ is a 
perfect cut of $\Qi$ as required.
\end{proof}

\begin{cor}\label{c:sigmaicut}
The set $\Sigma_i$ of arrows is a perfect
cut of $\overline{\Qi}$.
\end{cor}

\begin{proof}
The quiver $\overline{\Qi}$ can be obtained
from $\Qi$ by adding the boundary arrows
between $L_j$ and $L_{j+1}$ for all $j$
(as in Figure~\ref{f:boundaryneighbourfaces}). This completes each boundary path in $\Qi$ to a minimal cycle in $\overline{\Qi}$. Each such minimal cycle
must contain a single arrow of $\Sigma_i$
by Proposition~\ref{p:Ximatching}, and
the result follows.
\end{proof}

\begin{rem} \label{r:Ximatchingk1}
If $k=1$, there is a unique perfect
matching (which we also denote by $M_i$) on $G_i$ (where $G_i$ is as defined at the start of Section~\ref{s:anycluster}). It contains the unique
edge incident with $\boundary_j$ for each $j$. A
similar description holds for $k=n-1$.
\end{rem}


\begin{defn} \label{d:perfectmatching}
Let $\Mi$ be the set of edges in $G_i$ such
that $\Sigma(\Mi)=\Sigma_i$. By Proposition~\ref{p:Ximatching} and
Remark~\ref{rem:cyclematchings}, $\Mi$
is a perfect matching on $G_i$.
\end{defn}

For an example of the perfect matching
$\Mi$, in the case of the diagram $D_3$ in
Figure~\ref{fig:quiverexample}, see Figure~\ref{fig:miexample}.

\begin{figure}
\psfragscanon
\psfrag{v1}{$\boundary_1$}
\psfrag{v2}{$\boundary_2$}
\psfrag{v3}{$\boundary_3$}
\psfrag{v4}{$\boundary_4$}
\psfrag{v5}{$\boundary_5$}
\psfrag{v6}{$\boundary_6$}
\psfrag{1s}{\pscirclebox{$\scriptstyle 1$}}
\psfrag{1e}{\psframebox{$\scriptstyle 1$}}
\psfrag{2s}{\pscirclebox{$\scriptstyle 2$}}
\psfrag{2e}{\psframebox{$\scriptstyle 2$}}
\psfrag{3s}{\pscirclebox{$\scriptstyle 3$}}
\psfrag{3e}{\psframebox{$\scriptstyle 3$}}
\psfrag{4s}{\pscirclebox{$\scriptstyle 4$}}
\psfrag{4e}{\psframebox{$\scriptstyle 4$}}
\psfrag{5s}{\pscirclebox{$\scriptstyle 5$}}
\psfrag{5e}{\psframebox{$\scriptstyle 5$}}
\psfrag{6s}{\pscirclebox{$\scriptstyle 6$}}
\psfrag{6e}{\psframebox{$\scriptstyle 6$}}
\psfrag{p}{$\emptyset$}
\includegraphics[width=16cm]{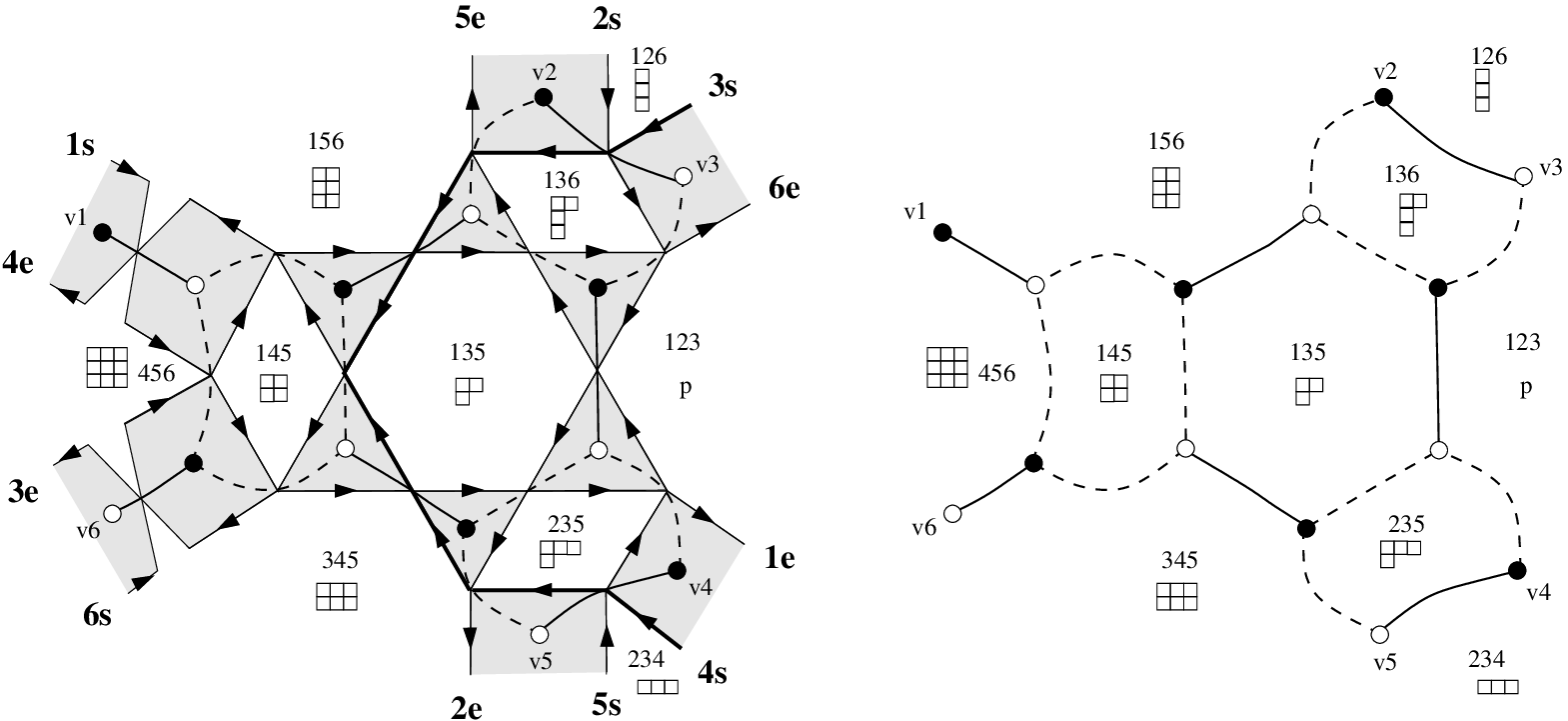}
\caption{The perfect matching $M_3$
on the graph $G_3$ in the case of
the example in Figure~\ref{fig:quiverexample}. Edges in
$G_3$ are drawn as full edges if they lie
in $M_3$ and as dashed edges otherwise.
The path $\gamma_3$ is drawn with thick
lines.}
\label{fig:miexample}
\end{figure}

\begin{figure}
\psfragscanon
\psfrag{p}{$\emptyset$}
\includegraphics[width=6.7cm]{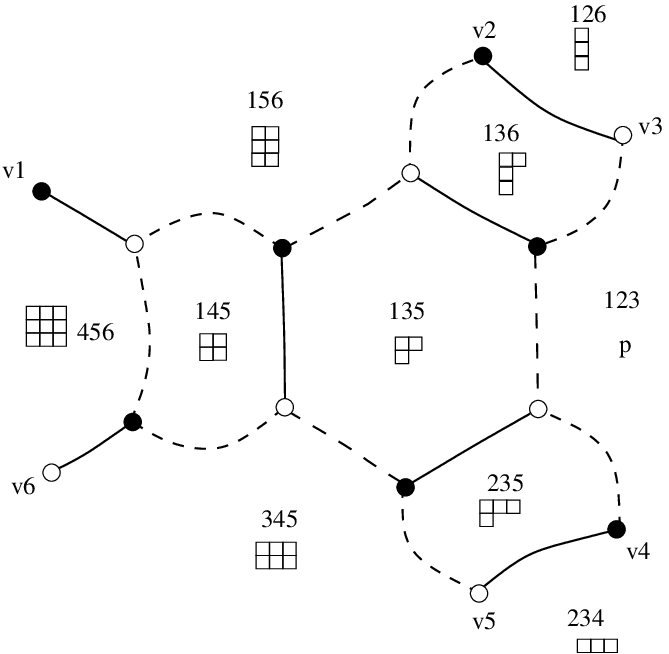}
\caption{The perfect matching on $G_3$ obtained from the perfect matching
shown in Figure~\ref{fig:miexample} by performing a face flip on the hexagonal
face. Edges in $G_3$ are drawn as full edges if they lie
in the perfect matching and as dashed edges otherwise.}
\label{fig:flippedmatching}
\end{figure}

\section{How to obtain all perfect matchings from $\Mi$}
\label{s:matchingproperties}
If $k=1$ or $n-1$, $\Mi$ is the unique perfect matching on $G_i$,
so we assume in this section that $k\not=1,n-1$. Our main aim is to show
that every other perfect matching on $G_i$
can be obtained from $M_i$ by performing
a sequence of face flips, in the sense of Definition~\ref{d:faceflip} below, on interior faces.
We will also show that the sequences
of face flips can be constructed in such a way that a distinguished face $\Fi$
of $G_i$ always appears at the beginning of the sequence and then never appears again.

\begin{defn} \label{d:faceflip}
Let $M$ be a perfect matching on a plane
graph $G$. An \emph{$M$-flippable face}
of $G$ is a (possibly unbounded) face for which the edges on
the boundary alternate between lying in $M$ and not lying in $M$.
If $F$ is an $M$-flippable face, then we
can construct a new perfect matching out of
$M$ by reversing membership of $M$ for
those edges along the boundary of $F$.
We call this operation a \emph{face flip}.
\end{defn}

Recall that the interior faces of $G_i$ correspond bijectively to the internal
alternating regions of the Postnikov diagram $D_i$, so are labelled by the non-boundary vertices of $\Qi$.

A plane bipartite graph is said to be \emph{factorizable} if it has at least one perfect matching. An edge is said to be \emph{allowed} if it appears in some perfect matching.

For example, the graph $G_3$ in Figure~\ref{fig:miexample} has a perfect
matching, $M_3$, as shown. The face labelled
\raisebox{\depth}{$\ydiagram{2,1}$} is $M_3$-flippable in $G_3$. 
Performing a face flip on this face produces a new perfect matching $M$, 
shown in Figure~\ref{fig:flippedmatching}. In $M$, the faces labelled
\raisebox{\depth}{$\ydiagram{3,1}$} and
\raisebox{\depth}{$\ydiagram{2,1,1}$} are both
$M$-flippable (as well as \raisebox{\depth}{$\ydiagram{2,1}$}). Flipping at either of these two faces in $M$ or both gives
three new perfect matchings on $G_3$. In fact, these five perfect matchings are all of the perfect matchings on $G_3$. We shall see later that, in general, all perfect matchings on $G_i$ can be obtained by performing sequences of face flips on $M_i$ (see Theorem~\ref{t:startwithQi}).
We also see that the allowed edges in the graph $G_3$ shown in Figure~\ref{fig:miexample} are as shown in Figure~\ref{fig:elementary}:
these are the edges appearing in the five perfect matchings listed above.

By definition, a connected factorizable plane bipartite graph is \emph{elementary} if and only if every edge is allowed~\cite[\S 4]{Lovasz}. We recall the following.

\begin{thm} \cite[Thm. 2]{propp},~\cite[Thm. 3.3]{zz}
\label{t:ztransformationconnected}
Let $G$ be a connected elementary plane bipartite graph.
Then, given any two perfect matchings
$M,M'$ of $G$, there is a sequence of face flips taking $M$ to $M'$.
\end{thm}

Note that to apply the result in~\cite[Thm. 2]{propp} to obtain Theorem~\ref{t:ztransformationconnected}, we need the fact that every edge in $G$ appears in some perfect matchings but not others. But this holds for any plane bipartite elementary graph,
e.g. by~\cite[Thm. 2.4]{zz}, which states that for any face of a plane bipartite
elementary graph $G$, there is a perfect matching in which that face is flippable.

In general, $G_i$ is not elementary, so in order to apply 
Theorem~\ref{t:ztransformationconnected} to $G_i$ we need to study the elementary components of $G_i$. The
\emph{elementary components} of a plane
bipartite graph $G$ are the connected components of the graph obtained by removing all disallowed edges from $G$
(see the definition before Lemma 2 in~\cite{Fournier:PerfectMatchings}).
The elementary components of the graph $G_3$ shown in Figure~\ref{fig:miexample} are shown in Figure~\ref{fig:elementary}.
We show that the elementary components
of $G_i$ are all \emph{elementary blocks}
(see~\cite[\S1]{lsz}), i.e.\ each interior
face of the component is also a face
of $G_i$. In fact, either every
elementary component of $G_i$ is a single
edge, or $G_i$ has a unique elementary
component which is not a single edge
(but is an elementary block).

To compute the elementary components of $G_i$ we need to find the allowed edges of $G_i$. This can be done using a result which we now recall.
Suppose that $G$ is a connected
factorizable plane bipartite graph with vertex bipartition $W\sqcup B$.
Let $G^*$ denote the dual graph of $G$,
oriented in such a way that the boundary
of a face in $G^*$ corresponding to a
black (respectively, white) vertex of
$G$ is oriented clockwise (respectively,
anticlockwise).
Let $M$ denote a perfect matching of $G$.
We denote by $G^*_M$ the quiver obtained by removing the edges in $G^*$
dual to the edges of $M$. Then we have
the following.

\begin{prop} \cite[Lemma 5]{Fournier:PerfectMatchings} \label{p:circuitcondition}
Suppose that G is a connected plane factorizable bipartite graph.
Then an edge of $G$ not in $M$ is allowed if and only if the dual edge in $G^*_M$ does not belong to a cycle in $G^*_M$.
\end{prop}

Note that it is clear that all edges in $M$ are allowed. Thus Proposition~\ref{p:circuitcondition}
states that the disallowed edges in
$G$ are those dual to cycles in $G_M^*$.
Thus in order to apply Proposition~\ref{p:circuitcondition} we need to find out which edges of $G_M^*$ lie in cycles.
We do this by first studying $\Qi$,
noting that $G_i^*$ can be obtained from
$\Qi$ by identifying all of the boundary
vertices $L_j$ (recall that $L_j=\{j-k+1,\ldots ,j\}$).

\begin{defn} \label{d:Giin}
Let $\Qil$ denote the full subquiver of
$\Qi$ whose vertices are those which are to the left of strand $i$ and to the right of
strand $i+1$, excluding $L_i$.
Let $\Qir$ denote the full subquiver of $\Qi$ on the remaining vertices of $\Qi$. In particular, $\Qir$ contains $L_i$.

Let $\Giin$ be the subgraph of $G_i$ whose edges are those which either cross an arrow between two vertices in $\Qil$ or cross an arrow between a vertex in $\Qil$ and a vertex in $\Qir$.

Figure~\ref{fig:elementary} shows $\Qil$ and $\Giin$ (for $i=3$) in our running example.
\end{defn}

\begin{figure}
\psfragscanon
\psfrag{v1}{$\boundary_1$}
\psfrag{v2}{$\boundary_2$}
\psfrag{v3}{$\boundary_3$}
\psfrag{v4}{$\boundary_4$}
\psfrag{v5}{$\boundary_5$}
\psfrag{v6}{$\boundary_6$}
\psfrag{1s}{\pscirclebox{$1$}}
\psfrag{1e}{\psframebox{$1$}}
\psfrag{2s}{\pscirclebox{$2$}}
\psfrag{2e}{\psframebox{$2$}}
\psfrag{3s}{\pscirclebox{$3$}}
\psfrag{3e}{\psframebox{$3$}}
\psfrag{4s}{\pscirclebox{$4$}}
\psfrag{4e}{\psframebox{$4$}}
\psfrag{5s}{\pscirclebox{$5$}}
\psfrag{5e}{\psframebox{$5$}}
\psfrag{6s}{\pscirclebox{$6$}}
\psfrag{6e}{\psframebox{$6$}}
\psfrag{p}{$\emptyset$}
\psfrag{P3}{$P_3$}
\includegraphics[width=10cm]{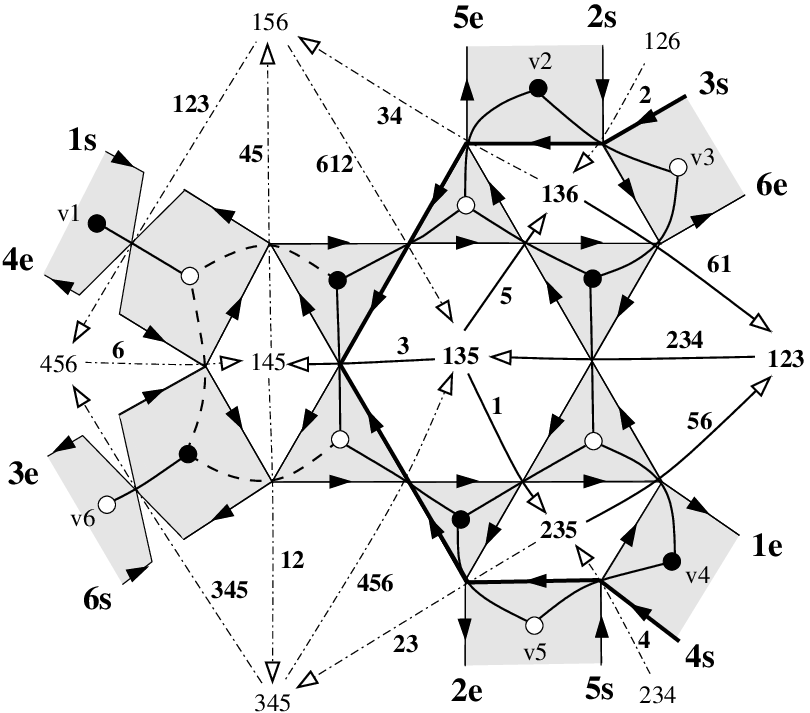}
\caption{The quiver $Q_3$ from Figure~\ref{fig:quiverexample}. The arrows in $Q_3^{\text{in}}$ (see Definition~\ref{d:Giin}) are shown as unbroken arrows (its vertices are 123,135,136, 235). As usual, the path $\gamma_3$ is shown as a thickened line. The allowed edges in $G_i$
are shown as unbroken edges. We see three elementary components: $G_i^{\text{in}}$ and two singleton edges on the left hand side incident with $v_1$ and $v_6$.}
\label{fig:elementary}
\end{figure}

We shall show that $\Giin$ forms an elementary component of $G_i$, and 
that all other elementary components of $G_i$
consist of single edges crossing arrows between vertices of $\Qir$ which 
lie in $\Sigma_i$.
We start with the following.

\begin{lem} \label{l:boundaryvertex}
Consider the arrows in $\Qi$ incident with the vertex $L_i$, in order anticlockwise from the boundary. Note that these arrows alternate between starting at $L_i$ and
ending at $L_i$, with the first and last
arrows ending at $L_i$. Arrows starting
at $L_i$ lie in $S_i$, while arrows ending
at $L_i$ do not lie in $S_i$.
\end{lem}

\begin{proof}
Since $\boundary_i$ is white and $\boundary_{i+1}$ is
black, the first and last arrows in the above ordering must be oriented
towards $L_i$.
The fact that these arrows do not lie in $S_i$ follows from Lemma~\ref{l:boundarypaths}.
An arrow oriented away from $L_i$ must have 
target labelled $(L_i\setminus \{j\})\cup \{l\}$
for some $j\in L_i=[\reduce{i-k+1},i]$ and $l\not\in L_i$, and thus has 
label $[j,\reduce{l-1}]$ containing $i$. Thus all arrows starting at 
$L_i$ lie in $S_i$. Similarly all arrows ending at $L_i$ do 
not lie in $S_i$, as required.
\end{proof}

\begin{lem} \label{l:boundaryarrow}
An arrow in $\Qi$ which has one end-point in $\Qil$ and one end-point in $\Qir$
is either
\begin{enumerate}
\item[(a)] in the perfect cut $\Sigma_i$, and oriented towards the end-point in
$\Qil$, or
\item[(b)] not in the perfect cut $\Sigma_i$, and oriented towards the end-point in $\Qir$.
\end{enumerate}
\end{lem}

\begin{proof}
Let $\alpha$ be an arrow as in the statement of the lemma.
Suppose first that $\alpha$ crosses $\gamma_i$. By the assumption on
$\alpha$, neither end-point of $\alpha$ is $L_i$ (otherwise both of its
end-points would lie in $\Qir$). In this case, the result follows from the
definition of $\Sigma_i$ and Corollary~\ref{c:gammaalternating}, noting that
the first and last arrows crossing $\gamma_i$ are oriented towards $\Qil$.

Next, suppose that one of the end-points of $\alpha$ is $L_i$.
Then the other end-point lies in $\Qil$, so $\alpha$ does not cross $\gamma_i$.
The result in this case follows from Lemma~\ref{l:boundaryvertex} and the
definition of $\Sigma_i$,
noting that $\alpha$ lies in $S_i$ if
and only if it lies in $\Sigma_i$, as it does not cross $\gamma_i$.
\end{proof}

Note that in the case where $i,\reduce{i+1}$ cross on the boundary of the face
labelled by $L_i$, the subquiver $\Qil$ is empty; see Figure~\ref{f:Lispecial}.
In this case, Lemma~\ref{l:boundaryarrow} does not say anything.

We recall the following result from~\cite[Prop.\ 4.9]{bkm}, which gives
information concerning the weights
of arrows incident with an internal vertex
of $\Qi$ which we shall use several times.

\begin{lem} \cite[Prop.\ 4.9]{bkm} \label{l:winding}
Let $I$ be an internal vertex of $\Qi$
of valency $2r$.
Then the weights of the arrows incident with
it, taken in order anticlockwise around $I$, 
follow on from each other and wrap around 
$[1,n]$ exactly $r-1$ times.
\end{lem}

For example, the weights of the arrows incident with the vertex $135$ in
Figure~\ref{fig:Siexample}, which has valency $3$, are, taken in order anticlockwise around the vertex, $1$, $234$, $5$, $612$, $3$, $456$,
which wrap around $[1,6]$ twice.

\begin{defn} \label{d:Fi}
Let $\Fi$ be the alternating face adjacent to the crossing point $\PPi$ of
strands $i$ and $\reduce{i+1}$ which is to the left of strand $i$ and to the right
of strand $i+1$. Let $I_i$
be the $k$-subset labelling this face.
Let $\Fip$ be the alternating face 
adjacent to $\PPi$ on the other side of
$\gamma_i$, i.e.\ to the right of strand
$i$ and to the left of strand $i+1$.
Let $I'_i$ be the $k$-subset labelling this face. Note that if $i,i+1$ cross
on the boundary of $D_i$ then $I_i=L_i$
and $I'_i=\widehat{L}_i$.
See Figure~\ref{f:boundary} for a schematic
illustration. In Figure~\ref{fig:quiverexample}, we have $I_3=\{1,3,5\}$ and $I'_3=\{1,4,5\}$. The faces labelled by these subsets are $F_3$ and
$F'_3$ respectively.
\end{defn}

\begin{figure}
\psfragscanon
\psfrag{Li}{$L_i$}
\psfrag{Li+1}{$L_{i+1}$}
\psfrag{Li-1}{$L_{i-1}$}
\psfrag{Li+k}{$L_{i+k}$}
\psfrag{iS}{\pscirclebox{$\scriptstyle i$}}
\psfrag{i+1S}{\pscirclebox{$\scriptstyle i+1$}}
\psfrag{Giin}{$\Giin$}
\psfrag{Fi}{$\Fi$}
\psfrag{Fip}{$\Fip$}
\includegraphics[width=6cm]{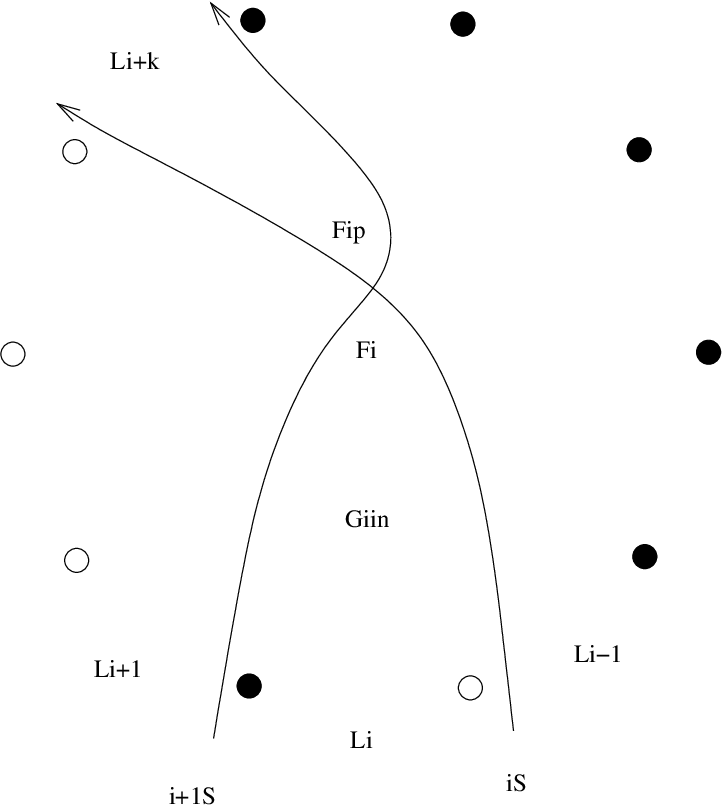}
\caption{The faces $\Fi$ and $\Fip$.}
\label{f:boundary}
\end{figure}

\begin{defn} \label{def:quiverinout}
We denote by $\Qilc$ (respectively,
$\Qirc$), the quiver $\Qil$ (respectively,
$\Qir$) with all arrows in $\Sigma_i$
removed. For the example in Figure~\ref{fig:elementary}, we show the
subquivers $Q_3^{\text{in}}(\Sigma_3)$ and $Q_3^{\text{out}}(\Sigma_3)$ in Figure~\ref{fig:quiverinout} (recall that the arrows in $\Sigma_i$ are shown in Figure~\ref{fig:sigmai}).
\end{defn}

\begin{figure}
\psfragscanon
\psfrag{v1}{$\boundary_1$}
\psfrag{v2}{$\boundary_2$}
\psfrag{v3}{$\boundary_3$}
\psfrag{v4}{$\boundary_4$}
\psfrag{v5}{$\boundary_5$}
\psfrag{v6}{$\boundary_6$}
\psfrag{1s}{\pscirclebox{$1$}}
\psfrag{1e}{\psframebox{$1$}}
\psfrag{2s}{\pscirclebox{$2$}}
\psfrag{2e}{\psframebox{$2$}}
\psfrag{3s}{\pscirclebox{$3$}}
\psfrag{3e}{\psframebox{$3$}}
\psfrag{4s}{\pscirclebox{$4$}}
\psfrag{4e}{\psframebox{$4$}}
\psfrag{5s}{\pscirclebox{$5$}}
\psfrag{5e}{\psframebox{$5$}}
\psfrag{6s}{\pscirclebox{$6$}}
\psfrag{6e}{\psframebox{$6$}}
\psfrag{p}{$\emptyset$}
\psfrag{P3}{$P_3$}
\includegraphics[width=10cm]{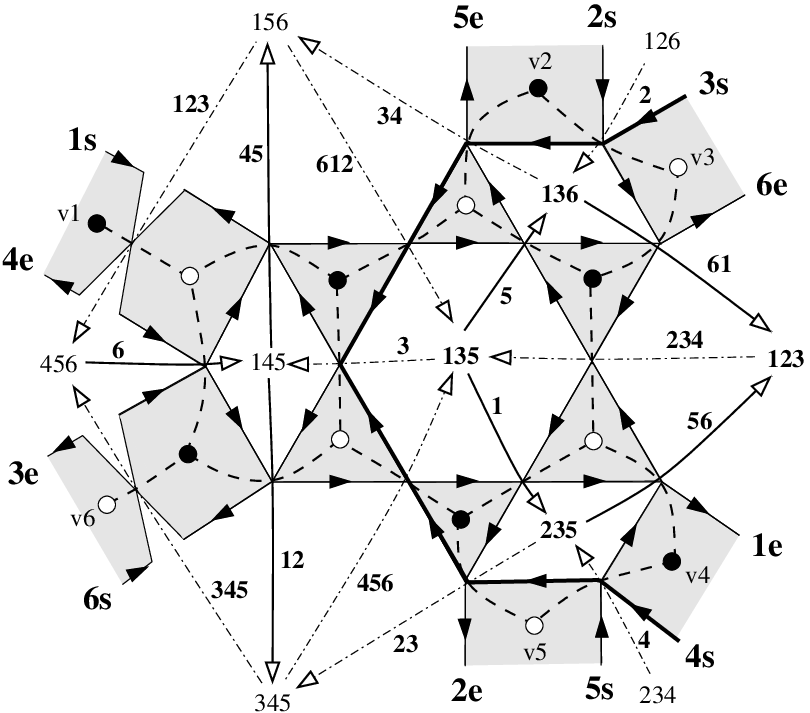}
\caption{The quiver $Q_3$ from Figure~\ref{fig:quiverexample}. The arrows in $Q_3^{\text{in}}(\Sigma_3)$ and $Q_3^{\text{out}}(\Sigma_3)$ (see Definition~\ref{def:quiverinout}) are shown as unbroken arrows.}
\label{fig:quiverinout}
\end{figure}

Our next step is to obtain more information about
paths and cycles in $\Qilc$ and $\Qirc$.
The proofs of Lemma~\ref{l:connected}(b) and Lemma~\ref{l:cycles} build
on discussions of R.~Marsh with K.~Baur and A.~King in an alternative
approach to the results in~\cite{bkm}.

\begin{lem} \label{l:connected}
\begin{enumerate}
\item[(a)] The quivers $\Qilc$ and $\Qirc$ are acyclic.
\item[(b)] Given any vertex $I$ of $\Qilc$, there is a path from
$I_i$ to $I$ in $\Qilc$.
\end{enumerate}
\end{lem}

\begin{proof}
(a) By Corollary~\ref{c:largercycle}, the weight of any cycle in $\overline{\Qi}$ is a multiset union of $[1,n]$. It follows that the weight of any cycle in $\Qi$ also has this property and, in particular, contains $i$. Therefore, any cycle in $\Qi$ contains
an arrow from $S_i$.
Since the arrows in $\Qil$ do not cross $\gamma_i$, the arrows
in $\Qil$ which lie in $\Sigma_i$ are exactly those which lie in $S_i$.
Removing these arrows from $\Qil$ breaks
up every cycle in $\Qil$.
It follows that $\Qilc$ is acylic. A similar argument applies to $\Qirc$.

(b) We will show that if $I$ is any vertex of $\Qilc$ not equal to $I_i$, then there
is an arrow in $\Qilc$ with target $I$.
Since $\Qilc$ is finite and acyclic,
repeated application of this argument
must produce a path from $I$ to $I_i$ in
$\Qilc$ as required. We find the required
arrow by showing that there is an arrow
in $\Qi$ with target $I$ whose weight does
not include $i-1,i$ or $i+1$. Such an arrow
does not lie in $S_i$, but also cannot
cross $\gamma_i$ by Lemma~\ref{l:crossingstrandi}. Hence it
also does not lie in $\Sigma_i$. Again
using the fact that the arrow does not cross $\gamma_i$, the source of the arrow must lie  in $\Qil$. Hence the arrow lies in
$\Qilc$ as required.

We consider an arbitrary vertex
$I$ of $\Qil$, of valency $2r$ in $\Qi$. We assume that $I\ne I_i$.
This implies that $I$ is not adjacent to the crossing point $\PPi$ of strands $i$ and $i+1$, and has no arrow  incident with it whose weight is $\{i\}$ (since the arrow
through $\PPi$ is the unique arrow with
weight $\{i\}$).

By Lemma~\ref{l:winding}, there are $r-1$ arrows, $\alpha_1,\ldots ,\alpha_{r-1}$ incident with $I$ whose weight contains $i-1$. For every arrow $\alpha_j$ we let $\beta_j$ refer to the arrow incident with $I$ which is adjacent to $\alpha_j$ in an anti-clockwise direction around $I$. We obtain a set
$\mathcal{A}=\{\alpha_j\}\cup\{\beta_j\}$
consisting of at most $2r-2$ arrows.
Clearly (again using Lemma~\ref{l:winding}), any arrow incident with $I$ whose weight contains $i$ must lie in $\mathcal A$. Moreover any arrow whose weight contains $i+1$ must also lie in $\mathcal A$, since otherwise there would be a $\beta_j$ with weight $\{i\}$, which contradicts our assumption that $I\ne I_i$. 

By its definition, $\mathcal{A}$ contains
at most $r-1$ arrows whose target is $I$.
Hence there is an arrow with target $I$ in $\Qi$ whose weight does not contain $i-1,i$ or $i+1$ and we are done.
\end{proof}

\begin{lem} \label{l:cycles}
Let $I$ be a non-boundary vertex in $\Qirc$. Then there is a path in $\Qirc$
from a boundary vertex to $I$ and a path from $I$ to a boundary vertex.
\end{lem}

\begin{proof}
Assume first that $I$ lies to the right of strand $i$. Let $2r$ be the valency of 
$I$ in $\Qi$. Consider the arrows incident with $I$ whose weight contains 
$i-1$, $i$ or both. By Lemma~\ref{l:winding}, cyclically ordering the arrows 
incident with $I$ anticlockwise around $I$, we see that such arrows occur either 
as singletons or in adjacent pairs, a total of $r-1$ singletons and
adjacent pairs. Since $I$ has valency $2r$ in $\Qi$, we see that there is 
always an arrow with source $I$ whose weight does not contain $i$ or $\reduce{i-1}$.
Similarly, there is always an arrow with target $I$ having this property.
These arrows do not intersect strand $i$
by Lemma~\ref{l:crossingstrandi}.

Since $I$ lies to the right of strand
$i$ and $\gamma_i$ is on or the left
of strand $i$, these arrows do not intersect
$\gamma_i$. Hence the other end-points of these arrows lie in $\Qir$.

A similar argument applies in the case where $I$ lies to the left of strand $\reduce{i+1}$.
Since any vertex in $\Qirc$ lies to the right of strand $i$ or to the left of
strand $\reduce{i+1}$, we see that repeating this argument gives the statement in the 
Lemma, noting that $\Qirc$ is finite and acyclic (by Lemma~\ref{l:connected}).
\end{proof}

We can now describe the allowed edges in $G_i$ and thus the elementary components.

\begin{lem} \label{l:allowededges}
An edge in $G_i$ which crosses
an arrow $\alpha$ in $\Qi$ is allowed
if and only if either
at least one endpoint of
$\alpha$ lies in $\Qilc$ or $\alpha$
lies in $\Sigma_i$.
\end{lem}

\begin{proof}
Since edges crossing arrows in $\Sigma_i$
are allowed, we are reduced to the
case of edges crossing arrows which do not
lie in $\Sigma_i$.
Recall that $\GMstar$ can be obtained
from $\Qic$ by identifying the boundary vertices. We denote the image of an arrow $\alpha$ in $\Qic$ under this procedure by $\overline{\alpha}$.

Note that $G_i$ is a connected plane bipartite graph, which is factorizable
by Proposition~\ref{p:Ximatching}.
By Proposition~\ref{p:circuitcondition} we 
need to show that, for any arrow $\alpha$
in $\Qic$, $\overline{\alpha}$ lies in a cycle in $\GMstar$ if and only if both of the endpoints of $\alpha$ lie in $\Qir$.

If both endpoints of $\alpha$ lie in
$\Qir$, then $\overline{\alpha}$
lies in a cycle in $\GMstar$ by Lemma~\ref{l:cycles}.
For the converse, note that
by Lemma~\ref{l:connected}(a),
$\Qilc$ is acyclic.
Also, by Lemma~\ref{l:boundaryarrow}, 
all arrows in $\Qic$ which have
one endpoint in $\Qilc$ and one in $\Qirc$
are oriented towards $\Qirc$.
It follows that if $\alpha$ has at least
one endpoint in $\Qil$ then $\overline{\alpha}$ does not lie in a
cycle in $\GMstar$.
%
\end{proof}

\begin{cor} \label{c:elementarycomponents}
The elementary components of $G_i$
are as follows:
\begin{itemize}
\item[(a)]
The full subgraph $\Giin$ of $G_i$ 
(see Definition~\ref{d:Giin}) is an
elementary component of $G_i$.
\item[(b)] Any single edge in $\Mi$
which has no end point in $\Giin$
is an elementary component of $G_i$.
\end{itemize}
\end{cor}

\begin{proof}
This follows from the description above of the allowed edges in $G_i$ and the definition of elementary components.
\end{proof}

Lemma~\ref{l:allowededges} and Corollary~\ref{c:elementarycomponents}
can both be verified in our running example in Figure~\ref{fig:elementary}.

In the case where $I_i$ is on the boundary, we see that the elementary components 
of $G_i$ are exactly the allowed edges
(i.e.\ the edges in $M_i$),
considered as subgraphs, and that
$M_i$ is the unique perfect matching on
$G_i$.

An elementary component of a plane graph which has the property that each interior
face of the component is also a face of the whole graph is called an
\emph{elementary block} (see~\cite[\S1]{lsz}). We need the following
important property of the elementary
components of $G_i$.

\begin{cor} \label{c:elementaryblock}
Each elementary component of $G_i$ is an elementary block of $G_i$.
\end{cor}

\begin{proof}
This is trivial for the elementary components of $G_i$ which consist of a single edge,
since they have no interior faces. So we consider the elementary component
$\Giin$.
By the construction of the Postnikov diagram
$D$, the region to the left of strand $i$
and to the right of strand $i+1$, together with the adjacent vertices in $G_i$, is a union of interior  
faces of $G_i$; these are exactly the interior faces of $\Giin$.
It follows that $\Giin$ is also an elementary block of $G_i$.
\end{proof}

This gives us the first key result.

\begin{prop} \label{p:biztransformationconnected}
The faces of $\Giin$ are faces of $G_i$,
and the set of perfect matchings of $G_i$ is connected under flips of faces of $\Giin$.
\end{prop}

\begin{proof}
The first statement follows from Corollary~\ref{c:elementaryblock}.
Let $M,M'$ be arbitrary perfect matchings on $G_i$. By Lemma~\ref{l:allowededges}, $M$ and $M'$ must coincide with
$\Mi$ on all edges of $G_i$ crossing arrows between vertices of $\Qir$.
By Corollary~\ref{c:elementarycomponents} and
Theorem~\ref{t:ztransformationconnected},
there is a sequence of face flips taking $M$ to
$M'$, noting that every face of the elementary graph $\Giin$
is also a face of $G_i$, by Corollary~\ref{c:elementaryblock}.
\end{proof}

Note that, by~\cite[Thm. 2.4]{zzy},
this implies that $G_i$ is \emph{weakly
elementary} (see e.g.~\cite[\S2]{zzy} for 
the definition).

%
%
%

In the remainder of this section, we will
show that, as in the example, every perfect matching on
$G_i$ can be obtained from $\Mi$ by
a sequence of face-flips in which the
first (and only the first) face is $\Fi$.
We first show that $\Fi$ is the unique
$M_i$-flippable face of $G_i$.

Given a vertex $I$ of $\Qi$, we say that
two arrows $\alpha,\beta$ incident with $I$
are \emph{adjacent} provided one follows
the other in the cyclic ordering around
$I$.

\begin{rem} \label{r:flippable}
Let $I$ be the label of an internal alternating face $F$ of $D_i$ and suppose that $I$ has valency $2r$ in $\Qi$.
Since it is not possible for two adjacent
arrows incident with $I$ to lie in
$\Sigma_i$ (as it is a perfect cut),
we have that $F$ is an $\Mi$-flippable face in $G_i$ if and only if the number of
arrows incident with $I$ lying in
$\Sigma_i$ is $r$.
\end{rem}

%

\begin{lem} \label{l:arrownumber}
Let $F$ be an internal face of $D_i$,
labelled with the $k$-subset $I$.
Suppose that $I$ has valency $2r$
in $\Qi$. Then the number of arrows in
$\Sigma_i$ incident with $I$ is
$r$ if $F=\Fi$, $r-2$ if $F=\Fip$, and is
$r-1$ otherwise.
\end{lem}

\begin{proof}
By Lemma~\ref{l:winding}, exactly $r-1$ of the arrows incident with the vertex $I$ lie in $S_i$.
Recall that $\Sigma_i$ is obtained from
$S_i$ by applying the swap $\rho$
(see Definition~\ref{d:sigmai}).
Since $I$ is an internal vertex of $\Qi$, the set of arrows incident with $I$
that are crossed by a fixed strand
consists of a number of pairs of adjacent arrows (see Figure~\ref{f:neighbourhood}).
This applies, in particular, to the strands $i$ and $i+1$ appearing in the definition of $\gamma_i$. The unique crossing point
$\PPi$ of these strands lies on the boundary
of the faces $\Fi$ of $\Fip$, and not on
the boundary of any other face.

So, if $I\not=I_i,I'_i$, then $\PPi$ is not on the boundary of $F$. It follows
that the set of arrows incident with $I$
which cross $\gamma_i$ consists of a
collection (possibly empty) of pairs of adjacent arrows, some pairs arising from the part of $\gamma_i$ along strand $i$ and some pairs arising from the part of $\gamma_i$ along (the reverse of) strand $i+1$.
The effect of the swap $\rho$ on the set
of arrows incident with $I$ is to
reverse membership in each such pair. Hence
the number of arrows incident with $I$ which
lie in $\Sigma_i$ is the same as the number
of such arrows lying in $S_i$, i.e.\ $r-1$.

The set of arrows incident with $I_i$ which cross $\gamma_i$ consists of the arrow $I_i\rightarrow I'_i$ and the two adjacent arrows incident with
$I_i$ together with a collection (possibly empty) of pairs of adjacent arrows incident with $I_i$.
The flip $\rho$ reverses membership in each of the pairs.
It also replaces the arrow $I_i\rightarrow I'_i$ with the two adjacent arrows
incident with $I_i$.
Hence exactly $r$ of the
arrows incident with $I_i$ lie in
$\Sigma_i$.

Similarly, the set of arrows incident with $I'_i$ which cross $\gamma_i$ consists of the single arrow $I_i\rightarrow I'_i$, together with a collection (possibly empty) of pairs of adjacent arrows incident with $I'_i$. The flip $\rho$ reverses membership in each of these pairs and
deletes the arrow $I_i\rightarrow I'_i$.
Hence exactly $r-2$ of the
arrows incident with $I'_i$ lie in
$\Sigma_i$.
\end{proof}

\begin{cor} \label{c:Miflippable}
Let $I$ be a vertex in $\Qil$
labelling an alternating face $F$. Then
$F$ is $\Mi$-flippable if and only if
$I=I_i$.
\end{cor}
\begin{proof}
Since $F$ must be an internal alternating face of $D_i$, this follows from
Remark~\ref{r:flippable} and Lemma~\ref{l:arrownumber}.
\end{proof}

Let $M$ be a perfect matching on a
plane bipartite graph $G$. Then a \emph{positive $M$-flippable face} is an
$M$-flippable face with the property that the
matched edges, when oriented from black
vertices towards white vertices, are
oriented in an anticlockwise direction
around the face.
Otherwise, an $M$-flippable face is
said to be \emph{negative}.
For example, in the perfect matching $M_3$ in Figure~\ref{fig:miexample},
the face $F_3$, labelled $\raisebox{\depth}{\ydiagram{2,1}}$, is an $M_3$-flippable face and is negative.

\emph{Twisting down} 
a face is the operation of flipping a positive $M$-flippable face. See~\cite[\S1]{propp}.
We recall the following, which follows
from~\cite[Prop.\ 1.11, Thm.\ 2]{propp}.

\begin{thm} \cite{propp} \label{t:distributive}
Let $G$ be a connected plane elementary factorizable bipartite graph and fix
a face $F$ of $G$.
Let $\M$ be the set of perfect matchings of $G$. Then the covering relation given by twisting down at a face other than $F$ makes $\M$ into a distributive lattice $\M_F$.
The unique minimum element of $\M_F$
is the unique perfect matching $M$ on $G$ which has no positive $M$-flippable face except for $F$.
\end{thm}

Note that, since $\Giin$ is an elementary component of $G_i$, the restriction $\Miin$
of $\Mi$ to $\Giin$ is a perfect matching
on $\Giin$ by~\cite[Lemma 2]{Fournier:PerfectMatchings}.
We denote by $\mathcal{M}_i^{\text{\rm in}}$ the set of perfect matchings of $\Giin$.

\begin{thm} \label{t:startwithQi}
Each perfect matching on $G_i$ can be obtained from $\Mi$ by a sequence of
flips of faces of $G_i$ which are faces
of $\Giin$, starting with the face $\Fi$ and never involving that face again.
\end{thm}

\begin{proof}
By Proposition~\ref{p:biztransformationconnected}, it is enough to prove
the result for the perfect matching
$\Miin$ on $\Giin$. 
By Corollary~\ref{c:Miflippable},
$\Fi$ is the unique internal $\Miin$-flippable face in $\Giin$. Since $\Fi$ is negative,
$\Giin$ has no positive internal $\Miin$-flippable faces.
But, by~\cite[Prop.\ 1.11]{propp}, there
must be at least one positive $\Miin$-flippable face in $\Giin$. Hence the boundary face of $\Giin$ is a positive $\Miin$-flippable face.

Taking $F$ to be the boundary face of
$\Giin$ in Theorem~\ref{t:distributive},
we see that $\Miin$ is the minimum
element of the lattice $(\mathcal{M}_i^{\text{\rm in}})_F$.
Hence, if $M$ is any perfect matching
on $\Giin$ not equal to $\Miin$,
there is a twisting down sequence from $M$ to $\Miin$ involving only internal faces.
Since $\Fi$ is the unique negative $\Miin$-flippable face in $\Giin$,
this twisting down sequence must involve $\Fi$ as its final flip. 
By~\cite[Cor.\ 4]{pretzel},
the number of times that any given face can be flipped in such a twisting down sequence
is at most $1$, so $\Fi$ cannot occur
in the twisting down
sequence at any other place than the end.
\end{proof}

Note that we can see that Theorem~\ref{t:startwithQi} holds in the case of
example $G_3$ in Figure~\ref{fig:miexample} from the description of the
perfect matchings given in the paragraph after Definition~\ref{d:faceflip}.

\begin{rem}
Theorem~\ref{t:startwithQi} holds trivially for $k=1,n-1$, since there is a unique perfect matching in this case (see Remark~\ref{r:Ximatchingk1}).
\end{rem}

\section{The matching monomial associated to $\Mi$}
\label{s:matchingevaluation}
Our main aim in this section is to compute the matching monomial associated
to $\Mi$, i.e.\ its contribution towards the
matching polynomial of $G_i$.
This, combined with Theorem~\ref{t:startwithQi}, will be used in Section~\ref{s:actionvectorfield2} in
order to compute the action of $X_{\lambda}$ on $W_q$.
We assume in this section that $k\not=1,n-1$. Recall the definition (Definition~\ref{d:weighting}) of the
weighting on $G_i$. We make the following
useful definition.

\begin{defn}
Let $D$ be a Postnikov diagram with closure
$\overline{D}$.
Let $I$ be a vertex of $Q(\overline{D})$.
For any minimal cycle $$I=I_0\rightarrow I_1\rightarrow I_2\rightarrow 
\cdots \rightarrow I_{s-1}\cdots \rightarrow I_s=I$$
in $Q(\overline{D})$ containing $I$, we call the path
$$I_1\rightarrow I_2\rightarrow \cdots \rightarrow I_{s-1}$$
a \emph{peripheral path} of $I$.
Note that the underlying unoriented graph of the union of the peripheral
paths of $I$ forms a circle around $I$ if $I$ is internal, or an arc of a circle if $I$ is external. Each peripheral path is either oriented clockwise or anticlockwise
in this circle (or arc). We call any arrow in a peripheral path of
$I$ a \emph{peripheral arrow} of $I$. The \emph{neighbourhood} of $I$
is the union of the cycles (and their interiors) containing $I$ (i.e.\ the region bounded by the peripheral paths
of $I$ and the boundary arrows incident with $I$ if any).
See Figure~\ref{f:neighbourhood} for an example of the neighbourhood
of an internal vertex.

We also say that an arrow in $Q(D)$ is peripheral, etc. as above, if it is peripheral when regarded as an arrow in $Q(\overline{D})$.
\end{defn}

The following lemma is a straightforward reformulation of the definitions.
\begin{lem} \label{l:monomialexponent}
The exponent of $p_I$ in the matching monomial $w_{\Mi}$ is equal to the number of clockwise peripheral arrows of $I$ in $\Qi$ which lie in $\Sigma_i$.
\end{lem}

Note that, by Proposition~\ref{p:Ximatching}, each clockwise peripheral path of $I$ contains at most one arrow in $\Sigma_i$.

%
%


\begin{lem}\label{l:sigma'}
Let $\Sigma'_i$ be the set of arrows
in $\overline{\Qi}$ obtained from
$\overline{S_i}$ by reversing membership for all
arrows crossing $\overline{\gamma_i}$.
Then $\Sigma'_i=\Sigma_i$.
\end{lem}
\begin{proof}
The intersection of $\Sigma'_i$ with $Q_i$ agrees with $\Sigma_i$, because both subsets of $Q_i$ are constructed in the same way. It only remains to check that $\Sigma'_i$ contains no arrows outside of $Q_i$. The arrows in $\overline{Q_i}\setminus Q_i$ crossing $\overline{\gamma_i}$ are in $\overline{ S_i}$ by Corollary~\ref{c:gammaalternating} and therefore are not in $\Sigma_i'$ by construction. The remaining arrows in $\overline{Q_i}\setminus Q_i$ are also not in $\Sigma_i'$.
Indeed they are not in $\overline{S_i}$
by Remark~\ref{r:boundaryarrows}.
And $\Sigma_i'$ agrees with $\overline{S_i}$ away from $\overline{\gamma_i}$.
\end{proof}

We denote the flip (defined
on subsets of the set of arrows of $\overline{\Qi}$) which reverses membership 
for arrows crossing $\overline{\gamma_i}$
by $\overline{\rho}$.

\begin{rem} \label{r:peripheral}
Fix a vertex $I$ of $\Qi$. Then, since
every arrow in $\Sigma_i$ is contained in $\Qi,$ the number of peripheral paths of $I$ in $\Qi$ containing an arrow in $\Sigma_i$ is the same as
the number of peripheral paths of $I$ in
$\overline{\Qi}$ containing an arrow in $\Sigma_i$.
\end{rem}


\begin{lem} \label{l:lightshining}
Let $I$ be the label of an internal alternating face $F$ of $D_i$. The
exponent $e_I$ of $p_I$ in the matching monomial $w_{\Mi}$ is given by:
\begin{equation}e_I=\begin{cases} 0 & \text{ if $I=I_i$ }; \\
2 & \text{ if $I=I'_i$ }; \\
1 & \text{ otherwise.}
\end{cases}
\end{equation}
\end{lem}

\begin{proof}
By Lemma~\ref{l:arrownumber}, the number of
arrows incident with $I$ in $\Qi$ (and
hence also in $\overline{\Qi}$, by Remark~\ref{r:peripheral}) which lie in
$\Sigma_i$ is equal to $r$ if $F=\Fi$,
$r-2$ if $F=\Fip$, and $r-1$ otherwise.
By Corollary~\ref{c:sigmaicut},
$\Sigma_i$ is a perfect cut
of $\overline{\Qi}$. In the case where $F=\Fi$ the arrows incident with $I_i$ alternate between lying in $\Sigma_i$ and not. Every minimal cycle passing through $I_i$ therefore has an arrow incident with $I_i$ as its unique arrow in $\Sigma_i$. It  follows that
none of the clockwise peripheral paths
of $I_i$ contain an arrow in
$\Sigma_i$. This shows that  $e_I=0$ if $I=I_i$. The other cases follow similarly. 
\end{proof}

\begin{figure}
\psfragscanon
\psfrag{I}{$I$}
\includegraphics[width=8cm]{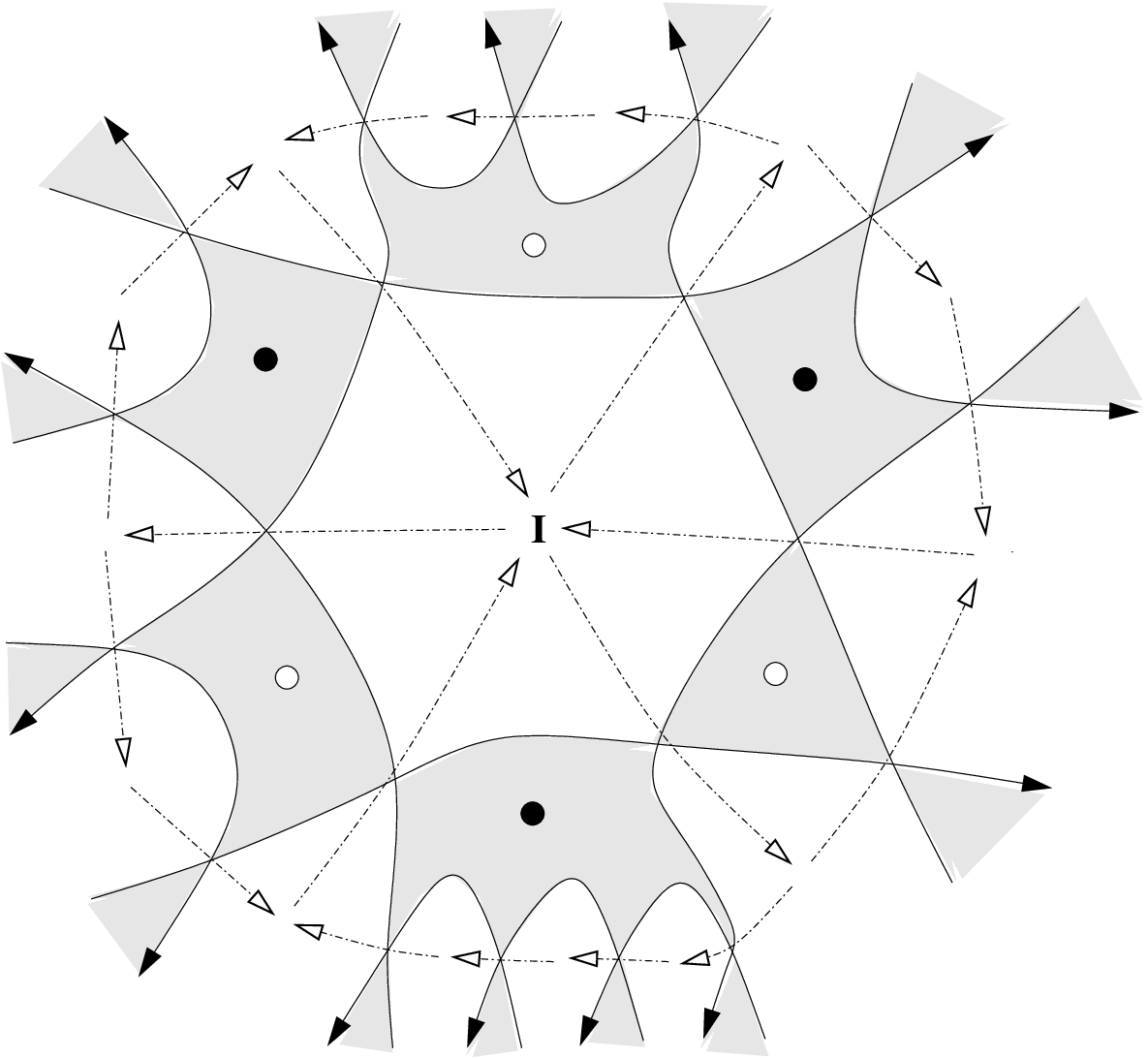}
\caption{The neighbourhood of a internal vertex $I$.}
\label{f:neighbourhood}
\end{figure}

We must next consider the boundary vertices $L_j$ of $\Qi$. The same approach essentially works, but with some additional complications arising from the behaviour at the boundary. For a subset $S$ of $[1,n]$
and $r\in \mathbb{N}$ we denote by $rS$ the
multiset union of $r$ copies of $S$. We recall:

\begin{prop} \label{p:wedge}
\cite[Prop.\ 4.11]{bkm}
Let $I=L_j$ be a boundary vertex in the quiver of a closed Postnikov diagram
$\overline{D}$.
Let $\alpha_+$ be the arrow with source
$I$ which is most anticlockwise (inside the boundary of the disk) and let
$\alpha^-$ be the arrow with source $I$ which is most clockwise. Let $W^{\out}(L_j)$
be the set of arrows incident with $I$ between $\alpha_-$ and $\alpha_+$
(including these two arrows). Let $r_{\out}$ be the number of arrows in $W^{\out}(L_j)$ with source $I$.
Then the multiset union of weights of the arrows in $W^{\out}(L_j)$ is computed by the formula:
\begin{equation}
\label{e:firstunion}
\bigcup_{\alpha\in W^{\out}(L_j)} d(\alpha)=(r_{\out}-1)
[1,n] \cup \{j\}.
\end{equation}
\end{prop}

Now we can prove an analogue of Lemma~\ref{l:lightshining} for the boundary case:

\begin{lem} \label{l:boundarylightshining}
The exponent $e_{L_j}$ of $p_{L_j}$ in the matching monomial $w_{\Mi}$ is determined as follows. Suppose first that $L_j\notin\{I_i,I_i'\}$. Then
\begin{equation}
\label{e:boundarydelta}
e_{L_j}=\begin{cases}
1, &  j\in [i+k+1,i-2]\cup \{i\};\\
0, & \text{otherwise.}\\
\end{cases}
\end{equation}
If $L_j=I_i$ then $j=i$ and $e_{L_j}=0$. 
If $L_j=I_i'$ then $j=i+k$ and $e_{L_j}=1$.
Written more compactly, we have 
\begin{equation}
e_{L_j}=\begin{cases}
1-\delta_{L_jI_i}+\delta_{L_jI'_i} & j\in [i+k+1,i-2]\cup \{i\}; \\
-\delta_{L_jI_i}+\delta_{L_jI'_i} & \text{otherwise.}
\end{cases}
\end{equation}
\end{lem}

\begin{proof}
By Lemma~\ref{l:monomialexponent} and Remark~\ref{r:peripheral}, it is
sufficient to show that the number of clockwise peripheral paths of $L_j$
in $\overline{\Qi}$ containing an arrow in $\Sigma_i$ is given by the
formula~\eqref{e:boundarydelta}.

Let $F$ be the alternating boundary face of $\overline{D_i}$ labelled $L_j$.
If $\PPi$, the unique crossing point of strands $i,i+1$, lies on the boundary
of $F$, then either $i\in L_j$, $i+1\not \in L_j$, or $i\not\in L_j$,
$i+1\in L_j$. It follows that $L_j=L_i=I_i$ (in the first case) or
$L_j=L_{i+k}=I'_i$ (in the second case).
We assume first that neither of these cases occur. This implies that, since the strand crossing points on the arrows incident with $L_j$ are precisely those on the boundary of $F$, strands $i,i+1$ cannot cross on an arrow incident with $L_j$. The separate
cases with this assumption are illustrated in Figure~\ref{f:boundarycases}.

\begin{figure}
\psfragscanon
\psfrag{ci}{$\overline{\gamma_i}$}
\psfrag{Lj}{$L_j$}
\psfrag{Lj-1}{$L_{j-1}$}
\psfrag{Lj+1}{$L_{j+1}$}
\psfrag{Li}{$L_i$}
\psfrag{Li-1}{$L_{i-1}$}
\psfrag{Li+1}{$L_{i+1}$}
\psfrag{Li+2}{$L_{i+2}$}
\psfrag{Li-2}{$L_{i-2}$}
\psfrag{Li+k}{$L_{i+k}$}
\psfrag{Li+k-1}{$L_{i+k-1}$}
\psfrag{Li+k+1}{$L_{i+k+1}$}
\psfrag{The case I}{The case $j\in [i+2,i+k-1]$}
\psfrag{The case II}{The case $j\in [i+k+1,i-2]$}
\psfrag{The case III}{The case $j=i+k$}
\psfrag{The case IV}{The case $j=i-1$}
\psfrag{The case V}{The case $j=i+1$}
\psfrag{The case VI}{The case $j=i$}
\includegraphics[width=12cm]{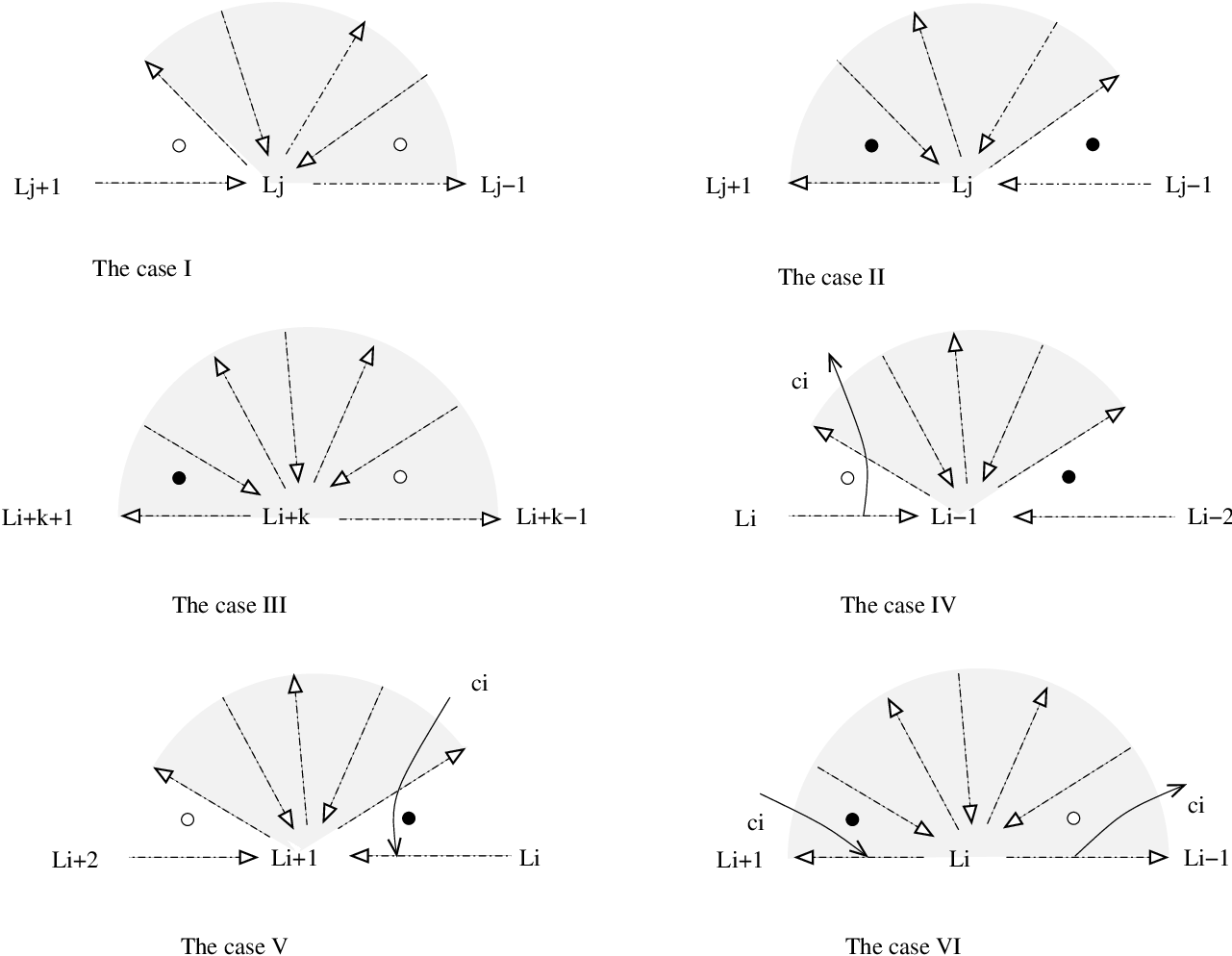}
\caption{Cases in the proof of Lemma~\ref{l:boundarylightshining}. The shaded
region indicates the arrows in $W^{\out}(L_j)$.}
\label{f:boundarycases}
\end{figure}

By Corollary~\ref{c:sigmaicut}, the number of clockwise peripheral paths of $L_j$ containing an arrow in $\Sigma_i$ is equal to the number of clockwise minimal cycles incident with $L_j$ minus the number
of arrows in $\Sigma_i$ incident with $L_j$ which lie in such cycles. We denote the
number of clockwise minimal cycles incident with $L_j$ by $n_c$.

Recall that the boundary arrows do not lie in
$\Sigma_i$, and note that any arrow incident with $L_j$ which is not a boundary arrow lies in some clockwise minimal cycle. Therefore the set of arrows in $\Sigma_i$ incident with $L_j$ coincides with the set of arrows in $\Sigma_i$ incident with $L_j$ which lie in a clockwise minimal cycle. Similarly, this set is also equal to the set of
arrows in $W^{\out}(L_j)$ which lie
in $\Sigma_i$.

Suppose first that $j\not=i-1,i,i+1$. Then
$\overline{\gamma_i}$ does not start or end on a boundary arrow of $\overline{\Qi}$ incident with $L_j$. Also $\PPi$ does not lie on an arrow incident with $L_j$. Hence the set of arrows incident with $L_j$ that cross $\overline{\gamma_i}$ consists of a (possibly empty) collection of pairs of adjacent arrows. This collection of arrows coincides
with the set of arrows in $W^{\out}(L_j)$
crossing $\overline{\gamma_i}$.
By an application of Corollary~\ref{c:gammaalternating}, we then see that   the number of arrows in $W^{\out}(L_j)$ which lie in $\Sigma_i$ is the same as the number of arrows in $W^{\out}(L_j)$ which lie in $\overline{S_i}$. This number is $r_{\out}-1$ by Proposition~\ref{p:wedge}. Hence,
by the above, $e_{L_j}=n_c-(r_{\out}-1)$.

We now compute $n_c$ for $j\ne\{i-1,i,i+1\}$ to complete the proof in this case. There are two cases. If the boundary arrow between $L_j$ and $L_{j-1}$ is oriented towards $L_{j-1}$, then it is an arrow with source $L_j$ which is not part of a clockwise minimal cycle. There is a bijection between the remaining arrows with source $L_j$ and the clockwise minimal cycles incident with $L_j$. Therefore $n_c=r_{out}-1$  and $e_{L_j}=0$ in this case.
If on the other hand the boundary arrow  between $L_j$ and $L_{j-1}$ is oriented towards $L_{j}$, then every arrow with source $L_j$ is part of a clockwise minimal cycle and we have $n_c=r_{out}$. In this case $e_{L_j}=1$.  By definition of the graph $G_i$ we are in the first case if $j\in[i+2,i+k]$ and in the second case if $j\in [i+k+1,i-2]$, as illustrated in Figure~\ref{f:boundarycases}.
  
We now consider the cases where
$\overline{\gamma_i}$ starts or ends on a boundary
arrow of $\overline{\Qi}$ incident with $L_j$, assuming still that $L_j\notin\{I_i,I'_i\}$. Note that $\PPi$ does not lie on an arrow incident with $L_j$ by this assumption. Then we are in the last three cases of Figure~\ref{f:boundarycases}. 

Recall that the exponent $e_{L_j}$ is given by the number of clockwise minimal cycles incident with $L_j$ minus  the number of arrows in $W^{\out}(L_j)$ which lie in $\Sigma_i$. By Proposition~\ref{p:wedge} if $j=i$ there are  $r_{\out}$ arrows in 
$W^{\out}(L_j)$ which lie in $\overline{S_i}$; otherwise there are $r_{\out}-1$. We again use this and the fact that $\Sigma_i$ is constructed out of $\overline{S_i}$ by the swap $\overline{\rho}$ to compute $e_{L_j}$ in each of the remaining cases.

For the cases $j=i-1$ and $j=i+1$ the path $\overline{\gamma_i}$ crosses one boundary arrow and the adjacent arrow incident with $L_j$. The perfect cut $\Sigma_i$ contains the adjacent arrow but not the boundary arrow, while $\overline{S_i}$ contains the boundary arrow but not the adjacent arrow. Therefore 
the number of arrows in $W^{\out}(L_j)$ which lie in $\Sigma_i$ is equal to $r_{\out}$. The exponent is then computed by $e_{L_j}=n_c-r_{\out}$, which in both cases equals $0$.

For the case $j=i$ the set of arrows incident with $L_i$ that cross $\overline{\gamma_i}$ consists of
the two boundary arrows incident with $L_i$,
together with a (possibly empty) collection of pairs of adjacent arrows. The two unpaired boundary arrows
incident with $L_i$ lie in $\overline{S_i}$ and do not lie in $\Sigma_i$.  Both of these boundary arrows lie in $W^{\out}(L_j)$. It follows that 
the number of arrows in $W^{\out}(L_j)$ which lie in $\Sigma_i$ is equal to $r_{\out}-2$. The exponent is then computed by $e_{L_i}=n_c-r_{\out}+2$, which equals $1$.

\begin{figure}
\psfragscanon
\psfrag{Li}{$L_i$}
\psfrag{Li+1}{$L_{i+1}$}
\psfrag{Li-1}{$L_{i-1}$}
\psfrag{Pi}{$\PPi$}
\psfrag{is}{$\overline{b_i}$}
\psfrag{i+1s}{$\overline{b_{i+1}}$}
\includegraphics[width=8cm]{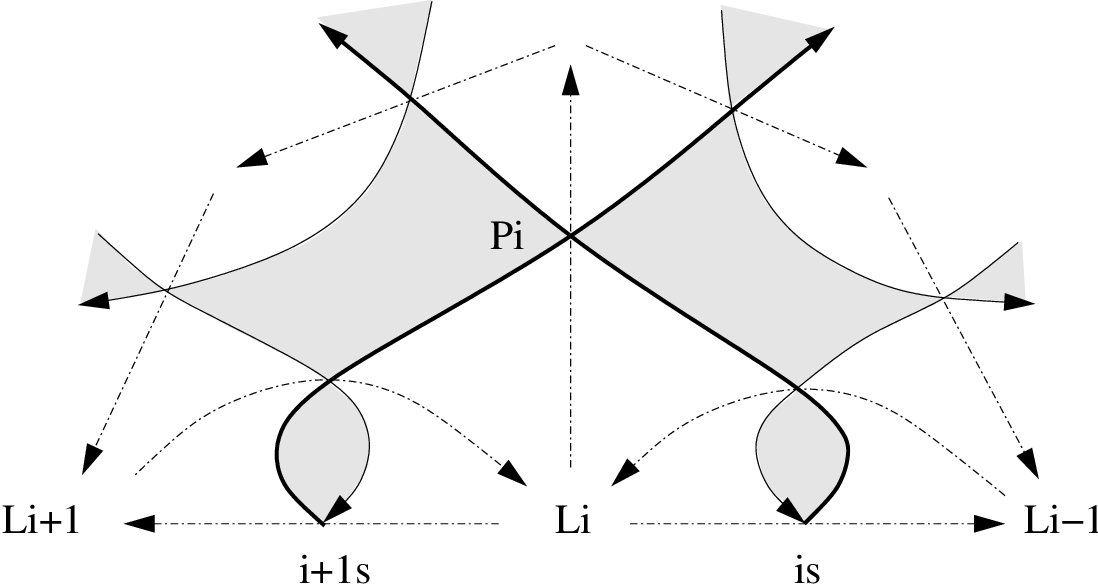}
\caption{Case where strands $i,i+1$ cross on the boundary
of the face labelled $L_i$.}
\label{f:Lispecial}
\end{figure}

\begin{figure}
\psfragscanon
\psfrag{Li+k}{$L_{i+k}$}
\psfrag{Li+k+1}{$L_{i+k+1}$}
\psfrag{Li+k-1}{$L_{i+k-1}$}
\psfrag{Pi}{$\PPi$}
\psfrag{i+ks}{$\overline{b_{i+k}}$}
\psfrag{i+k+1s}{$\overline{b_{i+k+1}}$}
\includegraphics[width=8cm]{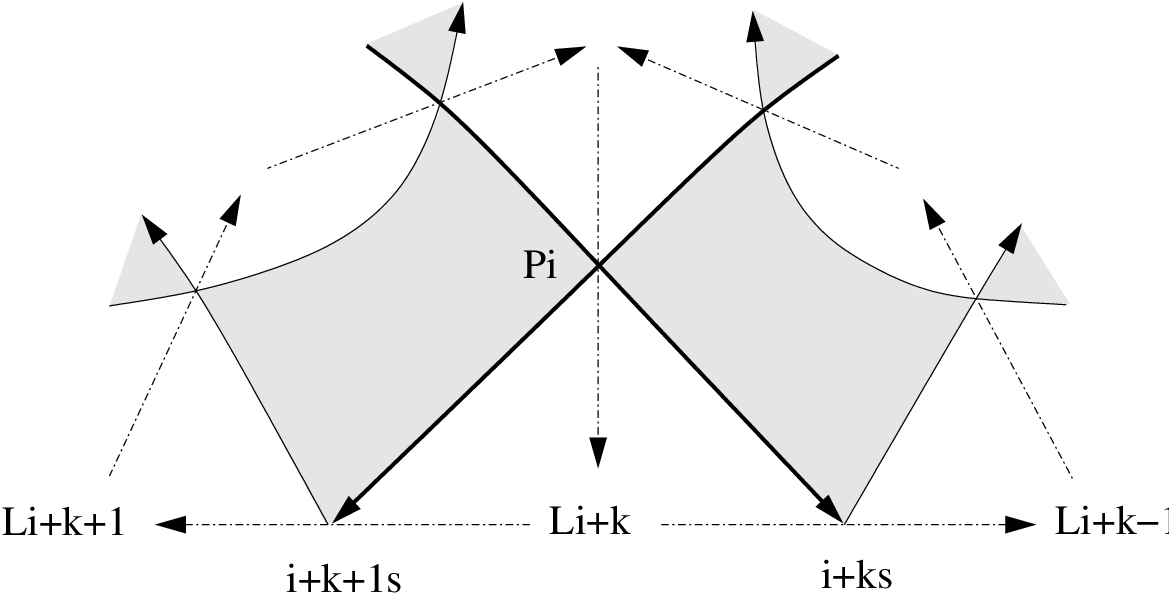}
\caption{Case where strands $i,i+1$ cross on the boundary
of the face labelled $L_{i+k}$.}
\label{f:Lipluskspecial}
\end{figure}

We are left with the cases $L_j=L_i=I_i$ and
$L_j=L_{i+k}=I'_i$.
We first consider the case $L_j=L_i=I_i$. In this case, strands
$i,i+1$ cross on the boundary of $F$.
For this to happen, the boundary arrows
incident with $L_i$ must lie in $2$-cycles and the crossing point of strands $i,i+1$ must lie on a unique internal
arrow with source $L_i$, lying in $\overline{S_i}$.
See Figure~\ref{f:Lispecial}.
The boundary arrows incident with $L_i$ lie in $\overline{S_i}$, and we see that the arrows completing these
arrows to $2$-cycles both lie in $\Sigma_i$.
Therefore, there are two clockwise minimal cycles incident with $L_i$, and each contains an arrow in $\Sigma_i$ incident with $L_i$. It follows that $e_{L_i}=0$.

Finally, we consider the case $L_j=L_{i+k}=I'_i$.
Note that the boundary arrows incident with $L_{i+k}$ point away from $L_{i+k}$.
For the crossing point $\PPi$ of strands $i,i+1$
to lie on an arrow incident with $L_{i+k}$,
there must be a unique internal arrow
incident with $L_{i+k}$, necessarily
oriented towards $L_{i+k}$, and $\PPi$
must lie on this arrow.
See Figure~\ref{f:Lipluskspecial}.
The boundary arrows incident with
$L_{i+k}$ do not lie in $\Sigma_i$.
Also, the arrow through $\PPi$ does not lie in $\Sigma_i$ (by Lemma~\ref{l:strandalternating}).
Therefore, there is one clockwise minimal
cycle incident with $L_{i+k}$, but no
arrows incident with $L_{i+k}$ lie in
$\Sigma_i$. It follows that $e_{L_{i+k}}=1$.

We have now considered all possible cases, so the proof is complete.
\end{proof}

We have proved the following theorem, which also holds in the cases $k=1,n-1$.

\begin{thm} \label{t:pmproperties}
Fix $\lambda\in \Pn_{k,n}$ and let $D_{\lambda}$
be the Postnikov diagram constructed by
Theorem~\ref{thm:diagramconstruction}.
Let $\CI$ be the corresponding Postnikov
cluster. Fix
$i\in [1,n]$ and let
$D_i$ be the Postnikov diagram
associated to $D_{\lambda}$ in Definition~\ref{d:Di}.
Let $G_i$ be the corresponding
dual bipartite graph.
Let $\Mi$ be the perfect
matching on $G_i$ defined in Definition~\ref{d:perfectmatching}. Let $\Fi$ be
the alternating face in $D_i$ defined
in Definition~\ref{d:Fi}. Then the following holds.
\begin{enumerate}[(a)]
\item
If $\Fi$ is a boundary face, then $\Mi$ is the unique perfect matching on $G_i$.
If $\Fi$ is an internal face, then it
is the face of the elementary component
$\Giin$ of $G_i$ (see Definition~\ref{d:Giin}).
In this case, every other perfect matching on $G_i$ can be obtained from $\Mi$ by flipping $\Fi$ and then applying a further sequence of flips involving faces of $\Giin$ distinct from $\Fi$.
\item
The following identity holds:
$$\frac{w_{\Mi}}{\cluster}\p_{L_{i-1}}\p_{L_{i+1}}\cdots \p_{L_{i+k}}=\frac{\p_{I'_i}}{\p_{I_i}},$$
and computes the summand corresponding to $M_i$ in the formula \eqref{e:Lihatterm} for $\frac{p_{\widehat{L}_i}}{p_{L_i}}$. 
associated to $\Mi$.
\end{enumerate}
\end{thm}

\begin{proof}
Suppose first that $k\not=1,n-1$. Then part (a)
is Theorem~\ref{t:startwithQi}.
Part (b) follows from Lemmas~\ref{l:lightshining}
and~\ref{l:boundarylightshining}.
For the case $k=1,n-1$, part (a) holds by Remark~\ref{r:Ximatchingk1}.
For part (b), note first that for our choice of Postnikov diagram (see
Section~\ref{s:anycluster}) there is not a unique choice of crossing point for
strands $i,i+1$. However, the adjacent faces $\Fi$ and $\Fip$ do not depend on this choice,
and we have $p_{I_i}=p_i=p_{L_i}$ and
 $p_{I'_i}=p_{i+1}=p_{\widehat{L}_i}$. Since $\Mi$ is the unique perfect matching on $G_i$ in this 
case, part (b) follows from Theorem~\ref{thm:MarSco}.
\end{proof}

\section{The vector fields $X_\lambda^{(\m)}$}
\label{s:vectorfield}
In this section we define a family of vector fields $X^{(\m)}_{\lambda}$,
$\lambda\in \Pn_{k,n}$, $\m\in [1,n]$, on $\Xcheck
$, denoting $X^{(n)}_{\lambda}$ also by $X_{\lambda}$. We allow the cases $k=1,n-1$.
We start by defining a vector field $X^{(\m)}_{\lambda,\CC}$
in a fixed Postnikov extended cluster $\CC$ containing $p_\lambda$ via an  explicit formula involving combinatorially defined coefficients $c^{(\m)}_{\lambda}(\mu)$. We then prove that the
coefficients satisfy an additivity property which in particular will imply that the vector field $X^{(\m)}_{\lambda,\CC}$ extends to a regular vector field $X^{(\m)}_{\lambda}$ on the whole of $\Xcheck$.

Fix a Young diagram $\lambda\in \Pn_{k,n}$. For each $\mu\in \Pn_{k,n}$
we define a nonnegative integer $c_{\lambda}(\mu)$ as follows.
Write $J_{\mu}\setminus J_{\lambda}=\{m_1<m_2<\cdots <m_r\}$ and
$J_{\lambda}\setminus J_{\mu}=\{l_1<l_2<\cdots <l_r\}$ in increasing numerical
order. Then set
$$c_{\lambda}(\mu)=|\{1\leq j\leq r\,:\,m_j>l_j\}|.$$
Note that $c_{\lambda}(\lambda)=c_{\lambda}(\emptyset)=0$.

Given an extended cluster $\CC$ (for the cluster structure on $\mathbb{C}[\Xcheck]$
discussed in Section~\ref{s:CAcoordring}), we define:
$$\Xcheck_{\CC}=\{x\in \Xcheck\,:\,f(x)\ne 0\text{ for all $f\in \CC$}\}.$$
Note that $\Xcheck_{\CC}$ is isomorphic to $(\mathbb{C}^*)^{k(n-k)}$
by~\cite[Theorem 4]{Scott:Grassmannian}. We call this a \emph{cluster torus} in $\Xcheck$.
If $\CC$ is an extended cluster of $\C[\Xcheck]$ coming from a Postnikov diagram
containing a region labelled $J_{\lambda}$ (so that $\p_{\lambda}\in \CC$),
we consider the regular vector field
\begin{equation} \label{e:clusterfield}
X_{\lambda,\CC}:=\p_{\lambda} \sum_{\p_{\mu}\in \CC}
c_{\lambda}(\mu)\p_{\mu}\frac{\partial}{\partial \p_{\mu}}
\end{equation}
on $\Xcheck_{\CC}$.
We shall see later that $X_{\lambda,\CC}$ can be extended to a regular vector field on the whole of
$\Xcheck$.

\begin{example}
Let $\CC(D)$ be the extended cluster associated to the Postnikov diagram $D$ in Figure~\ref{fig:post36}, and take $\lambda=\raisebox{\depth}{\ydiagram{1,1}}$. Note that $J_{\lambda}=\{1,2,5\}$ We calculate the coefficients $c_{\lambda}(\mu)$ for $p_{\mu}\in \CC$ in the following table:

\begin{center}
\begin{tabular}{|c|c|c|c|c|}
\hline
$\mu$ & $J_{\mu}$ & $J_{\mu} \setminus J_{\lambda}$ &
$J_{\lambda} \setminus J_{\mu}$ & $c_{\lambda}(\mu)$ \\
\hline
$\emptyset$ & $\{1,2,3\}$ & $\{3\}$ & $\{5\}$ & $0$ \\
\hline
$\raisebox{\depth}{\ydiagram{3}}$ & $\{2,3,4\}$ & $\{3,4\}$ & $\{1,5\}$ & $1$ \\
\hline
$\raisebox{\depth}{\ydiagram{3,3}}$ & $\{3,4,5\}$ & $\{3,4\}$ & $\{1,2\}$ & $2$ \\
\hline
$\raisebox{\depth}{\ydiagram{3,3,3}}$ & $\{4,5,6\}$ & $\{4,6\}$ & $\{1,2\}$ & $2$ \\
\hline
$\raisebox{\depth}{\ydiagram{2,2,2}}$ & $\{1,5,6\}$ & $\{6\}$ & $\{2\}$ & $1$ \\
\hline
$\raisebox{\depth}{\ydiagram{1,1,1}}$ & $\{1,2,6\}$ & $\{6\}$ & $\{5\}$ & $1$ \\
\hline
$\raisebox{\depth}{\ydiagram{2,1}}$ & $\{1,3,5\}$ & $\{3\}$ & $\{2\}$ & $1$ \\
\hline
$\raisebox{\depth}{\ydiagram{2,1,1}}$ & $\{1,3,6\}$ & $\{3,6\}$ & $\{2,5\}$ & $2$ \\
\hline
$\raisebox{\depth}{\ydiagram{2,2}}$ & $\{1,4,5\}$ & $\{4\}$ & $\{2\}$ & $1$ \\
\hline
$\raisebox{\depth}{\ydiagram{3,1}}$ & $\{2,3,5\}$ & $\{3\}$ & $\{1\}$ & $1$ \\
\hline
\end{tabular}
\end{center}
We see that
\begin{equation*}
\begin{split}
X_{\ydiagram{1,1},\CC(D)}=
p_{\ydiagram{1,1}} \left(
p_{\ydiagram{3}}\frac{\partial}{\partial \p_{\ydiagram{3}}}+
2p_{\ydiagram{3,3}}\frac{\partial}{\partial \p_{\ydiagram{3,3}}}+
2p_{\ydiagram{3,3,3}}\frac{\partial}{\partial \p_{\ydiagram{3,3,3}}}+
p_{\ydiagram{2,2,2}}\frac{\partial}{\partial \p_{\ydiagram{2,2,2}}}+
p_{\ydiagram{1,1,1}}\frac{\partial}{\partial \p_{\ydiagram{1,1,1}}}+\right.
\\
\left. +p_{\ydiagram{2,1}}\frac{\partial}{\partial \p_{\ydiagram{2,1}}}+
2p_{\ydiagram{2,1,1}}\frac{\partial}{\partial \p_{\ydiagram{2,1,1}}}+
p_{\ydiagram{2,2}}\frac{\partial}{\partial \p_{\ydiagram{2,2}}}+
p_{\ydiagram{3,1}}\frac{\partial}{\partial \p_{\ydiagram{3,1}}}\right).
\end{split}
\end{equation*}
\end{example}

We also consider a family of `twisted' versions of
$X_{\lambda,\CC}$, defined as follows. For $\m\in [1,n]
$, recall that $\mu^{(\m)}$ denotes the partition
defined by $J_{\mu^{(\m)}}:=J_{\mu}-\m\mod n$ (see Definition~\ref{d:cyclicgroup}).

For $m\in [1,n]$ and Young diagrams $\lambda$, $\mu$ in $\Pn_{k,n}$, we set
\begin{equation} \label{e:clambdam}
c^{(\m)}_{\lambda}(\mu)=c_{\lambda^{(\m)}}(\mu^{(\m)})-c_{\lambda^{(\m)}}(\emptyset^{(\m)}).
\end{equation}
Note also that, by definition,
$c^{(\m)}_{\lambda}(\emptyset)=0$ for any $m,\lambda$. We also have
that $c^{(n)}_{\lambda}(\mu)=c_{\lambda}(\mu)$ for any $\lambda,\mu$.

For a Postnikov extended cluster $\CC$ as above, we have the regular vector field
\begin{equation} \label{e:twistedclusterfield}
X^{(\m)}_{\lambda,\CC}:=\p_{\lambda} \sum_{\p_{\mu}\in \CC}
c^{(\m)}_{\lambda}(\mu)\p_{\mu}\frac{\partial}{\partial \p_{\mu}}
\end{equation}
on $\Xcheck_{\CC}$. Note that $X^{(n)}_{\lambda,\CC}=X_{\lambda,\CC}$.


Let $P_n$ be a regular polygon with vertices $1,2,\ldots ,n$ numbered
clockwise. Then, in~\cite[\S1]{LeZe:Quasicommuting}, two $k$-subsets $I,J$ of
$[1,n]$ are said to be \emph{weakly separated}
if none of the chords between the vertices of $P_n$ corresponding
to $I\setminus J$ crosses any of the chords between the vertices corresponding
to $J\setminus I$. Scott~\cite[Defn.\ 3]{Scott:Grassmannian} uses the term
\emph{non-crossing} and we shall use this terminology.

We recall the following result of Scott:

\begin{thm} \label{thm:Noncrossing}
Let $D$ be a Postnikov diagram of type $(k,n)$.
Then the collection of $k$-subsets
labelling the alternating faces of $D$ is an inclusion-maximal collection of
pairwise non-crossing $k$-subsets of $[1,n]$.
\end{thm}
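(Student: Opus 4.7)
The plan is to prove the two assertions in the statement in sequence: first, that the collection of labels $\{L(F) : F \text{ alternating face of } D\}$ is pairwise non-crossing, and second, that this collection has the maximum possible size for such a collection, from which inclusion-maximality follows.

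For the non-crossing property, I would fix two alternating faces $F_1, F_2$ of $D$ and assume for contradiction that the $k$-subsets $L(F_1)$ and $L(F_2)$ are crossing: there exist cyclically ordered $a, b, c, d$ in $[1,n]$ with $a, c \in L(F_1) \setminus L(F_2)$ and $b, d \in L(F_2) \setminus L(F_1)$. By the definition of $L$ as the set of strands having the given face on their left, strands $a$ and $c$ are both oriented so that $F_1$ lies to their left while $F_2$ lies to their right, whereas strands $b$ and $d$ have the opposite relationship to $F_1, F_2$. Each strand starts at one of $b_i, b_i'$ and ends at one of $b_{i+k}, b_{i+k}'$, so the cyclic order of these endpoints on the boundary of the disk is dictated by the cyclic order $a, b, c, d$. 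I would then use the planarity of the strand arrangement together with the axioms (a)--(e) of Definition~\ref{defn:Postnikov}, especially the sign-alternation axiom (c) and the transversality of crossings, to show that the pair of strands $\{a, c\}$ and the pair $\{b, d\}$ cannot be configured so as to produce the required side-assignments for $F_1, F_2$ simultaneously. This contradiction establishes pairwise non-crossing.

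For inclusion-maximality, I would invoke the enumeration of Theorem~\ref{theorem:Postnikov}(a): any Postnikov diagram of type $(k,n)$ has exactly $k(n-k)+1$ alternating faces, so the label collection has cardinality $k(n-k)+1$. The conclusion then reduces to the known upper bound theorem (due to Leclerc--Zelevinsky for the conjectural bound, and proved by Danilov--Karzanov--Koshevoy and others) which states that any pairwise non-crossing family of $k$-subsets of $[1,n]$ has cardinality at most $k(n-k)+1$. Since the face-label collection already attains this bound and is pairwise non-crossing by the first part, it must be inclusion-maximal.

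The main obstacle is the planarity argument in the first part: strands in a Postnikov diagram are not straight chords but curves that can cross many other strands, so ruling out the forbidden cyclic configuration $(a, b, c, d)$ demands careful local bookkeeping. A clean way to execute this is to reduce $D$ to a representative with the minimal number of crossings (using the twisting/untwisting and boundary twist moves of Definition~\ref{defn:Postnikov}) and then to analyze the connected components of the disk obtained by deleting the strands $a$ and $c$, showing that $F_1$ and $F_2$ must lie in the same component whereas $b$ and $d$ force them into different ones, yielding the contradiction. A cluster-algebraic alternative would be to induct on sequences of geometric exchanges, using Theorem~\ref{theorem:Postnikov}(d) to reduce to a single base case, and verifying directly that geometric exchange preserves non-crossingness of the label collection.
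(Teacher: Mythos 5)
The paper does not prove this statement at all: it is quoted verbatim as a recalled result of Scott, with the proof living in \cite{Scott:Grassmannian} (and, for the maximality half, ultimately in Scott's earlier work on quasi-commuting families of quantum Pl\"ucker coordinates and in \cite{LeZe:Quasicommuting}). So there is no in-paper argument to match yours against; what I can assess is whether your sketch would actually close the statement. Your overall architecture --- (i) face labels are pairwise non-crossing, (ii) there are $k(n-k)+1$ of them by Theorem~\ref{theorem:Postnikov}(a), (iii) any pairwise non-crossing family of $k$-subsets of $[1,n]$ has at most $k(n-k)+1$ members, hence the collection is inclusion-maximal --- is exactly the standard one. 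Step (iii) is fine as an appeal to an external theorem, though your attribution is off: the upper bound $k(n-k)+1$ for $k$-subsets is due to Scott (Leclerc--Zelevinsky proved the analogous bound for arbitrary subsets and conjectured purity; Danilov--Karzanov--Koshevoy and Oh--Postnikov--Speyer proved purity, which is more than you need). Either citation does yield the bound, so the maximality half is sound.

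The genuine gap is in step (i), and you have correctly located it yourself but not filled it. Strands in a Postnikov diagram are not chords: a pair of strands can cross each other many times, so the four regions $\mathrm{Left}(a)\cap\mathrm{Left}(c)$, $\mathrm{Right}(a)\cap\mathrm{Right}(c)$, etc.\ are not arranged as in the convex-chord picture, and knowing that $a,b,c,d$ are in cyclic order does not by itself determine on which boundary arcs the points $a+k,\,b+k,\,c+k,\,d+k$ lie relative to the strands. Moreover, the statement is simply false for an arbitrary planar arrangement of simple arcs joining $i$ to $i+k$: a pair of strands bounding a lens (removable by the untwisting move) already produces faces whose labels violate weak separation. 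So any correct argument must use the reducedness conditions (c)--(e) of Definition~\ref{defn:Postnikov} in an essential, quantitative way --- typically by first showing that strands $i$ and $j$ cross exactly once when their endpoint chords interleave and not at all otherwise, and then running the component-counting argument you describe on that cleaned-up picture. Your sketch names these axioms but never deploys them, and the phrase ``careful local bookkeeping'' is standing in for precisely the hard part of the proof. The alternative route via geometric exchanges has the same problem relocated: showing that a single mutation preserves pairwise non-crossingness of the label collection is again the nontrivial content (it is the mutation step in the Oh--Postnikov--Speyer theory of weakly separated collections), not a routine verification. As written, then, the first half of the theorem is asserted rather than proved.
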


We will now show that the coefficients $c^{(\m)}_{\lambda}(\mu)$ satisfy a
certain additivity property on Postnikov diagrams, which will be very useful for understanding $X_{\lambda,\mathcal C}^{(m)}$. We first need some notation for the
labels of the alternating faces around a given internal alternating face.

\begin{defn} \label{d:adjacentfaces}
Let $D$ be a Postnikov diagram.
Let $F$ be a non-boundary alternating face
of $D$, labelled by the $k$-subset
$J_{\mu}$, with $\mu\in \Pn_{k,n}$.
Consider the labels of the strands passing along the boundary of $F$, which are alternatingly anticlockwise and clockwise around $F$. Let $a_1$ be the  minimal label amongst all labels of anticlockwise strands around $F$. Let $c_1$ be the label of the clockwise strand meeting $a_1$ where it enters the boundary of $F$. We can extend this labelling to all of the strands passing along the boundary of $F$ to get a list  
\[
a_1,c_1,a_2,c_2,\dotsc, a_r,c_r
\] 
of elements of $[1,n]$. These are the labels of all of the strands in the boundary of $F$ starting from the minimal anticlockwise strand $a_1$ and going around $F$ in clockwise order.

Consider next the faces adjacent to $F$. We denote the face meeting $F$ in the crossing point of $c_i$ and $a_i$ by $F(i)$, and the face meeting $F$ in the crossing point of $a_i$ and $c_{i-1}$ by $F'(i)$.
Note that the strands
on the boundary of $F$ are oriented towards the intersection point where $F'(i)$ and
$F$ meet, and away from the intersection point where $F(i)$ and $F$ meet. The labels of $F(i)$ and of $F'(i)$ are determined from the label $J_\mu$ of $F$ as follows. The label of $F(i)$ is $(J_{\mu}\setminus \{a_{i}\})\cup \{c_{i}\}$.
The label of $F'(i)$ is $(J_{\mu}\setminus \{a_{i}\})\cup \{c_{i-1}\}$.

Thus, the faces adjacent to $F$ are, in order clockwise around $F$:
$$F'(1),F(1),\ldots ,F'(r),F(r).$$
We denote the partitions corresponding to the $k$-subsets labelling these
faces by:
$$\mu'(1),\mu(1),\mu'(2),\mu(2),\ldots ,\mu'(r),\mu(r).$$
Thus $J_{\mu'(i)}$ labels $F'(i)$ and $J_{\mu(i)}$ labels $F(i)$ for all $i$.
See Figure~\ref{fig:alternatingface}.
\end{defn}

\begin{figure}
\psfragscanon
\psfrag{I1}{$J_{\mu(1)}$}
\psfrag{I2}{$J_{\mu(2)}$}
\psfrag{J1}{$J_{\mu'(1)}$}
\psfrag{J2}{$J_{\mu'(2)}$}
\psfrag{I3}{$J_{\mu(3)}$}
\psfrag{I4}{$J_{\mu(4)}$}
\psfrag{J3}{$J_{\mu'(3)}$}
\psfrag{J4}{$J_{\mu'(4)}$}
\psfrag{Jmu}{$J_{\mu}$}
\psfrag{a1}{$a_1$}
\psfrag{a2}{$a_2$}
\psfrag{a3}{$a_3$}
\psfrag{a4}{$a_4$}
\psfrag{c1}{$c_1$}
\psfrag{c2}{$c_2$}
\psfrag{c3}{$c_3$}
\psfrag{c4}{$c_4$}
\includegraphics[width=7cm]{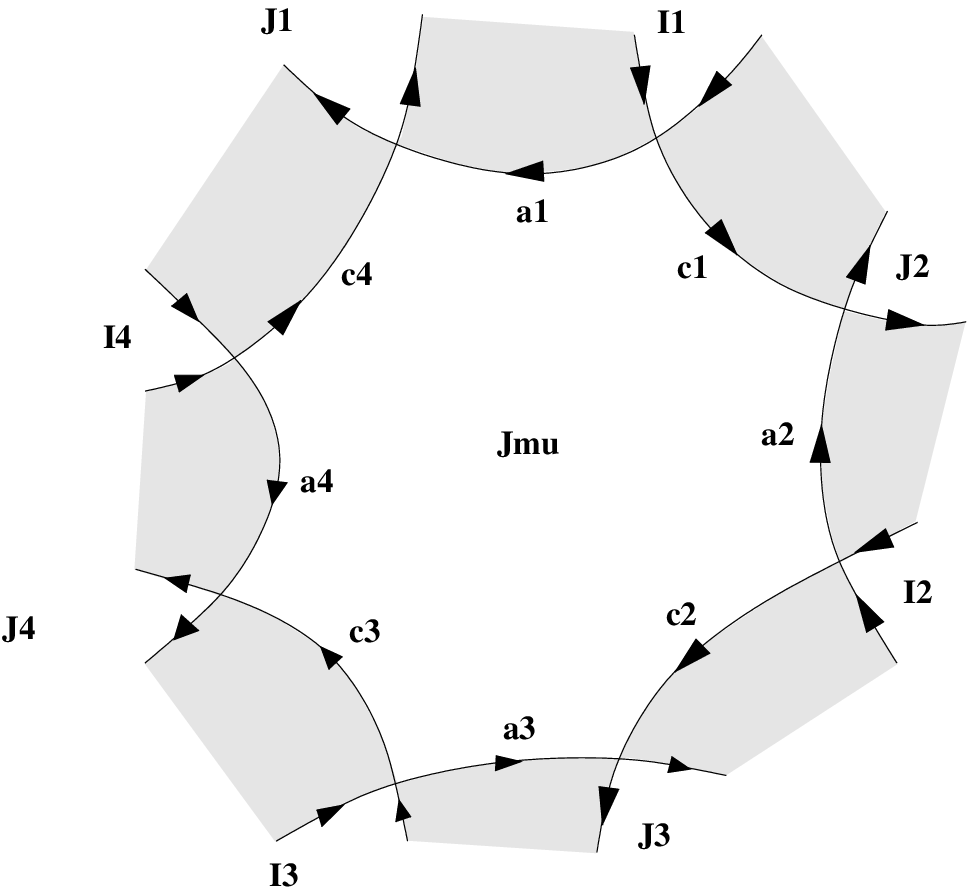}
\caption{The part of a Postnikov diagram around an alternating face.}
\label{fig:alternatingface}
\end{figure}

\begin{prop} \label{Prop:VectorFieldAdditivity}
Suppose $\lambda\in \Pn_{k,n}$ is a partition such that $J_{\lambda}$ is not a frozen variable. Fix a Postnikov diagram $D$ such that $D$ has an internal alternating
face labelled $J_{\lambda}$.
Let $F$ be a different, non-boundary alternating face of $D$, labelled by the $k$-subset $J_{\mu}$.
Then, for $\m\in [1,n]$, we have
$$\sum_{i=1}^r c^{(\m)}_{\lambda}(\mu(i))=\sum_{i=1}^r c^{(\m)}_{\lambda}(\mu'(i)).$$
In particular, taking $m=n$, we have:
$$\sum_{i=1}^r c_{\lambda}(\mu(i))=\sum_{i=1}^r c_{\lambda}(\mu'(i)).$$
\end{prop}

\begin{proof} We use the notation of Definition~\ref{d:adjacentfaces}.
We consider the strands along the boundary of the face $F$ labelled $J_\mu$.
Note that:
\begin{align*}
J_{\mu(i)}&=(J_{\mu}\setminus \{a_i\})\cup \{c_i\};\\
J_{\mu'(i)}&=(J_{\mu}\setminus \{a_i\})\cup \{c_{i-1}\}.
\end{align*}
Here we adopt the convention that the subscripts of the $a_i$ and $c_i$ are
interpreted modulo $r$. 
Note that since $c_i\not\in J_{\mu}$ and $a_j\in J_{\mu}$, we have
$c_i\not=a_j$ for all $i,j$.

We consider first the untwisted case, $\m=n$. We express $c_\lambda(\mu(i))$ in terms of $c_\lambda(\mu)$ and summands $C_i$ and $A_i$ associated to the strands $c_i$ and $a_i$, respectively, which separate $\mu$ from $\mu(i)$. The $C_i$ and $A_i$ are defined in such a way that simultaneously $c_\lambda(\mu'(i))=c_\lambda(\mu)+C_{i-1}+A_i$ (see the claim below).

For subsets $I,I'$ of $[1,n]$ we write
$I<I'$ to indicate that every element of $I$ is less than every element of $I'$
and for an element $a$ we write $a<I$ (respectively, $I<a$) to denote $\{a\}<I$
(respectively, $I<\{a\}$).
By 
Theorem~\ref{thm:Noncrossing} $J_{\lambda}$ and $J_{\mu}$ are non-crossing.  Since $J_{\lambda}\not =J_{\mu}$ there are four possible configurations for the subsets $J_{\lambda}\setminus J_{\mu}$ and $J_\mu\setminus J_{\lambda}$ in
$[1,n]$:
\begin{enumerate}
\item[(I)] $J_{\mu}\setminus J_{\lambda}<J_{\lambda}\setminus J_{\mu}$;
\item[(II)] $J_{\lambda}\setminus J_{\mu}=K_1\sqcup K_2$, $K_1,K_2\not=\emptyset$ and $K_1<J_{\mu}\setminus J_{\lambda}<K_2$;
\item[(III)] $J_{\lambda}\setminus J_{\mu}<J_{\mu}\setminus J_{\lambda}$;
\item[(IV)] $J_{\mu}\setminus J_{\lambda}=K_1\sqcup K_2$,
$K_1,K_2\not=\emptyset$ and $K_1<J_{\lambda}\setminus J_{\mu}<K_2$.
\end{enumerate}
We display these in Figure~\ref{f:cases}.
\begin{figure}
\psfragscanon
\psfrag{1}{$\scriptstyle 1$}
\psfrag{n}{$\scriptstyle n$}
\psfrag{Case I}{Case I}
\psfrag{Case II}{Case II}
\psfrag{Case III}{Case III}
\psfrag{Case IV}{Case IV}
\psfrag{lm}{$J_{\lambda}\setminus J_{\mu}$}
\psfrag{ml}{$J_{\mu}\setminus J_{\lambda}$}
\includegraphics[width=10cm]{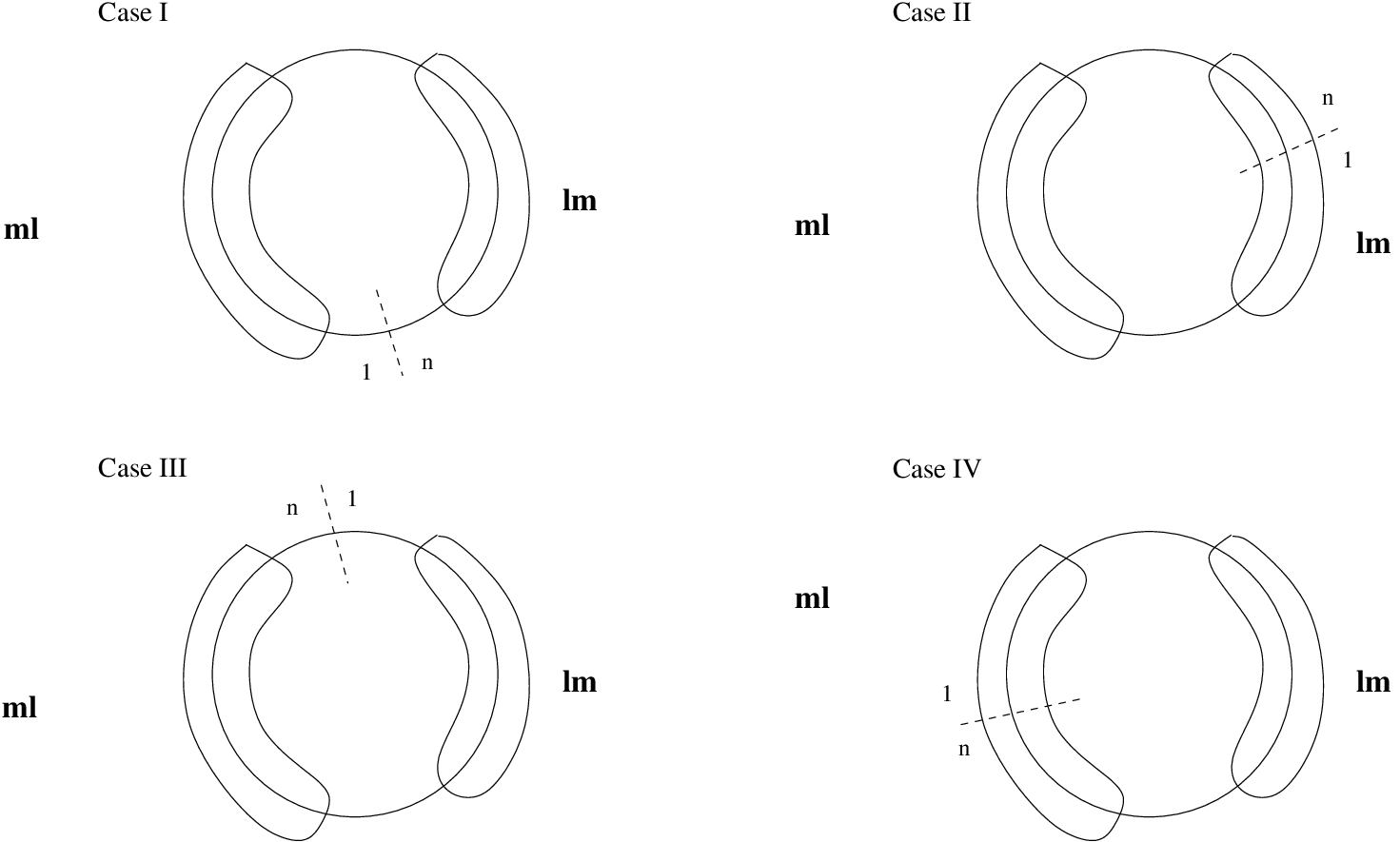}
\caption{The four configurations of $J_{\lambda}\setminus J_{\mu}$ and $J_{\mu}\setminus J_{\lambda}$.}
\label{f:cases}
\end{figure}

In each case, we define integers $C_i$ and $A_i$ for $i=1,2,\ldots ,r$ (again treating subscripts modulo $r$), as follows. Note that $C_i$ depends only on $\lambda,\mu$ and $c_i$ and similarly $A_i$ depends on $\lambda, \mu$ and $a_i$. 

\begin{description}
\item[Case I] If $J_{\mu}\setminus J_{\lambda}<J_{\lambda}\setminus J_{\mu}$, then set
\begin{alignat}{2}
C_i &=
\begin{cases}
0 & c_i\in J_{\lambda}; \\
0 & c_i<J_{\lambda}\setminus J_{\mu},\,c_i\not\in J_{\lambda}; \\
1 & c_i>J_{\lambda}\setminus J_{\mu},\,c_i\not\in J_{\lambda};
\end{cases}
\text{\ \ \  and\ } &\quad A_i &=
\begin{cases}
0 & a_i\not\in J_{\lambda}; \\
1 & a_i<J_{\mu}\setminus J_{\lambda},\,a_i\in J_{\lambda}; \\
0 & a_i>J_{\mu}\setminus J_{\lambda},\,a_i\in J_{\lambda}.
\end{cases}
\end{alignat}
\item[Case II] If $J_{\lambda}\setminus J_{\mu}=K_1\sqcup K_2$ where
$K_1$ and $K_2$ are nonempty and $K_1<J_{\mu}\setminus J_{\lambda}<K_2$, then set
\begin{alignat}{2}
C_i &=
\begin{cases}
0 & c_i\not\in J_{\lambda}; \\
-1 & c_i<J_{\mu}\setminus J_{\lambda},\,c_i\in J_{\lambda}; \\
0 & c_i>J_{\mu}\setminus J_{\lambda},\,c_i\in J_{\lambda};
\end{cases}
\text{\ \ \  and\ } &\quad A_i &=
\begin{cases}
0 & a_i\not\in J_{\lambda}; \\
1 & a_i<J_{\mu}\setminus J_{\lambda},\,a_i\in J_{\lambda}; \\
0 & a_i>J_{\mu}\setminus J_{\lambda},\,a_i\in J_{\lambda}.
\end{cases}
\end{alignat}
\item[Case III] If $J_{\lambda}\setminus J_{\mu}<J_{\mu}\setminus J_{\lambda}$, then set
\begin{alignat}{2}
C_i &=
\begin{cases}
0 & c_i\in J_{\lambda}; \\
0 & c_i<J_{\lambda}\setminus J_{\mu},\,c_i\not\in J_{\lambda}; \\
1 & c_i>J_{\lambda}\setminus J_{\mu},\,c_i\not\in J_{\lambda};
\end{cases}
\text{\ \ \  and\ } &\quad A_i &=
\begin{cases}
-1 & a_i\not\in J_{\lambda}; \\
0 & a_i<J_{\mu}\setminus J_{\lambda},\,a_i\in J_{\lambda}; \\
-1 & a_i>J_{\mu}\setminus J_{\lambda},\,a_i\in J_{\lambda}.
\end{cases}
\end{alignat}
\item[Case IV] If $J_{\mu}\setminus J_{\lambda}=K_1\sqcup K_2$ where
$K_1$ and $K_2$ are non-empty and $K_1<J_{\lambda}\setminus J_{\mu}<K_2$, then set
\begin{alignat}{2}
C_i &=
\begin{cases}
0 & c_i\in J_{\lambda}; \\
0 & c_i<J_{\lambda}\setminus J_{\mu},\,c_i\not\in J_{\lambda}; \\
1 & c_i>J_{\lambda}\setminus J_{\mu},\,c_i\not\in J_{\lambda};
\end{cases}
\text{\ \ \  and\ } &\quad A_i &=
\begin{cases}
0 & a_i\in J_{\lambda}; \\
0 & a_i<J_{\lambda}\setminus J_{\mu},\,a_i\not\in J_{\lambda}; \\
-1 & a_i>J_{\lambda}\setminus J_{\mu},\,a_i\not\in J_{\lambda}.
\end{cases}
\end{alignat}
\end{description}

\textbf{Claim:}
\begin{enumerate}[(a)]
\item For $1\leq i\leq r$, we have $c_{\lambda}(\mu(i))-c_{\lambda}(\mu)=C_i+A_i$.
\item For $1\leq i\leq r$, we have $c_{\lambda}(\mu'(i))-c_{\lambda}(\mu)=C_{i-1}+A_i$.
\end{enumerate}

\textbf{Proof of claim:}
The proof is case by case. Let $1\leq i\leq r$ and set $j=i$ or $i-1$ reduced modulo $r$.
Set $\kappa=\mu(i)$ in the former case and $\kappa=\mu'(i)$ in the latter case.
Then $J_{\kappa}=(J_{\mu}\setminus \{a_i\})\cup \{c_j\}$.

Note that, since $c_j\not\in J_{\mu}$ and
$a_i\in J_{\mu}$, we can describe $J_{\lambda}\setminus J_{\kappa}$ in terms of $J_{\lambda}\setminus J_{\mu}$
and $J_{\kappa}\setminus J_{\lambda}$ in terms of $J_{\mu}\setminus J_{\lambda}$, as follows:
\begin{itemize}
\item If $c_j,a_i\in J_{\lambda}$, we have $J_{\lambda}\setminus J_{\kappa}=((J_{\lambda}\setminus J_{\mu})\setminus \{c_j\})\cup \{a_i\}$ and $J_{\kappa}\setminus J_{\lambda}=J_{\mu}\setminus J_{\lambda}$.
\item If $c_j\in J_{\lambda}, a_i\not\in J_{\lambda}$, we have
$J_{\lambda}\setminus J_{\kappa}=(J_{\lambda}\setminus J_{\mu})\setminus \{c_j\}$ and $J_{\kappa}\setminus J_{\lambda}=(J_{\mu}\setminus J_{\lambda})\setminus \{a_i\}$.
\item If $c_j\not\in J_{\lambda}$ and $a_i\in J_{\lambda}$, we have
$J_{\lambda}\setminus J_{\kappa}=(J_{\lambda}\setminus J_{\mu})\cup \{a_i\}$ and $J_{\kappa}\setminus J_{\lambda}=(J_{\mu}\setminus J_{\lambda})\cup \{c_j\}$.
\item If $c_j,a_i\not\in J_{\lambda}$, we have
$J_{\lambda}\setminus J_{\kappa}=J_{\lambda}\setminus J_{\mu}$ and $J_{\kappa}\setminus J_{\lambda}=((J_{\mu}\setminus J_{\lambda})\setminus \{a_i\})\cup \{c_j\}$.
\end{itemize}

We consider each of the Cases (I)-(IV), from before, in turn.
We divide each case into separate subcases depending on whether $c_j,a_i$
lie in $J_{\lambda}$ or not and also their position in relation to the subsets
$J_{\mu}\setminus J_{\lambda}$ and $J_{\lambda}\setminus J_{\mu}$. We display
the calculations as a table for each of the cases (I)--(IV). The claim then follows
from the observation that the sum of the entries in the columns headed $C_j$ and $A_i$ 
coincides with the entry in the final column. Below the table for each case is a diagram 
illustrating the arrangement of the subsets $J_{\lambda}\setminus J_{\mu}$ and
$J_{\mu}\setminus J_{\lambda}$ (note that these are not intervals, just contained
in intervals in the arrangement shown, by the noncrossing property).
The subset $[1,n]$ is drawn as a circle (with the numbering 
$1,2,\ldots ,n$ clockwise around the boundary). A dotted line cutting across the circle 
indicates the gap between $1$ and $n$.

For example, in Case III, if $c_j,a_i\in J_{\lambda}$ and $a_i>J_{\mu}\setminus J_{\lambda}$,
then we have $J_{\lambda}\setminus J_{\mu}<J_{\mu}\setminus J_{\lambda}$. We also
have
$J_{\lambda}\setminus J_{\kappa} = ((J_{\lambda}\setminus J_{\mu})\setminus \{c_j\})\cup \{a_i\}$
and $J_{\kappa}\setminus J_{\lambda} = J_{\mu}\setminus J_{\lambda}$.
i.e.\ replacing $J_{\mu}$ with $J_{\kappa}$ has the effect of removing $c_j$
from $J_{\lambda}\setminus J_{\mu}$ and adding $a_i$. Since
$a_i>J_{\mu}\setminus J_{\lambda}$, it can be seen from the definition that
$c_{\lambda}(\kappa)=c_{\lambda}(\mu)-1$, giving the entry $-1$ in the second row,
final column of the table for Case III. From the definitions, $C_j=0$ and $A_i=-1$
in this case, and we see that $C_j+A_i=c_{\lambda}(\kappa)-c_{\lambda}(\mu)$ as required.
The arguments in the other cases are similar.

Occasionally we need to use the fact that the $J_{\lambda}$ and $J_{\kappa}$ are also noncrossing; this 
is why two of the subcases, as indicated in the tables, cannot occur.
This is also used, for example,  in Case III for the subcase $c_j\not\in J_{\lambda}$, $a_i\in 
J_{\lambda}$, $c_j<J_{\lambda}\setminus J_{\mu}$, $a_i<J_{\mu}\setminus J_{\lambda}$. The 
noncrossing property implies that we must have $c_j<a_i$.

\begin{center}
\begin{minipage}{\textwidth}
\begin{center}
\textbf{Case I}: $J_{\mu}\setminus J_{\lambda}<J_{\lambda}\setminus J_{\mu}$
\vskip 0.3cm
\begin{tabular}{|c|c|c|c|c|}
\hline
Membership in $J_{\lambda}$ & Extra condition(s) & $C_j$ & $A_i$ & $c_{\lambda}(\kappa)-c_{\lambda}(\mu)$ \\
\hline
\multirow{2}*{$c_j,a_i\in J_{\lambda}$}
& $a_i<J_{\mu}\setminus J_{\lambda}$ & $0$ & $1$ & $1$ \\ \cline{2-5}
& $a_i>J_{\mu}\setminus J_{\lambda}$ & $0$ & $0$ & $0$ \\ \hline
$c_j\in J_{\lambda},a_i\not\in J_{\lambda}$
& none                               & $0$ & $0$ & $0$ \\ \hline
\multirow{4}*{$c_j\not\in J_{\lambda},a_i\in J_{\lambda}$}
& $c_j<J_{\lambda}\setminus J_{\mu}, a_i<J_{\mu}\setminus J_{\lambda}$ & $0$ & $1$ & $1$ \\ \cline{2-5}
& $c_j<J_{\lambda}\setminus J_{\mu}, a_i>J_{\mu}\setminus J_{\lambda}$ & $0$ & $0$ & $0$  \\ \cline{2-5}
& $c_j>J_{\lambda}\setminus J_{\mu}, a_i<J_{\mu}\setminus J_{\lambda}$ & \multicolumn{3}{c|}{cannot occur}     \\ \cline{2-5}
& $c_j>J_{\lambda}\setminus J_{\mu}, a_i>J_{\mu}\setminus J_{\lambda}$ & $1$ & $0$ & $1$  \\ \hline
\multirow{2}*{$c_j,a_i\not\in J_{\lambda}$}
& $c_j<J_{\lambda}\setminus J_{\mu}$ & $0$ & $0$ & $0$ \\ \cline{2-5}
& $c_j>J_{\lambda}\setminus J_{\mu}$ & $1$ & $0$ & $1$ \\ \hline
\end{tabular}
\end{center}
\end{minipage}
\vskip 0.2cm
\begin{minipage}{\textwidth}
\begin{center}
\textbf{Case II}: $J_{\lambda}\setminus J_{\mu}=K_1\sqcup K_2$, $K_1,K_2\not=\emptyset$ and $K_1<J_{\mu}\setminus J_{\lambda}<K_2$
\vskip 0.3cm
\begin{tabular}{|c|c|c|c|c|}
\hline
Membership in $J_{\lambda}$ & Extra condition(s) & $C_j$ & $A_i$ & $c_{\lambda}(\kappa)-c_{\lambda}(\mu)$ \\
\hline
\multirow{4}*{$c_j,a_i\in J_{\lambda}$}
& $c_j<J_{\mu}\setminus J_{\lambda}, a_i<J_{\mu}\setminus J_{\lambda}$ & $-1$ & $1$ & $0$ \\ \cline{2-5}
& $c_j<J_{\mu}\setminus J_{\lambda}, a_i>J_{\mu}\setminus J_{\lambda}$ & $-1$ & $0$ & $-1$  \\ \cline{2-5}
& $c_j>J_{\mu}\setminus J_{\lambda}, a_i<J_{\mu}\setminus J_{\lambda}$ & $0$  & $1$ & $1$    \\ \cline{2-5}
& $c_j>J_{\mu}\setminus J_{\lambda}, a_i>J_{\mu}\setminus J_{\lambda}$ & $0$ & $0$ & $0$  \\ \hline
\multirow{2}*{$c_j\in J_{\lambda},a_i\not\in J_{\lambda}$}
& $c_j<J_{\mu}\setminus J_{\lambda}$ & $-1$ & $0$ & $-1$ \\ \cline{2-5}
& $c_j>J_{\mu}\setminus J_{\lambda}$ & $0$ & $0$ & $0$ \\ \hline
\multirow{2}*{$c_j\not\in J_{\lambda},a_i\in J_{\lambda}$}
& $a_i<J_{\mu}\setminus J_{\lambda}$ & $0$ & $1$ & $1$ \\ \cline{2-5}
& $a_i>J_{\mu}\setminus J_{\lambda}$ & $0$ & $0$ & $0$ \\ \hline
$c_j,a_i\not\in J_{\lambda}$
&  none                             & $0$ & $0$ & $0$ \\ \hline
\end{tabular}
\end{center}
\end{minipage}
\vskip 0.2cm
\begin{minipage}{\textwidth}
\begin{center}
\textbf{Case III}: $J_{\lambda}\setminus J_{\mu}<J_{\mu}\setminus J_{\lambda}$
\vskip 0.3cm
\begin{tabular}{|c|c|c|c|c|}
\hline
Membership in $J_{\lambda}$ & Extra condition(s) & $C_j$ & $A_i$ & $c_{\lambda}(\kappa)-c_{\lambda}(\mu)$ \\
\hline
\multirow{2}*{$c_j,a_i\in J_{\lambda}$}
& $a_i<J_{\mu}\setminus J_{\lambda}$ & $0$ & $0$ & $0$ \\ \cline{2-5}
& $a_i>J_{\mu}\setminus J_{\lambda}$ & $0$ & $-1$ & $-1$ \\ \hline
$c_j\in J_{\lambda},a_i\not\in J_{\lambda}$
&  none                              & $0$ & $-1$ & $-1$ \\ \hline
\multirow{4}*{$c_j\not\in J_{\lambda},a_i\in J_{\lambda}$}
& $c_j<J_{\lambda}\setminus J_{\mu}, a_i<J_{\mu}\setminus J_{\lambda}$ & $0$ & $0$ & $0$ \\ \cline{2-5}
& $c_j<J_{\lambda}\setminus J_{\mu}, a_i>J_{\mu}\setminus J_{\lambda}$ & \multicolumn{3}{c|}{cannot occur}  \\ \cline{2-5}
& $c_j>J_{\lambda}\setminus J_{\mu}, a_i<J_{\mu}\setminus J_{\lambda}$ & $1$ & $0$ & $1$  \\ \cline{2-5}
& $c_j>J_{\lambda}\setminus J_{\mu}, a_i>J_{\mu}\setminus J_{\lambda}$ & $1$ & $-1$ & $0$  \\ \hline
\multirow{2}*{$c_j,a_i\not\in J_{\lambda}$}
& $c_j<J_{\lambda}\setminus J_{\mu}$ & $0$ & $-1$ & $-1$ \\ \cline{2-5}
& $c_j>J_{\lambda}\setminus J_{\mu}$ & $1$ & $-1$ & $0$ \\ \hline
\end{tabular}
\end{center}
\end{minipage}
\vskip 0.2cm
\begin{minipage}{\textwidth}
\begin{center}
\textbf{Case IV}: $J_{\mu}\setminus J_{\lambda}=K_1\sqcup K_2$,
$K_1,K_2\not=\emptyset$ and $K_1<J_{\lambda}\setminus J_{\mu}<K_2$
\vskip 0.3cm
\begin{tabular}{|c|c|c|c|c|}
\hline
Membership in $J_{\lambda}$ & Extra condition(s) & $C_j$ & $A_i$ & $c_{\lambda}(\kappa)-c_{\lambda}(\mu)$ \\
\hline
$c_j,a_i\in J_{\lambda}$
& none                               & $0$ & $0$ & $0$ \\ \hline
\multirow{2}*{$c_j\in J_{\lambda},a_i\not\in J_{\lambda}$}
& $a_i<J_{\lambda}\setminus J_{\mu}$ & $0$ & $0$ & $0$ \\ \cline{2-5}
& $a_i>J_{\lambda}\setminus J_{\mu}$ & $0$ & $-1$ & $-1$ \\ \hline
\multirow{2}*{$c_j\not\in J_{\lambda},a_i\in J_{\lambda}$}
& $c_j<J_{\lambda}\setminus J_{\mu}$ & $0$ & $0$ & $0$ \\ \cline{2-5}
& $c_j>J_{\lambda}\setminus J_{\mu}$ & $1$ & $0$ & $1$ \\ \hline
\multirow{4}*{$c_j,a_i\not\in J_{\lambda}$}
& $c_j<J_{\lambda}\setminus J_{\mu}, a_i<J_{\lambda}\setminus J_{\mu}$ & $0$ & $0$ & $0$ \\ \cline{2-5}
& $c_j<J_{\lambda}\setminus J_{\mu}, a_i>J_{\lambda}\setminus J_{\mu}$ & $0$ & $-1$ & $-1$  \\ \cline{2-5}
& $c_j>J_{\lambda}\setminus J_{\mu}, a_i<J_{\lambda}\setminus J_{\mu}$ & $1$  & $0$ & $1$    \\ \cline{2-5}
& $c_j>J_{\lambda}\setminus J_{\mu}, a_i>J_{\lambda}\setminus J_{\mu}$ & $1$ & $-1$ & $0$  \\ \hline
\end{tabular}
\end{center}
\end{minipage}
\vskip 0.2cm
\end{center}
By the claim we have
\begin{equation}
\sum_{i=1}^r c_{\lambda}(\mu(i))=rc_{\lambda}(\mu)+\sum_{i=1}^r (C_i+A_i)
=\sum_{i=1}^r c_{\lambda}(\mu'(i)).
\label{e:additivity}
\end{equation}
This proves the Proposition in the case $n=m$.

For the general case, note that relabelling strand $i$ as $\reduce{i-m}$ in $D$ for each $i$, we
obtain a new Postnikov diagram $D^{(\m)}$ with a region labelled $J_{\lambda^{(\m)}}$. Furthermore, the partitions corresponding to the $k$-subsets labelling the regions surrounding this region are
$\mu'(1)^{(\m)},\mu(1)^{(\m)},\ldots ,\mu'(r)^{(\m)},\mu(r)^{(\m)}$.
Hence, applying equation~\eqref{e:additivity} with $\lambda$ replaced by $\lambda^{(\m)}$,
we have, for $\m\in [1,n]$:
\begin{equation*}
\begin{split}
\sum_{i=1}^r c_{\lambda}^{(\m)}(\mu(i))
&=\sum_{i=1}^r \left(c_{\lambda^{(\m)}}(\mu(i)^{(m)})-c_{\lambda^{(\m)}}(\emptyset^{(\m)})\right) \\
&=\sum_{i=1}^r \left(c_{\lambda^{(\m)}}(\mu'(i)^{(m)})-c_{\lambda^{(\m)}}(\emptyset^{(\m)})\right)
=\sum_{i=1}^r c_{\lambda}^{(\m)}(\mu'(i)),
\end{split}
\end{equation*}
and we are done.
\end{proof}

\begin{lem} \label{l:vectorfield}
Fix a Postnikov extended cluster $\CC$.
Let $X_{\lambda,\CC}$ and $X_{\lambda,\CC}^{(\m)}$ be the vector fields on $\Xcheck_{\CC}$ defined in equations~\eqref{e:clusterfield} and~\eqref{e:twistedclusterfield}.
Then there is a regular vector field 
$X_{\lambda}$ on $\Xcheck$ such that $X_{\lambda}$ and 
$X_{\lambda,\CC}$ coincide on $\Xcheck_{\CC}$.
Similarly, for each $m\in [1,n]$, there is a regular vector 
field $X^{(\m)}_{\lambda}$ on $\Xcheck$ such that $X^{(\m)}
_{\lambda}$ and $X^{(\m)}_{\lambda,\CC}$ coincide on $\Xcheck_{\CC}$.
\end{lem}

\begin{proof}
By~\cite[Lemma 2.3]{GSV:BaecklundDarboux},
the complement of $\Xcheck_{\CC}\cup \bigcup_{p_\mu\in \CC} \Xcheck_{\CC(\mu)}$ in $\Xcheck$ has codimension at least two.
Hartog's Theorem says that a function that is regular on the complement of a codimension two subvariety of an algebraic variety $X$ extends to a regular function on all of $X$. Hence, it suffices to prove that there is a regular extension of $X_{\lambda,\CC}^{(\m)}$ to
$\Xcheck_{\CC}\cup \bigcup_{p_\mu\in \CC} \Xcheck_{\CC(\mu)}$,
where $\CC(\mu)$ denotes the extended cluster obtained from $\CC$
by mutating at $\p_{\mu}$. We consider first the case
$\m=n$.


Fix $\mu\not=\lambda$ such that $p_{\mu}\in \CC$.
Let $\mu(i),\mu'(i)$, for $i=1,2,\ldots ,r$ be as in
Proposition~\ref{Prop:VectorFieldAdditivity}. 
Then the mutation of $\CC$ at $p_{\mu}$ corresponds to
the change of variables $\tilde{\p}_{\kappa}=\p_{\kappa}$ for
$\p_\kappa\in \CC$, $\p_\kappa\not=\p_\mu$, and
$$\tilde{\p}_{\mu}=\frac{\prod_{i=1}^r \p_{\mu(i)}+\prod_{i=1}^r \p_{\mu'(i)}}{\p_\mu}.$$

Consider the natural vector fields $p_{\kappa} \frac{\partial}{\partial p_{\kappa}}$ on the cluster torus $\Xcheck_{\CC}$ and the vector fields $\widetilde{p}_{\kappa} \frac{\partial}{\partial \widetilde{p}_{\kappa}}$ on the cluster torus $\Xcheck_{\CC(\mu)}$.
On the intersection of these cluster tori we obtain:
\begin{equation*}
\begin{split}
\p_{\mu(i)}\frac{\partial}{\partial \p_{\mu(i)}}&=
\tilde{\p}_{\mu(i)}\frac{\partial{\tilde{\p}_{\mu(i)}}}{\partial \p_{\mu(i)}}
\frac{\partial}{\partial \tilde{\p}_{\mu(i)}}+
\tilde{\p}_{\mu(i)}\frac{\partial \tilde{\p}_{\mu}}{\partial \p_{\mu(i)}}
\frac{\partial}{\partial \tilde{\p}_{\mu}} \\
&=\tilde{\p}_{\mu(i)}\frac{\partial}{\partial \tilde{\p}_{\mu(i)}}+
\tilde{\p}_{\mu(i)}
\frac{\prod_{j=1,j\not=i}^r \p_{\mu(j)}}{\p_{\mu}}
\frac{\partial}{\partial \tilde{\p}_{\mu}} \\
&=\tilde{\p}_{\mu(i)}\frac{\partial}{\partial \tilde{\p}_{\mu(i)}}+
\frac{\prod_{j=1}^r \tilde{\p}_{\mu(j)}}{\prod_{j=1}^r \tilde{\p}_{\mu(j)}+
\prod_{j=1}^r \tilde{\p}_{\mu'(j)}} \tilde{p}_{\mu}\frac{\partial}{\partial \tilde{\p}_{\mu}}.
\end{split}
\end{equation*}
Similarly,
\begin{equation*}
\p_{\mu'(i)}\frac{\partial}{\partial \p_{\mu'(i)}}=\tilde{\p}_{\mu'(i)}\frac{\partial}{\partial \tilde{\p}_{\mu'(i)}}+
\frac{\prod_{j=1}^r \tilde{\p}_{\mu'(j)}}{\prod_{j=1}^r \tilde{\p}_{\mu(j)}+
\prod_{j=1}^r \tilde{\p}_{\mu'(j)}} \tilde{p}_{\mu}\frac{\partial}{\partial \tilde{\p}_{\mu}}.
\end{equation*}
We also have:
\begin{equation*}
\p_{\mu}\frac{\partial}{\partial \p_{\mu}}=
\p_{\mu}\frac{\partial \tilde{\p}_{\mu}}{\partial \p_{\mu}}\frac{\partial}{\partial \tilde{\p}_{\mu}} \\
= -\tilde{p}_{\mu}\frac{\partial}{\partial \tilde{\p}_{\mu}}.
\end{equation*}
Hence, using Proposition~\ref{Prop:VectorFieldAdditivity}, we have:
\begin{equation*}
\begin{split}
\p_{\lambda}\sum_{\p_{\kappa}\in \CC} c_{\lambda}(\kappa)p_{\kappa}\frac{\partial}{\partial \p_{\kappa}}&=
\tilde{\p}_{\lambda}\sum_{\p_{\kappa}\in\CC,p_{\kappa}\not=p_{\mu}}
c_{\lambda}(\kappa)
\tilde{\p}_{\kappa}\frac{\partial}{\partial \tilde{\p}_{\kappa}}+ \\
&\quad\quad\quad\quad\left(\frac{\sum_{i=1}^r c_{\lambda}(\mu(i))
\prod_{j=1}^r \tilde{\p}_{\mu(j)}+
\sum_{i=1}^r c_{\lambda}(\mu'(i))
\prod_{j=1}^r \tilde{\p}_{\mu'(j)}}{\prod_{j=1}^r \tilde{\p}_{\mu(j)}+
\prod_{j=1}^r \tilde{\p}_{\mu'(j)}}-c_{\lambda}(\mu)\right)
\tilde{\p}_{\lambda}\tilde{\p}_{\mu}\frac{\partial}{\partial \tilde{\p}_{\mu}} \\
&=\tilde{\p}_{\lambda}\sum_{\p_{\kappa}\in\CC,\p_{\kappa}\not=\p_{\mu}}
c_{\lambda}(\kappa)\tilde{\p}_{\kappa}\frac{\partial}{\partial \tilde{\p}_{\kappa}}+
\left(\left(\sum_{i=1}^r c_{\lambda}(\mu(i))\right)-c_{\lambda}(\mu)\right)
\tilde{\p}_{\lambda}\tilde{\p}_{\mu}\frac{\partial}{\partial \tilde{\p}_{\mu}}.
\end{split}
\end{equation*}
This is regular on the cluster torus
$\Xcheck_{\CC(\mu)}$.

For the case $\mu=\lambda$, we have
\begin{equation*}
\p_{\lambda}\p_{\lambda(i)}\frac{\partial}{\partial p_{\lambda(i)}}
=\p_{\lambda}\tilde{\p}_{\lambda(i)}
\frac{\partial}{\partial \tilde{p}
_{\lambda(i)}}+
\left( \prod_{j=1}^r \tilde{\p}_{\lambda(j)} 
\right)
\frac{\partial}{\partial \tilde{\p}
_{\lambda}},
\end{equation*}
and, similarly:
\begin{equation*}
\p_{\lambda}\p_{\lambda'(i)}\frac{\partial}{\partial p_{\lambda'(i)}}
=\p_{\lambda}\tilde{\p}_{\lambda'(i)}\frac{\partial}{\partial \tilde{p}_{\lambda'(i)}}+
\left( \prod_{j=1}^r \tilde{\p}_{\lambda'(j)} \right)
\frac{\partial}{\partial \tilde{\p}_{\lambda}}.\\
\end{equation*}
As before, we have
\begin{equation*}
\p_{\lambda}\frac{\partial}{\partial \p_{\lambda}}
= -\tilde{\p}_{\lambda}\frac{\partial}{\partial \tilde{\p}_{\lambda}}.
\end{equation*}
Hence, we have
\begin{equation*}
\begin{split}
\p_{\lambda} \sum_{\p_{\kappa}\in \CC}
c_{\lambda}(\kappa)\p_{\kappa}\frac{\partial}{\partial \p_{\kappa}}&=
\p_{\lambda} \sum_{\tilde{p}_{\kappa}\in \CC(\lambda),\tilde{\p}_{\kappa}\not=\tilde{p}_{\lambda}}c_{\lambda}(\kappa)\tilde{\p}_{\kappa}\frac{\partial}{\partial \tilde{\p}_{\kappa}}
+ \\
& \quad \quad \quad \quad
\left(\sum_{i=1}^r c_{\lambda}(\lambda(i))\prod_{j=1}^r \tilde{\p}_{\lambda(j)}+
\sum_{i=1}^r c_{\lambda}(\lambda'(i))\prod_{j=1}^r \tilde{\p}_{\lambda'(j)}-
\tilde{\p}_{\lambda}
\right) \frac{\partial}{\partial \tilde{\p}_{\lambda}} \\
&=
\frac{
 \left(
  \prod_{j=1}^r \tilde{\p}_{\lambda(j)}+
  \prod_{j=1}^r \tilde{\p}_{\lambda'(j)}
 \right)
 }
{\tilde{\p}_{\lambda}}
\sum_{\tilde{\p}_{\kappa}\in\CC(\lambda),\tilde{\p}_{\kappa}\not=\tilde{\p_{\lambda}}}
c_{\lambda}(\kappa)\tilde{\p}_{\kappa}
\frac{\partial}{\partial\tilde{\p}_{\kappa}}+ \\
& \quad \quad \quad \quad
\left(\sum_{i=1}^r c_{\lambda}(\lambda(i))\prod_{j=1}^r \tilde{\p}_{\lambda(j)}+
\sum_{i=1}^r c_{\lambda}(\lambda'(i))\prod_{j=1}^r \tilde{\p}_{\lambda'(j)}-
\tilde{\p}_{\lambda}
\right) \frac{\partial}{\partial \tilde{\p}_{\lambda}}.
\end{split}
\end{equation*}

This is regular on the cluster torus
$\Xcheck_{\CC(\lambda)}$.
Hence there is a regular extension of $X_{\lambda,\CC}$ to
$\Xcheck_{\CC}\cup \bigcup_{\mu\in \CC} \Xcheck_{\CC(\mu)}$
as required.


A similar argument can be used for $X_{\lambda,\CC}^{(\m)}$, using the additivity given by Proposition~\ref{Prop:VectorFieldAdditivity}.
\end{proof}

Note that we have not shown that $X_{\lambda}^{(\m)}$ is independent of the choice of initial extended cluster $\CC$. We expect this to hold, but we do not need it here.
For $X_{\lambda}^{(\m)}$, we shall always choose $\CC$ to be the
extended cluster corresponding to the Postnikov
diagram $D_{\lambda}$ associated to
$\lambda$ (see Theorem~\ref{thm:diagramconstruction}).

We now fix a partition $\lambda\in \Pn_{k,n}$ and focus on the Postnikov diagram
$D_{\lambda}$. This diagram contains a face $F(\lambda)$ labelled $J_{\lambda}$.
Proposition~\ref{Prop:VectorFieldAdditivity}
applies to every face $F$ of $D_{\lambda}$
except $F(\lambda)$, but we need a version
of this proposition for the case $F=F(\lambda)$ also. 
We first introduce more notation.

\begin{defn} \label{d:partitionlabels}
Fix a partition $\lambda\in \Pn_{k,n}$.
Set
$$K_{\lambda}=\{i\in J_{\lambda}\,:\,i+1\not\in J_{\lambda}\}.$$
For $i\in K_{\lambda}$, let
$\addbox{\lambda}{i}$ be the partition
with the property that
\begin{equation}\label{e:Jlambdataui} 
J_{\addbox{\lambda}{i}}=(J_{\lambda}\setminus \{i\})\cup \{i+1\}.
\end{equation}
This is the partition which corresponds to adding a box to $\lambda$ adjacent to the edge numbered $i$ in the associated path (see Section~\ref{s:Plucker}).

If $F(\lambda)$ is internal in $D_\lambda$ then in terms of the notation in Definition~\ref{d:adjacentfaces} we have
$$K_{\lambda}=\{a_1<\cdots <a_r\},$$
and
$\addbox{\lambda}{a_j}=\lambda'(j)$ for $j=1,\ldots ,r$, by the construction of $D_{\lambda}$ in
Section~\ref{s:specialdiagram}. In particular \eqref{e:Jlambdataui} reflects the fact that for $a_j=i$ the adjacent clockwise strand $c_{j-1}$ around $F(\lambda)$  is $i+1$.  For the other adjacent faces $\lambda(j)$ we note that 
\begin{equation}\label{e:Jlambdaj}
J_{\lambda(j)}=(J_{\lambda}\setminus \{a_j\})\cup \{a_{j+1}+1\}.
\end{equation}
since $c_j=a_{j+1}+1$ for $F(\lambda)$.
If $F(\lambda)$ is on the boundary then $J_{\lambda}=L_i$ and 
$K_{\lambda}=\{i\}$ and the unique alternating face adjacent to $F(\lambda)$ is labelled with $(J_{\lambda}\setminus \{i\})\cup \{i+1\}$, which equals $\widehat{L}_i$. The corresponding partition is $\addbox{\lambda}{i}$.
\end{defn}

\begin{defn} \label{d:kappa}
In the case $k\not=1,n-1$, we have defined (see Definition~\ref{d:Fi}) the faces $\Fi$ and $\Fip$ to be the faces
in $D_{\lambda}$ adjacent to the unique crossing point $\PPi$ of strands $i$ and $i+1$.
If $k=1$ or $n-1$, there may be more than
one such crossing point, but we may define
$\Fi$ and $\Fip$ in the same way: the
definition does not depend on $\PPi$.

The faces $\Fi$ and $\Fip$ have labels $I_i$ and $I'_i$ respectively.
Let $\kappa_{i,\lambda}$ and $\kappa'_{i,\lambda}$
be the corresponding partitions.
\end{defn}

Our aim is to prove equation~
\eqref{e:vfequation}; i.e.\ to compute
$X_{\lambda}^{(\m)}W_q$. Thus we need to compute
$X_{\lambda}\frac{p_{\widehat{L}_i}}{p_{L_i}}$
for each $i\in [1,n]$.
We saw in Theorem~\ref{t:pmproperties} that
$$\frac{p_{\widehat{L}_i}}{p_{L_i}}=
\frac{\p_{I'_i}}{\p_{I_i}}+\cdots $$
where $\frac{\p_{I'_i}}{\p_{I_i}}$ is
the summand corresponding to $\Mi$ in the formula~\eqref{e:Lihatterm}.
In order to compute the action of $X_{\lambda}^{(\m)}$ on
$\frac{\p_{I'_i}}{\p_{I_i}}$,
we need to know the value of
$c^{(\m)}_{\lambda}(\kappa'_{i,\lambda})-c^{(\m)}
_{\lambda}(\kappa_{i,\lambda})$
for each $i$.
We first consider the case where $i\in K_{\lambda}$,
observing the following (which follows from
the construction of $D_{\lambda}$).
 
\begin{rem} \label{r:coincides}
If $i\in K_{\lambda}$, the faces $F(\lambda)$ and $\Fi$ coincide. We have $\kappa_{i,\lambda}=\lambda$
and $\kappa'_{i,\lambda}=\addbox{\lambda}{i}$,
since $I'_i=(I_i\setminus \{i\})\cup \{i+1\}$. See the construction in Section~\ref{s:specialdiagram}.
\end{rem}


\begin{lem} \label{l:caddabox}
Fix a partition $\lambda\in \Pn_{k,n}$, and let $D=D_{\lambda}$.
Then, for $i\in K_{\lambda}$ and $\m\in [1,n]$, we have
\begin{equation*}
c^{(\m)}_{\lambda}(\addbox{\lambda}{i})-c^{(\m)}_{\lambda}(\lambda)=
\begin{cases}
0, & \text{if $i=\m$}; \\
1, & \text{if $i\not=m$}.
\end{cases}
\end{equation*}
\end{lem}
\begin{proof}
Recall that, for any partition $\mu$, we have
$c_{\mu}^{(n)}(\mu)=c_{\mu}(\mu)=0$.
We first show that:
\begin{equation} \label{e:partbeqn}
c_{\lambda}(\addbox{\lambda}{i})=
\begin{cases}
0, & \text{if $i=n$}; \\
1, & \text{otherwise}.
\end{cases}
\end{equation}
For $i\in K_{\lambda}$, we have
$J_{\lambda}\setminus J_{\addbox{\lambda}{i}}=\{i\}$ and $J_{\addbox{\lambda}{i}}\setminus J_{\lambda}=\{i+1\}$.
If $i\not=n$, then $i+1>i$, so
$c_{\lambda}(\addbox{\lambda}{i})=1$.
If $i=n$, then $i+1=1<n=i$, so
$c_{\lambda}(\addbox{\lambda}{i})=0$, and~\eqref{e:partbeqn} is shown.

Using the definition (see equation~\eqref{e:clambdam}) of $c_{\lambda}^{(\m)}$, we have that
\begin{equation*}
\begin{split}
c_{\lambda}^{(\m)}\left(\addbox{\lambda}{i}\right)-
c_{\lambda}^{(\m)}(\lambda) &=
c_{\lambda^{(\m)}}\left(\addbox{\lambda}{i}^{(\m)}\right)-
c_{\lambda^{(\m)}}\left(\lambda^{(\m)}\right) \\
&=c_{\lambda^{(\m)}}\left(\addbox{\lambda^{(\m)}}{i-\m}\right),
\end{split}
\end{equation*}
where the last equality follows from~\eqref{e:Jlambdataui}.
The result then follows from~\eqref{e:partbeqn}
(replacing $\lambda$ with $\lambda^{(\m)}$).
\end{proof}

Using the definition of $X_{\lambda}^{(\m)}$ (see equation~\eqref{e:twistedclusterfield} and Lemma~\ref{l:vectorfield})
Lemma~\ref{l:caddabox} implies that,
for $i\in K_{\lambda}$,
\begin{equation} \label{e:iinKlambda}
X^{(\m)}_{\lambda} \frac{\p_{I'_i}}{\p_{I_i}}
=
\begin{dcases}
0, & i=\m; \\
\frac{\p_{I'_i}}{\p_{I_i}}p_{\lambda}, & i\not=\m.
\end{dcases}
\end{equation}

In order to compute the action of $X_{\lambda}^{(\m)}$ on the summands in the formula~\eqref{e:vfequation}
corresponding to the matchings other than
$\Mi$ (in the case $i\in K_{\lambda}$), we will need the following.

\begin{prop} \label{p:nearlambda}
Assume that $F(\lambda)$ is an internal face of $D_{\lambda}$.
Then, for any $\m\in [1,n]$, we have:
$$\sum_{j=1}^r c_{\lambda}^{(\m)}(\lambda'(j)) - \sum_{j=1}^r c_{\lambda}^{(\m)}(\lambda(j))=1.$$
\end{prop}

\begin{proof}
We first prove that:
\begin{equation} \label{e:nearlambda}
\sum_{j=1}^r c_{\lambda}(\lambda'(j)) - \sum_{j=1}^r c_{\lambda}(\lambda(j))=1.
\end{equation}

Recall that
$$K_{\lambda}=\{a_1<a_2<\cdots <a_r\}.$$
We will interpret the subscripts $j$ of the $a_j$ modulo $r$. By our assumption on $J_{\lambda}$, we have $r>1$.

By equation~\eqref{e:partbeqn} in
Lemma~\ref{l:caddabox}, we have
\begin{equation}
\label{e:lambdacase}
\sum_{j=1}^r c_{\lambda}\left(\lambda'(j)\right)=
\sum_{j=1}^r c_{\lambda}\left(\addbox{\lambda}{a_j}\right)=
\begin{cases}
r, & \text{if } a_r\not=n; \\
r-1, & \text{if } a_r=n.
\end{cases}
\end{equation}
We will show that
\begin{equation}
\label{e:lambdaprimecase}
\sum_{j=1}^r c_{\lambda}\left(\lambda(j)\right)=
\begin{cases}
r-1, & \text{if } a_r\not=n; \\
r-2, & \text{if } a_r=n;
\end{cases}
\end{equation}
from which~\eqref{e:nearlambda} follows.

By~\eqref{e:Jlambdaj}, we have $J_{\lambda}\setminus J_{\lambda(j)}=\{a_j\}$ and
$J_{\lambda(j)}\setminus J_{\lambda}=\{a_{j+1}+1\}$.

Suppose first that $a_r\not=n$.
If $1\leq j\leq r-1$
then $a_{j+1}+1>a_j$, so $c_{\lambda}(\lambda(j))=1$.
In the case $j=r$, we have
$$a_{r+1}+1=a_1+1<a_2\leq a_r,$$
as $a_1+1\not\in K_{\lambda}$ and $r\geq 2$.
Hence, $c_{\lambda}(\lambda(r))=0$.
We obtain $\sum_{j=1}^r c_{\lambda}(\lambda(j))=r-1$,
as required.

We are left with the case $a_r=n$.
If $1\leq j\leq r-2$, then $a_{j+1}+1>a_j$,
so $c_{\lambda}(\lambda(j))=1$. We have, for the case
$j=r-1$ (reducing modulo $n$ as usual)
$$a_{(r-1)+1}+1=a_r+1=n+1=1<a_{r-1},$$
since $r-1\geq 1$ and $1\not\in K_{\lambda}$.
Hence $c_{\lambda}(\lambda(r-1))=0$.
For the case $j=r$, we have
$$a_{r+1}+1=a_1+1<a_2\leq a_r=n,$$
so $a_{r+1}+1<a_r$, and $c_{\lambda}(\lambda(r))=0$.
We obtain $\sum_{i=1}^r c_{\lambda}(\lambda(j))=r-2$.

We have now shown that~\eqref{e:nearlambda} holds,
which is the case $\m=n$ of the proposition.
For the general case, we have, using the definition of $c_{\lambda}^{(\m)}$,
\begin{equation}
\sum_{j=1}^r c_{\lambda}^{(\m)}\left(\lambda'(j)\right) - \sum_{j=1}^r c_{\lambda}^{(\m)}(\lambda(j))
=
\sum_{j=1}^r c_{\lambda^{(\m)}}\left(\lambda'(j)^{(\m)}\right)-
\sum_{j=1}^r c_{\lambda^{(\m)}}\left(\lambda(j)^{(\m)}\right).
\end{equation}
By the construction of $D_{\lambda}$ (see Remark~\ref{r:rotateddiagrams}), the sets
$\{\lambda(j)^{(\m)}\,:\,j=1,\ldots ,r\}$ and
$\{\lambda^{(\m)}(j)\,:\,j=1,\ldots ,r\}$ coincide, and the sets
$\{\lambda'(j)^{(\m)}\,:\,j=1,\ldots ,r\}$ and
$\{(\lambda')^{(\m)}(j)\,:\,j=1,\ldots ,r\}$ coincide.
The result follows.
\end{proof}

To obtain the analogue of equation~\eqref{e:iinKlambda}
in the case $i\not\in K_{\lambda}$, we need to compute
$c^{(\m)}_{\lambda}\left(\kappa'_{i,\lambda}\right)-c^{(\m)}_{\lambda}(\kappa_{i,\lambda})$
for $i\not\in K_{\lambda}$; this is covered
by the following proposition.

\begin{prop} \label{p:inotK}
Fix a partition $\lambda\in \Pn_{k,n}$.
Let $D=D_{\lambda}$ be the Postnikov diagram constructed in Theorem~\ref{thm:diagramconstruction}.
Suppose that $i\not\in K_{\lambda}$.
Then we have:
$$c_{\lambda}^{(\m)}\left(\kappa'_{i,\lambda}\right)-c_{\lambda}^{(\m)}(\kappa_{i,\lambda})=
\begin{cases}
-1, & i=\m; \\
0, & i\not=\m.
\end{cases}
$$
\end{prop}
\begin{proof}
We first consider the case $\m=n$, i.e.\ we
show that:
\begin{equation} \label{e:casemn}
c_{\lambda}\left(\kappa'_{i,\lambda}\right)-c_{\lambda}(\kappa_{i,\lambda})=
\begin{cases}
-1, & i=n; \\
0, & i\not=n.
\end{cases}
\end{equation}

We divide the proof into three cases, depending on whether $i,i+1$
lie in $J_{\lambda}$ or not (note that, since
we assume $i\not\in K_{\lambda}$, we cannot have
the case $i\in J_{\lambda},i+1\not\in J_{\lambda}$).

\noindent
\textbf{Case (a)} $i,i+1\not \in J_{\lambda}$. \\
Then we have:
\begin{align*}
I'_i \setminus J_{\lambda} &=((I_i\setminus J_{\lambda}) \setminus \{i\})\cup \{i+1\}; \\
J_{\lambda} \setminus I'_i &= J_{\lambda}\setminus I_i.
\end{align*}
If $i\not=n$, then, since $i,i+1\not\in J_{\lambda}\setminus I_i$
and $i,i+1\not\in J_{\lambda}\setminus I'_i$,
we have that $c_{\lambda}(\kappa_{i,\lambda})=c_{\lambda}\left(\kappa'_{i,\lambda}\right)$, as required.
We now consider the case $i=n$.
Since $I_n$ and $J_{\lambda}$ are noncrossing and $n\in I_n\setminus J_{\lambda}$, we may write $I_n\setminus J_{\lambda}=K_1\sqcup K_2$, where $K_1<J_{\lambda}\setminus I_n<K_2$ and $n\in K_2$. Note that $K_1$ may be empty.
We have $c_{\lambda}(\kappa_{n,\lambda})=|K_2|$.
Let $K'_1=K_1\cup \{1\}$
and $K'_2=K_2\setminus \{n\}$.
Then we have
$I'_n\setminus J_{\lambda}=K'_1\sqcup K'_2$,
with $K'_1<J_{\lambda}\setminus I_n<K'_2$.
Note that $K'_1$ is nonempty, but $K'_2$ may
be empty.
We then have $c_{\lambda}\left(\kappa'_{n,\lambda}\right)=|K'_2|=|K_2|-1=
c_{\lambda}(\kappa_{n,\lambda})-1$ as required.

\noindent \textbf{Case (b)} $i\not \in J_{\lambda}, i+1\in J_{\lambda}.$ \\
Then we have $i\in I_i\setminus J_{\lambda}$,
$i+1\in J_{\lambda}\setminus I_i$, and:
\begin{align*}
I'_i \setminus J_{\lambda} &=(I_i\setminus J_{\lambda}) \setminus \{i\}; \\
J_{\lambda} \setminus I'_i &= (J_{\lambda}\setminus I_i)\setminus \{i+1\}.
\end{align*}
Since $I_i$ and $J_{\lambda}$ are noncrossing, we must have one of the
following three cases.


\noindent \emph{Case (b)(i)}: $J_{\lambda}\setminus I_i=K_1\sqcup K_2$, $K,_1$ and $K_2$ are nonempty
and $K_1<I_i\setminus J_{\lambda}<K_2$.
We have $c_{\lambda}(\kappa_{i,\lambda})=|K_1|$.
Since $i\in I_i\setminus J_{\lambda}$ and
$i+1\in J_{\lambda}\setminus I_i$, we must have
$i+1\in K_2$.
Let $K'_2=K_2\setminus \{i+1\}$.
Then we have $J_{\lambda}\setminus I'_i=K_1\sqcup K'_2$, where $K_1<I'_i\setminus J_{\lambda} <K'_2$. Note that $K'_2$ may be empty. 
We see that
$c_{\lambda}\left(\kappa'_{i,\lambda}\right)=|K_1|=c_{\lambda}(\kappa_{i,\lambda})$,
as required
(note that $i<i+1$ in this case, so $i\not=n$).

\noindent \emph{Case (b)(ii)}: $J_{\lambda} \setminus 
I_i<I_i\setminus J_{\lambda}$.
Then, since $i\in I_i\setminus J_{\lambda}$
and $i+1\in J_{\lambda}\setminus I_i$, we have
$i=n$. We have $J_{\lambda} \setminus I'_i<I'_i\setminus 
J_{\lambda}$,
so $c_{\lambda}\left(\kappa'_{i,\lambda}\right)=c_{\lambda}
(\kappa_{i,\lambda})-1$,
as required.

\noindent \emph{Case (b)(iii)}:
$I_i\setminus J_{\lambda}=K_1\sqcup K_2$, $K_1$ is
nonempty and $K_1<J_{\lambda}\setminus I_i<K_2$.
We have $c_{\lambda}(\kappa_{i,\lambda})=|K_2|$.
Since $i\in I_i\setminus J_{\lambda}$ and
$i+1\in J_{\lambda}\setminus I_i$ we have $i\in K_1$.
Let $K'_1=K_1\setminus \{i\}$.
Then we have
$I'_i\setminus J_{\lambda}=K'_1\sqcup K_2$,
where $K'_1<J_{\lambda}\setminus I'_i<K_2$.
Note that $K'_1$ may be empty. We see that
$c_{\lambda}\left(\kappa'_{i,\lambda}\right)=|K_2|=c_{\lambda}(\kappa_{i,\lambda})$
as required (note that $i<i+1$ in this case so $i\not=n$).

This completes case (b).

\noindent
\textbf{Case (c)} $i,i+1\in J_{\lambda}$.
Then we have:
\begin{align*}
I'_i \setminus J_{\lambda} &=I_i\setminus J_{\lambda}; \\
J_{\lambda} \setminus I'_i &= ((J_{\lambda}\setminus I_i)\setminus \{i+1\})\cup \{i\}.
\end{align*}
If $i\not=n$, then, since $i,i+1\not\in J_{\lambda}\setminus I_i$
and $i,i+1\not\in J_{\lambda}\setminus I'_i$,
we have that $c_{\lambda}(\kappa_{i,\lambda})=c_{\lambda}\left(\kappa'_{i,\lambda}\right)$, as required.
We now consider the case $i=n$.
Since $I_n$ and $J_{\lambda}$ are noncrossing and $1\in J_{\lambda}\setminus I_n$, we may write $J_{\lambda}\setminus I_n=K_1\sqcup K_2$, where $K_1<I_n\setminus J_{\lambda}<K_2$ and $1\in K_1$. Note that $K_2$ may be empty.
We have $c_{\lambda}(\kappa_{n,\lambda})=|K_1|$.
Let $K'_1=K_1\setminus \{1\}$ and $K'_2=K_2\cup \{n\}$.
We have
$J_{\lambda}\setminus I'_i=K'_1\sqcup K'_2$,
where $K'_1<I'_i\setminus J_{\lambda}<K'_2$.
Note that $K'_2$ is nonempty, but $K'_1$ may be
empty.
We then have $c_{\lambda}\left(\kappa'_{n,\lambda}\right)=|K'_1|=|K_1|-1=
c_{\lambda}(\kappa_{n,\lambda})-1$ as required.
This completes case (c).

We have completed the proof of~\eqref{e:casemn}, and thus the proof of the statement required for $\m=n$.

The result for arbitrary $\m$ follows
from Remark~\ref{r:rotateddiagrams} and~\eqref{e:casemn} as follows, using the
definition~\eqref{e:clambdam} of
$c_{\lambda}^{(\m)}$:
\begin{equation*}
\begin{split}
c^{(\m)}_{\lambda}\left(\kappa'_{i,\lambda}\right)-c^{(\m)}_{\lambda}(\kappa_{i,\lambda}) &=
c_{\lambda^{(\m)}}\left((\kappa'_{i,\lambda})^{(\m)}\right)-c_{\lambda^{(\m)}}\left(\kappa_{i,\lambda}^{(\m)}\right) \\
&= c_{\lambda^{(\m)}}\left(\kappa'_{i-\m,\lambda^{(\m)}}\right)-c_{\lambda^{(\m)}}\left(\kappa_{{i-\m},\lambda^{(\m)}}\right) \\
&=
\begin{cases}
-1, & i-\m=n \mod n; \\
0, & i-\m\not=n \mod n;
\end{cases}
\\
&=
\begin{cases}
-1, & i=\m; \\
0, & i\not=\m; \\
\end{cases}
\end{split}
\end{equation*}
as required.
\end{proof}

Proposition~\ref{p:inotK} implies that, for $i\not\in K_{\lambda}$, we have
\begin{equation} \label{e:inotinKlambda}
X^{(\m)}_{\lambda} \frac{\p_{I'_i}}{\p_{I_i}}
=
\begin{dcases}
-\frac{\p_{I'_i}}{\p_{I_i}}p_{\lambda}, & i=\m; \\
0, & i\not=\m.
\end{dcases}
\end{equation}

\section{Action of the vector field $X_{\lambda}$ on $W_q$}
\label{s:actionvectorfield2}
Fix a partition $\lambda\in \Pn_{k,n}$.
Our main aim in this section is to compute the action of $X^{(m)}_{\lambda}$
on $W$ for each $m\in [1,n]$ (Theorem~\ref{t:nonequivariantaction}). We include
the cases $k=1,n-1$.
Let $D=D_{\lambda}$ be the Postnikov diagram associated to $\lambda$ constructed in
Theorem~\ref{thm:diagramconstruction}.
Recall that $D$ has a face $F(\lambda)$
labelled $J_{\lambda}$.
Let $\CC$ denote the Postnikov extended cluster associated to $D$. 

We collect together the information we
will need. 
Recall first that, on the cluster torus $\Xcheck_{\CC}$, the vector field
$X_{\lambda}^{(\m)}$ is given by the
following formula (see~\eqref{e:clusterfield}).
\begin{equation} \label{e:recallvectorfield}
X_{\lambda}^{(\m)}=\p_{\lambda} \sum_{\p_{\mu}\in \CC}
c_{\lambda}^{(\m)}(\mu)\p_{\mu}\frac{\partial}{\partial \p_{\mu}}.
\end{equation}
Recall also that the superpotential $W_q$
on $\Xcheck$ is given by:
\begin{equation} \label{e:recallsuperpotential}
W_q=\sum_{i=1}^n q^{\delta_{i,n}} \frac{\p_{\widehat{L}_i}}{\p_{L_i}},
\end{equation}
where, for $i\in [1,n]$,
$L_i=[i-k+1,i]$ and
$\widehat{L}_i=[i-k+1,i-1]\cup \{i+1\}$.

Let $D_1,\ldots ,D_n$ be the Postnikov diagrams associated to $D$ in Definition~\ref{d:Di}, with corresponding weighted dual bipartite graphs $G_1,G_2,\ldots ,G_n$.

Let $$\coeff=\frac{\p_{L_{i-1}}\p_{L_{i+1}}\cdots \p_{L_{i+k}}}{\cluster}.$$
Then, by Theorem~\ref{thm:MarSco}, we have
the following formula in~$\C[\Xcheck_{\CC}]$:
\begin{equation} \label{e:recallMS}
\frac{\p_{\widehat{L}_i}}{\p_{L_i}}=\sum_{M} \coeff w_M,
\end{equation}
where the sum is over all perfect matchings $M$ of $G_i$ and
$w_M$ denotes the matching monomial associated to a perfect matching $M$.

Suppose first that $k\not=1,n-1$.
Recall from Definition~\ref{d:Fi}
that $\Fi$ is the alternating face
which is to the left of strand $i$ and to the right of strand $i+1$ and adjacent to the crossing point
$\PPi$ of strands $i$ and $\reduce{i+1}$ .
It has label $I_i$. Furthermore, $\Fip$ is the alternating face adjacent to $\PPi$ on the other side of $\gamma_i$, i.e.\ to the right of strand $i$ and to the left of strand $i+1$, with label $I'_i$.
Recall also that:
$$K_{\lambda}=\{i\in J_{\lambda}\,:\,i+1\not\in J_{\lambda}\}.$$

\begin{rem} \label{r:Flambda}
If $i\in K_{\lambda}$ we
have seen (Remark~\ref{r:coincides}) that
the faces $F(\lambda)$ and $F_i$ coincide.
If $i\not\in K_{\lambda}$ then, since the label of every face of $\Giin$ contains $i$ but not $i+1$, the face $F(\lambda)$ cannot be a face of $\Giin$.
\end{rem}

By Corollary~\ref{c:elementarycomponents} that
the elementary components of $G_i$
consist of $\Giin$ (see Definition~\ref{d:Giin}) together with certain
single edges on the remaining vertices.
By Theorem~\ref{t:pmproperties}, if $\Fi$ is an 
internal face, then the perfect matchings on $G_i$ 
are obtained from an initial matching, $\Mi$, in 
which $\Fi$ is flippable, namely by flipping $\Fi$ and then performing a sequence of flips (possibly empty) not involving
$\Fi$. The flips all take place in faces
of $\Giin$.
If $\Fi$ is a boundary face, there is a unique perfect matching $\Mi$
on $G_i$. 

If $k=1$ or $n-1$, recall that $\Fi$ is defined in the  same way as above (and doesn't depend on a 
choice of crossing point $\PPi$ of strands 
$i$ and $i+1$). If $i\in K_{\lambda}$ then
the faces $F(\lambda)$ and $F_i$ coincide.
For all $i$, we have that $\Fi$ is a boundary face and  there is a unique matching $\Mi$ on $G_i$
(see Remark~\ref{r:Ximatchingk1}).
Each elementary component of $G_i$ consists of
a single edge $e$ where $e$ is an edge in $\Mi$
(see Corollary~\ref{c:elementarycomponents}).

For any $k$ we have, by Theorem~\ref{t:pmproperties}:
\begin{equation} \label{e:initialshort}
\coeff w_{\Mi}=\frac{\p_{I'_i}}{\p_{I_i}}
\end{equation}
for all $i\in [1,n]$.

We also have the following (see~\eqref{e:iinKlambda} and~\eqref{e:inotinKlambda}):
\begin{equation} \label{e:firstterm}
X^{(\m)}_{\lambda} \frac{\p_{I'_i}}{\p_{I_i}}
=
\begin{dcases}
0, & i=\m, i\in K_{\lambda}; \\
\frac{\p_{I'_i}}{\p_{I_i}}p_{\lambda}, & i\not=\m, i\in K_{\lambda}; \\
-\frac{\p_{I'_i}}{\p_{I_i}}p_{\lambda}, & i=\m, i\not\in K_{\lambda}; \\
0, & i\not=\m, i\not\in K_{\lambda}.
\end{dcases}
\end{equation}


\begin{lem} \label{lem:flipchangeinw}
Let $M,M'$ be perfect matchings on the
bipartite graph $G$ dual to a Postnikov
diagram $D$, with $M'$ obtained from $M$ by flipping around an $M$-flippable face $F_0$ of $G$. Then
$$\frac{w_{M'}}{w_{M}}=\frac{\prod_{F} \p_{F}}{\prod_{F'} \p_{F'}},$$
where the product in the numerator is over the faces
$F$ of $G$ sharing an edge in $M$ with $F_0$ and the product in the denominator is over the
faces $F'$ of $G$ sharing an edge in $M'$ with
$F_0$.
\end{lem}
\begin{proof}
This is easily seen to hold by considering the weights on the edges in $G$ around $F_0$.
\end{proof}

We now have all the ingredients we need to prove the main result of this section.

\begin{thm} \label{t:nonequivariantaction}
Let $\lambda$ be an arbitrary Young diagram in $\Pn_{k,n}$ and $m\in [1,n]$. Then we have:
\begin{equation} \label{e:noneq}
X^{(\m)}_{\lambda}W_q=\left(
\sum_{\mu}\p_{\mu}+q\sum_{\nu}\p_{\nu}\right) - q^{\delta_{mn}}\frac{\p_{\widehat{L}_m}}{\p_{L_m}}\p_{\lambda},
\end{equation}
where $\mu$, $\nu$ are exactly as in the quantum version of Monk's rule for $\sigma^{\Box}*_q \sigma^{\lambda}$.
\end{thm}

\begin{proof}
It suffices to check this on the cluster
torus $\Xcheck_{\CC}$, since it is open
dense in $\Xcheck$.
Note that~\eqref{e:noneq} can be rewritten as:
\begin{align} \label{e:noneqrewrite}
X_\lambda^{(\m)}W_q
=\left( \sum_{i\in K_{\lambda}}q^{\delta_{in}}\p_{\addbox{\lambda}{i}} \right)
-q^{\delta_{mn}}\frac{\p_{\widehat{L}_m}}{\p_{L_m}}\p_{\lambda}.
\end{align}
As in Definition~\ref{d:partitionlabels}, we write
$$K_{\lambda}=\{a_1<a_2<\cdots <a_r\}.$$
We will show that
\begin{equation} \label{e:wewillshowthat}
X_{\lambda}^{(\m)}\frac{\p_{\widehat{L}_i}}{\p_{L_i}}=
\begin{dcases}
\p_{\addbox{\lambda}{i}}-\frac{\p_{\widehat{L}_i}}{\p_{L_i}}\p_{\lambda}, & i=m, i\in K_{\lambda}; \\
\p_{\addbox{\lambda}{i}}, & i\not=m, i\in K_{\lambda}; \\
-\frac{\p_{\widehat{L}_i}}{\p_{L_i}}\p_{\lambda}, & i=m, i\not\in K_{\lambda}; \\
0, & i\not=m, i\not\in K_{\lambda}.
\end{dcases}
\end{equation}
The result follows from this, since then we have:
\begin{align*}
X_\lambda^{(\m)}W_q &= X_{\lambda}\sum_{i=1}^n q^{\delta_{in}}
\frac{\p_{\widehat{L}_i}}{\p_{L_i}} \\
&=\left( \sum_{i\in K_{\lambda}}q^{\delta_{in}}\p_{\addbox{\lambda}{i}} \right)
-q^{\delta_{mn}}\frac{\p_{\widehat{L}_m}}{\p_{L_m}}\p_{\lambda}.
\end{align*}

We divide the proof into three cases:
the first with $F_i$ an internal face of $D$ and
$i\in K_{\lambda}$, the second with $F_i$ an
internal face of $D$ and $i\not\in K_{\lambda}$, and the third with $F_i$ a boundary face of $D$.

\noindent\textbf{Case I}: Suppose that $F_i$ is an
internal face of $D$ and $i\in K_{\lambda}$.
By Remark~\ref{r:Flambda}, $F(\lambda)=\Fi$, with label $I_i=J_{\lambda}$;
by assumption this is not a boundary face of $D$.
The face $\Fip$ is labelled with $I'_i=J_{\tau_i(\lambda)}$.
(see Definition~\ref{d:partitionlabels}).
By~\eqref{e:initialshort} and~\eqref{e:firstterm}
and, we have:
\begin{equation}
\label{e:Ifirstpart}
X^{(\m)}_{\lambda} \left(\coeff w_{\Mi}\right)=
\begin{cases}
0, & i=\m; \\
\coeff w_{\Mi}\p_{\lambda}, & i\not=\m.
\end{cases}
\end{equation}
Let $\Mi'$ be the perfect matching obtained by flipping $\Mi$ at the face $F(\lambda)=\Fi$. Then we have, by Lemma~\ref{lem:flipchangeinw} and
Proposition~\ref{p:nearlambda}:
\begin{align}
\label{e:Isecondpart}
\begin{split}
X^{(\m)}_{\lambda} \left( \frac{w_{\Mi'}}{w_{\Mi}} \right) &=
X^{(\m)}_{\lambda} \left( \frac{\prod_{j=1}^r p_{\lambda(j)}}{\prod_{j=1}^r p_{\lambda'(j)}} \right) \\
&= \left( \sum_{j=1}^r c_{\lambda}^{(\m)}(
\lambda(j))-\sum_{j=1}^r c_{\lambda}^{(\m)}(\lambda'(j)) \right) \frac{\prod_{j=1}^r p_{\lambda(j)}}{\prod_{j=1}^r p_{\lambda'(j)}}p_{\lambda}=-\frac{w_{\Mi'}}{w_{\Mi}}\p_{\lambda},
\end{split}
\end{align}
using the definition of $X_{\lambda}^{(\m)}$.
Hence, by~\eqref{e:Ifirstpart},~\eqref{e:Isecondpart} and the Leibniz rule,
\begin{equation}
\label{e:Ithirdpart}
X^{(\m)}_{\lambda} \left(\coeff w_{\Mi'}\right)=
X^{(\m)}_{\lambda} \left( \coeff w_{\Mi} \frac{w_{\Mi'}}{w_{\Mi}} \right) =
\begin{cases}
-\coeff w_{\Mi'} p_{\lambda}, & i=\m; \\
0, & i\not=\m.
\end{cases}
\end{equation}
Let $\Mi''$ be the perfect matching obtained from $\Mi'$ by flipping at some $\Mi'$-flippable face not equal to $F(\lambda)$.
By Lemma~\ref{lem:flipchangeinw} and Proposition~\ref{Prop:VectorFieldAdditivity},
\begin{equation}
\label{e:Ifourthpart}
X^{(\m)}_{\lambda} \left( \frac{w_{\Mi''}}{w_{\Mi'}} \right)=0.
\end{equation}
Hence, by~\eqref{e:Ithirdpart},~\eqref{e:Ifourthpart} and the Leibniz rule,
$$X^{(\m)}_{\lambda}( \coeff w_{\Mi''})=
\begin{cases}
-\coeff w_{\Mi''} p_{\lambda}, & i=\m; \\
0, & i\not=\m.
\end{cases}
$$
By Theorem~\ref{t:pmproperties}, any perfect matching on $G_i$ can
be reached from $M_i$ by a sequence of face flips
involving faces of $\Giin$ other than $F(\lambda)$. So, repeating the argument above
and using~\eqref{e:recallMS}, we obtain the
following, where the sums are over all perfect
matchings $M$ of $G_i$.
\begin{equation*}
\begin{split}
X^{(\m)}_{\lambda} \left( \frac{\p_{\widehat{L}_i}}{\p_{L_i}} \right) &=
X_{\lambda}^{(\m)} \left( \sum_{M} \coeff w_M \right)
=\begin{dcases}
\coeff w_{\Mi} p_{\lambda}-\sum_{M} \coeff w_M p_{\lambda}, & i=m; \\
\coeff w_{\Mi} p_{\lambda}, & i\not=\m;
\end{dcases} \\
&=\begin{dcases}
\frac{p_{I'_i}}{p_{I_i}}p_{\lambda}-\frac{p_{\widehat{L}_i}}{p_{L_i}}p_{\lambda}, & i=\m; \\
\frac{p_{I'_i}}{p_{I_i}}p_{\lambda}, & i\not=\m.
\end{dcases}
\end{split}
\end{equation*}
Note that $p_{I_i}=p_{\lambda}$ and
$p_{I'_i}=p_{\tau_i(\lambda)}$.
Hence, we have:
\begin{equation*}
X^{(\m)}_{\lambda} \left( \frac{\p_{\widehat{L}_i}}{\p_{L_i}} \right) 
=
\begin{dcases}
p_{\addbox{\lambda}{i}}-
\frac{p_{\widehat{L}_i}}{p_{L_i}}p_{\lambda}, & i=\m; \\
p_{\addbox{\lambda}{i}}, & i\not=\m;
\end{dcases}
\end{equation*}
as required in this case.

\noindent \textbf{Case II}: Suppose that $F_i$ is
an internal face and $i\not \in K_{\lambda}$. \\
Then, using~\eqref{e:initialshort} and~\eqref{e:firstterm}, we have that:
\begin{equation}
\label{e:IIfirstpart}
X^{(\m)}_{\lambda} \left(\coeff w_{\Mi}\right)=
\begin{cases}
-\coeff w_{\Mi} p_{\lambda}, & i=\m; \\
0, & i\not=\m.
\end{cases}
\end{equation}
Since $i\not\in K_{\lambda}$, $F(\lambda)$ is
not a face of $\Giin$ by Remark~\ref{r:Flambda}.
Hence all perfect matchings can be obtained from $\Mi$ by flips not
involving $F(\lambda)$ by Theorem~\ref{t:pmproperties}.
If $\Mi'$ is obtained from $\Mi$ by the
flip of a face in $\Giin$, then,
by the definition of $X_{\lambda}^{(\m)}$
and applying Lemma~\ref{lem:flipchangeinw} and
Proposition~\ref{Prop:VectorFieldAdditivity}:
\begin{equation}
\label{e:IIsecondpart}
X^{(\m)}_{\lambda} \left( \frac{w_{\Mi'}}{w_{\Mi}} \right)=0.
\end{equation}
Hence, by~\eqref{e:IIfirstpart},~\eqref{e:IIsecondpart} and the Leibniz rule,
$$X^{(\m)}_{\lambda} \left(\coeff w_{\Mi'}\right)=
\begin{cases}
-\coeff w_{\Mi'} p_{\lambda}, & i=\m; \\
0, & i\not=\m.
\end{cases}
$$
Repeating this argument, we see that for all perfect matchings $M$ of $G_i$,
$$X^{(\m)}_{\lambda} \left(\coeff w_M\right)=
\begin{cases}
-\coeff w_M p_{\lambda}, & i=\m; \\
0, & i\not=\m;
\end{cases}$$
and therefore, using~\eqref{e:recallMS},
we obtain the following, where the sum is over
all perfect matchings $M$ of $G_i$:
\begin{equation*}
\begin{split}
X^{(\m)}_{\lambda} \left( \frac{\p_{\widehat{L}_i}}{\p_{L_i}} \right)&=
\begin{cases}
-\sum_M \coeff w_M p_{\lambda}, & i=\m; \\
0, & i\not=\m;
\end{cases} \\
&=
\begin{dcases}
-\frac{p_{\widehat{L}_m}}{p_{L_m}}p_{\lambda}, & i=\m; \\
0, & i\not=\m;
\end{dcases}
\end{split}
\end{equation*}
as required in this case.

\noindent\textbf{Case III}: Suppose that $F_i$ lies on the boundary of $D$.
Then its label, $I_i=L_j=[j-k+1,j]$ for some $j\in [1,n]$.
Since $i\in I_i$ and $i+1\not\in I_i$, we must have $j=i$ and $L_i=I_i$, so $\widehat{L}_i=I'_i$.

We consider the cases of~\eqref{e:wewillshowthat}.
If $i\in K_{\lambda}$ then, by Remark~\ref{r:Flambda}, $p_{I_i}=
p_{\lambda}$ and $p_{I'_i}=p_{\addbox{\lambda}{i}}$. If, in addition, $i=\m$, then
\begin{equation}
\label{e:IIIfirstpart}
p_{\addbox{\lambda}{i}}-\frac{p_{\widehat{L}_i}}{p_{L_i}}p_{\lambda}=p_{I'_i}-\frac{p_{I'_i}}{p_{I_i}}p_{I_i}=0.
\end{equation}
If $i\not=\m$, then
\begin{equation}
\label{e:IIIsecondpart}
p_{\addbox{\lambda}{i}}=p_{I'_i}=\frac{p_{I'_i}}{p_{I_i}}p_{\lambda}.
\end{equation}
Also, if $i\not\in K_{\lambda}$ and $i=\m$,
then
\begin{equation}
\label{e:IIIthirdpart}
-\frac{p_{\widehat{L}_i}}{p_{L_i}}p_{\lambda}=
-\frac{p_{I'_i}}{p_{I_i}}p_{\lambda}.
\end{equation}
Equation~\eqref{e:wewillshowthat} in this
case now follows from~\eqref{e:IIIfirstpart},~\eqref{e:IIIsecondpart},~\eqref{e:IIIthirdpart} and~\eqref{e:firstterm}.
The proposition is proved.
\end{proof}

\section{Completion of the proof of Theorem~\ref{t:main1}}
\label{s:completionofproof}
In this section we complete the proof of Theorem~\ref{t:main1}. Namely, we will show that:
\begin{align} \label{e:dwdtaction3}
[q\dWdq \p_{\lambda}\omega] &=
\sum_{\mu}[\p_{\mu}\omega]+q\sum_{\nu}[\p_{\nu}\omega]; \\
\label{e:dwdtaction4}
\frac{1}{z}[W_q\p_{\lambda}\omega] &=
\frac{n}{z}\left(\sum_{\mu}[\p_{\mu}\omega]+q\sum_{\nu}[\p_{\nu}\omega] \right) -|\lambda|[\p_{\lambda}\omega],
\end{align}
where $\mu$, $\nu$ are exactly as in the quantum Monk's rule for
$\sigma^{\Box}*_q\sigma^{\lambda}$.

We first of all note a corollary to Theorem~\ref{t:nonequivariantaction}.

\begin{cor} \label{c:nonequivariantaction}
Let $\lambda$ be an arbitrary Young diagram in $\Pn_{k,n}$. Then we have:
\begin{enumerate}[(a)]
\item
$$X_{\lambda}W_q=
\left(
 \sum_{\mu} \p_{\mu}+q\sum_{\nu}\p_{\nu}
\right)-q \dWdq p_{\lambda};$$
\item
$$\sum_{m=1}^n X^{(m)}_{\lambda}W_q=n\left(
 \sum_{\mu} \p_{\mu}+q\sum_{\nu}\p_{\nu}
\right)
- W_qp_{\lambda},$$
where $\mu$, $\nu$ are exactly as in the quantum Monk's rule for
$\sigma^{\Box}*_q\sigma^{\lambda}$.
\end{enumerate}
\end{cor}

\begin{proof}
Part (a) is the case $m=n$ in Theorem~\ref{t:nonequivariantaction}. To see part (b) we add up the cases $m=1,2,\ldots ,n$.
\end{proof}

Let $\xi$ be a regular vector field on $\Xcheck$ and denote by $i_{\xi}\omega$ the insertion of $\xi$ into
$\omega$. Then we obtain a relation 
$$\left[d(i_{\xi}\omega)+\frac{1}{z} dW_q \wedge i_{\xi}\omega\right]=0$$
in $G^W$ by applying $d+\frac{1}{z} dW_q \wedge \ -\ $ to the $(n-1)$-form $i_{\xi}\omega$.
Since $dW_q\wedge \omega=0$ we have
$ dW_q\wedge i_{\xi}\omega=(i_{\xi}(dW_q))\omega$.
Therefore the relation in $G^W$ reads
\begin{equation} \label{e:insertion}
\left[d(i_{\xi}\omega)\right]+\frac{1}{z}\left[(\xi\cdot W_q)\omega\right]=0,
\end{equation}

%

We compute the first term in~\eqref{e:insertion}
in the case $\xi=X_{\lambda}^{(\m)}$. Recall that the $m$-th twist of the empty partition $\emptyset$, denoted by $\emptyset^{(\m)}$, is equal to
$\mu_{n-m}$ and corresponds to the $k$-subset $J_{n-m}=\{n-m+1,\dotsc, n-m+k\}$ (interpreted cyclically modulo $n$).

\begin{lem} \label{l:differentialofinsertion}
Let $\lambda\in \Pn_{k,n}$. Then we have:
\begin{equation}
\label{e:diffinsertion}
\left[d\left(i_{X_{\lambda}^{(\m)}}\omega\right)\right]=-c_{\lambda^{(\m)}}(\emptyset^{(\m)})[\p_{\lambda}\omega].
\end{equation}
\end{lem}

\begin{proof}
It suffices to check this on the cluster
torus $\Xcheck_{\CC}$, where $\CC$ is the
Postnikov extended cluster corresponding to $D_{\lambda}$.
We have:
\begin{equation*}
d\left(i_{X_{\lambda}^{(\m)}}\omega\right) =
\sum_{\mu\in \CC} d\left( i_{c_{\lambda}^{(\m)}(\mu)\p_{\lambda}\p_{\mu}\frac{\partial}{\partial \p_\mu}}\omega\right).
\end{equation*}
If $\mu\ne \lambda$ then 
\begin{equation*}
\begin{split}
d\left( i_{c_{\lambda}^{(\m)}(\mu)\p_{\lambda}\p_{\mu}\frac{\partial}{\partial \p_\mu}}\omega\right)
&=
\pm d\left( c_{\lambda}^{(\m)}(\mu) p_{\lambda}\bigwedge_{\varepsilon\in \CC, \varepsilon\not=\mu} \frac{d\p_{\varepsilon}}{\p_{\varepsilon}}\right) \\
&=\pm c_{\lambda}^{(\m)}(\mu)dp_{\lambda}\wedge\bigwedge_{\varepsilon\in \CC,\varepsilon\not=\mu} \frac{d\p_{\varepsilon}}{\p_{\varepsilon}}=0.
\end{split}
\end{equation*}
Therefore the only non-zero summand is the one where $\mu=\lambda$. We may write $\omega$ in the extended cluster $\CC$ as $\omega =\pm\frac{d\p_\lambda}{\p_\lambda}\wedge\bigwedge_{\varepsilon\in \CC,\varepsilon\not=\lambda} \frac{d\p_{\varepsilon}}{\p_{\varepsilon}}$. Then the $\mu=\lambda$ summand is 
\begin{equation*}
\begin{split}
d\left(c_{\lambda}^{(\m)}(\lambda)\p_{\lambda} i_{\p_{\lambda}\frac{\partial}{\partial \p_\lambda}}\omega\right)&=
\pm c_{\lambda}^{(\m)}(\lambda)dp_{\lambda}\wedge\bigwedge_{\varepsilon\in \CC,\varepsilon\not=\lambda} \frac{d\p_{\varepsilon}}{\p_{\varepsilon}} \\
&=c_{\lambda}^{(\m)}(\lambda)p_{\lambda}\omega \\
&=\left(c_{\lambda^{(\m)}}(\lambda^{(\m)})-c_{\lambda^{(\m)}}\left(\emptyset^{(\m)}\right)\right)p_{\lambda}\omega \\
&=-c_{\lambda^{(\m)}}\left(\emptyset^{(\m)}\right)p_{\lambda}\omega,
\end{split}
\end{equation*}
and we are done.
\end{proof}

Since $c_{\lambda}(\emptyset)=0$, it follows from Lemma~\ref{l:differentialofinsertion} that
\begin{align}
\left[d\left(i_{X_{\lambda}}\omega\right)\right] &=0; \label{e:firstinsertion} \\
\left[d\left(i_{\sum_{\m=1}^n X_{\lambda}^{(\m)}}\right) \omega\right] &=
\left(-\sum_{\m=1}^n c_{\lambda^{(\m)}}\left(\emptyset^{(\m)}\right) \right) [p_{\lambda}\omega].
\label{e:secondinsertion}
\end{align}
We need a simpler form for the coefficient in the second equation.
This will be given Lemma~\ref{l:boundarysum} below.
Recall that
$J_i$ is the $k$-subset corresponding to $\mu_i$. The following statement follows
immediately from the definitions.

\begin{lem} \label{l:boundarycoefficients}
Let $\lambda\in\Pn_{k,n}$.
Then for $1\leq i\leq n$, we have
$$c_{\lambda}({\mu_i})=
\begin{cases}
|[1,i]\cap J_{\lambda}|, & 1\leq i\leq n-k; \\
|[i+1,n]\setminus J_{\lambda}|, & n-k+1\leq i\leq n.
\end{cases}
$$
\hfill \qed
\end{lem}

\begin{lem} \label{l:boundarysum}
Let $\lambda \in \Pn_{k,n}$. Then
$$\sum_{\m=1}^{n} c_{\lambda^{(\m)}}\left(\emptyset^{(\m)}\right)=|\lambda|.$$
\end{lem}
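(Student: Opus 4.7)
The strategy is to compute each summand $c_{\lambda^{(m)}}(\phi^{(m)})$ explicitly by reducing it to a case of the preceding boundary lemma, and then sum by interchanging the order of summation.

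First I would identify $\phi^{(m)}$ as a cyclic interval partition. Since $J_{\phi}=\{1,\ldots,k\}$, the shifted set $J_{\phi^{(m)}}=J_{\phi}-m$ reduced modulo $n$ equals $\{n-m+1,\ldots,n-m+k\}$ modulo $n$, so in the indexing $J_{i}=\{i+1,\ldots,i+k\}$ we have $J_{\phi^{(m)}}=J_{n-m}$, i.e.\ $\phi^{(m)}=\mu_{n-m,n-m+1}$. Applying Lemma~\ref{l:boundarycoefficients} to $c_{\lambda^{(m)}}(\mu_{n-m,n-m+1})$ with $i=n-m$, the two cases of the lemma correspond respectively to $k\le m\le n-1$ and $1\le m\le k-1$. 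Setting $A_m=J_{\lambda}\cap[1,m]$ and $B_m=J_{\lambda}\cap[m+1,n]$, the shift decomposes as $J_{\lambda^{(m)}}=(B_m-m)\sqcup(A_m+n-m)$, with the two pieces lying in $[1,n-m]$ and $[n-m+1,n]$ respectively; one reads off
\[
c_{\lambda^{(m)}}(\phi^{(m)})=\begin{cases}m-|J_{\lambda}\cap[1,m]|, & 1\le m\le k-1,\\ |J_{\lambda}\cap[m+1,n]|, & k\le m\le n-1,\end{cases}
\]
the two formulas agreeing at $m=k$ since $|J_{\lambda}|=k$.

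Next I would sum over $m$ by exchanging the order of summation. Each $j\in J_{\lambda}$ with $j\le k-1$ appears in the first sum for $j\le m\le k-1$, contributing $-(k-j)$; each $j\ge k+1$ appears in the second sum for $k\le m\le j-1$, contributing $j-k$; and $j=k$ contributes nothing. Together with $\sum_{m=1}^{k-1}m=\binom{k}{2}$, this gives
\[
\sum_{m=1}^{n-1}c_{\lambda^{(m)}}(\phi^{(m)})=\binom{k}{2}+\sum_{j\in J_{\lambda}}(j-k)=\binom{k}{2}-k^{2}+\sum_{j\in J_{\lambda}}j.
\]
Combined with the standard identity $|\lambda|=\sum_{j\in J_{\lambda}}j-\binom{k+1}{2}$ and the elementary check $\binom{k}{2}+\binom{k+1}{2}=k^{2}$, this yields $\sum_{m=1}^{n-1}c_{\lambda^{(m)}}(\phi^{(m)})=|\lambda|$, as required. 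The only real care is in the first step, in matching the two cases of the boundary lemma with the cyclic shift, but this is routine once the decomposition of $J_{\lambda^{(m)}}$ above is written down; the remaining summation is purely elementary.
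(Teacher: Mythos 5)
Your proof is correct and follows essentially the same route as the paper: both identify $\phi^{(m)}=\mu_{n-m,n-m+1}$, apply Lemma~\ref{l:boundarycoefficients} to $\lambda^{(m)}$ with $i=n-m$ (splitting into $1\le m\le k-1$ and $k\le m\le n-1$, and noting $|J^c_\lambda\cap[1,m]|=m-|J_\lambda\cap[1,m]|$), and then interchange the order of summation. The only difference is the final bookkeeping, where the paper interprets the two partial sums as counting boxes of $\lambda$ strictly right of, respectively on or below, the leading diagonal, while you instead invoke the identity $|\lambda|=\sum_{j\in J_\lambda}j-\binom{k+1}{2}$; both are valid.
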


\begin{proof}
Firstly, we note that $c_{\lambda}^{(n)}(\emptyset)=0$. Recall also that
$J_{\emptyset^{(\m)}}=J_{n-\m}$.
If $1\leq \m\leq k-1$, then $n-k+1\leq n-\m\leq n-1$, so, by
Lemma~\ref{l:boundarycoefficients},
\begin{equation*}
\begin{split}
c_{\lambda^{(\m)}}\left(\emptyset^{(\m)}\right) &=|[n-m+1,n]\setminus J_{\lambda^{(\m)}}| \\
&= |[n-\m+1,n]\setminus (J_{\lambda}-\m)| \\
&= |[1,\m]\setminus J_{\lambda}|.
\end{split}
\end{equation*}
An element $j\in [1,k-1]\setminus J_{\lambda}$ contributes $1$ to
the term $[1,\m]\setminus J_{\lambda}$ in the sum
$$\sum_{\m=1}^{k-1} |[1,\m]\setminus J_{\lambda}|$$
for all $m\geq j$, and zero otherwise. It follows that
\begin{equation}
\label{e:kminusj}
\sum_{\m=1}^{k-1} c_{\lambda^{(\m)}}\left(\emptyset^{(\m)}\right) =
\sum_{j\in[1,k-1]\setminus J_{\lambda}} k-j.
\end{equation}
We have
$$[1,n]\setminus J_{\lambda}=
\{k-\lambda_1+1,\ldots ,k-\lambda_{n-k}+(n-k)\},$$
so
$$[1,k-1]\setminus J_{\lambda}
=
\{k-\lambda_1+1,\ldots ,k-\lambda_s+s\},
$$
where $s$ is maximal such that
$\lambda_s>s$.
Hence the sum in~\eqref{e:kminusj} can
be rewritten as
$$\sum_{r=1}^s k-(k-\lambda_r+r)=
\sum_{r=1}^s \lambda_r-r,$$
which is the number of boxes in $\lambda$
strictly to the right of the leading
diagonal.

If $k\leq \m \leq n-1$, then $1\leq n-m\leq n-k$ and, by Lemma~\ref{l:boundarycoefficients},
\begin{equation*}
\begin{split}
c_{\lambda^{(\m)}}\left(\emptyset^{(\m)}\right) &= |[1,n-m] \cap J_{\lambda^{(\m)}}| \\
&= |[1,n-m]\cap (J_{\lambda}-\m)| \\
&= |[m+1,n]\cap J_{\lambda}|.
\end{split}
\end{equation*}
An element $j$ in $[k+1,n]\cap J_{\lambda}$ contributes $1$ to the term
$|[m+1,n]\cap J_{\lambda}|$ in the sum
$$\sum_{m=k}^{n-1} |[m+1,n]\cap J_{\lambda}|$$
if $m\leq j-1$, and zero otherwise. It follows that
\begin{equation}
\label{e:jminusk}
\sum_{\m=k}^{n-1} c_{\lambda^{(\m)}}\left(\emptyset^{(\m)}\right) =
\sum_{j\in J_{\lambda}\cap [k+1,n]} j-k,
\end{equation}
Let $\lambda'$ be the transpose of $\lambda$.
Then we have:
$$J_{\lambda}=\{k+\lambda'_1,k+\lambda'_2-1,\ldots, k+\lambda'_k-k+1\}.$$
Hence,
$$[k+1,n]\cap J_{\lambda}=\{k+\lambda'_1,k+\lambda'_2-1,\ldots ,k+\lambda'_{u}-u+1\},$$
where $u$ is maximal such that $\lambda'_u\geq u$.
Therefore, the sum in~\eqref{e:jminusk} can
be rewritten as
$$\sum_{r=1}^u (k+\lambda'_r-r+1)-k=
\sum_{\lambda'_r\geq r} \lambda'_r-r+1,$$
which is the number of boxes in
$\lambda$ on or below the
leading diagonal. Combining this with the above gives the claimed result.
\end{proof}

By~\eqref{e:secondinsertion} and Lemma~\ref{l:boundarysum}, we have:
\begin{equation}
\label{e:secondinsertion2}
\left[d\left(i_{\sum_{\m=1}^n X_{\lambda}^{(\m)}}\right) \omega\right] =
-|\lambda|[p_{\lambda}\omega].
\end{equation}

\begin{thm} \label{t:dwdtaction}
Let $\lambda\in \Pn_{k,n}$. Then
\begin{enumerate}[(a)]
\item
$$
q\dWdq [\p_{\lambda}\omega]=
\sum_{\mu}[\p_{\mu}\omega]+q\sum_{\nu}[\p_{\nu}\omega],\text{ and}
$$
\item
$$\frac{1}{z}[W_q\p_{\lambda}\omega]=
\frac{n}{z}\left(\sum_{\mu}[\p_{\mu}\omega]+q\sum_{\nu}[\p_{\nu}\omega] \right) -|\lambda|[\p_{\lambda}\omega],
$$
where, in each case, $\mu$, $\nu$ are exactly as in the quantum Monk's rule
for $\sigma^{\Box}*_q\sigma^{\lambda}$.
\end{enumerate}
\end{thm}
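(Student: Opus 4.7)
The plan is to derive both identities as direct consequences of the relation in $G^W$ coming from Lemma~\ref{l:generalvectorfield} applied to the vector fields $X_\lambda^{(m)}$, combined with the evaluations of $X_\lambda^{(m)} W_q$ from Theorem~\ref{t:nonequivariantaction} and the differential computation of Lemma~\ref{l:differentialofinsertion}. The two parts of the theorem correspond exactly to the two parts of Theorem~\ref{t:nonequivariantaction}: part (a) uses only the single vector field $X_\lambda = X_\lambda^{(n)}$, while part (b) uses the sum over all cyclic twists $m=1,\dotsc,n$.

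For part (a), I would apply Lemma~\ref{l:generalvectorfield} with $\xi = X_\lambda$, obtaining the relation $[d i_{X_\lambda}\omega] + \tfrac{1}{z}[(X_\lambda\cdot W_q)\omega] = 0$ in $G^W$. By Lemma~\ref{l:differentialofinsertion} at $m = n$ we have $d(i_{X_\lambda}\omega) = -c_\lambda(\phi)[\p_\lambda \omega]$, and since $c_\lambda(\phi) = 0$ by definition, the first term vanishes. Substituting the formula for $X_\lambda W_q$ from Theorem~\ref{t:nonequivariantaction}(a) then gives
\[
\tfrac{1}{z}\Bigl[\Bigl(\sum_\mu \p_\mu + q\sum_\nu \p_\nu - q\tfrac{\partial W_q}{\partial q}\p_\lambda\Bigr)\omega\Bigr] = 0,
\]
which rearranges into the claimed identity for $q\partial_q W_q[\p_\lambda \omega]$.

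For part (b), I would sum Lemma~\ref{l:generalvectorfield} applied to $\xi = X_\lambda^{(m)}$ over $m = 1,\dotsc,n$. The differential terms assemble via Lemma~\ref{l:differentialofinsertion} into $-\bigl(\sum_{m=1}^n c_{\lambda^{(m)}}(\phi^{(m)})\bigr)[\p_\lambda\omega]$. The $m=n$ summand contributes $c_\lambda(\phi)=0$, so the sum is $\sum_{m=1}^{n-1} c_{\lambda^{(m)}}(\phi^{(m)})$, which equals $|\lambda|$ by Lemma~\ref{l:boundarysum}. Combining with the formula for $\sum_m X_\lambda^{(m)} W_q$ from Theorem~\ref{t:nonequivariantaction}(b) yields
\[
-|\lambda|[\p_\lambda\omega] + \tfrac{1}{z}\Bigl[\Bigl(n\sum_\mu\p_\mu + nq\sum_\nu\p_\nu - W_q\p_\lambda\Bigr)\omega\Bigr] = 0,
\]
and rearranging gives part (b).

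Since the ingredients are all already in place, there is no substantive obstacle in this final step; it is a bookkeeping assembly. The only thing requiring care is confirming that $c_{\lambda^{(n)}}(\phi^{(n)}) = c_\lambda(\phi) = 0$ so that the $m = n$ term of the boundary sum vanishes and the totals match $|\lambda|$ exactly as stated in Lemma~\ref{l:boundarysum}. All the genuine difficulty has been front-loaded into Theorem~\ref{p:nonequivariantaction} and the additivity/regularity argument for the vector fields $X_\lambda^{(m)}$ in Section~\ref{s:vectorfield}.
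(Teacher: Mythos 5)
Your proposal is correct and follows essentially the same route as the paper: both parts are obtained by applying Lemma~\ref{l:generalvectorfield} to $X_\lambda$ (respectively to each $X_\lambda^{(m)}$ and summing over $m$), evaluating the exact terms via Lemma~\ref{l:differentialofinsertion} together with $c_{\lambda^{(n)}}(\phi^{(n)})=c_\lambda(\phi)=0$ and Lemma~\ref{l:boundarysum}, and substituting Theorem~\ref{t:nonequivariantaction}. The bookkeeping assembly you describe is exactly the paper's argument.
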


\begin{proof}
Part (a) follows from~\eqref{e:insertion}
in the case $\xi=X_{\lambda}$, using
Corollary~\ref{c:nonequivariantaction}(a) and~\eqref{e:firstinsertion}.
Part (b) follows from~\eqref{e:insertion} in
the case $\xi=\sum_{\m=1}^n X_{\lambda}^{(\m)}$,
using Corollary~\ref{c:nonequivariantaction}(b) and~\eqref{e:secondinsertion2}.
\end{proof}

\begin{proof}[Proof of Theorem~\ref{t:main1}]
By Theorem~\ref{t:dwdtaction}, equations~\eqref{e:dwdtaction1} and~\eqref{e:dwdtaction2} hold. This, together with Lemma~\ref{l:freebasis2}, completes the proof of Theorem~\ref{t:main1}.
\end{proof}

\section{Background for mirror symmetry in the torus-equivariant setting}
\label{s:Tequivariant}
We now turn to the torus-equivariant mirror theorem, Theorem~\ref{t:equivmain}. We begin in  Section~\ref{s:eqqH} by reviewing the structure of the small equivariant quantum cohomology ring of a Grassmannian. We refer to \cite{Anderson:EquivariantCohomology} for background on equivariant cohomology, and \cite{Mihalcea} for relevant background on equivariant quantum cohomology. In Section~\ref{s:Tequivariantsuperpotential} we recall the equivariant version of the superpotential (introduced for general $G/P$ in \cite{Rie:MSgen}) and describe it in the case of the Grassmannian in terms of Pl\"ucker coordinates. The main ingredient to the proof of the mirror theorem, Theorem~\ref{t:equivmain}, is Theorem~\ref{t:dwdtactionequivariant} proved in Section~\ref{s:actionvectorfieldequivariant}.  Namely, in this section we work out the action on the equivariant superpotential of the vector fields $X_\lambda$ constructed in Section~\ref{s:vectorfield}.   

Let us first fix our basic set-up regarding the torus and its action on $X$. Recall that $T^\vee$ denotes the maximal torus of diagonal matrices in $GL_n^\vee(\C)$, the general linear group in the $A$-model. It naturally acts on $\C^n$ and on $X=Gr_{n-k}(\C^n)$. We denote the standard basis of $\C^n$ by $v_1,\dotsc, v_n$. The weight of the action of $T^\vee$ on the span of $v_i$ is denoted by $\varepsilon^\vee_i\in \Hom(T^\vee,\C^*)$, and we have $\varepsilon^\vee=(\varepsilon_i^\vee):T\overset\sim\to(\C^*)^n$. As usual we think of the lattice $X_*(T^\vee)=\Hom(T^\vee,\C^*)$ as embedded in the dual $(\mathfrak h^\vee)^*$ of the Lie algebra $\mathfrak h^\vee$ of $T^\vee$, and use additive notation for characters.

\subsection{The equivariant cohomology ring of $X$}\label{s:ehomology}
Our conventions regarding the equivariant cohomology ring $H^*_{T^\vee}(X,\C)$ of the Grassmannian $X$ are as follows. First recall that the equivariant cohomology of $X$ is a free module over the equivariant cohomology of a point. Moreover, the equivariant cohomology of a point 
is a polynomial ring 
\[
H^*_{T^\vee}(pt)=\C[x_1,\dotsc, x_n].
\]    
To be completely explicit, using the  
Borel construction and the isomorphism $\varepsilon^\vee:T^\vee\cong (\C^*)^n$ we have $H^*_{T^\vee}(pt)=H^*(BT^\vee)=H^*(\prod_{i=1}^n\C P^\infty)$, 
and our conventions are that $x_i$ is the first Chern class of the line bundle coming from the $\mathcal O(1)$ of the $i$-th factor in $\prod_{i=1}^n\C P^\infty$. 
This class $x_i$ is also the equivariant first Chern class of the one-dimensional representation $-\varepsilon_i^\vee$ of $T^\vee$, interpreted as an equivariant line bundle on the point. Therefore we have natural identifications
$$
H_{T^\vee}^{*}(pt,\C)=\C[x_1,\dotsc, x_n]=S^\bullet\left(\left(\mathfrak h^\vee\right)^*\right)=\C[\mathfrak h^\vee],
$$
with $x_i=-\varepsilon_i^\vee$. 

The Schubert basis in the equivariant setting
is made up of equivariant fundamental classes of certain $T^\vee$-invariant Schubert varieties in $X$, which we need to choose explicitly as follows.
Our Schubert varieties $X^\lambda$ are 
 obtained as as the closures of $B^\vee_+$-orbits in $X$. Recall that $J_\lambda$ records the $k$ horizontal steps in the Young diagram $\lambda\in\Pn_{k,n}$, compare Section~\ref{s:GrassmannianBmodel}. Let $\Vertical(\lambda)=[1,n]\setminus J_\lambda$ and define $[v_\lambda]\in Gr_{n-k}(\C^n)$ by
\[
[v_\lambda]=\left<v_{j}\ | \ j\in\Vertical(\lambda)\right>_\C.
\]
We define
\[
X^\lambda:=\overline{B^\vee_+\cdot [v_\lambda]}.
\]
With this definition, the Schubert variety denoted by $X^\lambda$ has complex codimension $|\lambda |$, the number of boxes in $\lambda$. We denote by
$
\sigma_{T^\vee}^{\lambda}
$
the associated equivariant fundamental class $[X^\lambda]_{T^\vee}\in H_{T^\vee}^{2|\lambda |}(X)$.

The equivariant version of Monk's rule involves the following linear combinations of equivariant parameters,
\begin{equation}\label{e:xlambda}
x_\lambda:=\sum_{j\in \Vertical(\lambda)}x_{j}.
\end{equation} 
Note that under the identification $x_i=-\varepsilon_i^\vee$ the $x_\lambda$ are the {\it negatives} of the weights of the $(n-k)$-th fundamental representation of $GL_n^\vee(\C)$. In particular $x_{\lambda_{max}}=x_1+\dotsc + x_{n-k}$ is the negative of the highest weight,
and $x_{\emptyset}=x_{k+1}+x_{k+2}+\cdots+x_{n}$ the negative of the lowest weight. Here $\lambda_{max}$ refers to the maximal Young diagram, the $(n-k)\x k$ rectangle.

We now consider three analogues of the `hyperplane class' $\sigma^{\ydiagram {1}}$ in the equivariant setting.

 
\begin{enumerate}
\item
We have the $T^\vee$-equivariant first Chern class of the equivariant line bundle $\mathcal O(1)$ coming from the Pl\"ucker embedding,
\[
\zeta:=c_1^{T^\vee}(\mathcal O(1)).
\]  
\item
In $H_{T^\vee}^2(X)$ we also have the Schubert class $\sigma_{T^\vee}^{\ydiagram {1}}$ defined above,
corresponding to the $B^\vee_+$-invariant Schubert divisor $X^{\ydiagram{1}}$.
One can check that
\[
\sigma_{T^\vee}^{\ydiagram{1}}= c^{T^\vee}_1(\mathcal O(1)\otimes L_{\varepsilon_{k+1}+\dotsc +\varepsilon_{n} })=c^{T^\vee}_1(\mathcal O(1))-(x_{k+1}+\dotsc+x_n)=\zeta-x_\emptyset.
\]
\item
Finally, we have the alternative equivariant Schubert class
$\tilde{\sigma}_{T^\vee}^{\ydiagram {1}}$,
which corresponds to the $B^{\vee}_-$-invariant Schubert divisor 
\[
\widetilde{X}^{\ydiagram{1}}=\overline{B^\vee_-\cdot v_{\lambda_{submax}}},
\]
where $\lambda_{submax}$ denotes the Young diagram in
$\Pn_{k,n}$ obtained by removing one box from the maximal Young diagram. This class is related to the other two choices by 
\[
\tilde{\sigma}_{T^\vee}^{\ydiagram{1}}=
\zeta-(x_{1}+\dotsc+x_{n-k})=\zeta-x_{\lambda_{max}}=\sigma_{T^\vee}^{\ydiagram{1}}+x_\emptyset-x_{\lambda_{max}}.
\]

\end{enumerate}

\subsection{The small equivariant quantum cohomology of $X$}\label{s:eqqH}

The equivariant quantum cohomology ring of $X$ is denoted by $qH^*_{T^\vee}(X)$. It is defined by  using $T^\vee$-equivariant versions of Gromov-Witten invariants \cite{Lu:EquivQCoh} to specify a $q$-deformed cup product structure on $H_{T^\vee}^*(X,\C)\otimes\C[q]$; see also \cite{GK:FlagQCoh}.  In the case of the Grassmannian $X$ the structure of the ring $qH^*_{T^\vee}(X)$ was worked out by Mihalcea \cite{Mihalcea}.



We now recall the equivariant version of the quantum Monk's rule 
\cite[Section~1.1]{Mihalcea}. 
Expressed in our conventions, this states that quantum multiplication with the equivariant Chern class of $\mathcal O(1)$, is given by the formula
\begin{equation}\label{e:EquivariantQMonks}
 \zeta\star_{q,x} \sigma_{T^\vee}^{\lambda}=
\sum_\mu \sigma_{T^\vee}^{\mu}+ q\sum_{\nu}\sigma^{\nu}_{T^\vee}+ x_\lambda \sigma_{T^\vee}^\lambda,
\end{equation}
where the first two summands on the right hand side are as in the non-equivariant quantum Monk's rule and $x_\lambda$ is given in Equation~\eqref{e:xlambda}.

We will also need a special $T^\vee$-invariant anti-canonical divisor, $X_{ac}$. Note that we have $\Z/n\Z$-action on the Grassmannian $X$ analogously to the one defined in Section~\ref{s:GrassmannianBmodel} for the $B$-model. (Indeed $X$ and $\Xcheckbar$ are isomorphic varieties).  The divisor $X_{ac}$ is the $\Z/n\Z$-orbit of the divisor $X^{\ydiagram{1}}$. If we denote by $X^{\ydiagram{1}}(i)$ the $i$-th translate of $X^{\ydiagram{1}}$ under the $\Z/n\Z$-action then 
\begin{equation}\label{e:Xac}
X_{ac}=\bigcup_{i=1}^n X^{\ydiagram{1}}(i).
\end{equation}
Note that $X_{ac}$ is the union of $n$ distinct hyperplanes in $X$,
including $X^{\ydiagram{1}}$ (where $i=n$) and $\widetilde{X}^{\ydiagram{1}}$ (where $i=n-k$). This is the Langlands dual version of the divisor $D$ from~\eqref{e:acdivisorDef}.

The equivariant fundamental class $[X_{ac}]_{T^\vee}$ of $X_{ac}$ is given by
\[
[X_{ac}]_{T^\vee}= n \zeta - \sum_{j=1}^n (x_{j+1} +\dotsc x_{j+n-k})=n \zeta-(n-k)\left(\sum_{i=1}^n x_i\right)
\] 
where the indices are interpreted modulo $n$. Hence we have
\begin{equation}
[X_{ac}]_{T^\vee} \star_{q,x} \sigma_{T^\vee}^{\lambda}=
n\sum_\mu \sigma_{T^\vee}^{\mu}+ qn\sum_{\nu}\sigma^{\nu}_{T^\vee}+ nx_\lambda \sigma_{T^\vee}^\lambda-(n-k)\left(\sum_{i=1}^n x_i\right) \sigma_{T^\vee}^{\lambda},
\end{equation}
where the terms in the first two summands are as in the
non-equivariant quantum Monk's rule.

\section{The $T^\vee$-equivariant version of the superpotential}
\label{s:Tequivariantsuperpotential}

By the equivariant superpotential of the target space $X$ we mean a deformation of the usual superpotential to a (multi-valued) map involving the equivariant parameters, which encodes structures from the equivariant quantum cohomology of  $X$. 
A torus-equivariant version of the superpotential for general type partial flag varieties was introduced in \cite[Section~4]{Rie:MSgen}, where it was denoted $\mathcal F_P+\ln(\phi)$, and was shown to recover the equivariant quantum cohomology rings in their presentation due to Dale Peterson \cite{Pet:QCoh}. In this section we express this equivariant superpotential, in the special case of the Grassmannian $X=Gr_{n-k}(\C^n)$, in terms of the Pl\"ucker coordinates on the mirror Grassmannian $\Xcheck$. 

We may think of the superpotential $W:\Xcheck\x \C^*_q\to \C$ as a section of a trivial line bundle $\C$ on $\Xcheck\x \C^*_q$. The $T^\vee$-equivariant version of the superpotential will be a multi-valued section of the trivial vector bundle $\C\oplus \mathfrak h$ on $\Xcheck\x \C^*_q$, so a multi-valued  map
 \[
\Weq:\Xcheck\x\C^*_q\xdashrightarrow{(W,\Phi)} 
\C\oplus \mathfrak h.
\]

We give our new definition of $\Weq$ in
the Grassmannian setting first, followed by the original,
more general definition of the equivariant superpotential for homogeneous spaces from 
\cite{Rie:MSgen}. Then we will demonstrate that the two are 
equivalent when the homogeneous space is a Grassmannian. 

\begin{defn}\label{d:Weq}
Recall that we have a natural identification of  $\mathfrak h$ with $H_{T^\vee}^2(\{pt\})$ which sends $-\varepsilon_i^\vee$ to the equivariant parameter $x_i$, see Section~\ref{s:ehomology}.
We define $\Weq:\Xcheck\x\C^*_q\xdashrightarrow{\quad \ }
\C\oplus \mathfrak h$ by
\begin{equation}
\label{e:Weq}
\Weq=W+ \ln(q)(x_1+\dotsc+ x_{n-k})+ \ln(\p_{\mu_{1}})(x_2-x_1)+\ln(\p_{\mu_{2}})(x_3-x_2)+\dotsc + \ln(\p_{\mu_{n-1}})(x_n-x_{n-1}),
\end{equation}
where $W$ is as in Definition~\ref{d:W} and we keep in mind that $\p_{\mu_n}=\p_{\emptyset}=1$. Therefore $\ln(\p_{\mu_n})=0$. 

Note that we have $x_{i+1}-x_{i}=\alpha_i^\vee$ and $x_1+\dotsc+x_{n-k}=-\omega_{n-k}^\vee$, since the equivariant parameters
are related to
the usual basis of $\mathfrak h$ by $x_i=-\varepsilon_i^\vee$. 
\end{defn}

Recall the notations from Section~\ref{s:RichardsonBmodel}. We have isomorphisms
\[
\begin{array}{ccccc}
\Xcheck\x \C^*_{q}&\overset{\psi_L}\longleftarrow &B_-\cap U_+\tilde T^{W_P}\dot w_P\dot w_0\inv U_+ &\overset{\psi_R}\longrightarrow & \RR\x \C^*_{q}. \\
 (Pb,\alpha_{n-k}(t))&\leftarrow & b=u_1 t \dot w_P \dot w_0\inv u_2 &\mapsto & (b\dot w_0 B_-,\alpha_{n-k}(t)).
\end{array}
\]

Note that $B_-\cap U_+\tilde T^{W_P}\dot w_P\dot w_0\inv U_+$ is a subset of the Borel subgroup $B_-$ of $GL_n(\C)$. Let
 $\pi$ be the projection $B_-\to T $ sending an element of the Borel subgroup $B_-=T U_-$ onto its torus factor. We add this map to the diagram above, giving:
\[
\begin{array}{ccccc}
\Xcheck\x \C^*_{q}&\overset{\psi_L}\longleftarrow &B_-\cap U_+\tilde T^{W_P}\dot w_P\dot w_0\inv U_+ &\overset{\psi_R}\longrightarrow & \RR\x \C^*_{q}. \\
&& \downarrow\pi & &  \\
&& T\ &&
\end{array}
\]
The inverse to $\exp:\mathfrak {h}\to T$  defines a multivalued map
\[
\ln_{T}:T\xdashrightarrow{\quad} \mathfrak h  
\]   
 \begin{defn}\cite[Section~4]{Rie:MSgen} The equivariant Lie-theoretic superpotential 
 \[
 \Feq:\RR\x \C_{q}^*\xdashrightarrow{\qquad} \C\oplus\mathfrak h
 \]
  is defined by adding an $\mathfrak h$ component to the superpotential $\mathcal F:\RR\x \C_{q}^*\longrightarrow \C$ from Definition~\ref{d:RichardsonBmodel} as follows.
 Consider the composition 
 \[
 \Phi=\ln_{T}\circ\pi\circ\psi_R\inv: 
 \RR\x \C_{q}^*\To \mathfrak h.
 \] 
 Then  
\[
\mathcal F^{\operatorname{eq}} = \mathcal F +  \Phi.
\]
 \end{defn}

This definition is a slight variation of the definition of the equivariant superpotential from \cite{Rie:MSgen}, the difference stemming from the fact that~\cite{Rie:MSgen} considered the maximal torus of $PSL_n(\C)$ whereas here $T$ is the maximal torus of $GL_n(\C)$. 
Composing with the map $\C\oplus \mathfrak h\to \C\oplus \mathfrak h_{PSL_n}$ defined by quotienting out the center of $\mathfrak{gl}_n$ recovers the original equivariant superpotential from  \cite[Section~4]{Rie:MSgen} associated to the action of the maximal torus of $PSL_n(\C)$. (This equivariant superpotential is denoted  $\mathcal F_P+\ln(\phi)$ in \cite{Rie:MSgen}.)

 The main goal of this section is to prove the following comparison result.
 
 \begin{prop} \label{p:EquivariantComparison}
With the definitions as above, the following diagram commutes, 
\[
\begin{array}{ccccc}
\Xcheck\x \C^*_{q}&\overset{\psi_L}\longleftarrow &B_-\cap U_+\tilde T^{W_P}\dot w_P\dot w_0\inv U_+ &\overset{\psi_R}\longrightarrow & \RR\x \C^*_{q}. \\
\downarrow \Weq& &  & & \downarrow \mathcal F^{\operatorname{eq}} \\
\C \oplus \mathfrak h&& = && \C \oplus \mathfrak h.
\end{array}
\]
 Therefore the equivariant superpotentials $(\Xcheck,\Weq_q)$ and $(\RR,\Feq_q)$ are equivalent.
 \end{prop}

This proposition is an extension of Proposition~\ref{p:comparisonMRR}. We begin with some remarks. Let $b=u_1t\dot w_P\dot w_0^{-1}\in B^-\cap U_+\tilde{T}^{W_P}\dot w_P \dot w_0^{-1}U^+$.
Then, by Proposition~\ref{p:comparisonMRR}, we have
\begin{equation}
\begin{split}
\mathcal{F}^{eq}(\psi_R(b)) &=
\mathcal{F}(\psi_R(b))+\ln_T(\pi(b)) \\
&=W(\psi_L(b))+\sum_{i=1}^n x_i \ln(b_{ii}),
\end{split}
\end{equation}
where $b_{ii}$ denotes the $i$th diagonal entry of the matrix $b$.
Looking at~\eqref{e:Weq}, we see that to prove Proposition~\ref{p:EquivariantComparison},
it is sufficient to show that the following holds, where
$q=\alpha_{n-k}(t)$:
\begin{equation}
\label{e:equivariantaim1}
\begin{split}
\sum_{i=1}^n x_i \ln(b_{ii})&=
\ln(q)(x_1+\dotsc+ x_{n-k})+ (x_2-x_1)\ln(\p_{\mu_{12}}(Pb))+(x_3-x_2)\ln(\p_{\mu_{23}}(Pb))+\dotsc \\
&\quad\quad+ (x_{n}-x_{n-1})\ln(\p_{\mu_{n-1,n}}(Pb)).
\end{split}
\end{equation}
Note that, since $u_1,u_2\in U_+$ and $t\in \tilde T^{W_P}$, we have
\begin{equation*}
\begin{split}
\Delta_{J_n}^{[n-k+1,n]}(b) &=
\Delta_{[1,k]}^{[n-k+1,n]}(u_1t\dot w_P\dot w_0^{-1}u_2) \\
&= \Delta_{[1,k]}^{[n-k+1,n]}
(u_1t\dot w_P\dot w_0^{-1}) \\
&= \Delta_{[1,k]}^{[n-k+1,n]}
(t\dot w_P\dot w_0^{-1})=1,
\end{split}
\end{equation*}
where the last step is an easy calculation.
Since our convention is that $p_{\emptyset}=1$, it follows
that, for $i=1,\ldots ,n-1$,
\begin{equation*}
p_{\mu_i}(Pb)=
\Delta_{J_i}^{[n-k+1,n]}(b).
\end{equation*}

Thus,~\eqref{e:equivariantaim1} is equivalent to
\begin{equation}
\label{e:equivariantaim2}
\begin{split}
\sum_{i=1}^n x_i \ln(b_{ii})&=
\ln(q)(x_1+\dotsc+ x_{n-k})+ (x_2-x_1)\ln\left(\Delta_{J_1}^{[n-k+1,n]}(b)\right)+ \\
&\quad\quad+(x_3-x_2)\ln(\Delta_{J_2}^{[n-k+1,n]}(b))+\dotsc+
(x_{n}-x_{n-1})\ln\left(\Delta_{J_{n-1}}^{[n-k+1,n]}(b)\right).
\end{split}
\end{equation}
To prove~\eqref{e:equivariantaim2}, we will
write the diagonal entries of $b$ in terms of the
minors $\Delta_{J_i}^{[n-k+1,n]}$ of $b$.

\begin{lem} \label{l:diagonalb}
Let $b=u_+t\dot w_P\dot w_0^{-1}u_2\in B^-\cap U_+\tilde{T}^{W_P}\dot w_P\dot w_0^{-1}U_+$, with $q=\alpha_{n-k}(t)$.
Then, for $1\leq i\leq n$, we have the following:
$$b_{ii}=
\begin{dcases}
q\frac{1}{\Delta_{J_1}^{[n-k+1,n]}(b)}, & \text{if }i=1; \\
q\frac{\Delta_{J_{i-1}}^{[n-k+1,n]}(b)}{\Delta_{J_i}^{[n-k+1,n]}(b)}, & \text{if }2\leq i\leq n-k; \\
\frac{\Delta_{J_{i-1}}^{[n-k+1,n]}(b)}{\Delta_{J_i}^{[n-k+1,n]}(b)}, & \text{if }n-k+1 \leq i\leq n-1; \\
\Delta_{J_{n-1}}^{[n-k+1,n]}(b), & \text{if }i=n.
\end{dcases}
$$
\end{lem}
\begin{proof}
Write
$b=u_1 \telt \dot w_Pw_0^{-1} u_2$, with $u_1,u_2\in U_+$ and $\telt\in \tilde{T}^{W_P}$.
Then $b^{-1}=u_2^{-1}\dot w_0\dot w_P^{-1} \telt^{-1}u_1^{-1}$.
Hence,
$$(b^{-1})_{ii}=\frac{\Delta_{[1,i]}^{[1,i]}(u_2^{-1}\dot w_0\dot w_P^{-1} \telt^{-1})}{\Delta_{[1,i-1]}^{[1,i-1]}(u_2^{-1}\dot w_0\dot w_P^{-1} \telt^{-1})},$$
so
\begin{equation}
\label{e:biiequation}
b_{ii}=\frac{\Delta_{[1,i-1]}^{[1,i-1]}(u_2^{-1}\dot w_0\dot w_P^{-1} \telt^{-1})}{\Delta_{[1,i]}^{[1,i]}(u_2^{-1}\dot w_0\dot w_P^{-1} \telt^{-1})}.
\end{equation}

Recall that $\telt$ is a diagonal matrix
with
$$t_{ii}=\begin{cases}
q, & \text{if }1\leq i\leq n-k; \\
1, & \text{if }i+1\leq i\leq n.
\end{cases}
$$

We claim that, for $1\leq i\leq n$,
\begin{equation}
\label{e:partialeq}
\Delta_{[1,i]}^{[1,i]}(u_2^{-1}\dot w_0\dot w_P^{-1} \telt^{-1})=
\begin{dcases}
q^{-i}\Delta_{J_i}^{[n-k+1,n]}(b), & \text{if } 1\leq i\leq n-k; \\
q^{-(n-k)}\Delta_{J_i}^{[n-k+1,n]}(b), & \text{if } n-k\leq i\leq n-1; \\
q^{-(n-k)}, & \text{if }i=n.
\end{dcases}
\end{equation}
The result then follows 
from~\eqref{e:biiequation} and~\eqref{e:partialeq}.

To prove the claim, we consider each of the
three cases. We suppose first that
$1\leq i\leq n-k$. Then
\begin{align*}
\Delta_{[1,i]}^{[1,i]}\left(u_2^{-1}\dot w_0\dot w_P^{-1} \telt^{-1}\right)
&=q^{-i}\Delta_{[1,i]}^{[1,i]}\left(u_2^{-1}\dot w_0\dot w_P^{-1}\right) \\
&=q^{-i}(-1)^{ik}\Delta_{[k+1,k+i]}^{[1,i]}(u_2^{-1}) \\
&=q^{-i}(-1)^{ik}(-1)^s\Delta_{[i+1,n]}^{[1,k]\cup [i+k+1,n]}(u_2),
\end{align*}
where $s=(1+2+\cdots +i)+((k+1)+(k+2)+\cdots +(k+i))$,
using Jacobi's Theorem for the minors of an inverse matrix in the last step.
Noting that $s$ is congruent to $ik$ mod $2$ and
that $u_2$ is upper unitriangular, we obtain:
\begin{align*}
\begin{split}
\Delta_{[1,i]}^{[1,i]}\left(u_2^{-1}\dot w_0\dot w_P^{-1} \telt^{-1}\right) &=
q^{-i}\Delta_{[i+1,i+k]}^{[1,k]}(u_2). \\
&=q^{-i}\Delta_{[i+1,i+k]}^{[n-k+1,n]}\left(t\dot w_P\dot w_0^{-1}u_2\right) \\
&=q^{-i}\Delta_{J_i}^{[n-k+1,n]}(b),
\end{split}
\end{align*}
as required in this case.

Next, suppose that $n-k+1\leq i\leq n-1$.
Then we have
\begin{align*}
\Delta_{[1,i]}^{[1,i]}\left(u_2^{-1}\dot w_0\dot w_P^{-1} \telt^{-1}\right)
&=q^{-(n-k)}\Delta_{[1,i]}^{[1,i]}\left(u_2^{-1}\dot w_0\dot w_P^{-1}\right) \\
&=q^{-(n-k)}(-1)^{k(n-k)}\Delta_{[k+1,n]\cup [1,i+k-n]}^{[1,i]}\left(u_2^{-1}\right) \\
&=q^{-(n-k)}(-1)^{k(n-k)+(n-k)(i+k-n)}\Delta_{[1,i+k-n,]\cup [k+1,n]}^{[1,i]}\left(u_2^{-1}\right) \\
&=q^{-(n-k)}(-1)^{k(n-k)+(n-k)(i+k-n)}\Delta_{[k+1,n]}^{[i+k-n+1,i]}\left(u_2^{-1}\right),
\end{align*}
using in the last step the fact that $u_2^{-1}$ is upper unitriangular.
Applying Jacobi's theorem, we have:
$$\Delta_{[1,i]}^{[1,i]}\left(u_2^{-1}\dot w_0\dot w_P^{-1} \telt^{-1}\right)
=q^{-(n-k)}(-1)^t\Delta_{[1,i+k-n]\cup [i+1,n]}^{[1,k]}(u_2),$$
where
\begin{align*}
t&=k(n-k)+(n-k)(i+k-n)+\sum_{j=1}^k j +\sum_{j=1}^{i+k-n} j +\sum_{j=1}^{n-i} (i+j) \\
&=k(n-k)+(n-k)(i+k-n)+\sum_{j=1}^k j +\sum_{j=n-i+1}^k (j-n+i) +\sum_{j=1}^{n-i} (i+j) \\
&=k(n-k)+(n-k)(i+k-n)-n(i+k-n) +\sum_{j=1}^k j +\sum_{j=1}^{k} (i+j) \\
&=k(n-k)+(n-k)(i+k-n)-n(i+k-n) +k(i+k+1) \\
&=2kn-k(k-1),
\end{align*}
which is even. Hence
\begin{align*}
\Delta_{[1,i]}^{[1,i]}\left(u_2^{-1}\dot w_0\dot w_P^{-1} \telt^{-1}\right)
&=q^{-(n-k)}\Delta_{[1,i+k-n]\cup [i+1,n]}^{[1,k]}(u_2) \\
&=q^{-(n-k)}\Delta_{[1,i+k-n]\cup [i+1,n]}^{[n-k+1,n]}\left(\dot w_P\dot w_0^{-1}u_2\right) \\
&=q^{-(n-k)}\Delta_{[1,i+k-n]\cup [i+1,n]}^{[n-k+1,n]}\left(u_1\telt \dot w_P\dot w_0^{-1}u_2\right) \\
&=q^{-(n-k)}\Delta_{J_i}^{[n-k+1,n]}(b),
\end{align*}
as required in this case.

Finally, we consider the case $i=n$. We have:
\begin{align*}
\Delta_{[1,n]}^{[1,n]}\left(u_2^{-1} \dot w_0\dot w_P^{-1} \telt^{-1}\right)
&=q^{-(n-k)}\Delta_{[1,n]}^{[1,n]}\left(u_2^{-1}\dot w_0\dot w_P^{-1}\right) \\
&=q^{-(n-k)}(-1)^{k(n-k)}\Delta_{[k+1,n]\cup [1,k]}^{[1,n]}\left(u_2^{-1}\right) \\
&=q^{-(n-k)}(-1)^{k(n-k)}(-1)^{k(n-k)}\Delta_{[1,n]}^{[1,n]}\left(u_2^{-1}\right) \\
&=q^{-(n-k)},
\end{align*}
since $u_2^{-1}$ is upper unitriangular.
The result is shown.
\end{proof}

\begin{proof}[Proof of Proposition~\ref{p:EquivariantComparison}]
By Lemma~\ref{l:diagonalb}, we have:
\begin{equation*}
\begin{split}
\sum_{i=1}^n x_i \ln(b_{ii})
&=\ln(q)(x_1+x_2+\cdots +x_{n-k}) 
-x_1\ln\left(\Delta_{J_1}^{[n-k+1,n]}(b)\right)
\\
&\quad\quad
+x_n\ln\left(\Delta_{J_{n-1}}^{[n-k+1,n]}(b)\right)+\sum_{i=2}^{n-1}
\left(
x_i\ln\left(\Delta_{J_{i-1}}^{[n-k+1,n]}(b)\right)-
x_i\ln\left(\Delta_{J_{i}}^{[n-k+1,n]}(b)\right)
\right)
\\
&=\ln(q)(x_1+x_2+\cdots+x_{n-k})
+\sum_{i=1}^{n-1}\left( (x_{i+1}-x_i)\ln\left(\Delta_{J_i}^{[n-k+1,n]}(b)\right) \right).
\end{split}
\end{equation*}
Hence~\eqref{e:equivariantaim2} holds, and we are done.
\end{proof}

\section{Action of the vector field: $T^\vee$-equivariant case}
\label{s:actionvectorfieldequivariant}
In this section we prove the formulas needed to complete
the proof of Theorem~\ref{t:equivmain}.

\begin{thm} \label{t:dwdtactionequivariant}
Let $\lambda\in \Pn_{k,n}$. Then we
have:
\begin{enumerate}
\item[(a)]
\begin{equation*}
[q\dWeqdq \p_{\lambda}\omega] =
\left( \sum_{\mu}[\p_{\mu}\omega]+q\sum_{\nu}[\p_{\nu}\omega] \right)
+x_{\lambda}{[\p_{\lambda}\omega]},
\end{equation*}
and
\item[(b)]
\begin{equation*}
-\frac{1}{z}[W p_{\lambda}\omega]=
|\lambda|[p_{\lambda}\omega]
-\frac{n}{z}\left( \sum_{\mu}[p_{\mu}\omega]+q[p_{\nu}\omega]\right)-
\frac{n}{z}x_{\lambda}[p_{\lambda}\omega]
+\frac{(n-k)}{z}\left(\sum_{j=1}^n x_j\right)[p_{\lambda}\omega],
\end{equation*}
\end{enumerate}
where, in each case, $\mu$, $\nu$ are exactly as in the quantum Monk's rule for $\sigma^{\Box}*_q\sigma^{\lambda}$.\end{thm}


To prove Theorem~\ref{t:dwdtactionequivariant},
we compute the action of the vector field $X_{\lambda}$ on
$\Weqq$.
Note that $\Weqq=W_q+\Wtildeq$, where
$$
\Wtildeq=
\ln(q)(x_1+x_2+\cdots +x_{n-k})+\sum_{i=1}^{n-1} (x_{i+1}-x_i) \ln\left(\p_{\mu_i}\right).
$$
Recall that
$$x_{\lambda}=\sum_{i\in \Vertical(\lambda)}x_i.$$
In particular, we have 
$x_{\lambda_{\text{max}}}=x_1+\cdots +x_{n-k}$.

\begin{lem} \label{l:xdifference}
Let $\m\in [1,n]$. Then
$$
\sum_{i=1}^{n-1} c_{\lambda}^{(\m)}(\mu_i)(x_{i+1}-x_i)=x_{\lambda}-(x_{m+1}+\cdots +x_{m+n-k}).
$$
\end{lem}
\begin{proof}
We interpret the subscripts of the $\mu_i$ modulo $n$,
with representatives in $[1,n]$.
We have:
$$
\sum_{i=1}^{n-1} c_{\lambda}^{(\m)}(\mu_i)(x_{i+1}-x_i)=
\sum_{i=1}^n \left(c_{\lambda}^{(\m)}(\mu_{i-1})-c_{\lambda}^{(\m)}(\mu_i)\right)x_i,
$$
noting that $c_{\lambda}^{(\m)}(\mu_n)=c_{\lambda}^{(\m)}(\emptyset)=0$.
Thus the statement in the lemma is equivalent to the statement:
$$
c_{\lambda}^{(\m)}(\mu_{i-1})-c_{\lambda}^{(\m)}(\mu_i)=
\begin{cases}
0, & i\in \Vertical(\lambda),\ i\in [m+1,m+n-k]; \\
-1, & i\not\in \Vertical(\lambda),\ i\in [m+1,m+n-k]; \\
1, & i\in \Vertical(\lambda),\ i\not\in [m+1,m+n-k]; \\
0, & i\not\in \Vertical(\lambda),\ i\not\in [m+1,m+n-k],
\end{cases}
$$
for all $i\in [1,n]$. We first show that this holds for $m=n$, i.e.\ that:
\begin{equation}
\label{e:cdifferencen}
c_{\lambda}(\mu_{i-1})-c_{\lambda}(\mu_i)=
\begin{cases}
0, & i\in \Vertical(\lambda),\ i\in [1,n-k]; \\
-1, & i\not\in \Vertical(\lambda),\ i\in [1,n-k]; \\
1, & i\in \Vertical(\lambda),\ i\not\in [1,n-k]; \\
0, & i\not\in \Vertical(\lambda),\ i\not\in [1,n-k],
\end{cases}
\end{equation}
for all $i\in [1,n]$. Note that Lemma~\ref{l:boundarycoefficients} can be
restated as:
\begin{equation}
\label{e:boundarycoefficients}
c_{\lambda}(\mu_i)=
\begin{cases}
[1,i]\setminus \Vertical(\lambda), & 1\leq i\leq n-k; \\
[i+1,n]\cap \Vertical(\lambda), & n-k+1\leq i\leq n.
\end{cases}
\end{equation}
For $i=1$, we have $c_{\lambda}(\mu_n)-c_{\lambda}(\mu_1)=
-c_{\lambda}(\mu_1)$ and the result follows
from~\eqref{e:boundarycoefficients}. For $2\leq i\leq n-k$,
it follows from~\eqref{e:boundarycoefficients} that
$c_{\lambda}(\mu_{i-1})-c_{\lambda}(\mu_i)=0$ if $i\in \Vertical(\lambda)$
and is equal to $-1$ if $i\not\in\Vertical(\lambda)$, giving the result in this case.
The case $n-k+2\leq i\leq n$ is similar, leaving the case $i=n-k+1$.

We have, using the fact that $|\Vertical(\lambda)|=n-k$,
\begin{equation*}
\begin{split}
c_{\lambda}(\mu_{n-k})-c_{\lambda}(\mu_{n-k+1}) &=
|[1,n-k]\setminus \Vertical(\lambda)|-|[n-k+2,n]\cap \Vertical(\lambda)| \\
&=n-k-|[1,n-k]\cap \Vertical(\lambda)|-|[n-k+2,n]\cap \Vertical(\lambda)| \\
&=
\begin{cases}
n-k-|[1,n-k]\cap \Vertical(\lambda)|-|[n-k+1,n]\cap \Vertical(\lambda)|+1,
& n-k+1\in \Vertical(\lambda); \\
n-k-|[1,n-k]\cap \Vertical(\lambda)|-|[n-k+1,n]\cap \Vertical(\lambda)|,
& n-k+1\not\in \Vertical(\lambda);
\end{cases} \\
&=
\begin{cases} 1, & n-k+1\in \Vertical(\lambda); \\
0, & n-k+1\not\in \Vertical(\lambda);
\end{cases}
\end{split}
\end{equation*}
as required.
For arbitrary $\m\in [1,n]$, we have, recalling the definition of
$c_{\lambda}^{(\m)}$ (equation~\eqref{e:clambdam}) and
using~\eqref{e:cdifferencen},
\begin{equation}
\label{e:cdifferencepart}
\begin{split}
c_{\lambda}^{(\m)}(\mu_{i-1})-c_{\lambda}^{(\m)}(\mu_i) &=
c_{\lambda^{(\m)}}\left(\mu_{i-1}^{(\m)}\right)-c_{\lambda^{(\m)}}\left(\mu_i^{(\m)}\right) \\
&=
\begin{cases}
0, & i-\m\in \Vertical(\lambda^{(\m)}),\ i-\m\in [1,n-k]; \\
-1, & i-\m\not\in \Vertical(\lambda^{(\m)}),\ i-\m\in [1,n-k]; \\
1, & i-\m\in \Vertical(\lambda^{(\m)}),\  i-\m \not\in [1,n-k]; \\
0, & i-\m\not\in \Vertical(\lambda^{(\m)}),\ i-\m \not\in [1,n-k].
\end{cases}
\end{split}
\end{equation}
The result then follows from~\eqref{e:cdifferencepart}, noting that
$\Vertical(\lambda^{(\m)})=\Vertical(\lambda)-m$ (working mod $n$).
\end{proof}

\begin{prop} \label{p:equivariantaction}
Let $\lambda\in \Pn_{k,n}$. Then:
$$X_{\lambda}^{(\m)} \Wtildeq=(x_{\lambda}-(x_{m+1}+\cdots +x_{m+n-k}))\p_{\lambda}.$$
\end{prop}
\begin{proof}
For any $i\in [1,n]$,
$$\p_{\mu_i}\frac{\partial}{\partial \p_{\mu_i}}\ln \p_{\mu_i}=1,$$
Hence, by Lemma~\ref{l:xdifference},
$$X_{\lambda}^{(\m)}\Wtildeq=\sum_{i=1}^{n-1}
c_{\lambda}^{(\m)}(\mu_i)\alpha^{\vee}_ip_{\lambda}=
(x_{\lambda}-(x_{m+1}+\cdots +x_{m+n-k}))p_{\lambda},
$$
\end{proof}

\begin{prop}
\label{p:termsofW}
Let $\lambda\in \Pn_{k,n}$ and $m\in [1,n]$. Then
we have:
\begin{equation*}
\begin{split}
\frac{1}{z}q^{\delta_{mn}}[\frac{\p_{\widehat{L}_m}}{\p_{L_m}}\p_{\lambda}\omega]
&=
\frac{1}{z}
\left(
\sum_{\mu}[\p_{\mu}\omega]+q\sum_{\nu}[\p_{\nu}\omega] \right)
+\frac{1}{z}(x_{\lambda}-(x_{m+1}+\cdots +x_{m+n-k}))[p_{\lambda}\omega] \\
&\quad\quad\quad -c_{\lambda^{(m)}}\left(\emptyset^{(\m)}\right)[p_{\lambda}\omega],
\end{split}
\end{equation*}
\end{prop}
\begin{proof}
Arguing as for equation~\eqref{e:insertion}, we have the following:
\begin{equation} \label{e:equivariantinsertion}
\left[d\left(i_{\xi}\omega\right)\right]+\frac{1}{z}\left[\left(\xi\cdot \Weqq\right)\omega\right]=0,
\end{equation}
for a regular vector field $\xi$ on
$\Xcheck$.
We will apply this in the case $\xi=X_{\lambda}^{(\m)}$, for each
$m\in [1,n]$.

By Lemma~\ref{l:differentialofinsertion}, we have:
\begin{equation}
\label{e:mcasefirstterm}
\left[d\left(i_{X_{\lambda}^{(\m)}}\right)\omega\right]=
-c_{\lambda^{(m)}}\left(\emptyset^{(\m)}\right)[p_{\lambda}\omega],
\end{equation}
giving the first term in~\eqref{e:equivariantinsertion}.

For the second term, we first note that, by
Theorem~\ref{t:nonequivariantaction}, we have:
\begin{equation} \label{e:mcasesecondtermA}
X^{(\m)}_{\lambda}W_q=\left(
\sum_{\mu}\p_{\mu}+q\sum_{\nu}\p_{\nu}\right) - q^{\delta_{mn}}\frac{\p_{\widehat{L}_m}}{\p_{L_m}}\p_{\lambda}.
\end{equation}
By Proposition~\ref{p:equivariantaction}, we have:
\begin{equation}
\label{e:mcasesecondtermB}
X_{\lambda}^{(\m)} \Wtildeq=(x_{\lambda}-(x_{m+1}+\cdots +x_{m+n-k}))\p_{\lambda}.
\end{equation}
Combining~\eqref{e:mcasesecondtermA} and~\eqref{e:mcasesecondtermB}, we obtain:
\begin{equation}
\label{e:mcasesecondterm}
X_{\lambda}^{(\m)} \Weq=
\left(
\sum_{\mu}\p_{\mu}+q\sum_{\nu}\p_{\nu}\right) - q^{\delta_{mn}}\frac{\p_{\widehat{L}_m}}{\p_{L_m}}\p_{\lambda}
+(x_{\lambda}-(x_{m+1}+\cdots +x_{m+n-k}))\p_{\lambda}.
\end{equation}

Substituting~\eqref{e:mcasefirstterm} and~\eqref{e:mcasesecondterm} into~\eqref{e:equivariantinsertion}, we obtain:
$$
\frac{1}{z}q^{\delta_{mn}}\left[\frac{\p_{\widehat{L}_m}}{\p_{L_m}}\p_{\lambda}\omega\right]
=
\frac{1}{z}
\left(
\sum_{\mu}[\p_{\mu}\omega]+q\sum_{\nu}[\p_{\nu}\omega] \right)
+\frac{1}{z}(x_{\lambda}-(x_{m+1}+\cdots +x_{m+n-k}))[p_{\lambda}\omega]
-c_{\lambda^{(m)}}\left(\emptyset^{(\m)}\right)[p_{\lambda}\omega],
$$
as required.
\end{proof}

We can now prove the following enhanced version of Proposition~\ref{p:eqLiecomp}.

\begin{prop}\label{p:eqLiecompEnhanced}
The Jacobi ring of $(\Xcheck,\Weq)$ is isomorphic to the equivariant quantum cohomology ring $qH^*_{T^\vee}(X,\C)[q\inv]$ via an isomorphism which satisfies
\[
[\p_\lambda]\mapsto\sigma^\lambda_{T^\vee}.
\]
Moreover this isomorphism sends the summands of $W$ to equivariant fundamental classes of $T^\vee$-invariant divisors $X^{\ydiagram{1}}(i)$ from Section~\ref{s:eqqH}.  Namely for $i\ne n-k$, 
\[
\left[\frac{\p_{\widehat\mu_i}}{\p_{\mu_i}}\right]\mapsto [X^{\ydiagram{1}}(i)]_{T^\vee},
\]
and for $i=n-k$
\[
q\left[\frac{\p_{\widehat\mu_{n-k}}}{\p_{\mu_{n-k}}}\right]\mapsto [ X^{\ydiagram{1}}(n-k)]_{T^\vee}
=\widetilde{\sigma}^{\, \ydiagram{1}}_{T^\vee}
\]
noting that $\widetilde X^{\ydiagram{1}}$ is the $(n-k)$-th shift of $X^{\ydiagram{1}}$.
\end{prop}

\begin{proof}[Proof of Proposition~\ref{p:eqLiecomp}]
By the comparison result, Proposition~\ref{p:EquivariantComparison}, together with \cite[Theorem~4.1]{Rie:MSgen} and Peterson's theory, see  \cite[Corollary~4.2]{Rie:MSgen}, we know that the Jacobi ring of $(\Xcheck,\Weq_q)$ is isomorphic to the quantum cohomology $qH^*_{T^\vee}(X,\C)[q\inv]$ via an isomorphism of graded (compare \eqref{e:gradingastorusaction}) rings,  which fixes the $x_i$.
Moreover the image of $\p_\lambda$ is $\sigma^\lambda_{T^\vee}$ up to possible summands in the ideal generated by the equivariant parameters, by Proposition~\ref{p:SchubertPlucker}.  Therefore the $\p_\lambda$ form an additive basis of the Jacobi ring as module over $\C[x_1,\dotsc, x_n,q,q\inv]$. 

 Consider \eqref{e:mcasesecondterm} from above. Recall that $\p_{L_k}=\p_\emptyset=1$, see \eqref{e:Lihatterm}. Setting $m=k$ we obtain the following relation in the Jacobi ring of $\Weq_q$,
\begin{equation}\label{e:JacobiMonk}
 {\p_{\ydiagram{1}}}\ \p_\lambda=\sum_{\mu}\p_{\mu}+q\sum_{\nu}\p_{\nu}
+(x_{\lambda}-(x_{k+1}+\cdots +x_{n}))\p_{\lambda}.
\end{equation}
For example (assuming $n-k>1$), we have
\[
(\p_{\ydiagram{1}}+(x_{k+1}+\cdots +x_{n})) p_{\ydiagram{1}}= \p_{\ydiagram{2}}+\p_{\ydiagram{1,1}}+x_\lambda\p_{\ydiagram{1}},
\]
in the Jacobi ring, which we can compare with the relation from the equivariant quantum Monk's rule,
\[
\zeta\star_{q,x} \sigma^{\ydiagram{1}}_{T^\vee}= \sigma^{\ydiagram{2}}_{T^\vee}+\sigma^{\ydiagram{1,1}}_{T^\vee}+x_\lambda\sigma^{\ydiagram{1}}_{T^\vee}.
\]
It now follows from \eqref{e:JacobiMonk} by \cite[Corollary~7.1]{Mihalcea} that the isomorphism from the Jacobi ring to quantum cohomology must take $p_\lambda$ to $\sigma^{\lambda}_{T^\vee}$.

In particular $\p_{\ydiagram{1}}$ maps to $[X^{\ydiagram{1}}]_{T^\vee}$. The equation~\eqref{e:mcasesecondterm} for $\lambda=\emptyset$ implies the identity in the Jacobi ring, if $m\ne n$,
\[
\frac{\p_{\widehat L_m}}{\p_{L_m}}=p_{\ydiagram{1}}+x_{\emptyset}-(x_{m+1}+\dotsc + x_{m+n-k}),
\]
and for $m=n$ the identity,
\[
q\frac{\p_{\widehat L_n}}{\p_{L_n}}=p_{\ydiagram{1}}+(x_{k+1}+\dotsc+x_n)-(x_{1}+\dotsc + x_{n-k}).
\]
Therefore under the isomorphism with quantum cohomology we have
\[
\frac{\p_{\widehat L_m}}{\p_{L_m}}\mapsto\zeta - (x_{m+1}+\dotsc + x_{m+n-k})
\]
and
\[
q\frac{\p_{\widehat L_n}}{\p_{L_n}}\ \mapsto \ \zeta - (x_{1}+\dotsc + x_{n-k}).
\]
Since the $\Z/n\Z$-action on $X=Gr_{n-k}(\C^*)$ comes from the cyclic permutation of the basis $v_1,\dotsc, v_n$ of $\C^n$, and fixes  $\zeta$, the equivariant Chern class of $\mathcal O(1)$,  we have that the fundamental class $[X^{\ydiagram{1}}(m)]_{T^\vee}$ is related to 
\[
[X^{\ydiagram{1}}]_{T^\vee}=\zeta - (x_{k+1}+\dotsc + x_{n})
\] 
by cyclic permutation of the equivariant parameters. Therefore
\[
[X^{\ydiagram{1}}(i)]_{T^\vee}=\zeta - (x_{k+i+1}+\dotsc + x_{n+i})
\] 
with indices taken modulo $n$, and this agrees with the image of
\[
\frac{\p_{\widehat{\mu_i}}}{\p_{\mu_i}}=\frac{\p_{\widehat L_{k+i}}}{\p_{L_{k+i}}}
\]
respectively of 
\[q\frac{\p_{\widehat{\mu_{n-k}}}}{\p_{\mu_{n-k}}}=q\frac{\p_{\widehat L_{n}}}{\p_{L_{n}}},
\]
if $i=n-k$, which was to be proved. 
\end{proof}

Finally, we complete the proof of Theorem~\ref{t:dwdtactionequivariant}.
\begin{proof}[Proof of Theorem~\ref{t:dwdtactionequivariant}]
Putting $m=n$ in the statement in Proposition~\ref{p:termsofW}, we obtain:
$$
q[\frac{\p_{\widehat{L}_n}}{\p_{L_n}}\p_{\lambda}\omega]
=
\left(
\sum_{\mu}[\p_{\mu}\omega]+q\sum_{\nu}[\p_{\nu}\omega] \right)
+(x_{\lambda}-x_{\lambda_{\text{max}}})[p_{\lambda}\omega].
$$
We also have:
\begin{equation*}
\begin{split}
q\dWeqdq &= q\dWdq +x_{\lambda_{\text{max}}} \\
&= 
q\frac{\p_{\widehat{L}_n}}{\p_{L_n}}+x_{\lambda_{\text{max}}},
\end{split}
\end{equation*}
and part (a) of Theorem~\ref{t:dwdtactionequivariant} follows.

For part (b), we use Lemma~\ref{l:boundarysum} and the sum
of the cases $m=1,\ldots ,n$ in Proposition~\ref{p:termsofW}.
\end{proof}

\begin{proof}[Proof of Theorem~\ref{t:equivmain}]
The free basis lemma, Lemma~\ref{l:freebasis2},  also has an equivariant version.
This is just obtained by replacing $\C[ z^{\pm 1}, q^{\pm 1}]$ by
$H^*_{T^\vee}(pt)[ z^{\pm 1},q^{\pm 1}]$, which does not affect the proof. Theorem~\ref{t:equivmain} now follows from Theorem~\ref{t:dwdtactionequivariant} and the equivariant free basis lemma.
\end{proof}

\end{document}